\newcommand{\Aa}{\mathfrak{A}}
\newcommand{\AAA}{\mathcal{A}}
\newcommand{\Bb}{\mathfrak{B}}
\newcommand{\BB}{\mathcal{B}}
\newcommand{\Cc}{\mathfrak{C}}
\newcommand{\CC}{\mathcal{C}}
\newcommand{\E}{\mathbb{E}}
\newcommand{\eE}{\mathfrak{e}}
\newcommand{\EE}{\mathcal{E}}
\newcommand{\FF}{\mathcal{F}}
\newcommand{\Hh}{\mathfrak{H}}
\newcommand{\HH}{\mathcal{H}}
\newcommand{\KK}{\mathcal{K}}
\newcommand{\LL}{\mathcal{L}}
\newcommand{\Mm}{\mathfrak{M}}
\newcommand{\MM}{\mathcal{M}}
\newcommand{\N}{\mathbb{N}}
\newcommand{\pP}{\mathfrak{p}}
\newcommand{\PP}{\mathcal{P}}
\newcommand{\Q}{\mathbb{Q}}
\newcommand{\RR}{\mathcal{R}}
\newcommand{\Ss}{\mathfrak{S}}
\newcommand{\SSSS}{\mathscr{S}}
\newcommand{\WW}{\mathcal{W}}
\newcommand{\Xx}{\mathfrak{X}}
\newcommand{\re}{\mathbb{R}}
\newcommand{\com}{\mathbb{C}}
\newcommand{\one}{\mathbf{1}}
\newcommand{\la}{\langle}
\newcommand{\ra}{\rangle}
\newcommand{\inv}{^{-1}}
\newcommand{\realp}{\text{Re} \,}
\newcommand{\LRA}{\quad \Leftrightarrow
\quad}
\newcommand{\sa}[1]{\stackrel{#1}{\rightarrow}}
\newcommand{\longsa}[1]{\stackrel{#1}{\longrightarrow}}
\newcommand{\dss}[2]{{#1}_{_{#2}}}
\newcommand{\hstar}{\, \widehat{\star} \,}
\newcommand{\tstar}{\, \widetilde{\star} \,}
\newcommand{\lla}{\langle \! \langle}
\newcommand{\rra}{\rangle \! \rangle}
\newtheorem{theorem}{Theorem}[section]
\newtheorem{lemma}[theorem]{Lemma}
\newtheorem{corollary}[theorem]{Corollary}
\newtheorem{proposition}[theorem]{Proposition}
\theoremstyle{definition}
\newtheorem{definition}[theorem]{Definition}
\newtheorem{example}[theorem]{Example}
\theoremstyle{remark}
\newtheorem{remark}[theorem]{Remark}
\numberwithin{section}{chapter}
\numberwithin{equation}{chapter}
\begin{document}

\frontmatter

\title{Unital Dilations of Completely Positive Semigroups: From Combinatorics to Continuity}


\author{David J.\ Gaebler}

\date{September 13, 2013}

\subjclass[2010]{Primary 46L57}

\keywords{Dilation, completely positive semigroup,
liberation, Sauvageot product, moment polynomial}


\begin{abstract}
Semigroups of completely positive maps arise naturally both in noncommutative stochastic processes and in the dynamics of open quantum systems.  Since its inception in the 1970's, the study of
completely positive semigroups has included among its
central topics the dilation of a completely positive semigroup to an endomorphism semigroup.  In quantum dynamics, this amounts to embedding a given open system inside some closed system, while in noncommutative probability, it corresponds to the construction of a Markov process from its transition probabilities.  In addition to the existence of dilations, one is interested in what properties of the original semigroup (unitality, various kinds of continuity) are preserved.

Several authors have proved the existence of dilations, but in general, the dilation achieved has been non-unital; that is, the unit of the original algebra is embedded as a proper projection in the dilation algebra.  A unique approach due to Jean-Luc Sauvageot overcomes this problem, but leaves unclear the continuity of the dilation semigroup.  The major purpose of this thesis, therefore, is to further develop Sauvageot's theory in order to prove the existence of continuous unital dilations.
This existence is proved in Theorem \ref{thmkahuna}, the central
result of the paper.

The dilation depends on a modification of free probability theory, and in particular on a combinatorial property akin to free independence.  This property is implicit in some of Sauvageot's original calculations, but a secondary goal of this paper is to present it as its own object of study, which we do in chapter \ref{chapliberation}.

The content discussed here is based on
 \cite{Sauvageot}, and the present exposition is
 substantially the same as in the author's Ph.D.\ thesis \cite{GaeblerThesis}.
\end{abstract}

\maketitle

\tableofcontents

\chapter*{Preface: Background and Terminology} \label{chappreface}
Following \cite{Sakai}, we distinguish between W$^*$-algebras, which
are abstractly defined as C$^*$-algebras having a Banach-space predual (necessarily unique, as it turns out), and
von Neumann algebras, which are concretely defined as weakly closed
self-adjoint subalgebras of $B(H)$ for some Hilbert space $H$.
In this convention, every von Neumann algebra is also a W$^*$-algebra
(with predual equal to a quotient of the predual $B(H)_* \simeq L^1(H)$),
whereas every W$^*$-algebra is isomorphic to some von Neumann algebra
(\cite{Sakai} 1.16.7).  We depart somewhat from Sakai in
referring to the weak-* topology on a W$^*$-algebra as
the \textbf{ultraweak topology}, which he calls the $\sigma$-topology
or weak topology, and the topology induced by the seminorms
$x \mapsto \phi(x^* x)$ for positive weak-* continuous functionals
$\phi$ as the \textbf{ultrastrong topology}, which he calls
the strong topology or s-topology.
In the case of a von Neumann algebra, these topologies coincide
with the ultraweak and ultrastrong operator topologies
as usually defined (\cite{Sakai} 1.15.6), and hence also with
the weak and strong operator topologies on bounded subsets
(\cite{Sakai} 1.15.2).  Because of this latter fact, we sometimes drop the ``ultra'' and refer merely to the \textbf{weak} and \textbf{strong} topologies when working on a bounded subset of a W$^*$-algebra.
Among the properties of these topologies that we will need are the following:
\begin{itemize}
    \item Multiplication is separately continuous in both the ultraweak and ultrastrong topologies.  However, it is jointly continuous in neither.  On bounded sets, multiplication is jointly strongly continuous,
        but not jointly weakly continuous.
    \item The adjoint map $x \mapsto x^*$ is ultraweakly continuous,
    but not ultrastrongly nor even strongly continuous.
    \item On bounded subsets, one may relate the weak and
    strong topologies as follows: $x_\nu \to x$ strongly
    iff $x_\nu \to x$ weakly and $x_\nu^* x_\nu \to x^* x$
    weakly.
    \item The \textbf{Kaplansky density theorem}: If $A$ is a W$^*$-algebra and $A_0 \subset A$ an ultraweakly dense
        *-subalgebra, then the unit ball of $A_0$ is
        strong-* dense in the unit ball of $A$.  In the case
        of a von Neumann algebra, the hypothesis of ultraweak
        density may be replaced by WOT-density.
\end{itemize}

A linear map between W$^*$-algebras
which is continuous with respect to their ultraweak topologies
is called \textbf{normal}; if the map in question
is positive, this is equivalent
to the property of preserving
upward-convergent nets (in this case weak and strong convergence
are equivalent) of
positive elements, that is, a positive linear map is normal iff
$\phi(x_\alpha) \uparrow \phi(x)$ whenever $x_\alpha \uparrow x$
(\cite{ConwayOperator} Corollary 46.5).  A C$^*$-isomorphism
between two W$^*$-algebras is automatically normal, but a *-homomorphism or
completely positive map need not be.

We refer to a W$^*$-algebra $\AAA$ as \textbf{separable} if its
predual $\AAA_*$ is a separable Banach space; this can be shown to be equivalent to numerous other conditions, including the separability of
either $\AAA$ or its unit ball in either the ultraweak or ultrastrong topologies, and the existence of a faithful normal representation of $\AAA$ on a separable Hilbert space.  A related but strictly weaker property is that of
\textbf{countable decomposability}, which can be defined as the property
that every mutually orthogonal family of nonzero projections in $\AAA$
is at most countable; this is equivalent to the existence
of a faithful state, the existence of a faithful
normal state, or the strong metrizability of
the unit ball (\cite{Blackadar} III.2.2.27).

Throughout, we use the boldface symbol $\one$ to denote the unit
of an algebra, while $1$ will denote the first natural number.  The phrase ``unital subalgebra'' will always be taken to mean that the subalgebra contains the unit of the larger algebra.

\mainmatter
\chapter{Introduction to Completely Positive Semigroups} \label{chapintroduction}

\section{Completely Positive Maps and Semigroups}
In this section we introduce the basic objects of study.

\begin{definition} \label{defncpmap}
Let $A, B$ be C$^*$-algebras and $\phi: A \to B$ a linear map.  We
say that $\phi$ is
\begin{enumerate}
    \item \textbf{positive} if it maps positive elements of $A$
    to positive elements of $B$,
    \item \textbf{$n$-positive} if the map $I_n \otimes \phi:
    M_n(\com) \otimes A \to M_n(\com) \otimes B$ is positive, and
    \item \textbf{completely positive} if $\phi$ is $n$-positive
    for all $n \geq 1$.
\end{enumerate}
\end{definition}

We record here without proof some of the important properties of positive and completely positive maps.  Throughout, $A, B$ denote
C$^*$-algebras and $\phi: A \to B$ a linear map.
\begin{itemize}
    \item Every positive linear map is a *-map, that is, has
    the property that $\phi(a)^* = \phi(a^*)$ for all $a \in A$.  (\cite{Paulsen} Exercise 2.1)
    \item Any two of
    the following properties implies the third:
    \begin{enumerate}
        \item $\phi(\one) = \one$
        \item $\|\phi\| = 1$
        \item $\phi$ is positive.
    \end{enumerate}
    (\cite{Paulsen} Cor 2.9 and Prop 2.11)

    \item If $\phi$ is 2-positive, then $\phi(a)^* \phi(a) \leq \|\phi(\one)\|\phi(a^* a)$
    for all $a \in A$.  This is known as the \textbf{Schwarz
    inequality for 2-positive maps}.
    In particular, if $\phi$ is unital and
    completely positive then $\phi(a)^* \phi(a)
    \leq \phi(a^* a)$ for all $a \in A$.  (\cite{Paulsen} Proposition 3.3 and Exercise 3.4)
    \item If either $A$ or $B$ is commutative, the map $\phi: A \to B$
    is positive iff it is completely positive.  (\cite{Paulsen} Theorems 3.9 and 3.11)
    \item If $A$ and $B$ are W$^*$-algebras, a completely positive
    map $\phi: A \to B$ is normal iff it is strongly continuous.  (\cite{Blackadar} Proposition III.2.2.2).  Strong continuity is equivalent to ultrastrong because of the boundedness of the map.
    \item If $\phi: A \to B(H)$ is a completely positive map, there
    exists a triple $(K, V, \pi)$, unique up to isomorphism, such that
    \begin{enumerate}
        \item $K$ is a Hilbert space
        \item $V: H \to K$ is a linear map such that $\|\phi\| = \|V\|^2$
        \item $\pi: A \to B(K)$ is a *-homomorphism such that
        $V^* \pi(a) V = \phi(a)$ for all $a \in A$
    \end{enumerate}
    and with the additional minimality property that $\overline{\pi(A) VH} = K$.  The triple $(H, V, \pi)$ is called the \textbf{minimal Stinespring
    dilation} of $\phi$.  If $\phi$ is unital, $V$ is an isometry; if $\phi$ is normal, so is $\pi$.  This is known as
    \textbf{Stinespring's dilation theorem} (\cite{Stinespring},
    \cite{Paulsen} Theorem 4.1,
    \cite{Blackadar} Theorems II.6.9.7 and III.2.2.4).
\end{itemize}

\begin{definition} \label{defcpsemigroup}
 Let $\AAA$ be a C$^*$-algebra (resp.\ W$^*$-algebra).
 \begin{enumerate}
    \item A \textbf{cp-semigroup} on $\AAA$ is a family
    $\{\phi_t: t \in [0,\infty)\}$ of (normal) completely positive contractive linear
    maps  $\phi_t: \AAA \to \AAA$ such that $\phi_0 = \dss{\text{id}}{\AAA}$ and
    \[
    \phi_t \circ \phi_s = \phi_{t+s}
    \]
    for all $s,t \geq 0$.

    \item An \textbf{e-semigroup} on $\AAA$ is a cp-semigroup in
    which each $\phi_t$ is a *-endomorphism.

    \item Capital letters (\textbf{CP-semigroup}, \textbf{E-semigroup}) indicate
    that for each $a \in \AAA$, $t \mapsto \phi_t(a)$ is a
    continuous function from $[0,\infty)$ to $\AAA$, where $\AAA$
    is given the norm (resp.\ ultraweak) topology.  We refer to this
    continuity property of the semigroup as \textbf{strong continuity}
    or \textbf{point-norm continuity} in the C$^*$ case, and
    \textbf{point-weak continuity} in the W$^*$ case.

    \item A subscript of 0 (\textbf{cp$_0$-semigroup}, \textbf{CP$_0$-}, \textbf{e$_0$-}, \textbf{E$_0$-})
    indicates that $\AAA$ contains a unit $\one$
    and that $\phi_t(\one) = \one$ for all $t \geq 0$.
 \end{enumerate}
\end{definition}

\begin{remark} \label{remQMP}
The term \textbf{quantum Markov process} or \textbf{quantum Markov
semigroup} is sometimes used in the literature to describe cp$_0$- or CP$_0$-semigroups.
\end{remark}

\begin{remark} \label{remCPdefinition}
In the case where $\AAA$ is a W$^*$-algebra,
the definitions of cp-semigroup and CP-semigroup remain unchanged
when stated in terms of the strong topology rather than the weak
topology.  That is, each map $\phi_t$ is normal iff it is strongly
continuous, as noted above; and, as we shall show in more detail
below, the map $t \mapsto \phi_t(a)$ for fixed $a$ is continuous
with respect to the weak topology iff it is continuous
with respect to the strong topology (that is, point-weak continuity is equivalent to point-strong continuity).
\end{remark}

\begin{definition} \label{definvariantstate}
Let $\phi = \{\phi_t\}$ be a cp-semigroup on $\AAA$.  An
\textbf{invariant state} for $\phi$ is a state $\omega: \AAA \to \com$
with the property
\[
\forall t \geq 0: \qquad \omega \circ \phi_t = \omega.
\]
\end{definition}

\section{Dilation}
In this section we introduce the ways in which cp-semigroups and e-semigroups may be related to each other.

\begin{definition} \label{defembeddingretraction}
Let $A, B$ be C$^*$-algebras.
\begin{enumerate}
    \item A \textbf{conditional expectation} on $A$ is a linear
    map $E: A \to A$ such that $E^2 = E$, $\|E\| = 1$, and
    the range $E(A)$ is a C$^*$-subalgebra.
    \item An \textbf{embedding from $A$ to $B$} is an injective
    (equivalently, isometric) *-homomorphism from $A$ to $B$.
    \item Given an embedding $i: A \to B$, a \textbf{retraction
    with respect to $i$} is a completely positive map $e: B \to A$
    such that $e \circ i = \text{id}_{A}$.
\end{enumerate}
\end{definition}

\begin{remark}
A linear map $E: A \to A$ whose range is a C$^*$-subalgebra
is a conditional expectation iff it
is a completely positive contraction and is a bimodule map over its range, i.e.\ has the property that $E(E(a) x) =  E(a) E(x) = E(aE(x))$ for
all $a,x \in A$; this is known as \textbf{Tomiyama's theorem}
(\cite{Tomiyama}).  As a result, if $i: A \to B$ is an embedding
and $e: B \to A$ a corresponding retraction, then $i \circ e$
is a conditional expectation on $B$ with range $i(A)$.  Hence,
the distinction between a retraction and a conditional expectation is precisely the distinction between \emph{identifying} $A$ as a subalgebra of $B$, and \emph{explicitly writing an inclusion map} from $A$ to $B$.  The difference is a matter of taste; we generally follow the latter approach.
\end{remark}

\begin{definition}
Let $\phi = \{\phi_t\}$ be a cp-semigroup on a C$^*$-algebra $\AAA$.
 An \textbf{e-dilation} of $(A, \phi)$ is a tuple $(\Aa, i, \E, \sigma)$ where $\Aa$ is a C$^*$-algebra, $i: A \to \Aa$ an embedding,
$\E: \Aa \to A$ a retraction with respect to $i$,
and $\sigma = \{\sigma_t\}$ an e-semigroup on $\Aa$, satisfying
\[
\forall t \geq 0: \qquad \phi_t = \E \circ \sigma_t \circ i.
\]
We summarize the relationship in the following diagram:
\[ \xymatrixcolsep{5pc}\xymatrix{
\Aa \ar[r]^{\sigma_t} & \Aa \ar[d]^\E \\
A \ar[u]^i \ar[r]_{\phi_t} & A
} \]
We call $(\Aa, i, \E, \sigma)$
a \textbf{strong e-dilation} if it satisfies $\E \circ \sigma_t
= \phi_t \circ \E$, corresponding to the diagram
\[ \xymatrixcolsep{5pc}\xymatrix{
\Aa  \ar[d]_\E \ar[r]^{\sigma_t} & \Aa \ar[d]^\E \\
A \ar[r]_{\phi_t} & A
} \]
Note that this implies
\[
\phi_t = \phi_t \circ \E \circ i = \E \circ \sigma_t \circ i
\]
so that every strong dilation is a dilation, but the converse does
not always hold.
An e$_0$-dilation of a cp$_0$-semigroup is said to be \textbf{unital}
if $i(\one) = \one$.
\end{definition}

\section{Motivation and Examples}
\begin{example}[Conjugation by Contractions]
Let $H$ be a Hilbert space and $\{T_t\}$ a semigroup of contractions
on $H$.  Then the maps $\phi_t: B(H) \to B(H)$ defined by
\[
\phi_t(X) = T_t^* X T_t
\]
form a cp-semigroup.  It is a cp$_0$-semigroup iff all the $T_t$
are isometries, an e-semigroup iff all the $T_t$ are
coisometries, and hence an e$_0$-semigroup iff all the $T_t$
are unitaries.  If $\{T_t\}$ is strongly continuous, in that
$t \mapsto T_t$ is continuous with respect to the strong operator
topology on $B(H)$, then $\{\phi_t\}$ is a CP-semigroup.
A theorem of Cooper (\cite{Cooper}) states that, given
a strongly continuous contraction semigroup $\{T_t\}$ on
$H$, there exist a Hilbert space $K$, an isometry
$V: H \to K$, and a strongly continuous group $\{U_t\}$ of
unitaries on $K$ such that
\[
T_t = V^* U_t V.
\]
If the $T_t$ are isometries, one obtains the stronger condition
\[
V T_t = U_t V.
\]
Given the Cooper dilation of the semigroup $\{T_t\}$,
one can then define
\begin{enumerate}
    \item the E$_0$-semigroup $\{\alpha_t\}$ on $B(K)$ by $\alpha_t(Y) = U_t^* Y U_t$
    \item the non-unital embedding $i: B(H) \to B(K)$ by $i(X) = V X V^*$
    \item the retraction $\E: B(K) \to B(H)$ by $\E(Y) = V^* Y V$
\end{enumerate}
Then $(B(K), i, \E, \{\alpha_t\})$ is an E$_0$-dilation of
$(B(H), \{\phi_t\})$.

This example
plays a role in the general theory; for instance, Evans and Lewis
prove their dilation theorem (\cite{EvansLewis}) by relating certain more general
semigroups to those of the form $X \mapsto T_t^* X T_t$,
and then applying Cooper dilation.
\end{example}


\begin{example}[Open Quantum Systems]
In (one of the axiomatizations of) quantum mechanics, every physical system corresponds to a von Neumann algebra $\AAA$, with states of the system
corresponding to positive elements of $\AAA$ of trace 1.  A physical transformation of the system must map states to states and hence, in
particular, must be a positive map; a continuous-time evolution of the system corresponds therefore to a semigroup of positive maps.  If the system is entangled with an environment, a physical transformation of the composite system  must map composite states to composite states, which implies complete positivity of the restriction to the original system; hence, a continuous-time evolution of such an \textbf{open quantum system} is represented by a semigroup of completely positive maps.  Continuity requirements are also natural to impose in this setting as one of the physical axioms.

Actually, the representation of such a system as a completely positive semigroup is an approximation to a more general \textbf{master equation}, which approximation holds under various simplifying physical assumptions such as those of ``weak coupling'' or a ``singular reservoir.''  Completely positive semigroups arise, for instance, in quantum thermodynamics, where the environment is sometimes regarded as an infinite ``heat bath'' whose self-interactions are much faster than those of the system under study.  For more on these matters see \cite{Haake}, \cite{DaviesMME}, \cite{GKS}, \cite{Lindblad}, \cite{Davies}, \cite{EvansLewis}, and \cite{Attal2}.
In the thermodynamic context one typically assumes the existence of a
normal $\phi$-invariant state $\omega$ on $\AAA$, representing an
equilibrium of the system; correspondingly, one is interested in
dilations $(\Aa, i, \E, \sigma)$ for which there exists a
normal $\sigma$-invariant state $\varpi$ on $\Aa$, which dilates
$\omega$ in the sense that $\varpi \circ i = \omega$.  In the
case of a strong dilation this is automatic, as one can simply
define $\varpi = \omega \circ \E$, and it follows that
\[
\varpi \circ \sigma_t = \omega \circ \E \circ \sigma_t
= \omega \circ \phi_t \circ \E = \omega \circ \E =\varpi.
\]

In this setting, dilation is a way of relating the dynamics
of an open (or ``dissipative'') system to the dynamics of
a closed (or ``non-dissipative'') system containing it.
\end{example}

\begin{example}[Daniell-Kolmogorov Construction] \label{exmarkovdilation}
Let $\AAA$ be a commutative unital C$^*$-algebra, and let $S$
be the maximal ideal space of $\AAA$, so that $\AAA \simeq C(S)$.
Let $\{P_t\}$ be a CP$_0$-semigroup on $\AAA$.  By Riesz representation
we obtain for each $t \geq 0$ and each $x \in S$ a measure $p_{t,x}$
characterized by the property
\[
\forall f \in C(S): \qquad \int_S f(y) \, dp_{t,x}(y) = (P_t f)(x).
\]
Moreover, since $P_t f$ is a continuous function, the family $\{p_{t,x}\}$
varies weak-* continuously in $x$.  The property $P_0 = \text{id}$
implies that $p_{0,x}$ is the point mass at $x$, and the
semigroup property $P_{s+t} = P_s P_t$ implies the \textbf{Chapman-Kolmogorov equation}
\[
p_{t+s,x}(E) = \int_S p_{s,y}(E) dp_{t,x}(y).
\]

Let $\SSSS$ denote the \textbf{path space}
$S^{[0,\infty)}$, and $\Aa =
C(\SSSS)$.  We have the embedding $i: \AAA \to \Aa$ given
by $i(f)(\pP) = f(\pP(0))$.
By the Stone-Weierstrass theorem, the *-subalgebra
$\Aa_0 \subset \Aa$ consisting of finite sums of functions of
the form $f_1^{(t_1)} \cdots f_n^{(t_n)}$, where for a path
$\pP \in \SSSS$ the value of $f_i^{(t_i)}$ depends only
on $\pP(t_i)$, is dense in $\Aa$.  We define a unital
linear map $\E_0: \Aa_0 \to \AAA$ on $\AAA_0$ by
\[
\E_0 [f_1^{(t_1)} \cdots f_n^{(t_n)}]
= f_n P_{t_n-t_{n-1}} \Big( f_{n-1} P_{t_{n-1}- t_{n-2}} \Big( \cdots
P_{t_2-t_1} \Big(f_1 \Big) \Big) \cdots \Big).
\]
Clearly $\E_0 \circ i = \text{id}_\AAA$.
We will show shortly that $\E_0$ is well-defined and contractive,
so that it extends to a unital contractive
(hence positive, hence completely positive)
linear map $\E: \Aa \to \AAA$ which satisfies $\E \circ i = \text{id}_\AAA$
and is therefore a retraction with respect to $i$.

We define for each $t \geq 0$ the continuous maps $\lambda_t:
\SSSS \to \SSSS$ by $(\lambda_t \pP)(s) = \pP(s+t)$, and
the corresponding *-endomorphisms $\sigma_t: \Aa \to \Aa$ by
$\sigma_t f = f \circ \lambda_t$.  It is immediate from
the above that $\E \circ \sigma_t \circ i = P_t$, so that
we have obtained a unital e$_0$-dilation of our CP$_0$-semigroup.

Given any regular Borel probability measure $\mu_0$ on $S$,
we obtain through Riesz representation a regular Borel probability
measure $\mu$ on $\SSSS$ characterized by the property
\[
\forall f \in \Aa: \qquad \int_\SSSS f \, d\mu
= \int_S (\E f) \, d\mu_0.
\]
This then implies that
\[
\forall f \in \AAA: \qquad (P_t f)(x)
= \EE \Big[f(\pP(t)) \Big| \pP(0) = x\Big]
\]
where $\EE$ denotes conditional expectation in the probabilistic
sense, so that we have constructed a Markov process $\{\pP(t)\}$
with specified transition probabilities.
We thus obtain a C$^*$-algebraic version of the classical
\textbf{Daniell-Kolmogorov construction}, at least
in the context of \textbf{Feller processes}
rather than general Markov processes.

We now consider an alternate perspective on the same construction, which
enables us easily to prove that $\E_0$ is well-defined and contractive, and simultaneously offers a preview of the techniques used in
this paper.
For each nonempty finite subset $\gamma \subset [0,\infty)$ let
$\AAA_\gamma$ denote a tensor product of $|\gamma|$ copies of
$C(S)$.  For $\beta \subset \gamma$ we obtain unital embeddings
$\AAA_\beta \to \AAA_\gamma$ as follows: Writing $\gamma$
as a disjoint union $\beta \cup \gamma'$, identify
$\AAA_\gamma$ with $\AAA_\beta \otimes \AAA_{\gamma'}$
and embed via $f \mapsto f \otimes \one$.  This yields
an inductive system and, using the general fact that
$C(X \times Y) \simeq C(X) \otimes C(Y)$ for compact
Hausdorff spaces $X$ and $Y$, we see that
$\displaystyle\lim_{\rightarrow} \AAA_\gamma$ is isomorphic
to $\Aa$.  The domain of $\E_0$ is the union of the images of
all the $\AAA_\gamma$ inside $\Aa$, and the well-definedness and
contractivity of $\E_0$ reduce, by induction, to the well-definedness
and contractivity of the maps $\theta_t: C(S) \otimes C(S)$ given on
simple tensors by $\theta_t (f \otimes g) = (P_t f) g$.  But
such a map $\theta_t$ may
be equivalently defined as
\[
(\theta_t F)(x) = \int_S F(y,x) dp_{t,x}(y)
\]
which obviously yields a well-defined contraction
on $C(S) \otimes C(S)$.

We note that the e$_0$-semigroup $\{\sigma_t\}$ is not continuous,
even when the original semigroup $\{P_t\}$ is; that is,
we obtain only an e$_0$-dilation, not an E$_0$-dilation, of
a CP$_0$-semigroup.  We shall return to this point in chapter
\ref{chapcontinuous}.
\end{example}

\begin{remark} \label{remwhy}
The last two examples represent the two major streams of thought which motivate the dilation theory of completely positive semigroups.  On the one hand, in the physics setting such a dilation corresponds to an embedding of an open quantum system inside some closed quantum system.  On the other hand, we have seen that
dilating a CP$_0$-semigroup defined on a \emph{commutative} C$^*$-algebra amounts to construction of a Markov process; hence, we may think of
dilations of general CP$_0$-semigroups as a way
of constructing ``noncommutative Markov processes.''
\end{remark}

\section{Continuity Properties of Semigroups}
In this section we examine in greater detail the continuity properties
of completely positive semigroups, beginning with more general
considerations regarding contraction semigroups on Banach spaces.

\subsection{C$_0$-Semigroups} \label{secC0semigroups}
We recount here some of the essentials of the theory of semigroups on Banach spaces, which can be found in \cite{HillePhillips},
\cite{DunfordSchwarz1}, \cite{BratteliRobinson1}, and \cite{EngelNagel}.

A semigroup $\{T(t)\}_{t \geq 0}$ of bounded linear operators on a Banach space
$\Xx$ is said to be
\begin{enumerate}
    \item \textbf{uniformly continuous} if $t \mapsto T(t)$ is
    continuous with respect to the norm topology on $B(\Xx)$; that
    is, if $\displaystyle\lim_{t \to t_0} \|T(t)-T(t_0)\|_{B(\Xx)} = 0$;
    \item \textbf{strongly continuous} if, for each
    $x \in \Xx$, $t \mapsto T(t) x$ is continuous with respect
    to the norm topology on $\Xx$; that is, if
    $\displaystyle\lim_{t \to t_0} \|T(t)x - T(t_0)x\|_\Xx = 0$ for each $x \in \Xx$;
    \item \textbf{weakly continuous} if, for each $x \in \Xx$,
    $t \mapsto T(t) x$ is continuous with respect to the weak topology
    on $\Xx$; that is, if $\displaystyle\lim_{t \to t_0} \ell \big( T(t) x - T(t_0) x \big) = 0$ for each $x \in \Xx$ and each
    $\ell \in \Xx^*$.
\end{enumerate}
In case $\Xx$ is the dual of some Banach space $\Xx_*$, we
define the semigroup to be
\begin{enumerate}[resume]
    \item \textbf{weak-* continuous} if, for each $x \in \Xx$,
    $t \mapsto T(t) x$ is continuous with respect to the weak-* topology
    on $\Xx$; that is, if $\displaystyle\lim_{t \to t_0} \ell \big( T(t) x - T(t_0) x \big) = 0$ for each $x \in \Xx$ and each
    $\ell \in \Xx_*$.
\end{enumerate}

These modes of continuity can, of course, be defined for other
families $\{T(t)\}$ of operators which do not necessarily form a semigroup; however, when they do, strong and weak continuity
are equivalent (\cite{EngelNagel} Theorem 1.1.6).  Furthermore, uniform continuity is too stringent a hypothesis to be attainable in most applications of interest, so that the bulk of semigroup theory revolves around strongly continuous semigroups, also known as \textbf{C$_0$-semigroups}.
Many important C$_0$-semigroups are \textbf{contractive} (meaning
$\|T(t)\| \leq 1$ for all $t$), including completely positive
semigroups; as the theory is somewhat simpler for contractive
C$_0$-semigroups, we shall focus on this case.

The most important object associated with a contractive C$_0$-semigroup
is its \textbf{generator}, the operator $\LL$ on $\Xx$ defined by
the formula
\[
\LL x = \lim_{t \to 0} t\inv [T(t) x - x].
\]
This is in general a closed densely defined unbounded operator,
and in fact
is bounded iff the semigroup is uniformly continuous.  Furthermore,
the generator satisfies the resolvent growth condition $\|(\lambda \one - \LL)\inv\| \leq \lambda\inv$ for all $\lambda > 0$.  The
\textbf{Hille-Yosida theorem} provides a converse, stating that
every closed densely defined operator satisfying this resolvent growth
condition is the generator of some contractive C$_0$-semigroup.  Intuitively,
this semigroup is given by $T(t) = e^{t \LL}$, but this exponential
functional cannot be defined through the usual power series when
$\LL$ is unbounded; one can, however, write
\[
T(t) x = \lim_{n \to \infty} \left(\one - \frac{t}{n} \LL \right)^{-n} x
\]
which is sometimes called the \textbf{Post-Widder inversion formula}
for C$_0$-semigroups.  We thus have a bijection between
contractive C$_0$-semigroups and closed densely defined operators satisfying
a resolvent growth condition, with explicit formulas for both
directions of the bijection.

A notable consequence of the semigroup property is the equivalence between certain notions of continuity and measurability.  We
define a family $\{T(t)\}$ of operators on $\Xx$,
equivalently viewed as a function $T: [0,\infty) \to B(\Xx)$, to be
\begin{enumerate}
    \item \textbf{uniformly measurable} if $T$ is the a.e.\ norm
    limit of a sequence of countably-valued functions from
    $[0,\infty)$ to $B(\Xx)$;
    \item \textbf{strongly measurable} if, for each $x \in \Xx$,
    $t \mapsto T(t) x$ is the a.e.\ norm limit of a sequence
    of countably-valued functions from $[0,\infty)$ to $\Xx$;
    \item \textbf{weakly measurable} if, for each $x \in \Xx$
    and $\ell \in \Xx^*$, $t \mapsto \ell(T(t) x)$ is
    a measurable function from $[0,\infty)$ to $\com$.
\end{enumerate}
In case $\Xx$ is the dual of another Banach space $\Xx_*$,
we also define $\{T(t)\}$ to be
\begin{enumerate}[resume]
    \item \textbf{weak-* measurable}  if, for each $x \in \Xx$
    and $\ell \in \Xx_*$, $t \mapsto \ell(T(t) x)$ is
    a measurable function from $[0,\infty)$ to $\com$.
\end{enumerate}
One might ask why we do not instead define the different types of measurability using the Borel $\sigma$-algebras generated by the corresponding continuity types; the short answer is that a better integration theory results from the definitions given here (the Bochner integral in the case of uniform measurability, the Pettis integral for the others).

It turns out that weak and strong measurability are equivalent
when $\Xx$ is separable (\cite{HillePhillips} Corollary 2, p.\ 73)
and, when $\{T(t)\}$ is a contraction semigroup, both are
equivalent to strong and weak continuity at times $t > 0$ (\cite{HillePhillips} Theorem 10.2.3).  This latter result is analogous to the fact that measurable solutions to the Cauchy functional equation
$f(x+y) = f(x)f(y)$ on $\re$ are exponentials, and hence are continuous.  However, strong measurability at $t = 0$ is not enough
to infer strong continuity at $t = 0$, but requires the additional
hypothesis that $\bigcup_{t > 0} T(t) \Xx$ be dense in
$\Xx$ (\cite{HillePhillips} Theorem 10.5.5).

A contraction semigroup $\{T(t)\}$ on $\Xx$ induces an
\textbf{adjoint semigroup} $\{T(t)^*\}$ on $\Xx^*$ by the
formula $(T(t)^* f)(x) = f(T(t) x)$.  If $\Xx$ is the dual
of $\Xx_*$ and if each $T(t)$ is weak-* continuous,
one obtains also a \textbf{pre-adjoint semigroup} $\{T(t)_*\}$
through the same formula; since the weak-* topology is
of much more interest than the weak topology for spaces
having a predual, this is usually referred to in the literature as the
adjoint semigroup (and of course is the restriction of the adjoint
semigroup to $\Xx_* \subset \Xx^*$).  Weak-* continuity and
measurability of $\{T(t)\}$ are equivalent to weak continuity and measurability of $\{T(t)_*\}$, so that in particular they
are equivalent to each other at times $t > 0$
if $\Xx_*$ is separable.

The last topic to consider for contraction semigroups
is the passage from separate to joint continuity.  We
summarize the results in the following theorem.

\begin{theorem}[Joint Continuity of C$_0$-Semigroups] \label{thmjointcontinuityC0} \
\begin{enumerate}
    \item Let $\Xx$ be a Banach space and $\{T(t)\}_{t \geq 0}$
    a contractive C$_0$-semigroup.  Then $T(t)(x)$ is
    jointly continuous in $t$ and $x$; that is, the map
    $[0,\infty) \times \Xx \sa{T} \Xx$ is continuous
    with respect to the norm topology on $\Xx$.
    \item Let $\Xx$ be a Banach space with separable predual
    $\Xx_*$, and
    $\{T(t)\}_{t \geq 0}$ a weak-* continuous semigroup
    of weak-* continuous contractions on $\Xx$.
    Then $T(t)(x)$ is jointly weak-* continuous
    in $t$ and $x$ on bounded subsets of $\Xx$.
    That is, the map
    $[0,\infty) \times \Xx_1 \sa{T} \Xx_1$
    is continuous with
    respect to the weak-* topology on $\Xx_1$.

    \item Let $\AAA$ be a W$^*$-algebra and
    $\{\phi_t\}_{t \geq 0}$ a C$_0$-semigroup
    of strongly continuous contractions on $\AAA$.
    Then $\phi_t(a)$ is jointly strongly continuous
    in $t$ and $a$ at nonzero times.  That is,
    the map $(0,\infty) \times \AAA_1 \sa{\phi} \AAA_1$
    is continuous with
    respect to the strong topology on $\AAA_1$.
\end{enumerate}
\end{theorem}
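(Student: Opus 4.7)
The plan is to reduce all three parts to the common decomposition
\[
T(t)(x) - T(t_0)(x_0) = T(t)(x - x_0) + \bigl[T(t) - T(t_0)\bigr](x_0),
\]
whose second summand vanishes in the appropriate topology by the hypothesized pointwise-in-$t$ continuity. Part (1) is then immediate: contractivity gives $\|T(t)(x - x_0)\| \leq \|x - x_0\| \to 0$, and combined with strong continuity of the orbit, this closes the argument via the triangle inequality.

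For parts (2) and (3), the key preliminary step is to establish that the pre-adjoint semigroup is a norm-continuous C$_0$-semigroup on the predual. In part (2), $T(t)_*\colon \Xx_* \to \Xx_*$ exists because each $T(t)$ is weak-* continuous; in part (3), $\phi_t^*\colon \AAA_* \to \AAA_*$ exists because each $\phi_t$ is normal. Since the dual of the predual is $\Xx$ (respectively $\AAA$), the weak topology on $\Xx_*$ is exactly $\sigma(\Xx_*, \Xx)$, and weak continuity of the orbit $t \mapsto T(t)_* \omega$ is precisely the statement that $t \mapsto \omega(T(t) x)$ is continuous for every $x$ and $\omega$---which is the hypothesized weak-* continuity. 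The general C$_0$ theorem that weak and strong continuity coincide for contraction semigroups (\cite{EngelNagel} Theorem 1.1.6) then upgrades this to norm continuity of $t \mapsto T(t)_* \omega$ in $\Xx_*$, and similarly for $\phi_t^* \omega$ in $\AAA_*$.

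With that in hand, part (2) follows by reducing to sequences $(t_n, x_n) \to (t_0, x_0)$ with $\|x_n\| \leq M$, using that separability of $\Xx_*$ makes $\Xx_1$ weak-* metrizable. For each $\omega \in \Xx_*$, I would split
\[
\omega\bigl(T(t_n)(x_n - x_0)\bigr) = \bigl[T(t_n)_* \omega - T(t_0)_* \omega\bigr](x_n - x_0) + \bigl(T(t_0)_* \omega\bigr)(x_n - x_0);
\]
the first term is bounded by $2M \|T(t_n)_* \omega - T(t_0)_* \omega\| \to 0$, and the second vanishes because $x_n \to x_0$ weak-* and $T(t_0)_* \omega \in \Xx_*$. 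For part (3), I would test strong convergence in a faithful representation $\AAA \subset B(H)$ against vectors $\xi$, using the Schwarz inequality for 2-positive maps to obtain
\[
\|\phi_{t_n}(a_n - a_0) \xi\|^2 \leq \bigl\langle \phi_{t_n}\bigl((a_n - a_0)^* (a_n - a_0)\bigr) \xi, \xi\bigr\rangle = (\phi_{t_n}^* \omega_\xi)(c_n),
\]
where $c_n = (a_n - a_0)^* (a_n - a_0)$ is uniformly bounded by $4$ and tends to $0$ weakly (by strong convergence $a_n \to a_0$). The same splitting as in part (2) then disposes of $(\phi_{t_n}^* \omega_\xi)(c_n)$ via norm continuity of $t \mapsto \phi_t^* \omega_\xi$ and positive normality of $\phi_{t_0}^* \omega_\xi$.

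The main obstacle is the pre-adjoint promotion step; the subtle point is recognizing that the weak topology on the predual is just $\sigma(\AAA_*, \AAA)$, so that the hypothesized orbit-wise continuity transfers directly before the C$_0$ theorem is applied. The restriction $t_0 > 0$ in part (3) simply isolates the interior of the time parameter, where this machinery runs without boundary complications at $t = 0$.
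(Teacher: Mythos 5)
Parts (1) and (2) of your proposal are correct. For part (1) you use the paper's exact decomposition, so nothing to add there. For part (2) you take a genuinely different route from the paper: the paper's proof consists of citing topological results (Chernoff--Marsden for $t_0 > 0$, Lawson or Skeide for $t_0 = 0$) applied to the weak-* compact metrizable unit ball, whereas you pass to the preadjoint semigroup $\{T(t)_*\}$ on $\Xx_*$, observe that the hypothesized orbit-wise weak-* continuity of $\{T(t)\}$ is exactly $\sigma(\Xx_*, \Xx)$-continuity of $t \mapsto T(t)_*\omega$, upgrade that to norm continuity of the preadjoint orbit via the standard equivalence of weak and strong continuity for contraction semigroups, and then split $(T(t_n)_*\omega)(x_n - x_0)$ into a term controlled by $\|T(t_n)_*\omega - T(t_0)_*\omega\|$ and a term killed by weak-* convergence $x_n \to x_0$. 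This is more self-contained (no compactness or nonlinear-semigroup machinery), and the same estimate works uniformly at $t_0 = 0$, which the paper treats as a separate harder case.

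Part (3), however, has a genuine gap. The hypothesis of Theorem \ref{thmjointcontinuityC0}(3) assumes only that each $\phi_t$ is a strongly continuous \emph{contraction}; there is no positivity hypothesis of any kind. Your argument applies the Schwarz inequality for 2-positive maps, $\phi_t(b)^*\phi_t(b) \le \phi_t(b^*b)$, which is simply unavailable for a general contraction. What you have written is in effect a proof of Theorem \ref{thmjointcontinuityCP}(2), which \emph{is} stated for CP-semigroups and \emph{does} use the Schwarz inequality. Your closing remark also misdiagnoses the source of the restriction to $t_0 > 0$: your preadjoint machinery would run equally well at $t_0 = 0$ if it were applicable (as it does in Theorem \ref{thmjointcontinuityCP}(2), which does hold there). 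The restriction in part (3) appears precisely because, without positivity, the Schwarz inequality fails and the argument must instead rely on the compactness of the unit ball, which the strong topology lacks.
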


\begin{proof} \
\begin{enumerate}
    \item By the triangle inequality and the contractivity
    of the semigroup,
    \[
    \|T(s)(y)-T(t)(x)\| \leq \|T(s)(y-x)\|
    + \|T(s)x - T(t)x\| \leq \|y-x\| + \|T(s)(x)-T(t)(x)\|
    \]
    which tends to zero as $(s,y) \to (t,x)$.

    \item By Alaoglu's theorem, $\Xx_1$ is weak-* compact, and since $\Xx_*$ is assumed to be separable, another standard result implies that $\Xx_1$ is weakly metrizable (\cite{ConwayFunctional} V.5.1).  Joint weak-*
        continuity at $(t,a)$ with $t > 0$ is therefore a special case of Theorem 4 in \cite{ChernoffMarsden}.  Joint weak-*
        continuity at $(0,a)$ is more complicated to establish, but is a consequence of Corollary 3.3 of \cite{Lawson}.
        A very different proof, using less topology and
more semigroup theory, appears in Lemma A.2
of \cite{SkeideClassification}.  Notably, this proof
does not require separability of the predual; although
stated for W$^*$-algebras, it uses only their Banach-space
structure.

    \item  For strong continuity, we use the same
proof, plus the fact that
$\AAA_1$ is also strongly metrizable (\cite{Blackadar}
III.2.2.27).  Since
$\AAA_1$ is not strongly compact, however, we cannot
infer joint continuity at $(0,a)$.
\end{enumerate}
\end{proof}

\subsection{Completely Positive Semigroups} \label{secCPsemigroupcontinuity}
So far we have considered semigroups of contractions on Banach spaces.  When the Banach space is a W$^*$-algebra, and the contractions are normal completely positive maps, some stronger continuity results hold.  Here we note three .  First, recall that a CP-semigroup was defined by the property
 of point-weak continuity.  It turns out that such a
semigroup is automatically \textbf{point-strongly continuous}.
This is Theorem 3.1 of
\cite{ShalitMarkiewicz}.  Second, if a CP-semigroup on
a W$^*$-algebra is
point-norm continuous (which, confusingly enough, would be
called ``strongly continuous'' in the setting of semigroups on
general Banach spaces), then it is automatically uniformly
continuous.  This is Theorem 1 of \cite{Elliott}.

Our third continuity result which is specific to completely positive
semigroups is an improved statement of joint continuity.

\begin{theorem}[Joint Continuity for CP-Semigroups]  \label{thmjointcontinuityCP} \

Let $\AAA$ be a separable W$^*$-algebra and $\{\phi_t\}_{t \geq 0}$
a CP-semigroup on $\AAA$.
\begin{enumerate}
    \item $\phi_t(a)$ is jointly weakly continuous in $t$
    and $a$; that is, the map
    $[0,\infty) \times \AAA_1 \sa{\phi} \AAA_1$ is
    continuous with respect to the weak topology on $\AAA_1$.

    \item $\phi_t(a)$ is jointly strongly continuous
    in $t$ and $a$; that is, the map
    $[0,\infty) \times \AAA_1 \sa{\phi} \AAA_1$ is
    continuous with respect to the strong topology on $\AAA_1$.
\end{enumerate}
\end{theorem}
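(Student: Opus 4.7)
Part (1) is immediate from Theorem \ref{thmjointcontinuityC0}(2) applied to $\Xx = \AAA$ with its ultraweak topology (which coincides with the weak topology on bounded subsets). Each $\phi_t$ is normal, hence weak-* continuous; the semigroup is point-weak continuous by the definition of a CP-semigroup; and $\AAA_*$ is separable by hypothesis, so all the assumptions of that theorem are met. For part (2), joint strong continuity at $(t_0, a_0)$ with $t_0 > 0$ follows from Theorem \ref{thmjointcontinuityC0}(3): each $\phi_t$ is strongly continuous (normal $=$ strongly continuous for CP maps), and the semigroup is point-strongly continuous by the Shalit--Markiewicz result recalled at the start of Section \ref{secCPsemigroupcontinuity}.

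The only case left is joint strong continuity at $(0, a_0)$, where $\phi_0 = \text{id}$. The plan is to take an arbitrary net $(t_\nu, a_\nu) \to (0, a_0)$ in $[0,\infty) \times \AAA_1$ with $a_\nu \to a_0$ strongly, fix a normal state $\omega$, and expand
\[
\omega\bigl((\phi_{t_\nu}(a_\nu) - a_0)^*(\phi_{t_\nu}(a_\nu) - a_0)\bigr)
= \omega\bigl(\phi_{t_\nu}(a_\nu)^* \phi_{t_\nu}(a_\nu)\bigr)
- 2\,\realp \omega\bigl(a_0^* \phi_{t_\nu}(a_\nu)\bigr)
+ \omega(a_0^* a_0),
\]
and show each piece behaves correctly in the limit. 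For the first summand, the Schwarz inequality for 2-positive maps together with $\|\phi_{t_\nu}(\one)\| \leq \|\phi_{t_\nu}\| \leq 1$ gives $\phi_{t_\nu}(a_\nu)^* \phi_{t_\nu}(a_\nu) \leq \phi_{t_\nu}(a_\nu^* a_\nu)$; since $a_\nu \to a_0$ strongly in the unit ball implies $a_\nu^* a_\nu \to a_0^* a_0$ weakly, part (1) then gives $\phi_{t_\nu}(a_\nu^* a_\nu) \to a_0^* a_0$ weakly, so $\limsup_\nu \omega\bigl(\phi_{t_\nu}(a_\nu)^* \phi_{t_\nu}(a_\nu)\bigr) \leq \omega(a_0^* a_0)$. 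For the middle summand, the functional $x \mapsto \omega(a_0^* x)$ is normal by separate ultraweak continuity of multiplication, and $\phi_{t_\nu}(a_\nu) \to a_0$ weakly by part (1), so $\omega\bigl(a_0^* \phi_{t_\nu}(a_\nu)\bigr) \to \omega(a_0^* a_0)$. Combining these pieces yields $\limsup_\nu \omega\bigl((\phi_{t_\nu}(a_\nu) - a_0)^*(\phi_{t_\nu}(a_\nu) - a_0)\bigr) \leq 0$, and nonnegativity forces the limit to be $0$. Since $\omega$ was an arbitrary normal state and the ultrastrong topology is defined by the seminorms induced by positive normal functionals, this is exactly strong convergence $\phi_{t_\nu}(a_\nu) \to a_0$.

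There is no serious obstacle; the only CP-specific ingredient beyond part (1) is the Schwarz inequality, which supplies the one-sided bound needed to transfer weak control on the "squared" net $\phi_{t_\nu}(a_\nu^* a_\nu)$ to weak control on the diagonal $\phi_{t_\nu}(a_\nu)^* \phi_{t_\nu}(a_\nu)$. The subtle point is purely that Theorem \ref{thmjointcontinuityC0}(3) does not apply at $t_0 = 0$, so this Schwarz-based estimate is what is required to handle the boundary; everything else is routine topology built on part (1).
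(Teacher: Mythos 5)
Your proof is correct and, for the crucial case $t_0 = 0$, takes essentially the same route as the paper: bound $\phi_{t}(a)^*\phi_{t}(a)$ by $\phi_t(a^*a)$ via the Schwarz inequality, then feed in the joint weak continuity of Part 1 to control the limsup.

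Two small points are worth noting in comparing your argument to the paper's. First, you phrase the estimate in terms of normal states and the ultrastrong seminorms $x \mapsto \omega(x^*x)^{1/2}$, whereas the paper fixes a faithful normal representation $\AAA \subset B(H)$ with $H$ separable and works with the vector seminorms $x \mapsto \|xh\|$; these are equivalent formulations of the same estimate. Second, and more substantively, you split into the two cases $t_0 > 0$ (handled by Theorem~\ref{thmjointcontinuityC0}(3)) and $t_0 = 0$ (handled by the Schwarz estimate), whereas the paper runs a single Schwarz computation for all $t_0$. Your case split is actually the more careful option: the Schwarz inequality is \emph{tight} at $t_0 = 0$ because $\phi_0 = \mathrm{id}$, so there $\langle \phi_0(a^*a)h,h\rangle = \|\phi_0(a)h\|^2$ and the one-sided bound closes. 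At $t_0 > 0$ this identity fails in general ($\phi_{t_0}(a)^*\phi_{t_0}(a) \leq \phi_{t_0}(a^*a)$ can be strict), so running the same estimate verbatim against $\phi_{t_0}(a_0)$ only gives a limsup bound by $\langle\phi_{t_0}(a_0^*a_0)h,h\rangle$, which is not $\|\phi_{t_0}(a_0)h\|^2$. To make the single-argument version work for all $t_0$ one should instead split $\phi_{t_n}(a_n) - \phi_{t_0}(a_0) = \phi_{t_n}(a_n - a_0) + (\phi_{t_n}(a_0) - \phi_{t_0}(a_0))$, apply Schwarz only to the first piece, and handle the second by point-strong continuity; your citation of the earlier joint-continuity theorem for $t_0 > 0$ sidesteps this neatly. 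In short: correct, same key ingredients, with a case division that makes the Schwarz step cleanly tight where it is used.
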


\begin{proof}
\
\begin{enumerate}
  \item This follows from Theorem
    \ref{thmjointcontinuityC0}; we mention it here in order to observe that a considerably simpler proof is available in this special case, which appears as Proposition 2.23 of \cite{SeLegue} and as
    Proposition 4.1(2) of \cite{MS02}.  See also Lemma 3.2
    of \cite{AccardiMohariVolterra}.  (Although the statement of that
    lemma does not restrict to bounded subsets of $\AAA$ nor
    assume separability of $\AAA_*$, both seem to be necessary
    to justify the use of sequences rather than nets in the proof.)
   \item This is an improvement on Theorem \ref{thmjointcontinuityC0} because of the joint continuity at time 0, which
       we shall need later.
       Assume that $\AAA \subset B(H)$, with $H$ separable.
   Let $t_n \to t$ be a convergent sequence in $[0,\infty)$
   and $a_n \to a$ an SOT-convergent sequence in $\AAA_1$.
   (We can use sequences rather than nets because
   $\AAA_1$ is SOT-metrizable.)  By the first part of this
   theorem, $\phi_{t_n}(a_n) \to \phi_t(a)$ in WOT.
   Now for any $h \in H$,
   \begin{align*}
   \left\| \phi_{t_n}(a_n) h - \phi_t(a) h \right\|^2
   &= \left\|\phi_{t_n}(a_n) h \right\|^2
   - 2 \realp \left\la \phi_{t_n}(a_n) h,
   \phi_t(a) h \right\ra + \left\| \phi_t(a) h \right\|^2\\   &= \la \phi_{t_n}(a_n)^* \phi_{t_n}(a_n) h, h \ra
   - 2 \realp \left\la \phi_{t_n}(a_n) h,
   \phi_t(a) h \right \ra + \left\| \phi_t(a) h \right\|^2\\
   &\leq \la \phi_{t_n}(a_n^* a_n) h, h \ra
   - 2 \realp \left\la \phi_{t_n}(a_n) h,
   \phi_t(a) h \right\ra + \left\| \phi_t(a) h \right\|^2
   \end{align*}
   where we have used the Schwarz inequality for 2-positive
   maps. Taking the limsup as $n \to \infty$,
   and using the fact that $a_n^* a_n \to a^* a$ in WOT whenever $a_n \to a$ in SOT, we see
   that $\phi_{t_n}(a_n) \to \phi_t(a)$ in SOT.

   This appears as Lemma 4 in \cite{VincentSmith} and
    as Lemma 6.4 in \cite{ShalitE0Dilation}.
\end{enumerate}
\end{proof}

\section{Survey of Extant Results}
The first results concerning the existence of dilations
for cp-semigroups date from the 1970's and pertain to uniformly continuous semigroups.  Recall that a contraction semigroup is uniformly continuous
iff its generator is bounded; \cite{ChristensenEvans}, preceded
in special cases by \cite{GKS} and \cite{Lindblad}, showed that the generator of a uniformly continuous CP-semigroup on a W$^*$-algebra must have the form $a \mapsto \Psi(a) + k^* a + ak$ for some element
$k \in \AAA$ and completely positive map $\Psi: \AAA \to \AAA$.
This structure theorem was used by \cite{EvansLewis} to construct dilations of uniformly continuous
 semigroups on $B(H)$ or, more generally, on injective von Neumann algebras).

Dilations of point-weakly continuous CP-semigroups
were shown to exist in special cases (for instance,
on semigroups having specific forms, on semigroups satisfying additional hypotheses such as the existence of a faithful normal invariant state, in the case of discrete-time semigroups, or using a weaker sense of the word ``dilation'') by
\cite{EmchMinimalDilations},  \cite{AccardiFrigerioLewis}, \cite{VincentSmith}, \cite{KummererMarkovDilations}, and others.  However, progress on the general problem required a new insight.  This insight was the notion of a \textbf{product system of Hilbert spaces}, developed by Arveson in \cite{ArvesonContinuousAnalogues1}, \cite{ArvesonContinuousAnalogues2}, \cite{ArvesonContinuousAnalogues3},
and \cite{ArvesonContinuousAnalogues4}.  Briefly, there is an equivalence of categories between E$_0$-semigroups on $B(H)$ and product systems of Hilbert spaces, so that the problem of constructing E$_0$-dilations reduces in some sense to the problem of building a product system out of a CP$_0$-semigroup.  Variants of this strategy were used in \cite{BhatIndexTheory} and \cite{SeLegue}
to show that every CP$_0$-semigroup on $B(H)$ has an E$_0$-dilation, a result known as \textbf{Bhat's theorem}, and the corresponding result
for separable W$^*$-algebras was established in \cite{ArvesonDynamics}.  
 Later, the more general notion of a \textbf{product system of Hilbert modules} was introduced, leading to new proofs of these theorems in \cite{BhatSkeide} and \cite{MS02}.  More recently, product systems have been used to study families of completely positive maps indexed by semigroups other than $[0,\infty)$, with the existence of dilations depending on an additional hypothesis known as strong commutativity (\cite{ShalitE0Dilation}).

A different approach to dilation theory, standing outside this narrative, was proposed by Jean-Luc Sauvageot in \cite{Sauvageot}, \cite{SauvageotFirstExitTimes}, and \cite{SauvageotDirichletProblem}.  Writing during the nascence of free probability  (shortly after
the publication of \cite{VoiculescuSymmetries}, for instance), Sauvageot developed a modified version of the free product appropriate for use in dilation theory.  Since the Daniell-Kolmogorov construction (Example \ref{exmarkovdilation}) can be built using tensor products, which are the coproduct in the category of commutative unital C$^*$-algebras, and since free products play the corresponding role in the category of unital C$^*$-algebras, this is an attractively functorial way to conceptualize a noncommutative Markov process.  Using his version of the free product,
Sauvageot proved that every cp$_0$-semigroup on a C$^*$-algebra has a unital e$_0$-dilation.  This dilation theorem was then used to solve a Dirichlet problem for C$^*$-algebras, much as classical Brownian motion can be used to solve the classical Dirichlet problem (\cite{Kakutani}).

Sauvageot's theorem is unusual in that it achieves a unital dilation; at some point, all the other dilation strategies mentioned so far rely upon  the non-unital embedding of $B(H)$ into $B(K)$ for Hilbert spaces $H \subset K$.  And although other unital dilation techniques exist (for instance, the quantum stochastic calculus pioneered in \cite{HudsonParthasarathy} and expounded more recently in \cite{GoswamiSinha}), they tend to require restrictions on the algebra or the semigroup or both, in contrast to the generality of Sauvageot's construction.  However, although \cite{Sauvageot} asserts that his dilation technique can be modified to yield continuous dilations on W$^*$-algebras,
no detail is given as to how this modification would proceed.
Hence, given a CP$_0$-semigroup, it seems that one may be forced to choose either a \emph{unital} e$_0$-dilation or a \emph{continuous} (that is, E$_0$-) dilation.  The present paper will expound Sauvageot's dilation techniques in order to demonstrate the possibility of achieving both objectives together (Theorem \ref{thmkahuna}).

\chapter{Liberation} \label{chapliberation}

\section{Introduction}
Free probability theory was introduced by Voiculescu in \cite{VoiculescuSymmetries} as a tool to address the free group factor problem.  Free probability has since blossomed into its own area of study; its development has been an important success, even though the free group factor problem remains unresolved.  Sauvageot's \emph{ad hoc} modification of free probability, in contrast, does not appear to have inspired further pursuit beyond his first paper.  This could be due in part to the relevant free independence-like property remaining implicit in that paper, appearing only in the midst of the proof of Proposition 1.7.

In the present chapter, Sauvageot's version of free independence, hereinafter referred to as \textbf{liberation}
(meant to suggest something similar to freeness; not to be confused with Voiculescu's use of
the same word in \cite{VoiculescuFisher6}) is studied in its own right.  As yet the only nontrivial liberated system known to the
 author is the one originally used by Sauvageot in application to dilation theory.  However, it is still advantageous to separate this part of the exposition, both (i) to clarify the \emph{combinatorial} aspects of dilation, in contrast to its algebraic and analytic features, and (ii) to suggest possibilities for further investigation of connections with standard free probability theory.

\section{Background: Free Independence and Joint Moments}
We recall some of the basic notions of free
probability, which can be found in
references such as \cite{VoiculescuSymmetries}, \cite{FRV}, and \cite{NicaSpeicher}.

A \textbf{noncommutative probability space} is a pair $(\AAA, \phi)$ where $\AAA$ is a unital complex algebra
and $\phi: \AAA \to \com$ a unital linear functional.
Subalgebras $\{A_i\}_{i \in I}$ of $\AAA$ are
said to be \textbf{freely independent} with respect to $\phi$ if $\phi(a_{i_1} a_{i_2} \dots a_{i_n}) = 0$
whenever
\begin{itemize}
    \item $i_1, \dots, i_n$ are elements of $I$ such that adjacent indices are not equal, i.e. for $k = 1, \dots, n-1$
    one has $i_k \neq i_{k+1}$; this condition is abbreviated as $i_1 \neq i_2 \neq \dots \neq i_n$
    \item $a_{i_k} \in A_{i_k}$ for each $k = 1, \dots, n$
    \item $\phi(a_{i_k}) = 0$ for each $k = 1, \dots, n$.
\end{itemize}
Given noncommutative probability spaces $\{(A_i, \phi_i)\}$, a construction known as the
\textbf{free product} of unital algebras yields, in a universal (i.e.\ minimal) way, a noncommutative probability
space $(\AAA, \phi)$ and injections $f_i: A_i \to \AAA$ satisfying $\phi \circ f_i = \phi_i$,
such that the images $f_i(A_i)$ are freely independent with respect to $\phi$.  Furthermore, this
construction on unital algebras can be ``promoted'' to a construction on unital *-algebras or C$^*$-algebras; in the latter
case it is related to the free product of Hilbert spaces.

One implication of free independence which is essential
for our present purposes is that it determines
the value of $\phi$ on the subalgebra generated by
$\{A_i\}$.  Given $i_1 \neq i_2 \neq \dots \neq i_n$
and elements $a_{i_k} \in A_{i_k}$, one can
compute the \textbf{joint moment}
$\phi(a_{i_1} \dots a_{i_n})$ as follows:
\begin{itemize}
    \item \textbf{Center} each term $a_{i_k}$;
    that is, rewrite it as $\mathring{a}_{i_k}
    + \phi(a_{i_k}) \one$, where we define
    $\mathring{x} = x - \phi(x) \one$.
    \item \textbf{Expand} the product
    $(\mathring{a}_{i_1} + \phi(a_{i_1}) \one)
    \cdots (\mathring{a}_{i_n} + \phi(a_{i_n}) \one)$,
    thus obtaining a sum of $2^n$ words.
    \item \textbf{Simplify} by pulling out scalars:
    rewrite, for instance, $\mathring{a}_{i_1}
    \big( \phi(a_{i_2}) \one \big) \mathring{a}_{i_3}$
    as $\phi(a_{i_2}) \mathring{a}_{i_1} \mathring{a}_{i_3}$.
    \item After simplification, the only
    remaining word of length $n$ is the
    centered word
    $\mathring{a}_{i_1} \dots \mathring{a}_{i_n}$.
    Applying the procedure iteratively to all the smaller words that have been generated, one can rewrite
    the original word as a sum of many centered words, plus a word of length 0, i.e.\ a scalar.  Since $\phi$
    vanishes on centered words and is unital, its
    value at the original word is therefore whatever
    scalar is left when this iteration terminates.
\end{itemize}
Using this outline, one can calculate
$\phi(a_{i_1} \dots a_{i_n})$ whenever
$i_1 \neq i_2 \neq \dots \neq i_n$.  Of course,
no generality is lost by this hypothesis, as
neighboring terms belonging to the
same subalgebra can be combined.

\section{Defining Liberation}
We now develop two variations on
free independence, which will be
of use in dilation theory.

\begin{definition} \label{defliberated}
Let $\CC$ be a complex algebra and $\eE: \CC \to \CC$
a linear map.
Given a triple $(A, B, \rho, \psi)$ consisting of subalgebras $A, B \subseteq \CC$
and linear maps $\rho: A \to B$
and $\psi: B \to A$,
we introduce the notation
$\mathring{a} = a - \rho(a)$ for elements
$a \in A$ and $\tilde{b} = b - \psi(b)$
for elements $b \in B$; note that in general $\mathring{a}$
 and $\tilde{b}$ are elements neither of $A$ nor of $B$.
 We say the pair $(A, B)$ is:
\begin{enumerate}
    \item \textbf{right-liberated}
    (with respect to $\eE,\rho,\psi$) if
    $\eE$ is a $B$-bimodule map,
    i.e. $\eE[b_1 x b_2] = b_1 \eE[x] b_2$
    for all $b_1, b_2 \in B$ and $x \in \CC$,
    and for every $n \geq 1$, every
    $a_1, \dots, a_n \in A$, and every
    $b_1, \dots, b_{n-1} \in B$,
    \[
    \eE \Big[ \mathring{a}_1 \tilde{b}_1
    \mathring{a}_2 \tilde{b}_2 \cdots \tilde{b}_{n-1} \mathring{a}_n  \Big]=0;
    \]

    \item \textbf{left-liberated} if
    $\eE$ is an $A$-bimodule map and
    for every $n \geq 1$, every $a_1, \dots, a_{n-1} \in A$,
    and every $b_1, \dots, b_n \in B$,
    \[
    \eE \Big[ \tilde{b}_1 \mathring{a}_1
    \tilde{b}_2 \mathring{a}_2 \cdots \mathring{a}_{n-1} \tilde{b}_n
    \Big] =0.
    \]
\end{enumerate}
\end{definition}

We note that the criteria in these definitions
resemble free independence, in that
the alternating product of centered terms
is centered.  The key difference, however,
 is that the centering takes place with respect
to several different maps---elements of $B$
are centered with respect to $\psi$, elements
of $A$ with respect to $\rho$, and the
alternating product with respect to $\eE$.

In some cases it will be useful to generalize
this definition.

\begin{definition} \label{defliberatingrep}
Let $A, B$ be complex algebras, and $\rho: A \to B$
and $\psi: B \to A$ linear maps.  A \textbf{right-liberating representation} of the quadruple $(A, B, \rho, \psi)$
is a quadruple $(\AAA, f, g, \eE)$ where
\begin{itemize}
    \item $\AAA$ is a complex algebra
    \item $f: A \to \CC$ and $g: B \to \AAA$ are
     homomorphisms
    \item $\eE: \AAA \to \AAA$ is a linear map
\end{itemize}
such that $\eE$ is a $g(B)$-bimodule map and, for every $n \geq 1$, every
     $a_1, \dots, a_n \in A$,
    and every $b_1, \dots b_{n-1} \in B$,
    \[
    \eE \Bigg[ \Big(f(a_1) - g(\rho(a_1)) \Big)
    \Big(g(b_1)- f(\psi(b_1))\Big) \cdots \Big(g(b_{n-1})-
     f(\psi(b_{n-1})) \Big) \Big( f(a_n) - g(\rho(a_n)) \Big)
     \Bigg] = 0.
    \]
A \textbf{left-liberating representation} is such a quadruple
for which $\eE$ is an $f(A)$-bimodule map and,
for every $n \geq 1$, every
     $a_1, \dots, a_{n-1} \in A$,
    and every $b_1, \dots b_n \in B$,
    \[
    \eE \Bigg[ \Big(g(b_1)-f(\psi(b_1))\Big)
     \Big(f(a_1) - g(\rho(a_1)) \Big)
     \cdots \Big( f(a_{n-1}) - g(\rho(a_{n-1})) \Big)
     \Big(g(b_n) - f(\psi(b_n)) \Big) \Bigg] = 0.
    \]

\end{definition}

\begin{remark} \label{remreduceliberatingrep}
If there exist maps $\tilde{\rho}: f(A) \to g(B)$
and $\tilde{\psi}: g(B) \to f(A)$
such that $\tilde{\rho} \circ f = g \circ \rho$
and $\tilde{\psi} \circ g = f \circ \psi$,
then Definition (\ref{defliberatingrep}) reduces
to the statement that $(f(A), g(B))$
is right- or left-liberated with
respect to $\eE$, $\tilde{\rho}$,
$\tilde{\psi}$.  Such maps
$\tilde{\rho}$ and $\tilde{\psi}$ need not exist in general,
but they do in two important special cases:
\begin{enumerate}
    \item If $f$ is injective, one may define
    $\tilde{\rho} = g \circ \rho \circ f\inv$;
    similarly, if $g$ is injective,
     one may define $\tilde{\psi} = f \circ \psi \circ g\inv$.
    \item If $(\eE \circ g)$ is injective
    and $\eE \circ g \circ \rho = \eE \circ f$,
    one may define $\tilde{\rho} = g\circ
    (\eE \circ g)\inv \circ \eE$.  Then
    \[
    \tilde{\rho} \circ f = g \circ (\eE \circ g)\inv
    \circ \eE \circ f = g \circ (\eE \circ g)\inv \circ \eE \circ g \circ \rho = g \circ \rho.
    \]
    Similarly, if $(\eE \circ f)$ is injective
    and $\eE \circ f \circ \psi = \eE \circ g$,
    one may define $\tilde{\psi} = f \circ (\eE \circ f)\inv
    \circ \eE$.
\end{enumerate}
Both of these cases will be used subsequently.
\end{remark}

\section{Liberation and Joint Moments}
Like free independence, liberation is a property
that implies an algorithm.  The idea is the same---by centering, expanding, and simplifying, one can write any word as a centered word plus shorter words---but since the centering takes place with respect to three different maps, the details of the procedure are
more complicated.

Suppose  $(A, B)$ are right-liberated in $\CC$
with respect to $\eE, \rho, \psi$ as above.  Suppose
also that $\CC$ and $B$ are unital (with the same unit).
We continue
to use the notation $\mathring{a} = a - \rho(a)$
and $\tilde{b}=b-\psi(b)$.  We will show that the liberation
property determines $\eE$ on the subalgebra $\la A, B \ra \subset \CC$
generated by $A$ and $B$.  Since $B$ is unital, $\la A, B \ra$
is linearly spanned by words of the form $b_0 a_1 b_1
\cdots a_\ell b_\ell$ for $\ell \geq 0$.  To determine
the value of $\eE$ on such a word, we write
\[
b_0 a_1 b_1 \cdots a_\ell b_\ell
= b_0 \mathring{a}_1 \tilde{b}_1 \mathring{a}_2
\tilde{b}_2 \cdots \tilde{b}_{\ell-1} \mathring{a}_\ell
b_\ell + x,
\]
where $x$ is a sum of words with ``fewer $A,B$ alternations''
(in a sense to be made precise) than before.  One can calculate $x$ by expanding
\[
x = b_0 a_1 b_1 \cdots b_{\ell-1} a_\ell b_\ell
- b_0 \Big[a_1 - \rho(a_1) \Big]
\Big[b_1 - \psi(b_1) \Big]
\cdots
\Big[b_{\ell-1} - \psi(b_{\ell-1}) \Big]
\Big[a_\ell - \rho(a_\ell) \Big] b_\ell.
\]
The liberation property implies that $\eE[b_0
a_1 b_1 \cdots a_\ell b_\ell] = \eE[x]$, which can
then be computed recursively.  The recursion terminates
on elements of $b$, for which the bimodule property
implies $\eE[b] = b \eE[\one]$.

To make these computations more explicit,
 we use the following combinatorial notation
 and terminology:
\begin{itemize}
    \item For a natural number $m \geq 1$,
    $[m]$ denotes the set $\{1,2, \dots, m\}$, while
    $2[m]$ denotes the set $\{2,4,\dots,2m\}$.

    \item Given $\ell$ and $S$ as above, the
    \textbf{alternation number} of $S$,
    which we denote $\text{Alt}(S)$, is
    \[
    \text{Alt}(S) = \frac{1}{2}\sum_{j=1}^{2\ell}
    \Big| \dss{\chi}{S}(j) - \dss{\chi}{S}(j-1) \Big|.
    \]
    If one colors $S \cup \{0,2\ell\}$
    white and $[2\ell-1] \setminus S$ black,
    then $\text{Alt}(S)$ is the number of times
    that the color changes from white to black and
    back again as one counts from $0$ to $2\ell$.
    Note that the maximum value is $\ell$, achieved
    iff $S = 2[\ell-1]$.

    \item Given $\ell$ and $S$ as above, the
    \textbf{consecutive in-subsets} of $S$,
    which we denote $T_0, \dots, T_{\text{Alt}(S)}$,
    are the maximal subsets of $S \cup \{0,2\ell\}$
    consisting of consecutive elements, while
    the \textbf{consecutive out-subsets},
    denoted $U_1, \dots, U_{\text{Alt}(S)}$, are
    the maximal consecutive subsets of
    $[2\ell-1] \setminus S$.  For
    example, if $\ell=5$ and
    $S = \{1,3,4,8\}$ then
    \begin{align*}
    T_0 &= \{0,1\}, \qquad &T_1 &= \{3,4\},
   \qquad &T_2 &= \{8\}, \qquad
    &T_3 &= \{10\},\\
    && U_1 &= \{2\}, \qquad &U_2 &= \{5,6,7\}, \qquad
    &U_3 &= \{9\}.
    \end{align*}
\end{itemize}
We then introduce the following algebraic definitions:
\begin{itemize}
    \item[(i)] Given complex algebras $A,B$,
    let
    \[
    \WW_\ell = \{(b_0, a_1, b_1, \dots, a_\ell, b_\ell)
    \mid a_1, \dots, a_\ell \in A; \ b_0, \dots, b_\ell \in B\}, \qquad \ell \geq 0
    \]
    denote the alternating tuples of length $2\ell+1$ which start and
    end with an element of $B$, and $\WW_\# = \bigcup_{\ell=0}^\infty \WW_\ell$.

    \item[(ii)] Given $A, B$ as above,
    as well as linear maps $\rho: A \to B$
    and $\psi: B \to A$, a natural
    number $\ell \geq 1$, a subset
    $S \subseteq [2\ell-1]$, and a tuple
    $w \in \WW_\ell$, let
    \[
    x_j = \begin{cases} \rho \big( a_{(j+1)/2}\big) &
    j \text{ odd}\\ b_{j/2} & j \text{ even}
    \end{cases}
    \]
    for $j \in S \cup \{0,2\ell\}$, and
    \[
    y_k = \begin{cases} a_{(k+1)/2} & k \text{ odd}\\
    \psi \big( b_{k/2} \big) & k \text{ even}
    \end{cases} \]
    for $k \in [2\ell-1] \setminus S$.  Then
    the \textbf{collapse of $\mathbf{w}$ determined by $S$}
    is the tuple in $\WW_{\text{Alt}(S)}$ given by
    \[
    \text{Col}(w; S) = \left( \prod_{j \in T_0}
    x_j, \prod_{k \in U_1} y_k, \prod_{j \in T_1}
    x_j, \prod_{k \in U_2} y_k, \dots, \prod_{k
    \in U_{\text{Alt}(S)}} y_k, \prod_{j \in T_{\text{Alt}(S)}} x_j \right).
    \]

    \item[(iii)] Finally, we define the \textbf{moment function}
    $\MM: \WW_\# \to B$ recursively by $\MM(x) = x$ for
    $x \in \WW_0 = B$, and
    \[
    \MM(x) =
    \sum_{\underset{S \neq 2[\ell-1]}{S \subseteq [2\ell-1]}}
    (-1)^{\ell+|S|} \MM(\text{Col}(x; S)), \qquad
     \qquad x \in \WW_\ell, \quad \ell \geq 1.
    \]

\end{itemize}
The recursion for $\MM$ is well defined
    because the sum is over sets with alternation
    number strictly less than $\ell$.

    The point of these definitions is that, in
expanding the expression
$b_0 a_1 b_1 \cdots a_\ell b_\ell
- b_0 \Big[a_1 - \rho(a_1) \Big]
\Big[b_1 - \psi(b_1) \Big] \cdots
\Big[a_\ell-\rho(a_\ell)\Big] b_\ell$, each term
in the expansion corresponds to a subset $S \subset [2\ell-1]$
indicating from which of the bracketed factors one has chosen
an element of $B$ (either $\rho(a_i)$ or $b_i$).  In the
resulting term of the expansion, one then multiplies
together consecutive elements of $A$ and $B$ to obtain
a word with fewer $A,B$ alternations than the original
word $b_0 a_1 b_1 \cdots a_\ell b_\ell$.

\begin{remark}[Complexity of the moment function] \label{remnumberterms}
Note that evaluating $\MM$
    on a word of length $\ell$ returns a sum of $2^{2\ell-1}$ evaluations on words of length up to $\ell-1$.   This implies that the number
    of terms in the evaluation of $\MM$ on words of length $\ell$ is
    bounded above by the sequence $\{s_\ell\}$ determined by $s_0 = 1$
    and $s_{\ell+1} = 2^{2\ell+1} s_\ell$, which has
    the closed form $s_\ell = 2^{\ell^2}$.
    Of course the actual number of terms is considerably less, due both to cancellation and
    to the fact that this estimate treats all words of length less than $\ell$
    as if they had length $\ell-1$.
\end{remark}

\begin{theorem} \label{thmrightlibmoments}
Let $(A, B)$ be right-liberated in $\AAA$
with respect to $\eE, \rho, \psi$, and suppose
$\AAA$ and $B$ are unital.  Define the product
    function $\Pi: \WW_\# \to \AAA$ by
    $\Pi(b_0, a_1, b_1, \ldots, a_\ell, b_\ell)
    = b_0 a_1 b_1 \cdots a_\ell b_\ell$.  Then
for any $x \in \WW_\#$,
\[
\eE\left[ \Pi(x) \right] = \MM(x) \eE[\one].
\]
\end{theorem}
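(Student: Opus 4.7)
The plan is to prove the identity by induction on $\ell$, where $x \in \WW_\ell$. The base case $\ell = 0$ is immediate: for $x = b \in B$, the definition gives $\MM(x) = b$, and $\eE[\Pi(x)] = \eE[b \cdot \one] = b\,\eE[\one]$ by the $B$-bimodule property of $\eE$.

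For the inductive step the strategy is to extract $\eE[\Pi(x)]$ from the vanishing of the fully centered product. Combining the right-liberation hypothesis with the $B$-bimodule property gives
\[
\eE\bigl[b_0 \mathring{a}_1 \tilde{b}_1 \cdots \tilde{b}_{\ell-1} \mathring{a}_\ell b_\ell\bigr] = b_0 \,\eE\bigl[\mathring{a}_1 \tilde{b}_1 \cdots \mathring{a}_\ell\bigr]\, b_\ell = 0.
\]
Substituting $\mathring{a}_i = a_i - \rho(a_i)$ and $\tilde{b}_i = b_i - \psi(b_i)$ and fully expanding the bracketed product yields a signed sum over subsets $S \subseteq [2\ell-1]$, where $S$ marks the positions at which the ``subtracted'' ($\rho$ or $\psi$) summand is chosen. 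The $S = \emptyset$ term is $\Pi(x)$, so rearranging gives
\[
\eE[\Pi(x)] = \sum_{S \neq \emptyset} (-1)^{|S|+1} \eE[W(S)],
\]
where $W(S)$ is the corresponding mixed word of $2\ell+1$ factors beginning with $b_0$ and ending with $b_\ell$.

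To match this with the recursion defining $\MM$, I introduce the involution $S \mapsto S' := (S \cap \mathrm{Odd}) \cup (\mathrm{Even} \setminus S)$ on subsets of $[2\ell-1]$, where $\mathrm{Odd}$ and $\mathrm{Even}$ partition $[2\ell-1]$ by parity. Case analysis shows that $S'$ is exactly the set of positions whose factor in $W(S)$ lies in $B$: an odd position contributes $\rho(a) \in B$ iff $j \in S$, while an even position contributes $b \in B$ iff $j \notin S$. Grouping consecutive same-algebra factors therefore identifies $W(S) = \Pi(\Col(x; S'))$, and the involution sends $S = \emptyset$ to $S' = 2[\ell-1]$, so the constraint $S \neq \emptyset$ becomes $S' \neq 2[\ell-1]$, exactly the exclusion in the recursion for $\MM$. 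A parity computation using $|S'| = |S \cap \mathrm{Odd}| + (\ell-1) - |S \cap \mathrm{Even}|$ converts $(-1)^{|S|+1}$ into $(-1)^{\ell+|S'|}$.

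Each $\Col(x; S')$ with $S' \neq 2[\ell-1]$ lies in $\WW_j$ for some $j < \ell$, so the inductive hypothesis replaces $\eE[\Pi(\Col(x; S'))]$ with $\MM(\Col(x; S'))\,\eE[\one]$; the recursive definition of $\MM$ then assembles $\MM(x)\,\eE[\one]$, completing the induction. The main obstacle is the combinatorial bookkeeping of the third paragraph: correctly pairing the set-indexed expansion with the $\Col$-construction under the involution $S \leftrightarrow S'$ and verifying that the two sign conventions agree. The algebraic content (liberation, bimodule, and the substitution $a_i = \mathring{a}_i + \rho(a_i)$) is comparatively routine.
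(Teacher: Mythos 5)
Your proof is correct and follows essentially the same route as the paper, whose own proof is a one-line appeal to the preceding discussion of $\MM$ as a formalization of exactly this expand--center--recurse argument. Your only deviation is cosmetic: you index the expansion by the positions where the subtracted ($\rho$ or $\psi$) summand is chosen and then transport to the paper's convention (positions carrying $B$-factors) via the bijection $S \mapsto S'$, and your sign computation and the identification $W(S) = \Pi(\Col(x;S'))$ both check out.
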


\begin{proof}
This follows from the above discussion of
$\MM$ as a formalized way of expanding
certain expressions relating to the right-liberation property.
\end{proof}

\begin{corollary} \label{corrightlibgenerated}
Let $(A, B)$ be right-liberated in
$\AAA$ with respect to $\eE, \rho, \psi$.   Then
\[
\eE \Big[ \la A, B \ra \Big] = \eE \big[ B \big].
\]
\end{corollary}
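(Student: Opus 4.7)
The plan is to deduce this directly from Theorem \ref{thmrightlibmoments}, with no new liberation calculation needed. The inclusion $\eE[B] \subseteq \eE[\langle A, B \rangle]$ is immediate from $B \subseteq \langle A, B \rangle$, so the content lies in the reverse inclusion.

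For $\eE[\langle A, B \rangle] \subseteq \eE[B]$, I would first use the standing unitality of $B$ (with the same unit as $\AAA$) to observe that $\langle A, B \rangle$ is linearly spanned by the images $\Pi(x) = b_0 a_1 b_1 \cdots a_\ell b_\ell$ for $x \in \WW_\#$; any word in generators of $A$ and $B$ can be grouped so that maximal blocks of $A$-elements and $B$-elements alternate, and missing $B$-slots can be filled with the unit $\one \in B$. By linearity it therefore suffices to show that $\eE[\Pi(x)] \in \eE[B]$ for every $x \in \WW_\#$.

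For this, I would apply Theorem \ref{thmrightlibmoments} to write $\eE[\Pi(x)] = \MM(x)\,\eE[\one]$, where $\MM(x) \in B$. Since $\eE$ is a $B$-bimodule map, $\eE[\MM(x)] = \eE[\MM(x)\cdot \one] = \MM(x)\,\eE[\one]$, so that $\eE[\Pi(x)] = \eE[\MM(x)]$ lies in $\eE[B]$. This completes the inclusion.

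I do not anticipate a genuine obstacle: the real combinatorial work has already been absorbed into the moment function $\MM$ and its identification with the $B$-valued scalar by which $\eE[\one]$ must be multiplied. The only mild subtlety to be careful about is ensuring the spanning reduction at the start uses unitality of $B$ to pad $A$-only blocks with $\one$, so that every spanning word is genuinely of the form $\Pi(x)$ with $x \in \WW_\#$; once that is noted, the corollary is a one-line consequence of the theorem plus the bimodule property.
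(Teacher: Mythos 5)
Your proof is correct and takes essentially the same route as the paper, which simply cites Theorem \ref{thmrightlibmoments}; your extra step using the $B$-bimodule property to rewrite $\MM(x)\,\eE[\one] = \eE[\MM(x)] \in \eE[B]$ just fills in the detail the paper leaves implicit. One small caveat: the paper's remark after the corollary notes that the conclusion holds without unitality of $\AAA$ and $B$ (by running the \emph{proof} of Theorem \ref{thmrightlibmoments} rather than invoking the theorem itself), whereas your argument explicitly uses unitality of $B$ both to reduce spanning words to $\Pi(\WW_\#)$ and to invoke the theorem, so you cover only the unital case the theorem addresses.
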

We note that this holds even without assuming unitality of $\AAA$ and $B$, although in that case it follows from the proof of Theorem (\ref{thmrightlibmoments}) rather than the theorem itself.

The obvious generalizations of Theorem
(\ref{thmrightlibmoments}) and Corollary (\ref{corrightlibgenerated}) to right-liberating representations are true as well, and are verified
inductively in the same manner.  We record
them here without proof.

\begin{theorem} \label{thmrightlibrepmoments}
Let $(\AAA, f,g, \eE)$
be a right-liberating representation
of $(A, B, \rho, \psi)$.  For
$x = (b_0, a_1, b_1, \dots, a_\ell, b_\ell)
\in \WW_\#$, let $(f \times g)(x)$
denote the element \\
 ${g(b_0) f(a_1) g(b_1)
\dots f(a_\ell) g(b_\ell) \in \AAA}$.  Suppose $\AAA, B, g$
are unital.

Then for any $x \in \WW_\#$,
\[
\eE[ (f \times g)(x)] = g(\MM(x)) \eE[\one].
\]
\end{theorem}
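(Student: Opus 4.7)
I would prove this by induction on $\ell$ for $x \in \WW_\ell$, following the strategy sketched for Theorem \ref{thmrightlibmoments} but carrying out all computations in $\AAA$ via the homomorphisms $f$ and $g$.

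The base case $\ell = 0$ is immediate: $x = b_0 \in B$, so $(f \times g)(x) = g(b_0)$ and $\MM(x) = b_0$; since $g$ is unital and $\eE$ is a $g(B)$-bimodule map, $\eE[g(b_0)] = \eE[g(b_0) \cdot \one] = g(b_0) \eE[\one] = g(\MM(x)) \eE[\one]$.

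For the inductive step with $\ell \geq 1$, I would start from the liberating identity of Definition \ref{defliberatingrep} and use the $g(B)$-bimodule property of $\eE$ to absorb $g(b_0)$ and $g(b_\ell)$ on the outside, obtaining
\[
\eE\Big[g(b_0) (f(a_1) - g(\rho(a_1)))(g(b_1) - f(\psi(b_1))) \cdots (f(a_\ell) - g(\rho(a_\ell))) g(b_\ell)\Big] = 0.
\]
Fully expanding this product, each term is indexed by a subset $S \subseteq [2\ell-1]$ recording the positions at which the ``$g$-valued'' summand --- $g(\rho(a_{(j+1)/2}))$ at odd $j$, $g(b_{j/2})$ at even $j$ --- is chosen, while complementary positions select the ``$f$-valued'' summand. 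Because $f$ and $g$ are homomorphisms, consecutive same-type factors merge, and each expansion term equals $(f \times g)(\Col(x; S))$ times a sign $(-1)^{\#\text{subs}(S)}$. Counting substitutions (which occur at odd $j \in S$ and at even $j \notin S$) gives $\#\text{subs}(S) \equiv (\ell - 1) + |S| \pmod 2$. Separating out $S = 2[\ell-1]$, which is the original word $(f \times g)(x)$ with sign $+1$, and applying the inductive hypothesis to each remaining term (whose collapsed word has alternation number strictly less than $\ell$) yields
\[
\eE[(f \times g)(x)] = \sum_{S \neq 2[\ell-1]} (-1)^{\ell + |S|} g(\MM(\Col(x; S))) \eE[\one] = g(\MM(x)) \eE[\one],
\]
using the linearity of $g$ to pull the finite sum inside and the recursive definition of $\MM$ to recognize the resulting expression.

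The only real obstacle is the combinatorial bookkeeping --- matching the sign $(-1)^{\#\text{subs}(S)}$ from the expansion with the sign $(-1)^{\ell + |S|}$ in the definition of $\MM$, and verifying that merged products of $f$- and $g$-images reassemble as $(f \times g)(\Col(x; S))$. A minor technical wrinkle is that the liberating identity starts and ends with $A$-valued factors, while a word in $\WW_\ell$ starts and ends with $B$-valued ones; the bimodule property bridges this gap, and also ensures that $\eE[\one]$ emerges on the right (noting that elements of $g(B)$ commute with $\eE[\one]$, since $g(b) \eE[\one] = \eE[g(b)] = \eE[\one] g(b)$).
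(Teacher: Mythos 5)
Your proof is correct and follows the same approach the paper intends: the text preceding Theorem \ref{thmrightlibmoments} already describes the expand-and-recurse argument, and the paper states Theorem \ref{thmrightlibrepmoments} is "verified inductively in the same manner." You have simply made explicit the details the paper leaves implicit, including the sign bookkeeping $\#\text{subs}(S) \equiv (\ell-1)+|S| \pmod 2$ and the observation that the $g(B)$-bimodule property supplies both the base case and the two outer $g(b_0)$, $g(b_\ell)$ factors.
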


\begin{corollary} \label{corrightlibrepgenerated}
Let $(\AAA, f,g, \eE)$
be a right-liberating representation
of $(A, B, \rho, \psi)$.   Let
$\la A, B \ra$ denote the subalgebra of $\AAA$
generated by $f(A)$ and $g(B)$.  Then
\[
\eE \Big[ \la A, B \ra \Big] = \eE \big[ g(B) \big].
\]
\end{corollary}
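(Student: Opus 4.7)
The plan is to prove the two inclusions separately. The containment $\eE[g(B)] \subseteq \eE[\la A,B\ra]$ is immediate since $g(B) \subseteq \la A,B \ra$. For the reverse inclusion I would first dispatch the unital case and then adapt the same bookkeeping to the non-unital case.

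In the unital case (when $\AAA$, $B$, and $g$ are unital), the subalgebra $\la A,B\ra$ is linearly spanned by words of the form $(f\times g)(x)$ with $x\in \WW_\#$, after padding any ``missing'' boundary $B$-entries with $g(\one)=\one$. Theorem~\ref{thmrightlibrepmoments} then supplies
\[
\eE[(f\times g)(x)] = g(\MM(x))\,\eE[\one] = \eE[g(\MM(x))\cdot\one] = \eE[g(\MM(x))],
\]
where the second equality is the $g(B)$-bimodule property applied to $\one\in g(B)$. This expression lies in $\eE[g(B)]$, and linearity gives the inclusion.

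Without unitality I would induct on the length $n\geq 1$ of an alternating word $w\in\la A,B\ra$ built from $f(A)\cup g(B)$, showing $\eE[w]\in\eE[g(B)]$. The base case $n=1$ is either $w=g(b)$ (immediate) or $w=f(a)$, in which case the $n=1$ instance of the liberation identity reads $\eE[f(a)-g(\rho(a))]=0$, so $\eE[f(a)] = \eE[g(\rho(a))]\in\eE[g(B)]$. For the inductive step, I split on the endpoint types of $w$. Whenever $w$ starts or ends with a factor from $g(B)$, I strip that factor off via the bimodule identity $\eE[g(b_0)\,y]=g(b_0)\,\eE[y]$ (resp.\ on the right), landing on a strictly shorter alternating word; an inductively-produced expression $\eE[g(\beta)]$ is then reabsorbed as $g(b_0)\eE[g(\beta)]=\eE[g(b_0)g(\beta)]=\eE[g(b_0\beta)]\in\eE[g(B)]$. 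This reduces everything to the case in which both endpoints of $w$ lie in $f(A)$, which is the exact shape annihilated by the liberation identity: I center each letter via $f(a_i)=\mathring a_i+g(\rho(a_i))$ and $g(b_j)=\tilde b_j+f(\psi(b_j))$, expand the product, and observe that the fully centered term vanishes by liberation, while every other term of the expansion fuses two adjacent same-side factors into one (using that $f,g$ are homomorphisms), producing a strictly shorter alternating word covered by the inductive hypothesis.

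The only real obstacle is checking that the bimodule-absorption steps continue to land inside $\eE[g(B)]$, but this is automatic because $g$ is a homomorphism so $g(b)g(\beta)=g(b\beta)\in g(B)$, and $\eE$ is a genuine two-sided $g(B)$-module map. The remainder of the argument is a mechanical transcription of the expansion bookkeeping already carried out for Theorem~\ref{thmrightlibrepmoments} and Corollary~\ref{corrightlibgenerated}.
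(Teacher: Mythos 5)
Your proposal is correct and takes essentially the same route the paper intends: in the unital case it is an immediate corollary of Theorem~\ref{thmrightlibrepmoments} together with the $g(B)$-bimodule property, and in the non-unital case it reduces to the same center/expand/collapse induction that underlies the proof of that theorem. The paper states the result without proof precisely because it is ``verified inductively in the same manner,'' and your write-out of the endpoint-stripping and centering induction is a faithful (and slightly more explicit) rendition of that bookkeeping.
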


Later we shall be interested in the continuity
properties of joint moments.  We record here the following simple observation:

\begin{proposition}  \label{propnormalmoments}
Let $\AAA$ be a W$^*$-algebra, $A$ and $B$ subalgebras,
and $\rho: A \to B$ and $\psi: B \to A$ normal linear maps.
\begin{enumerate}
    \item For any $x \in \WW_\#$, $\MM(x)$
    is normal in each entry of $x$.  That is,
    given $\ell \geq 0$, $x \in \WW_\ell$,
    and $1 \leq j \leq 2\ell+1$, let $x_k$
    be fixed for all $1 \leq k \leq 2\ell+1$ with $k \neq j$;
    then $\MM(x)$, viewed as a function
    of $x_j$, is a normal linear map from $A$ or $B$
    (depending on the parity of $j$) to $\AAA$.

    \item If $\rho$ is strongly continuous
    on the unit ball $A_1$, and $\psi$ strongly
    continuous on $B_1$, then
    $\MM(x)$ is jointly strongly continuous in the
    entries of $x$ on bounded subsets.  That is,
    the corresponding map
    $A_1 \times B_1 \times \dots \times A_1
    \to \AAA$ is strongly continuous.
\end{enumerate}
\end{proposition}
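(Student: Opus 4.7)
The plan is to prove both statements by induction on $\ell$, the length parameter for $\WW_\ell$, exploiting the recursive definition of $\MM$.

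For the base case $\ell = 0$, we have $\MM(x) = x$, which is trivially normal and jointly strongly continuous in its single entry. For the inductive step, we use
\[
\MM(x) = \sum_{\substack{S \subseteq [2\ell-1] \\ S \neq 2[\ell-1]}}
(-1)^{\ell+|S|} \MM(\Col(x; S)),
\]
so it suffices to analyze a single summand. The key observation is that each entry of $\Col(x;S)$ is a product of at most $2\ell+1$ factors, each of which is either some $x_j$ unchanged, or the image of some $x_j$ under $\rho$ or $\psi$. In particular, fixing all entries of $x$ but one, each entry of $\Col(x;S)$ depends linearly on that one variable (either constantly, or as a left/right multiplication composed with possibly $\rho$ or $\psi$).

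For part (1), fix $1 \leq j \leq 2\ell+1$ and freeze all other entries of $x$. The $j$-th entry appears in exactly one block in $\Col(x;S)$, and the map sending $x_j$ to that block is linear and normal: multiplication is separately ultraweakly continuous in $\AAA$, and $\rho, \psi$ are normal by assumption. The other blocks of $\Col(x;S)$ are constant in $x_j$. Thus $\Col(\,\cdot\,;S)$, viewed as a function of $x_j$, is an entrywise normal linear map into $\WW_{\Alt(S)}$. By the inductive hypothesis applied at level $\Alt(S) < \ell$, $\MM(\Col(x;S))$ is normal in $x_j$, and summing finitely many normal maps preserves normality.

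For part (2), restrict to bounded subsets, say where every entry has norm at most some $R$. Then each block in $\Col(x;S)$ has norm at most $R^{2\ell+1}$, so $\Col$ maps a bounded region of $\WW_\ell$ into a bounded region of $\WW_{\Alt(S)}$. On bounded sets, multiplication is jointly strongly continuous (as recalled in the preface), and $\rho, \psi$ are strongly continuous on $A_1, B_1$ by hypothesis, so each block of $\Col(x;S)$ is jointly strongly continuous in the entries of $x$ on the bounded region. Hence $\Col(\,\cdot\,;S)$ is jointly strongly continuous into the bounded region it maps to, and by the inductive hypothesis $\MM$ is jointly strongly continuous there; composition and summation finish the step.

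The only real obstacle is keeping careful track of why the joint continuity argument stays inside bounded sets at every level of the recursion --- this is crucial because adjoint and multiplication lose joint strong continuity off bounded sets. The bound $R^{2\ell+1}$ depends on $\ell$ but is uniform for each fixed $\ell$, and since the recursion only calls $\MM$ on tuples of strictly smaller alternation number (with entries in bounded sets of larger but still bounded radius), the inductive hypothesis applies cleanly.
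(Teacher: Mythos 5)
Your proof is correct and follows the same route the paper intends: the paper dismisses the proposition with ``The proof is a straightforward induction on $\ell$,'' and your write-up is exactly that induction carried out. The key points you identify — that each entry of $x$ feeds into exactly one block of $\Col(x;S)$ (possibly through $\rho$ or $\psi$), that this makes $\Col(\cdot\,;S)$ separately normal-linear and, on bounded sets, jointly strongly continuous, and that $\MM$ itself is multilinear so bounded-set arguments pass through the recursion — are the right ones. One small imprecision: the bound $R^{2\ell+1}$ on a block of $\Col(x;S)$ should really be $\bigl(\max(1,\|\rho\|,\|\psi\|)\,R\bigr)^{2\ell+1}$ or similar, since $\rho$ and $\psi$ are normal (hence bounded) but not assumed contractive; this does not affect the argument, since all that matters is that $\Col$ carries bounded sets to bounded sets.
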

The proof is a straightforward induction
on $\ell$.

Later we will need to consider moments with respect to several
maps.  When need arises, we use $\MM(x; \rho; \psi)$ in place of $\MM(x)$ for specificity.
\begin{proposition} \label{propmomentwrtsemigroup}
Let $A$ be a $W^*$-algebra,
$\psi: A \to A$ a strongly continuous linear map, and $\{\rho_t\}_{t \geq 0}$ a CP-semigroup on $A$.  Then for each fixed
$x \in \WW_\#$, $\MM(x; \rho_t; \psi)$
and $\MM(x; \psi; \rho_t)$ are strongly continuous in $t$.
\end{proposition}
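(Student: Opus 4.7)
The plan is to prove, by induction on the length $\ell$, a statement slightly stronger than what is asked: for each $r > 0$ and each $\ell \geq 0$, the map
\[
(t, x) \mapsto \MM(x; \rho_t; \psi), \qquad [0, \infty) \times A_r^{2\ell+1} \to A,
\]
is jointly strongly continuous, where $A_r$ denotes the closed ball of radius $r$ in $A$. Specializing to a fixed tuple $x$ gives the statement of the proposition; the case of $\MM(x; \psi; \rho_t)$ is handled by the symmetric argument. The base case $\ell = 0$ is immediate, since $\MM(x; \rho_t; \psi) = x$ carries no $t$-dependence.

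For the inductive step, I would apply the recursion defining $\MM$,
\[
\MM(x; \rho_t; \psi) = \sum_{\substack{S \subseteq [2\ell-1] \\ S \neq 2[\ell-1]}} (-1)^{\ell+|S|}\, \MM\bigl(\Col(x; S; \rho_t, \psi);\, \rho_t;\, \psi\bigr),
\]
and read each summand as a composition of two pieces. First, the collapse $\Col(x; S; \rho_t, \psi)$ is a tuple of length $2\,\text{Alt}(S) + 1 < 2\ell + 1$ whose entries are finite products of the $a_i$, the $b_j$, the $\rho_t(a_i)$, and the $\psi(b_j)$; these entries are jointly strongly continuous in $(t, x)$ on bounded sets, using the joint strong continuity of $(t, a) \mapsto \rho_t(a)$ (Theorem \ref{thmjointcontinuityCP}(2)), the strong continuity of $\psi$, and the joint strong continuity of multiplication on bounded subsets of a W$^*$-algebra. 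Since $\|\rho_t\| \leq 1$ and $\psi$ is bounded, the entries remain in some ball $A_{r'}$ depending only on $r$. Second, the inductive hypothesis, applied at length $\text{Alt}(S) < \ell$, yields joint strong continuity of $(t, y) \mapsto \MM(y; \rho_t; \psi)$ on $[0, \infty) \times A_{r'}^{2\,\text{Alt}(S)+1}$. Composing the two pieces delivers joint strong continuity of each summand in $(t, x)$, and summing over $S$ completes the induction.

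The main subtlety—and the reason the inductive hypothesis must be strengthened beyond the literal statement of the proposition—is the composition step. Proposition \ref{propnormalmoments}(2) already supplies joint strong continuity of $\MM$ in the tuple entries for \emph{fixed} centering maps $\rho$ and $\psi$, but in our setting both the inner tuple $\Col(x; S; \rho_t, \psi)$ and the outer semigroup parameter $\rho_t$ vary with $t$ simultaneously. The strengthened hypothesis absorbs both dependencies at once, so no separate uniform-continuity estimate in $\rho_t$ is required to pass to the limit.
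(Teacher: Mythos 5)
Your proof is correct and follows the route the paper intends: the paper disposes of this proposition in two sentences ("strong continuity of $t \mapsto \phi_t(a)$ for fixed $a$, plus joint strong continuity of multiplication on $A_1$, yields a straightforward induction"), and your argument is the careful version of exactly that induction. The one substantive point where you go beyond the paper's sketch is also the right one to worry about: because the entries of $\Col(x;S;\rho_t,\psi)$ themselves move with $t$, the naive induction hypothesis (continuity in $t$ for a fixed tuple) does not compose, and your strengthened hypothesis of joint continuity in $(t,x)$ on bounded sets is what actually makes the recursion close up. Be aware, though, that this strengthening forces you to invoke the \emph{joint} strong continuity of $(t,a)\mapsto\rho_t(a)$ (Theorem \ref{thmjointcontinuityCP}(2)), which the paper proves only for separable $\AAA$, whereas the proposition as stated carries no separability hypothesis; either add that hypothesis (harmless for the paper's applications, where $\AAA$ is always separable) or supply a separate equicontinuity argument for the family $\{\rho_t\}$ on the relevant bounded sets. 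With that caveat recorded, the proposal is sound.
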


Here we are implicitly using $B=A$ in the definition of $\WW_\#$.  For the proof, recall that
$t \mapsto \phi_t(a)$ is strongly continuous for
fixed $a \in A$, as discussed in section \ref{secCPsemigroupcontinuity}.  This fact plus the joint
strong continuity of multiplication on $A_1$ yields
a straightforward induction.



\subsection{Left Liberation and Joint Moments}
The calculation of joint moments given the property of left liberation
is essentially the same as for right liberation.  Informally,
given a moment calculation based on right liberation, one
may obtain a corresponding moment calculation for left
liberation by interchanging the roles of $A$ and $B$,
and of $\rho$ and $\psi$; for example, given that
\begin{align*}
\eE[b_0 a_1 b_1 a_2 b_2] &= b_0 \Bigg( \rho(a_1) b_1
\rho(a_2) + \rho \Big( a_1 \psi(b_1) a_2 \Big)
- \rho(a_1) \rho \Big( \psi(b_1) a_2 \Big)\\
&\qquad - \rho \Big( a_1 \psi(b_1) \Big)
+ \rho(a_1) \rho \Big( \psi(b_1) \Big) \rho(a_2)
\Bigg) \eE[\one]
\end{align*}
when $(A,B)$ are right-liberated and $\AAA, B$ unital, one can infer that
\begin{align*}
\eE[a_0 b_1 a_1 b_2 a_2] &= a_0 \Bigg( \psi(b_1) a_1
\psi(b_2) + \psi \Big( b_1 \rho(a_1) b_2 \Big)
- \psi(b_1) \psi \Big( \rho(a_1) b_2 \Big)\\
&\qquad - \psi \Big( b_1 \rho(a_1) \Big)
+ \psi(b_1) \psi \Big( \rho(a_1) \Big) \psi(b_2)
\Bigg) \eE[\one]
\end{align*}
when $(A,B)$ are left-liberated and $\AAA, A$ unital.

More precisely, we modify our algebraic definitions above
as follows:
\begin{itemize}
    \item[(i)] Given $A, B$ we define
    for each $\ell \geq 0$ the set
    \[
    \widehat{\WW}_\ell = \{ (a_0, b_1, a_1, \dots,
    b_\ell, a_\ell) \mid
    a_0, \dots, a_\ell \in A; \
    b_1, \dots, b_\ell \in B \}
    \]
    and the corresponding set
    $\widehat{\WW}_\# = \bigcup_{\ell =0}^\infty \widehat{\WW}_\ell$.

    \item[(ii)] Given $A, B, \rho, \psi,
    \ell \geq 1$, $S \subseteq [2\ell-1]$, and
    $w \in \widehat{\WW}_\ell$, let
    \[
    \hat{x}_j = \begin{cases} \psi \big( b_{(j+1)/2}\big) &
    j \text{ odd}\\ a_{j/2} & j \text{ even}
    \end{cases}
    \]
    for $j \in S \cup \{0,2\ell\}$, and
    \[
    \hat{y}_k = \begin{cases} b_{(k+1)/2} & k \text{ odd}\\
    \rho \big( a_{k/2} \big) & k \text{ even}
    \end{cases} \]
    for $k \in [2\ell-1] \setminus S$.  Then
    we let
    \[
    \widehat{\text{Col}}(w; S) = \left( \prod_{j \in T_0}
    \hat{x}_j, \prod_{k \in U_1} \hat{y}_k, \prod_{j \in T_1}
    \hat{x}_j, \prod_{k \in U_2} \hat{y}_k, \dots, \prod_{k
    \in U_{\text{Alt}(S)}} \hat{y}_k, \prod_{j \in T_{\text{Alt}(S)}} \hat{x}_j \right).
    \]

    \item[(iii)] Finally, we define
    $\widehat{\MM}: \widehat{\WW}_\# \to A$ recursively by $\widehat{\MM}(x) = x$ for
    $x \in \widehat{\WW}_0 = A$, and
    \[
    \widehat{\MM}(x) =
    \sum_{\underset{S \neq 2[\ell-1]}{S \subseteq [2\ell-1]}}
    (-1)^{1+|S|} \widehat{\MM}(\widehat{\text{Col}}(x; S)), \qquad
     \qquad x \in \widehat{\WW}_\ell, \quad \ell \geq 1.
    \]
\end{itemize}

We arrive at the obvious analogues of our
results for right liberation, presented here
without proof.

\begin{theorem} \label{thmleftlibmoments}
Let $(A, B)$ be left-liberated in $\AAA$
with respect to $\eE, \rho, \psi$, and suppose $\AAA$
   and $A$ are unital.  Define the product function
    $\widehat{\Pi}: \widehat{\WW}_\# \to \AAA$
    by $\widehat{\Pi}(a_0, b_1, a_1, \dots, b_\ell, a_\ell)
    = a_0 b_1 a_1 \cdots b_\ell a_\ell$.  Then
for any $x \in \widehat{\WW}_\#$,
\[
\eE\left[ \widehat{\Pi}(x) \right] = \widehat{\MM}(x) \eE[\one].
\]
\end{theorem}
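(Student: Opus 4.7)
The plan is to mirror the proof of Theorem \ref{thmrightlibmoments} by induction on $\ell$, swapping the roles of $A \leftrightarrow B$ and $\rho \leftrightarrow \psi$, with the $A$-bimodule property of $\eE$ and the left-liberation identity playing the parts previously filled by the $B$-bimodule property and right liberation. The combinatorial apparatus $(\widehat{\WW}_\#, \widehat{\text{Col}}, \widehat{\MM})$ is constructed precisely to make this swap formal. The base case $\ell = 0$ is immediate: for $x = a_0 \in \widehat{\WW}_0 = A$, the $A$-bimodule property gives $\eE[a_0] = a_0 \eE[\one] = \widehat{\MM}(x) \eE[\one]$.

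For the inductive step on $x = (a_0, b_1, a_1, \ldots, b_\ell, a_\ell) \in \widehat{\WW}_\ell$, I would use
\[
a_0 b_1 a_1 \cdots b_\ell a_\ell = a_0 \bigl[\tilde{b}_1 + \psi(b_1)\bigr]\bigl[\mathring{a}_1 + \rho(a_1)\bigr] \cdots \bigl[\tilde{b}_\ell + \psi(b_\ell)\bigr] a_\ell
\]
and expand the $2\ell - 1$ binomial factors. Each resulting summand is indexed by a subset $S \subseteq [2\ell - 1]$ tracking, at each middle slot, which of the two pieces is selected; by the conventions built into $\hat{x}_j$ and $\hat{y}_k$, $S$ records exactly the positions whose chosen factor lies in $A$. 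Contracting runs of consecutive $A$- and $B$-type factors identifies each summand with $\widehat{\Pi}(\widehat{\text{Col}}(x; S))$ up to a sign depending on $|S|$. The distinguished choice $S = 2[\ell - 1]$, the unique subset with $\text{Alt}(S) = \ell$, returns $\widehat{\text{Col}}(x; S) = x$; isolating it writes the fully centered product $a_0 \tilde{b}_1 \mathring{a}_1 \cdots \tilde{b}_\ell a_\ell$ as a signed combination of the remaining summands.

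Now apply $\eE$. The fully centered contribution vanishes, since the $A$-bimodule property lets us pull $a_0$ and $a_\ell$ outside and left liberation then annihilates $\eE[\tilde{b}_1 \mathring{a}_1 \cdots \tilde{b}_\ell]$. For every other $S$, $\widehat{\text{Col}}(x; S)$ has alternation strictly less than $\ell$, so the inductive hypothesis replaces $\eE[\widehat{\Pi}(\widehat{\text{Col}}(x; S))]$ with $\widehat{\MM}(\widehat{\text{Col}}(x; S)) \eE[\one]$. The $A$-bimodule property also forces $\eE[\one]$ to commute with $A$, because $a \eE[\one] = \eE[a] = \eE[\one] a$ for every $a \in A$, which lets us collect the $\eE[\one]$ factors uniformly on the right. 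Solving for $\eE[\widehat{\Pi}(x)]$ and matching signs against the recursive definition of $\widehat{\MM}$ then produces the claimed identity.

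The only real obstacle is the sign bookkeeping in this last step: one must verify that after isolating $\eE[\widehat{\Pi}(x)]$ and dividing through, the coefficient in front of each $\widehat{\MM}(\widehat{\text{Col}}(x; S)) \eE[\one]$ matches the factor prescribed in the recursive formula for $\widehat{\MM}$. This is the same routine combinatorial check hidden inside the proof of Theorem \ref{thmrightlibmoments}, and the parallel definitions of $\MM$ and $\widehat{\MM}$ ensure that it goes through.
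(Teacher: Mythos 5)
Your method is exactly the one the paper intends: Theorem \ref{thmleftlibmoments} is stated ``without proof'' as the mirror image of Theorem \ref{thmrightlibmoments}, and your center--expand--induct argument (strong induction on $\ell$, vanishing of the fully centered word via the $A$-bimodule property together with left liberation applied to $\tilde b_1\mathring a_1\cdots\tilde b_\ell$, and identification of the remaining summands with collapses of strictly smaller alternation number) is precisely the proof the paper omits. One small presentational correction: the summands of $a_0[\tilde b_1+\psi(b_1)][\mathring a_1+\rho(a_1)]\cdots a_\ell$ as you wrote it still contain centered factors and so are not of the form $\widehat{\Pi}(\widehat{\text{Col}}(x;S))$; the expansion that actually produces the collapses is the full binomial expansion of $a_0[b_1-\psi(b_1)][a_1-\rho(a_1)]\cdots[b_\ell-\psi(b_\ell)]a_\ell$, in which $\widehat{\Pi}(x)$ itself appears as the term $S=2[\ell-1]$.

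The step you deferred, however, does not go through against the text as printed, and it is worth carrying out. The term indexed by $S$ acquires one factor of $-1$ for each slot at which an image is chosen, namely $|S\cap\mathrm{odd}|+(\ell-1)-|S\cap\mathrm{even}|$ of them, so after isolating $\eE[\widehat{\Pi}(x)]$ the coefficient of $\widehat{\MM}(\widehat{\text{Col}}(x;S))\,\eE[\one]$ is $(-1)^{\ell+|S|}$ --- the same exponent as in the recursion defining $\MM$. But the printed recursion for $\widehat{\MM}$ carries $(-1)^{1+|S|}$, so the definitions of $\MM$ and $\widehat{\MM}$ are \emph{not} parallel as they stand, and for even $\ell$ the identity you derive and the one claimed differ by a sign at the top level. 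The resolution is that the exponent $1$ in the definition of $\widehat{\MM}$ is evidently a typo for $\ell$: the worked example for $\eE[a_0b_1a_1b_2a_2]$ in the text carries exactly the signs produced by $(-1)^{\ell+|S|}$. With that reading your argument is complete; without it, the appeal to ``parallel definitions'' is the one place the proof fails.
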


\begin{corollary} \label{corleftlibgenerated}
Let $(A, B)$ be left-liberated in
$\AAA$ with respect to $\eE, \rho, \psi$.  Then
\[
\eE \Big[ \la A, B \ra \Big] = \eE \big[ A \big].
\]
\end{corollary}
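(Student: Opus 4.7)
The reverse inclusion $\eE[A] \subseteq \eE[\la A, B \ra]$ is immediate from $A \subseteq \la A, B \ra$, so I focus on showing $\eE[\la A, B \ra] \subseteq \eE[A]$. The plan is to mirror the proof of Corollary \ref{corrightlibgenerated}, invoking Theorem \ref{thmleftlibmoments} in place of Theorem \ref{thmrightlibmoments}.

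In the unital case (where $\AAA$ and $A$ share a unit), every element of $\la A, B \ra$ is a linear combination of words of the form $\widehat{\Pi}(x) = a_0 b_1 a_1 \cdots b_\ell a_\ell$ for some $x \in \widehat{\WW}_\#$, where one pads with the unit of $A$ at either end as needed. Theorem \ref{thmleftlibmoments} gives $\eE[\widehat{\Pi}(x)] = \widehat{\MM}(x) \eE[\one]$, and since $\widehat{\MM}(x) \in A$ and $\eE$ is an $A$-bimodule map, this equals $\eE[\widehat{\MM}(x) \cdot \one] = \eE[\widehat{\MM}(x)]$, which is patently in $\eE[A]$.

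For the non-unital case I would proceed by strong induction on the length of alternating products. The $A$-bimodule property of $\eE$ lets me pull any leading or trailing element of $A$ outside of $\eE$ (then absorbing it into the element of $A$ delivered by the inductive hypothesis, using the bimodule property a second time), so it suffices to handle words of the form $w = b_1 a_1 b_2 \cdots a_{\ell-1} b_\ell$ that begin and end in $B$. For such a word I expand
\[
w = \tilde{b}_1 \mathring{a}_1 \tilde{b}_2 \cdots \mathring{a}_{\ell-1} \tilde{b}_\ell + x,
\]
where $x$ is the explicit sum of shorter alternating products obtained by choosing $\psi(b_i) \in A$ or $\rho(a_i) \in B$ at some nonempty collection of positions and merging these into adjacent letters; by construction each summand in $x$ has strictly smaller alternation number than $w$. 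The first term vanishes under $\eE$ by left-liberation, while $\eE[x] \in \eE[A]$ by the inductive hypothesis. The base case of a single-letter word $b \in B$ uses the $n=1$ instance of left-liberation, namely $\eE[\tilde{b}] = 0$, yielding $\eE[b] = \eE[\psi(b)] \in \eE[A]$.

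The principal obstacle is purely organizational rather than conceptual: one must verify that each summand produced by the centering expansion truly has fewer $A,B$ alternations, so that the recursion terminates. But this is exactly the bookkeeping already encoded in the definitions of $\widehat{\text{Col}}$ and $\widehat{\MM}$, so no new combinatorial difficulty beyond what has already been handled in the text should arise.
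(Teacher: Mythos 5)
Your proposal is correct and follows exactly the route the paper intends: the paper states this corollary without proof as an ``obvious analogue'' of Corollary~\ref{corrightlibgenerated}, and the paper's own proof of that right-hand version is nothing more than the observation that the moment function $\MM$ formalizes the center--expand--collapse recursion. Your unital argument (padding with $\one\in A$, applying Theorem~\ref{thmleftlibmoments}, and using the $A$-bimodule property to rewrite $\widehat{\MM}(x)\eE[\one]$ as $\eE[\widehat{\MM}(x)]$) and your non-unital argument (strong induction on word length, peeling off boundary $A$-factors with the bimodule property, then centering to produce the vanishing $\tilde b_1\mathring a_1\cdots\tilde b_\ell$ term plus strictly shorter words, with base case $\eE[b]=\eE[\psi(b)]$) together reproduce precisely the bookkeeping encoded in $\widehat{\mathrm{Col}}$ and $\widehat{\MM}$, which is what the paper's one-line proof of Theorem~\ref{thmrightlibmoments} appeals to. No gap.
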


\begin{theorem} \label{thmleftlibrepmoments}
Let $(\AAA, f,g, \eE)$
be a left-liberating representation
of $(A, B, \rho, \psi)$.  For
$x = (a_0, b_1, a_1, \dots, b_\ell, a_\ell)
\in \widehat{\WW}_\#$, let $\widehat{(f \times g)}(x)$
denote the element \\
 ${f(a_0) g(b_0) f(a_1)
\dots g(b_\ell) f(a_\ell)\in \AAA}$.  Suppose also
that $\AAA, A, f$ are unital.

Then for any $x \in \widehat{\WW}_\#$,
\[
\eE[ \widehat{(f \times g)}(x)] = f(\widehat{\MM}(x)) \eE[\one].
\]
\end{theorem}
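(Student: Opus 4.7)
The plan is to mirror the inductive proof of Theorem \ref{thmrightlibrepmoments} (which itself extends Theorem \ref{thmrightlibmoments}), adapting it by interchanging the roles of $A$ and $B$ together with those of $\rho$ and $\psi$. I induct on $\ell$. The base case $\ell = 0$ is immediate: for $x = a_0 \in \widehat{\WW}_0 = A$ one has $\widehat{(f \times g)}(x) = f(a_0)$ and $\widehat{\MM}(x) = a_0$, while the $f(A)$-bimodule property of $\eE$ together with the assumed unitality of $\AAA$ and $f$ gives $\eE[f(a_0)] = f(a_0) \eE[\one]$.

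For the inductive step, let $x = (a_0, b_1, a_1, \ldots, b_\ell, a_\ell) \in \widehat{\WW}_\ell$ with $\ell \geq 1$. First I use the $f(A)$-bimodule property to extract the endpoints, writing $\eE[\widehat{(f \times g)}(x)] = f(a_0)\, \eE[W]\, f(a_\ell)$ where $W = g(b_1) f(a_1) \cdots g(b_\ell)$. The left-liberation hypothesis gives
\[
\eE \Big[ (g(b_1) - f(\psi(b_1)))(f(a_1) - g(\rho(a_1))) \cdots (g(b_\ell) - f(\psi(b_\ell))) \Big] = 0,
\]
and expanding telescopically — at each position $j \in [2\ell-1]$ choosing either the original factor $z_j$ or the ``projected'' factor $P_j$ (namely $f(\psi(b_{(j+1)/2}))$ for $j$ odd and $g(\rho(a_{j/2}))$ for $j$ even) — yields
\[
0 = \eE[W] + \sum_{\emptyset \neq S \subseteq [2\ell-1]} (-1)^{|S|} \eE[M_S],
\]
where $M_S$ is the mixed product with $P_j$ at $j \in S$ and $z_j$ elsewhere. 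Hence $\eE[W] = \sum_{S \neq \emptyset} (-1)^{|S|+1} \eE[M_S]$.

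The combinatorial heart of the argument is the recognition that each $M_S$, once the endpoints are reattached, equals $\widehat{(f \times g)}(\widehat{\text{Col}}(x; S'))$ for a specific $S' \subseteq [2\ell-1]$. The bijection is $S' = S \triangle 2[\ell-1]$: position $j$ belongs to $S'$ exactly when the corresponding factor of $M_S$ lies in $f(A)$ rather than $g(B)$, i.e., when either $j$ is odd and projected ($j \in S$) or $j$ is even and original ($j \notin S$). Under this identification, consecutive $f(A)$-factors multiply via the homomorphism property of $f$ into a single $f(\prod \hat{x}_j)$ across each $T$-block, while consecutive $g(B)$-factors multiply via $g$ into a single $g(\prod \hat{y}_k)$ across each $U$-block — reproducing precisely the definition of $\widehat{\text{Col}}(x; S')$. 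Moreover, $S = \emptyset$ corresponds to $S' = 2[\ell-1]$, so the sum over $S \neq \emptyset$ reindexes as a sum over $S' \neq 2[\ell-1]$.

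Finally, invoking the inductive hypothesis on each $\widehat{\text{Col}}(x; S') \in \widehat{\WW}_{\text{Alt}(S')}$ (using $\text{Alt}(S') < \ell$) and again using the $f(A)$-bimodule property to move $f(a_0), f(a_\ell)$ through $\eE$, one obtains $f(a_0)\eE[M_S]f(a_\ell) = f(\widehat{\MM}(\widehat{\text{Col}}(x; S'))) \eE[\one]$. A parity check — since $S \leftrightarrow S'$ flips the $\ell-1$ bits of $2[\ell-1]$, one has $|S| \equiv |S'| + (\ell-1) \pmod 2$ — shows that the expansion sign $(-1)^{|S|+1}$ matches $(-1)^{|S'| + \ell}$, which is the sign in the recursion defining $\widehat{\MM}(x)$. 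Summing then produces $\eE[\widehat{(f\times g)}(x)] = f(\widehat{\MM}(x))\eE[\one]$. The only real obstacle is the bookkeeping for this bijection-plus-sign calculation; everything else reduces to direct application of the bimodule property, left-liberation, and induction.
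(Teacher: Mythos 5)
Your proof fills a gap the paper leaves: this theorem is declared an ``obvious analogue'' and stated without proof, so there is no argument in the paper to compare against. Your inductive strategy—peel off $f(a_0), f(a_\ell)$ using the $f(A)$-bimodule property, expand the centered product that left-liberation annihilates, re-index the ``projected-position'' set $S$ against the ``$A$-factor'' set $S' = S \mathbin{\triangle} 2[\ell-1]$ used by $\widehat{\Col}$, merge consecutive same-algebra factors through the homomorphism property of $f$ and $g$, and close the induction using $\text{Alt}(S') < \ell$ for $S' \neq 2[\ell-1]$—is the natural transplant of the right-liberation argument and is sound.

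One thing needs to be said out loud. Your parity check correctly gives $(-1)^{|S|+1} = (-1)^{|S'| + \ell}$, and you then assert this is ``the sign in the recursion defining $\widehat{\MM}(x)$.'' But the paper's printed definition of $\widehat{\MM}$ carries the sign $(-1)^{1+|S|}$, not $(-1)^{\ell+|S|}$; these disagree for even $\ell$. A direct check at $\ell = 2$ (compare against the worked example for $\eE[a_0 b_1 a_1 b_2 a_2]$ that the paper itself gives in the left-liberation subsection) shows that the recursion with $(-1)^{1+|S|}$ produces the \emph{negative} of the correct moment, whereas $(-1)^{\ell+|S|}$—exactly mirroring the sign in the $\MM$-recursion for right liberation—gives the right answer. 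So your computation is correct and in fact exposes a typo in the paper's definition of $\widehat{\MM}$ (the exponent should be $\ell + |S|$, not $1 + |S|$). When writing this up, state this explicitly rather than silently using the corrected sign: the theorem as literally printed is false for even $\ell \geq 2$, but holds, by your argument, once the sign in the $\widehat{\MM}$-recursion is corrected to $(-1)^{\ell+|S|}$.
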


\begin{corollary} \label{corleftlibrepgenerated}
Let $(\AAA, f,g, \eE)$
be a left-liberating representation
of $(A, B, \rho, \psi)$.   Let
$\la A, B \ra$ denote the subalgebra of $\AAA$
generated by $f(A)$ and $g(B)$.  Then
\[
\eE \Big[ \la A, B \ra \Big] = \eE \big[f(A) \big].
\]
\end{corollary}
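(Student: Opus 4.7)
The plan is to mirror the argument implicit in Corollary \ref{corrightlibrepgenerated}, interchanging the roles of $f$ and $g$ and of $\MM$ and $\widehat{\MM}$. The two ingredients are Theorem \ref{thmleftlibrepmoments} and the fact, built into the definition of a left-liberating representation, that $\eE$ is an $f(A)$-bimodule map.

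The first step is to show that every element of $\la A, B\ra$ lies in the linear span of products of the form $\widehat{(f \times g)}(x)$ for $x \in \widehat{\WW}_\#$. An arbitrary word in the generators $f(A) \cup g(B)$ is first normalized by merging consecutive factors from the same subalgebra, using the fact that $f$ and $g$ are homomorphisms; this reduces to an alternating word in $f(A)$ and $g(B)$. If such a word does not already start and end with an element of $f(A)$, we pad it with $\one = f(\one_A)$ at the appropriate side, which is legitimate because the theorem hypothesizes that $\AAA$, $A$, and $f$ are unital.

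The second step applies Theorem \ref{thmleftlibrepmoments}, giving
\[
\eE\!\left[\widehat{(f \times g)}(x)\right] = f(\widehat{\MM}(x))\,\eE[\one]
\]
for each such normalized word. Since $\widehat{\MM}(x) \in A$, the element $f(\widehat{\MM}(x))$ lies in $f(A)$, and the $f(A)$-bimodule property of $\eE$ permits the rewriting
\[
f(\widehat{\MM}(x))\,\eE[\one] = \eE\!\left[f(\widehat{\MM}(x)) \cdot \one\right] = \eE\!\left[f(\widehat{\MM}(x))\right] \in \eE[f(A)].
\]
Taking linear combinations yields $\eE[\la A, B\ra] \subseteq \eE[f(A)]$, and the reverse inclusion is immediate from $f(A) \subseteq \la A, B\ra$.

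There is no substantial obstacle; the corollary is essentially a repackaging of Theorem \ref{thmleftlibrepmoments}. The one point requiring care is the reduction of a generic word in $f(A) \cup g(B)$ to the standard $\widehat{\WW}_\#$ shape, which the unitality hypotheses render routine.
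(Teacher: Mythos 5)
Your proposal is correct and is essentially the argument the paper intends: the corollary is stated without proof as the "obvious analogue" of the right-liberated case, following directly from Theorem \ref{thmleftlibrepmoments} together with the $f(A)$-bimodule property of $\eE$ and the reduction of arbitrary words to the alternating $\widehat{\WW}_\#$ form via unitality. The only point worth noting is that (as the paper remarks for the right-liberated analogue, Corollary \ref{corrightlibgenerated}) the conclusion still holds without the unitality hypotheses, but then one must rerun the recursive centering argument from the proof of the theorem rather than pad with $f(\one_A)$ and invoke the theorem itself.
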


\begin{proposition}  \label{propnormalleftmoments}
Let $\AAA$ be a W$^*$-algebra, $A$ and $B$ subalgebras,
and $\rho: A \to B$ and $\psi: B \to A$ normal linear
maps.
\begin{enumerate}
    \item For any $x \in \widehat{\WW}_\#$, $\widehat{\MM}(x)$
    is normal in each entry of $x$.
    \item If $\rho$ is strongly continuous
    on the unit ball $A_1$, and $\psi$ strongly
    continuous on $B_1$, then
    $\widehat{\MM}(x)$ is jointly strongly continuous in the
    entries of $x$ on bounded subsets.
\end{enumerate}
\end{proposition}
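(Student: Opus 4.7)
The plan is to prove both statements by induction on $\ell$, leveraging the recursive definition
\[
\widehat{\MM}(x) = \sum_{\substack{S \subseteq [2\ell-1]\\ S \neq 2[\ell-1]}} (-1)^{1+|S|} \widehat{\MM}\bigl(\widehat{\text{Col}}(x;S)\bigr),
\]
in which every collapsed tuple $\widehat{\text{Col}}(x;S)$ lies in $\widehat{\WW}_{\text{Alt}(S)}$ with $\text{Alt}(S) < \ell$. The base case $\ell = 0$ is immediate since $\widehat{\MM}$ is the identity on $A$; and since the sum is finite, the closure properties under consideration (normality in a fixed entry, joint strong continuity on bounded subsets) will pass from each summand to $\widehat{\MM}(x)$ itself.

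For the inductive step, observe that each entry of $\widehat{\text{Col}}(x;S)$ is a product of at most $2\ell+1$ factors, each of which is either an entry of $x$ or the image of such an entry under $\rho$ or $\psi$. Viewed as a function of a single entry $x_j$ of $x$ (all other entries held fixed), any such collapsed entry depends on $x_j$ through the composition of (left and/or right) multiplication by a fixed element of $\AAA$, possibly preceded by $\rho$ or $\psi$. For part~(1), each of these operations is normal: $\rho$ and $\psi$ by hypothesis, and separate multiplication in $\AAA$ by the ultraweak continuity of multiplication recalled in the preface. By the inductive hypothesis, $\widehat{\MM}$ applied to the shorter collapsed tuple is normal in each of its entries; composing yields a normal linear map, and the finite signed sum preserves normality.

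For part~(2), the same composition chain is jointly strongly continuous on bounded subsets: $\rho$ and $\psi$ are strongly continuous on $A_1, B_1$ by hypothesis (and hence on any bounded set, by linearity), multiplication is jointly strongly continuous on bounded subsets of $\AAA$, and by the inductive hypothesis $\widehat{\MM}$ is jointly strongly continuous on bounded subsets of $\widehat{\WW}_{\text{Alt}(S)}$. The one subtlety is uniform boundedness of the collapsed entries: since $\rho$ and $\psi$, being normal, are bounded linear maps, whenever the entries of $x$ range over $A_1, B_1$ the entries of $\widehat{\text{Col}}(x;S)$ range over a ball of radius at most $R = \max(1, \|\rho\|, \|\psi\|)^{2\ell+1}$. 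The inductive hypothesis applied on such a ball gives joint strong continuity of each summand, and hence of $\widehat{\MM}(x)$.

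The main obstacle is essentially clerical: tracking these bounds carefully enough to feed the inductive hypothesis at each level, and keeping straight the interplay between separate and joint continuity, since multiplication in a W$^*$-algebra is separately ultraweakly continuous but only jointly strongly continuous on bounded subsets. Once this bookkeeping is in order, the argument is a direct parallel of the proof of Proposition~\ref{propnormalmoments}, with the roles of $A, B$ and of $\rho, \psi$ interchanged, reflecting the symmetry between left- and right-liberation noted in the informal discussion preceding the definition of $\widehat{\MM}$.
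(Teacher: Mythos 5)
Your proof is correct and fills in the details of the induction that the paper leaves to the reader, summarizing it only as ``a straightforward induction on $\ell$'' for the right-liberation analogue (Proposition \ref{propnormalmoments}) and asserting that the left-liberation versions follow in the same way. The key ingredients you identify---finiteness of the recursion sum, normality of $\rho$, $\psi$, and separate multiplication for part~(1), and joint strong continuity of multiplication on bounded sets together with the uniform-boundedness observation needed to feed the inductive hypothesis for part~(2)---are exactly what the paper implicitly relies upon.
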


\begin{proposition} \label{propmomentwrtsemigroup}
Let $A$ be a $W^*$-algebra,
$\psi: A \to A$ a strongly continuous linear map, and $\{\rho_t\}_{t \geq 0}$ a CP-semigroup on $A$.  Then for each fixed
$x \in \widehat{\WW}_\#$, $\widehat{\MM}(x; \rho_t; \psi)$
and $\widehat{\MM}(x; \psi; \rho_t)$ are strongly continuous in $t$.
\end{proposition}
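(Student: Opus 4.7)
The plan is to mirror the proof sketched for the preceding (right-liberation) analogue, arguing by induction on the length $\ell$ of $x\in\widehat{\WW}_\ell$ via the recursion
\[
\widehat{\MM}(x;\rho_t;\psi)\;=\;\sum_{\substack{S\subseteq[2\ell-1]\\ S\neq 2[\ell-1]}}(-1)^{1+|S|}\,\widehat{\MM}\bigl(\widehat{\text{Col}}(x;S);\rho_t;\psi\bigr),
\]
which reduces each moment to a finite sum of moments of strictly smaller length. The base case $\ell=0$ is trivial, as $\widehat{\MM}(a_0;\rho_t;\psi)=a_0$ is independent of $t$.

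For the inductive step I would strengthen the hypothesis to allow the tuple itself to depend on $t$: for every $\ell$ and every tuple-valued function $t\mapsto x(t)\in\widehat{\WW}_\ell$ whose entries are strongly continuous in $t$ and take values in a bounded subset of $A$, the map $t\mapsto\widehat{\MM}(x(t);\rho_t;\psi)$ is strongly continuous. This strengthening is necessary because after even a single application of the recursion the entries of $\widehat{\text{Col}}(x;S)$ are products involving $\rho_t(a_i)$, so the deeper levels of the recursion automatically operate on genuinely $t$-dependent tuples. With this hypothesis in place, the inductive step reduces to verifying that products of terms of the form $a_i(t)$, $b_j(t)$, $\psi(b_j(t))$, and $\rho_t(a_i(t))$ are strongly continuous in $t$ and uniformly bounded. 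Boundedness follows from contractivity of $\rho_t$ and boundedness of $\psi$; strong continuity of each individual factor follows from strong continuity of $\psi$ and from point-strong continuity of $\{\rho_t\}$ recorded in Section \ref{secCPsemigroupcontinuity}; and strong continuity of the products follows from joint strong continuity of multiplication on bounded subsets of $A$. The case $\widehat{\MM}(x;\psi;\rho_t)$ is handled identically after interchanging the roles of $\rho$ and $\psi$.

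The main subtlety is that the inductive step requires applying $\rho_t$ to arguments that themselves vary with $t$, so one needs joint strong continuity of $(t,a)\mapsto\rho_t(a)$ rather than merely point-strong continuity. I would derive this by decomposing
\[
\rho_t(a(t))-\rho_{t_0}(a(t_0))\;=\;\rho_t\bigl(a(t)-a(t_0)\bigr)+\bigl(\rho_t-\rho_{t_0}\bigr)(a(t_0)),
\]
controlling the second summand by point-strong continuity of $\{\rho_t\}$ and the first via the Schwarz inequality for $2$-positive contractions, $\|\rho_t(z)h\|^2\le\langle\rho_t(z^*z)h,h\rangle$, combined with norm-continuity of the predual action $t\mapsto(\rho_t)_*\omega$ on vector states. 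Once this joint continuity is in hand the induction runs without further obstacle.
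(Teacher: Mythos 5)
Your proof is correct and takes the route the paper indicates: induction on the length of the tuple via the recursion for $\widehat{\MM}$, combining point-strong continuity of $\{\rho_t\}$, strong continuity of $\psi$, and joint strong continuity of multiplication on bounded subsets of $A$. The paper supplies only a one-sentence sketch, and you have correctly identified what that sketch elides: already after a single application of the recursion the collapsed tuple has entries of the form $\rho_t(a_i)$, so deeper levels of the recursion apply $\rho_t$ to elements that themselves depend on $t$, and the induction therefore genuinely requires continuity of $t \mapsto \rho_t(a(t))$ for bounded strongly continuous $a(\cdot)$ — not merely point-strong continuity of the semigroup. Strengthening the inductive hypothesis to allow $t$-dependent bounded tuples is exactly the right repair, and your derivation of the extra continuity fact is sound: split $\rho_t(a(t)) - \rho_{t_0}(a(t_0))$ as you do, control the second summand by point-strong continuity, and for the first use the Schwarz estimate $\|\rho_t(z)h\|^2 \le \langle \rho_t(z^*z)h, h\rangle = ((\rho_t)_*\omega_h)(z^*z)$ together with norm-continuity of $t \mapsto (\rho_t)_*\omega_h$ on $A_*$ (the preadjoint contraction semigroup is weakly, hence strongly, continuous); to finish one also uses that $z^*z \to 0$ weakly when $z \to 0$ strongly on a bounded set and that $(\rho_{t_0})_*\omega_h$ is normal. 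A pleasant feature of this argument is that it avoids any separability hypothesis on $A$, which the paper's Theorem \ref{thmjointcontinuityCP} — the otherwise natural citation for joint strong continuity of a CP-semigroup — would have imposed but which the present proposition does not assume.
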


\subsection{Tables and the Scalar Case}

We include in Appendix \ref{appendixmomenttables} values of the
moment function $\MM$ for small $\ell$.

An important special case of right-liberation occurs
when the map $\psi: B \to A$ is scalar-valued (in particular,
the applications to Sauvageot's dilation theory are
built on the case where $\psi$ is a state on $B$).
This allows considerable simplification of the expressions for joint moments, both because $\psi(B) \subset A \cap A'$
and because $\rho$ is a $\psi$-bimodule map.  Due to its
importance in applications, this special case also has
a table in Appendix \ref{appendixmomenttables}. 

\chapter{The Sauvageot Product} \label{chapsauvageotproduct}

\section{Introduction}
In this chapter we develop a modification of the unital free product of C$^*$-algebras, adapted for use in dilation theory.  As mentioned in example (\ref{exmarkovdilation}), the classical Daniell-Kolmogorov construction can be reduced to the construction of maps $\theta_t: C(S) \otimes C(S) \to C(S)$
given on simple tensors by $\theta_t(f \otimes g) = (P_t f) g$.  We shall return to the details of this reduction in chapter \ref{chapiteratedproducts}; at present we only describe enough of its features to see what we shall need for the appropriate noncommutative analogue.

Among the many embeddings of $C(S)$ into $C(S) \otimes C(S)$ we distinguish
two, the ``left'' embedding $f \mapsto f \otimes \one$ and the ``right''
embedding $f \mapsto \one \otimes f$.  The map $\theta_t$ is a retraction with respect to the right embedding, and its composition with the left embedding is $P_t$.  That is, by constructing $\theta_t$ we factor $P_t$ into an embedding followed by a retraction (with respect to a different embedding), as
depicted in the following diagram:
\[ \xymatrix{
    C(S) \otimes C(S) \ar@/^1pc/[rd]^{\theta_t}\\
    C(S) \ar[u] \ar[r]_{P_t} & C(S) \ar@{.>}[lu]
    }  \quad \theta_t(f \otimes g) = P_t(f) g
    \]
More generally, the inductive process will work
with tensor powers $C(S)^{\otimes \gamma}$ for finite
sets $\gamma \subset [0,\infty)$, building for each one
a retraction $\epsilon_\gamma: C(S)^{\otimes \gamma} \to C(S)$.
Given $\gamma' = \gamma \cup \{t_k\}$, where $\tau = t_k - \underset{t \in \gamma}{\min} \, t > 0$, we will seek to define $\epsilon_{\gamma'}$ such that
\[ \xymatrix{
   C(S)^{\otimes \gamma'} \ar@/^1pc/[rd]^{\epsilon_{\gamma'}}\\
    C(S)^{\otimes \gamma} \ar[u] \ar[r]_{P_\tau} & C(S) \ar@{.>}[lu]
    }  \quad \epsilon_{\gamma'}(f \otimes g) = P_\tau (\epsilon_\gamma(f)) g
    \]
We note in passing that Stinespring dilation produces a very similar
diagram: Given a unital completely map $\phi: A \to B(H)$ with
minimal Stinespring triple $(K, V, \pi)$, we obtain
\[ \xymatrix{
    B(K) \ar@/^1pc/[rd]^{\theta}\\
    A \ar[u] \ar[r]_{\phi} & B(H) \ar@{.>}[lu]
    }  \quad \theta(T) = V^* T V
    \]
Crucially, however, the right embedding in this case is the non-unital
map $X \mapsto V X V^*$, in contrast to the unital embedding in the
commutative example.  Hence, we take the tensor product as our model in what follows.

In addition to constructing tensor products $C(X) \otimes C(Y) \simeq C(X \times Y)$ of commutative unital C$^*$-algebras, one can also form tensor products of maps between them, and the resulting maps satisfy certain functorial properties. We summarize the properties of the tensor product which we shall seek to replicate in this chapter:
\begin{enumerate}
    \item Given unital C$^*$-algebras $A,B$ and a unital completely
    positive map $A \sa{\phi} B$, we construct a unital C$^*$-algebra
    $A \star B$ with unital embeddings of $A$ and $B$, the images
    of which generate $A\star B$.
    \item We also construct a retraction $A \star B \to B$ which
    factors $\phi$ in the sense of the above diagrams.
    \item Given unital completely positive maps $A \sa{\phi} B$
    and $C \sa{\psi} D$, and given unital *-homomorphisms
    $A \sa{f} C$ and $B \sa{g} D$ such that the square
    \[ \xymatrix{
    C \ar[r]^\psi & D \\
    A \ar[r]_\phi \ar[u]^f & B \ar[u]_g
    }\]
    commutes, we construct a (necessarily unique) unital *-homomorphism\\
    $f \star g: A \star B \to C \star D$ such that the squares
    \[ \xymatrix{
    A \star B \ar[r]^{f \star g} &C \star D \\
    A \ar[u] \ar[r]_f & C \ar[u]
    } \qquad \qquad \xymatrix{
    A \star B \ar[r]^{f \star g} &C \star D \\
    B \ar[u] \ar[r]_g & D \ar[u]
    }\]
    commute.
\end{enumerate}

We now begin our development of a construction satisfying these
requirements.

\section{Sauvageot Products of Hilbert Spaces and Bounded Operators}
Just as the free product of unital C$^*$-algebras can be constructed
from a free product of Hilbert spaces (\cite{VoiculescuSymmetries}),
our product construction on C$^*$-algebras will rely on an underlying
construction on Hilbert space.  Some notational preliminaries:
For a Hilbert space $H$, we use $H^+$ to denote $H \oplus \com$,
and if a unit vector has been distinguished, $H^-$ to denote the
complement of its span.  A distinguished unit vector (such as
$1 \in \com$ as an element of the direct sum $H\oplus \com$)
is generally denoted by $\Omega$.  We also follow the convention, most common in physics and in Hilbert C$^*$-modules, that inner
products are linear in the second variable.

\begin{definition} \label{defsauvageotproducthilbertspaces}
Let $\HH$ and $\LL$ be Hilbert spaces.  The \textbf{Sauvageot
product} $\HH \star \LL$ is the space
\[
\HH \star \LL = \HH^+ \oplus \bigoplus_{n=0}^\infty \Big[
\big(\LL^{+ \otimes n} \otimes \LL \big) \oplus
\big(\HH \otimes \LL^{+ \otimes n} \otimes \LL \big) \Big]
\]
with the convention $\LL^{+ \otimes 0} \otimes \LL = \LL$.
\end{definition}

Though defined as a direct sum, the Sauvageot product of Hilbert
spaces may also be viewed as an infinite tensor product, as
expressed in the following proposition.

\begin{proposition} \label{prop3UE}
Let $\HH$ and $\LL$ be Hilbert spaces, and $\KK = \HH^+ \oplus \LL$.  Denote by $\LL^{+ \otimes \N}$ the infinite tensor power of $\LL^+$ with respect to $\Omega$.  Then there are unitary
equivalences between $\HH \star \LL$
and both $\HH^+ \otimes \LL^{+ \otimes \N}$ and
$\KK \otimes \LL^{+ \otimes \N}$, under which
\begin{itemize}
    \item the subspace $\HH \otimes \LL^{+ \otimes \N}$ of $\HH^+ \otimes \LL^{+ \otimes \N}$
    is identified with the subspace\\ $\displaystyle{\bigoplus_{n=0}^\infty \HH \otimes \LL^{+ \otimes n} \otimes \LL}$
    of $\HH \star \LL$
    \item the subspace $\com \otimes \LL^{+ \otimes \N}$ of $\HH^+ \otimes \LL^{+ \otimes \N}$
    is identified with the subspace\\ $\displaystyle\bigoplus_{n=0}^\infty \LL^{+ \otimes n} \otimes \LL$
    of $\HH \star \LL$
    \item the subspace $\HH \otimes \LL^{+ \otimes \N}$ of $\KK \otimes \LL^{+ \otimes \N}$
    is identified with the subspace\\ $\HH \
    \oplus \displaystyle\bigoplus_{n=0}^\infty \HH \otimes \LL^{+ \otimes n} \otimes \LL$
    of $\HH \star \LL$
    \item the subspace $\com \otimes \LL^{+ \otimes \N}$ of $\KK \otimes \LL^{+ \otimes \N}$
    is identified with the subspace\\ $\LL \oplus \displaystyle\bigoplus_{n=1}^\infty \com \otimes \LL^{+ \otimes (n-1)}
    \otimes \LL$ of $\HH \star \LL$
    \item the subspace $\LL \otimes \LL^{+ \otimes \N}$ of $\KK \otimes \LL^{+ \otimes \N}$
    is identified with the subspace\\ $\displaystyle\bigoplus_{n=1}^\infty \LL \otimes \LL^{+ \otimes (n-1)} \otimes \LL$
    of $\HH \star \LL$.
\end{itemize}
\end{proposition}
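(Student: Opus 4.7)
The plan is to stratify $\LL^{+\otimes \N}$ by the largest index at which a tensor factor differs from the distinguished vector $\Omega$. Because the infinite tensor product is built by stabilizing at $\Omega \in \com \subset \LL^+$, every simple tensor reduces to the form $\ell_1 \otimes \cdots \otimes \ell_m \otimes \Omega \otimes \Omega \otimes \cdots$ with $\ell_i \in \LL^+$ for $i<m$ and $\ell_m \in \LL$; sorting by $m$ yields
\[
\LL^{+\otimes \N} \;\cong\; \com\Omega \;\oplus\; \bigoplus_{n=0}^\infty \LL^{+\otimes n}\otimes\LL,
\]
with the $n$-th summand collecting those tensors whose last non-$\Omega$ factor sits at position $n+1$.

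For the first equivalence I would tensor this decomposition with $\HH^+$ and distribute using $\HH^+ = \HH \oplus \com$. The summand $\HH^+ \otimes \com\Omega$ identifies with $\HH^+$, and each $\HH^+ \otimes \LL^{+\otimes n}\otimes \LL$ splits as $(\HH\otimes \LL^{+\otimes n}\otimes\LL)\oplus(\LL^{+\otimes n}\otimes\LL)$. Matching against the defining direct-sum expansion of $\HH \star \LL$ gives the unitary equivalence termwise, and the two listed subspace identifications for the $\HH^+$-version follow immediately from the construction.

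For the second equivalence I would pull off the initial $\LL^+$-factor of the infinite tensor via the shift isomorphism $\LL^{+\otimes \N} \cong \LL^+ \otimes \LL^{+\otimes \N'}$, where $\N' = \{2,3,\ldots\}$ is canonically identified with $\N$. The key algebraic identity is $\HH^+\otimes\LL^+ = \HH\otimes\LL \oplus \HH \oplus \LL \oplus \com = \KK \oplus (\HH\otimes\LL)$, which rewrites
\[
\HH^+\otimes\LL^{+\otimes\N} \;\cong\; \KK\otimes\LL^{+\otimes\N'} \;\oplus\; \HH\otimes\LL\otimes\LL^{+\otimes\N'}.
\]
Reindexing $\N' \cong \N$ and absorbing the residual $\HH\otimes\LL\otimes\LL^{+\otimes\N'}$ summand into the infinite tail via the iterated shift $\LL^{+\otimes\N}\cong\LL^{+\otimes\N}\oplus\LL\otimes\LL^{+\otimes\N}$ produces a unitary $\HH^+\otimes\LL^{+\otimes\N}\cong\KK\otimes\LL^{+\otimes\N}$.

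Finally, the remaining three subspace identifications can be verified by tracking the images of each orthogonal piece of $\KK\otimes\LL^{+\otimes\N}$ under the unitary just constructed: the $\HH$-part of $\KK$ routes $\HH\otimes\LL^{+\otimes\N}$ onto the $\HH$-first-letter summands of $\HH\star\LL$; the $\com$-part of $\KK$ routes $\com\otimes\LL^{+\otimes\N}$ onto the vacuum together with the $\com\Omega$-first-letter pieces of the $\LL^{+\otimes n}\otimes\LL$ summands; and the $\LL$-part of $\KK$ carries $\LL\otimes\LL^{+\otimes\N}$ into the $\LL$-first-letter pieces of the $\LL^{+\otimes n}\otimes\LL$ summands with $n\geq 1$. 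The first equivalence is routine once the stratification of $\LL^{+\otimes \N}$ is in hand; the main obstacle is the bookkeeping in the second equivalence, since the non-canonical absorption must be arranged to keep all five listed subspace identifications simultaneously consistent.
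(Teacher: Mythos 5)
Your treatment of the first equivalence (stratifying $\LL^{+\otimes\N}$ by the position of the last non-$\Omega$ factor, tensoring with $\HH^+ = \HH\oplus\com$, and matching summands termwise against the defining direct-sum expansion of $\HH\star\LL$) is essentially the paper's argument. The second equivalence, however, takes a genuinely different and more laborious route. You peel the initial $\LL^+$ factor from $\HH^+\otimes\LL^{+\otimes\N}$ and invoke $\HH^+\otimes\LL^+ \simeq \KK\oplus(\HH\otimes\LL)$, which produces a residual $\HH\otimes\LL\otimes\LL^{+\otimes\N}$ that must then be absorbed back. The paper avoids the residual entirely: it rearranges the direct sum $\KK = \HH^+\oplus\LL \simeq \HH\oplus\LL^+$, distributes to get $\KK\otimes\LL^{+\otimes\N}\simeq(\HH\otimes\LL^{+\otimes\N})\oplus(\LL^+\otimes\LL^{+\otimes\N})$, and applies the shift $\LL^+\otimes\LL^{+\otimes\N}\simeq\LL^{+\otimes\N}\simeq\com\otimes\LL^{+\otimes\N}$ only to the $\LL^+$-side, leaving $\HH\otimes\LL^{+\otimes\N}$ untouched. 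That absence of a residual is exactly what makes the five subspace identifications immediate to read off by following each summand through the chain.

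Your route can be arranged to yield the same unitary: the absorption of $\HH\otimes\LL\otimes\LL^{+\otimes\N}$ back into $\HH\otimes\LL^{+\otimes\N}$ via $\LL^{+\otimes\N}\simeq\LL^+\otimes\LL^{+\otimes\N}$ tensored with $\HH$ exactly inverts the effect of the peel on the $\HH$-summand, so the net map there is the identity and the $\com$-part proceeds as in the paper. But this is a constraint that has to be imposed, not a free choice. You describe the absorption as \emph{non-canonical} and flag \emph{the main obstacle is the bookkeeping\ldots since the non-canonical absorption must be arranged to keep all five listed subspace identifications simultaneously consistent} --- and then do not carry out that bookkeeping. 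An arbitrary Hilbert-hotel absorption of the residual would shift vectors into different tensor positions and would break the third, fourth, and fifth identifications. So as written, your argument defers precisely the part the paper has dissolved by its cleaner choice of intermediate decomposition; if you switch to $\KK \simeq \HH \oplus \LL^+$ before distributing, there is no residual, no absorption, and nothing to arrange.
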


\begin{proof}
We will use the unitary equivalences $\LL^{+ \otimes \N} \simeq \LL^+ \otimes \LL^{+ \otimes \N}$, which is evident, and $\LL^{+ \otimes \N} \simeq \com \oplus \bigoplus_{n=0}^\infty \LL^{+ \otimes n} \otimes \LL$,
which follows from the definition of infinite tensor powers.  Repeated application of these equivalences plus
the associative, commutative, and distributive laws
\[
H \otimes (K_1 \otimes K_2) \simeq (H \otimes K_1) \otimes K_2, \quad
H \otimes K \simeq K \otimes H, \quad
H \otimes (K_1 \oplus K_2) \simeq (H \otimes K_1) \oplus (H \otimes K_2)
\]
and the identity $H \otimes \com \simeq H$
yield
\begin{align*}
\KK \otimes \LL^{+ \otimes \N} &= (\HH^+ \oplus \LL) \otimes \LL^{+ \otimes \N} \simeq (\HH \oplus \LL^+)\otimes \LL^{+ \otimes \N}
\simeq (\HH \otimes \LL^{+ \otimes \N}) \oplus (\LL^+ \otimes
\LL^{+ \otimes \N})\\
&\simeq (\HH \otimes \LL^{+ \otimes \N}) \oplus
\LL^{+ \otimes \N}
\simeq (\HH \otimes \LL^{+ \otimes \N}) \oplus
(\com \otimes \LL^{+ \otimes \N}) \simeq \HH^+ \otimes \LL^{+ \otimes \N}
\end{align*}
and
\begin{align*}
\HH \star \LL &= \HH^+ \oplus \bigoplus_{n=0}^\infty
\left(\HH \otimes \LL^{+ \otimes n} \otimes \LL
\oplus \LL^{+ \otimes n} \otimes \LL \right)\\
&\simeq (\HH^+ \otimes \com) \oplus \bigoplus_{n=0}^\infty
\left(\HH \otimes \LL^{+ \otimes n} \otimes \LL
\oplus \com \otimes \LL^{+ \otimes n} \otimes \LL \right)\\
&\simeq (\HH^+ \otimes \com) \oplus \bigoplus_{n=0}^\infty
(\HH^+ \otimes \LL^{+ \otimes n} \otimes \LL)
\simeq \HH^+ \otimes \left(\com \oplus \bigoplus_{n=0}^\infty
\LL^{+ \otimes n} \otimes \LL\right) \simeq
\HH^+ \otimes \LL^{+ \otimes \N}.
\end{align*}
The specific identifications arise by following subspaces through these equivalences.
\end{proof}

As a simple corollary, we obtain the folllowing identifications:
\begin{proposition} \label{propsauvageotproductsHS}
Let $\HH, \HH_1, \HH_2, \LL$ be Hilbert spaces.
\begin{enumerate}
    \item $(\HH_1 \oplus \HH_2)^+ \star
    \LL \simeq (\HH_1 \star \LL) \oplus
    (\HH_2 \star \LL)$
    \item $\HH \star \{0\} \simeq \HH^+$
    \item $\{0\} \star \LL \simeq \LL^{+ \otimes \N}$
\end{enumerate}
\end{proposition}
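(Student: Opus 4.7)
The plan is to deduce all three identifications directly from Proposition \ref{prop3UE}, which provides the unitary equivalence $\HH \star \LL \simeq \HH^+ \otimes \LL^{+ \otimes \N}$. Once both sides of each identity are rewritten in this tensor form, everything reduces to routine manipulation using the associative, commutative, and distributive laws for Hilbert space $\oplus$ and $\otimes$ already exploited in the proof of Proposition \ref{prop3UE}, together with the single observation that the infinite tensor power $\com^{\otimes \N}$ (formed with respect to the distinguished unit $1 \in \com$) collapses to $\com$.

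For part (3), I would take $\HH = \{0\}$, so that $\HH^+ = \com$; Proposition \ref{prop3UE} then yields $\{0\} \star \LL \simeq \com \otimes \LL^{+ \otimes \N} \simeq \LL^{+ \otimes \N}$. For part (2), I would take $\LL = \{0\}$, so that $\LL^+ = \com$ and $\LL^{+ \otimes \N} \simeq \com$ by the observation above; hence $\HH \star \{0\} \simeq \HH^+ \otimes \com \simeq \HH^+$.

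For part (1), I would apply Proposition \ref{prop3UE} with $\HH = (\HH_1 \oplus \HH_2)^+$, identifying the left side with $\bigl((\HH_1 \oplus \HH_2)^+ \oplus \com\bigr) \otimes \LL^{+ \otimes \N} = (\HH_1 \oplus \HH_2 \oplus \com \oplus \com) \otimes \LL^{+ \otimes \N}$. Applying the same identification termwise on the right side and pulling out the common tensor factor $\LL^{+ \otimes \N}$ via distributivity produces $(\HH_1^+ \oplus \HH_2^+) \otimes \LL^{+ \otimes \N} \simeq (\HH_1 \oplus \com \oplus \HH_2 \oplus \com) \otimes \LL^{+ \otimes \N}$; a reordering of the direct summands makes the two sides coincide.

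There is no substantive obstacle here: the argument is exactly the ``simple corollary'' the text advertises, and consists entirely of the same $\oplus$/$\otimes$ bookkeeping already performed inside Proposition \ref{prop3UE}. The only point that deserves explicit mention in the written-up version is the collapse $\com^{\otimes \N} \simeq \com$ needed in part (2), which follows immediately from the definition of the infinite tensor power relative to a distinguished unit vector.
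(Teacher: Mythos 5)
Your proposal is correct and follows exactly the route the paper intends: the text gives no explicit proof, describing the proposition as a simple corollary of Proposition \ref{prop3UE}, and your reduction of each identity to the tensor form $\HH^+ \otimes \LL^{+\otimes\N}$ (together with the collapse $\com^{\otimes\N}\simeq\com$ for part (2)) is precisely that corollary spelled out.
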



Our next goal is to define the product of maps between Hilbert
spaces.

\begin{definition} \label{defSPops}
Let $\HH_1, \HH_2, \LL_1, \LL_2$ be Hilbert spaces,
$\KK_1 = \HH_1^+ \oplus \LL_1$ and $\KK_2 = \HH_2^+
\oplus \LL_2$, $S: \HH_1^+ \to \HH_2^+$ and $T: \KK_1 \to \KK_2$ bounded maps, and $V: \LL_1 \to \LL_2$ a contraction.
Define the bounded maps $S \star V: \HH_1 \star \LL_1
\to \HH_2 \star \LL_2$ and $T \hstar V:
\HH_1\star \LL_1 \to \HH_2 \star \LL_2$ as follows: Let $V^+
= V \oplus \dss{\text{id}}{\com}: \LL_1^+ \to \LL_2^+$
and let $V^{+ \otimes \N}: \LL_1^{+ \otimes \N} \to \LL_2^{+ \otimes \N}$
be the limit
of the contractions $V^{+\otimes n}: \LL_1^{+ \otimes n}
\to \LL_2^{+ \otimes n}$.  Then
$S \star V: \HH_1 \star \LL_1 \to \HH_2 \star \LL_2$ is the
operator $S \otimes V^{+ \otimes \N}:
\HH_1^+ \otimes \LL_1^{+ \otimes \N}
\to \HH_2^+ \otimes \LL_2^{+ \otimes \N}$ composed
with the unitary equivalences of Proposition \ref{prop3UE};
similarly, $T \hstar V$ is the operator $T \otimes V^{+ \otimes \N}:
\KK_1 \otimes \LL_1^{+ \otimes \N} \to
\KK_2 \otimes \LL_2^{+ \otimes \N}$ composed with the appropriate
unitary equivalences.
\end{definition}

By following the sequence of equivalences in the proof
of Proposition \ref{prop3UE}, we can calculate
how product maps act on the various summands
of $\HH_1 \star \LL_1$.

\begin{proposition} \label{propSPopsummands}
Let $\HH_1, \HH_2, \LL_1, \LL_2$ be Hilbert spaces,
and for $i = 1,2$ let $\KK_i = \HH_i^+ \oplus \LL_i$.
Let $\HH_1^+ \sa{S} \HH_2^+$ and $\KK_1 \sa{T} \KK_2$ be
bounded operators and $\LL_1 \sa{V} \LL_2$ a contraction.
For each $n \geq 0$ let $V^{(n)}$ denote
$V^{+\otimes n} \otimes V: \LL_1^{+ \otimes n}
\otimes \LL_1 \to \LL_2^{+ \otimes n} \otimes \LL_2$.

Let $h \in \HH_1^+$, $h_0 \in \HH_1$,
$k \in \KK_1$, $\ell \in \LL_1^+$, and $\xi \in \LL_1^{+ \otimes n} \otimes \LL_1$ for some $n \geq 0$, and suppose that
\begin{align*}
S \Omega_1 &= \alpha \Omega_2 + y, \qquad \qquad \alpha \in \com, y \in \HH_2\\
S h_0 &= \beta \Omega_2 + z, \qquad \qquad \beta \in \com, z \in \HH_2\\
T h_0 &= \eta + w, \qquad \qquad \eta \in \HH_2, w \in \LL_2^+\\
T\ell &= \zeta+u, \qquad \qquad \zeta \in \HH_2, u \in \LL_2^+.
\end{align*}
Then
\begin{align*}
(S \star V) h &= Sh\\
(S \star V)\xi &= \alpha V^{(n)} \xi + (y \otimes V^{(n)} \xi)\\
(S \star V)(h_0 \otimes \xi) &= \beta V^{(n)}\xi
+ (z \otimes V^{(n)}\xi)\\
(T \hstar V) k &= Tk \\
(T \hstar V)(h_0 \otimes \xi) &= (\eta \otimes V^{(n)}\xi) + (w \otimes V^{(n)}\xi)\\
(T \hstar V)(\ell \otimes \xi) &= (\zeta \otimes V^{(n)}\xi) + (u \otimes V^{(n)}\xi).
\end{align*}
\end{proposition}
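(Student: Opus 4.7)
My plan is to unwind Definition \ref{defSPops} directly, using the dictionaries supplied by Proposition \ref{prop3UE}. For every input vector I will (i) re-express it in the appropriate tensor-product form ($\HH_1^+ \otimes \LL_1^{+\otimes \N}$ for $S \star V$, $\KK_1 \otimes \LL_1^{+\otimes \N}$ for $T \hstar V$), (ii) apply $S \otimes V^{+\otimes \N}$ or $T \otimes V^{+\otimes \N}$, and (iii) translate the image back into the Sauvageot-product decomposition of $\HH_2 \star \LL_2$. The only facts needed about $V^{+\otimes \N}$ are that it fixes the infinite-tensor-product vacuum $\Omega$, and that on the $n$-th summand it acts as $V^{(n)} = V^{+\otimes n} \otimes V$. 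The only subtlety---really bookkeeping rather than a genuine obstacle---is keeping the various distinguished vacuum vectors straight (those of $\HH_i^+$, $\LL_i^+$, $\KK_i$, and $\LL_i^{+\otimes \N}$) and remembering that the identification $\HH \star \LL \simeq \KK \otimes \LL^{+\otimes \N}$ is built on the splitting $\KK = \HH \oplus \LL^+$, while the outputs of $S$ are more naturally analyzed through $\HH^+ = \HH \oplus \com\Omega$.

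For the three $S \star V$ identities, the relevant dictionary from Proposition \ref{prop3UE} is: under $\HH_1 \star \LL_1 \simeq \HH_1^+ \otimes \LL_1^{+\otimes \N}$, a vector $h \in \HH_1^+$ corresponds to $h \otimes \Omega$; a vector $\xi \in \LL_1^{+\otimes n} \otimes \LL_1$ corresponds to $\Omega_1 \otimes \xi$ with $\Omega_1$ the scalar vacuum of $\HH_1^+$; and $h_0 \otimes \xi \in \HH_1 \otimes \LL_1^{+\otimes n} \otimes \LL_1$ corresponds to itself. Applying $S \otimes V^{+\otimes \N}$ to each, and decomposing $S\Omega_1 = \alpha\Omega_2 + y$ or $Sh_0 = \beta\Omega_2 + z$ against $\HH_2^+ = \com\Omega_2 \oplus \HH_2$, yields the three formulas for $(S \star V)h$, $(S \star V)\xi$, and $(S \star V)(h_0 \otimes \xi)$ with essentially no computation.

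The $T \hstar V$ case is analogous but uses the identification $\HH_1 \star \LL_1 \simeq \KK_1 \otimes \LL_1^{+\otimes \N}$. The corresponding dictionary reads: $k \in \KK_1$ corresponds to $k \otimes \Omega$; a vector $h_0 \otimes \xi \in \HH_1 \otimes \LL_1^{+\otimes n} \otimes \LL_1$ corresponds to itself via $\HH_1 \hookrightarrow \KK_1$; and $\ell \otimes \xi$ with $\ell \in \LL_1^+$ and $\xi \in \LL_1^{+\otimes n} \otimes \LL_1$ corresponds to itself via the embedding $\LL_1^+ \hookrightarrow \KK_1$ from $\KK_1 = \HH_1 \oplus \LL_1^+$, composed with the shift isomorphism $\LL_1^+ \otimes \LL_1^{+\otimes \N} \simeq \LL_1^{+\otimes \N}$ used in the proof of Proposition \ref{prop3UE}. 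After applying $T \otimes V^{+\otimes \N}$, one decomposes $Tk$, $Th_0 = \eta + w$, and $T\ell = \zeta + u$ against $\KK_2 = \HH_2 \oplus \LL_2^+$ and reads off each summand to obtain the three desired formulas.
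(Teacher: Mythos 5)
Your proposal is correct and is exactly the argument the paper intends: the paper offers no written proof beyond the remark that one "follows the sequence of equivalences in the proof of Proposition \ref{prop3UE}," and your dictionary of where each summand of $\HH_1 \star \LL_1$ sits inside $\HH_1^+ \otimes \LL_1^{+\otimes\N}$ (for $S \star V$) and $\KK_1 \otimes \LL_1^{+\otimes\N}$ (for $T \hstar V$), followed by applying $S \otimes V^{+\otimes\N}$ or $T \otimes V^{+\otimes\N}$ and decomposing the first tensor factor, is precisely that computation carried out.
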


Next, we develop some of the essential properties of
this construction.

\begin{proposition} \label{propSPopproperties}
Let $\HH_1^+ \sa{S} \HH_2^+ \sa{S'} \HH_3^+$
and $\KK_1 \sa{T} \KK_2 \sa{T'} \KK_3$ be bounded
maps, and $\LL_1 \sa{V} \LL_2 \sa{V'} \LL_3$ contractions.
\begin{enumerate}
    \item $(S' \star V') \circ (S \star V) =
    (S' \circ S) \star (V' \circ V)$ and
    $(T' \hstar V') \circ (T \hstar V)
    = (T' \circ T) \hstar (V' \circ V)$.
    \item If $S$ is the identity map on $\HH_1 = \HH_2$, and
    $V$ the identity map on $\LL_1 = \LL_2$,
    then $S \star V$ is the identity map on
    $\HH_1 \star \LL_1 = \HH_2 \star \LL_2$.  Similarly, if $T$
    and $V$ are the appropriate identity maps, then so is
    $T \hstar V$.
    \item $\|S \star V\| \leq \|S\| \|V\|$ and
    $\|T \hstar V\| \leq \|T\| \|V\|$.
    \item If $S$ and $V$ are isometries (resp.\ unitaries),
    so is $S \star V$; if $T$ and $V$ are isometries (resp.\ unitaries),    so is $T \hstar V$.
    \item $(S \star V)^* = S^* \star V^*$ and
    $(T \hstar V)^* = T^* \hstar V^*$.
    \item If $S$ decomposes as a direct sum $\dss{S}{L}
    \oplus \dss{S}{R}: \HH_1 \oplus \com \to \HH_2 \oplus \com$,
    then $S \star V$ maps the summands
    of $\HH_1 \star \LL_1$ into the corresponding summands
    of $\HH_2 \star \LL_2$.  That is, if $P_1$ is the projection
    from $\HH_1 \star \LL_1$ onto any of
    $\HH_1^+$, $\LL_1^{+ \otimes n} \otimes \LL$,
    or $\HH_1 \otimes \LL_1^{+ \otimes n} \otimes \LL$,
    and $P_2$ the projection from $\HH_2 \star \LL_2$ onto
    its corresponding subspace, then
    \begin{equation} \label{eqnSPopsintertwines}
    (S \star V) P_1 = P_2 (S \star V).
    \end{equation}
    Similarly, if $T$ decomposes as a direct sum
    $\dss{T}{L} \oplus \dss{T}{R}:\HH_1 \oplus \LL_1
    \to \HH_2 \oplus\LL_2$ and $P_1, P_2$ are as before, then
    \begin{equation} \label{eqnSPopsintertwines2}
    (T \hstar V) P_1 = P_2 (T \hstar V).
    \end{equation}

    \item If $S$ decomposes as the direct sum $\dss{S}{L}
    \oplus \dss{\text{id}}{\com}: \HH_1 \oplus \com \to \HH_2 \oplus \com$
    and $T = S\oplus V$, then
    $S \star V = T \hstar V$.
\end{enumerate}
\end{proposition}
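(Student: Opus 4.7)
The plan is to prove all seven parts by reducing to standard properties of tensor products of bounded operators, working through the unitary equivalences established in Proposition \ref{prop3UE}. Under those equivalences, $S \star V$ and $T \hstar V$ are nothing more than $S \otimes V^{+\otimes \N}$ and $T \otimes V^{+\otimes \N}$ on $\HH_1^+ \otimes \LL_1^{+\otimes \N}$ and $\KK_1 \otimes \LL_1^{+\otimes \N}$ respectively, so everything will ultimately be driven by the behavior of the assignment $V \mapsto V^+ = V \oplus \dss{\text{id}}{\com}$.

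The key preliminary observation is that this assignment is functorial in the relevant sense: $(V'V)^+ = V'^+ V^+$ (because each factor acts as the identity on the $\com$ summand), it sends identity maps to identity maps, it satisfies $\|V^+\| = \max(\|V\|, 1)$ (so contractions go to contractions with $\|V^+\| = 1$), it commutes with adjoints since $(V \oplus \dss{\text{id}}{\com})^* = V^* \oplus \dss{\text{id}}{\com}$, and it preserves isometries and unitaries. Passing to the inductive-limit map $V^{+\otimes \N}$ on the infinite tensor power, each of these properties is inherited. Once this lemma is in hand, parts (1), (2), (3), (4), and (5) all follow directly from the corresponding facts for the tensor product of bounded operators: composition distributes over $\otimes$, identities tensor to identities, $\|A \otimes B\|_{\mathrm{op}} = \|A\|\|B\|$ (giving $\|S \star V\| \leq \|S\|\|V^{+\otimes \N}\| \leq \|S\|$, which yields the claimed bound since $V$ is a contraction), isometries and unitaries tensor to isometries and unitaries, and $(A \otimes B)^* = A^* \otimes B^*$.

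For part (6), the content is essentially bookkeeping. A direct sum decomposition of $S$ (or $T$) tensored with $V^{+\otimes \N}$ plainly respects the corresponding direct sum structure on the tensor products, so the assertion reduces to verifying that the subspaces of $\HH_1 \star \LL_1$ listed in equations \eqref{eqnSPopsintertwines} and \eqref{eqnSPopsintertwines2} are carried to the subspaces of $\HH_2 \star \LL_2$ under the identifications of Proposition \ref{prop3UE}. Those identifications are precisely what Proposition \ref{prop3UE} records, so (6) follows by inspection.

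The main obstacle will be part (7), which asserts a nontrivial equality between two \emph{a priori} different operators. Here the cleanest route is to verify agreement on each summand of $\HH_1 \star \LL_1$ by applying the action formulas of Proposition \ref{propSPopsummands}. Under the hypothesis $S = \dss{S}{L} \oplus \dss{\text{id}}{\com}$ we have $S \Omega_1 = \Omega_2$, so in the notation of that proposition $\alpha = 1$ and $y = 0$; under $T = S \oplus V$ we similarly have, for $\ell \in \LL_1 \subset \KK_1$, $T\ell = V\ell \in \LL_2^+$ so $\zeta = 0$ and $u = V\ell$, and for $h_0 \in \HH_1$, $Th_0 = Sh_0$. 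Substituting these values into the six formulas of Proposition \ref{propSPopsummands} and comparing summand by summand shows directly that $(S \star V)$ and $(T \hstar V)$ coincide on every summand of $\HH_1 \star \LL_1$, which is what had to be proved.
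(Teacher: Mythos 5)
Your proposal is correct and follows essentially the same route as the paper: parts (1)--(5) are dispatched by standard tensor-product facts applied to $S\otimes V^{+\otimes\N}$ (resp.\ $T\otimes V^{+\otimes\N}$), and parts (6) and (7) are handled by unwinding Proposition \ref{propSPopsummands}. The only minor divergence is in (6): you argue it abstractly from the compatibility of a direct-sum decomposition of $S$ (or $T$) with the tensor decomposition, whereas the paper simply observes that the hypothesis forces $y=\beta=0$ (resp.\ $w=\zeta=0$) in the action formulas of Proposition \ref{propSPopsummands}, whence those formulas visibly preserve the summands; your version is mathematically equivalent but relies on the unstated (though true, and implicit in the proof of Proposition \ref{prop3UE}) identification of each summand of $\HH_1\star\LL_1$ with a pure-tensor subspace of $\HH_1^+\otimes\LL_1^{+\otimes\N}$, so the paper's route via the explicit formulas is slightly more self-contained.
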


\begin{proof}
The first five assertions are simple
consequences of the corresponding properties of
tensor products of operators.  The sixth follows
as a special case of Proposition \ref{propSPopsummands}
with $y = \beta = 0$ or $w = \zeta = 0$, and the
seventh is also an easy corollary of Proposition \ref{propSPopsummands}.
\end{proof}

\begin{remark} \label{rembifunctor}
The first two properties say that $- \star -$
and $- \hstar -$ are bifunctors from (Hilbert spaces, bounded maps)
$\times$(Hilbert spaces, contractions) to (Hilbert spaces, bounded maps), and the third and fourth imply that they restrict to bifunctors
from (Hilbert spaces, contractions)$^2$ to (Hilbert spaces, contractions),
from (Hilbert spaces, isometries)$^2$ to (Hilbert spaces, isometries), and from (Hilbert spaces, unitaries)$^2$ to (Hilbert spaces, unitaries).
\end{remark}

\begin{remark} \label{remSPsubspaces}
Together, the fourth and seventh parts of Proposition
\ref{propSPopproperties} imply that, given isometries
$W: \HH_1 \to \HH_2$ and $V: \LL_1 \to \LL_2$, one obtains
an isometry $\HH_1 \star \LL_1 \to \HH_2 \star \LL_2$ which
may be constructed either as $(W \oplus \dss{\text{id}}{\com})
\star V$ or as $(W \oplus \dss{\text{id}}{\com} \oplus V)
\hstar V$.  The sixth part then implies that this
induced isometry maps each summand of $\HH_1 \star \LL_1$
into the corresponding summand of $\HH_2 \star \LL_2$.
Hence, if $\HH_1 \subset \HH_2$ and $\LL_1 \subset \LL_2$,
we may regard $\HH_1 \star \LL_1$ as a subspace of
$\HH_2 \star \LL_2$.
\end{remark}

An important special case of Proposition
\ref{propSPopsummands} occurs when $\HH_1 = \HH_2$,
$\LL_1 = \LL_2$, and $V$ is the identity map.  In
 this case we obtain unital representations
of both $B(\HH^+)$ and $B(\KK)$ on $\HH \star \LL$,
given by $S \mapsto S \star I$ and $T \mapsto T \hstar I$.

\begin{proposition} \label{prop3UEreps}
Let $\HH$ and $\LL$ be Hilbert spaces and $\KK = \HH^+ \oplus \LL$.
Let $\Phi: B(\HH^+) \to B(\HH \star \LL)$ and
$\Psi: B(\KK) \to B(\HH \star \LL)$ be the representations induced
by the unitary equivalences of Proposition \ref{prop3UE}.  Let
$b \in B(\HH^+)$, $a \in B(\KK)$, $h \in \HH^+$, $h_0 \in \HH$,
$k \in \KK$, $\ell \in \LL^+$, and $\xi \in \LL^{+ \otimes n} \otimes \LL$ for some $n \geq 0$, and suppose that
\begin{align*}
b \Omega &= \alpha \Omega + y, \qquad \qquad \alpha \in \com, y \in \HH\\
b h_0 &= \beta \Omega + z, \qquad \qquad \beta \in \com, z \in \HH\\
a h_0 &= \eta + w, \qquad \qquad \eta \in \HH, w \in \LL^+\\
a \ell &= \zeta+u, \qquad \qquad \zeta \in \HH, u \in \LL^+.
\end{align*}
Then
\begin{align*}
\Phi(b) h &= h\\
\Phi(b) \xi &= \alpha \xi + (y \otimes \xi)\\
\Phi(b)(h_0 \otimes \xi) &= \beta \xi + (z \otimes \xi)\\
\Psi(a) k &= a k \\
\Psi(a)(h_0 \otimes \xi) &= (\eta \otimes \xi) + (w \otimes \xi)\\
\Psi(a)(\eta \otimes \xi) &= (\zeta \otimes \xi) + (u \otimes \xi).
\end{align*}
\end{proposition}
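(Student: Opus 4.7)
The plan is to realize both $\Phi$ and $\Psi$ as special instances of the operator constructions $S \mapsto S \star V$ and $T \mapsto T \hstar V$ analyzed in Proposition \ref{propSPopsummands}, and then just read the displayed formulas off of that proposition by taking $V$ to be the identity.

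Concretely, I would take $\HH_1 = \HH_2 = \HH$, $\LL_1 = \LL_2 = \LL$, $\KK_1 = \KK_2 = \KK$, and set $V = I_\LL$. Then $V^+ = I_{\LL^+}$, the infinite tensor power $V^{+\otimes \N}$ is the identity on $\LL^{+ \otimes \N}$, and in particular the operator $V^{(n)}$ appearing in Proposition \ref{propSPopsummands} reduces to the identity on $\LL^{+\otimes n} \otimes \LL$ for every $n \geq 0$. Under these choices, Definition \ref{defSPops} gives $S \star I = S \otimes V^{+\otimes \N}$ on $\HH^+ \otimes \LL^{+\otimes \N}$ conjugated by the unitary equivalence of Proposition \ref{prop3UE}, which is exactly $\Phi(S)$; similarly, $T \hstar I$ agrees with $\Psi(T)$ after the analogous unitary identification of $\KK \otimes \LL^{+\otimes \N}$ with $\HH \star \LL$.

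With those identifications in hand, each of the six formulas in Proposition \ref{propSPopsummands} specializes, by dropping every occurrence of $V^{(n)}$, to the corresponding line in the present statement. The fact that $\Phi$ and $\Psi$ are unital representations (algebra homomorphisms) then comes for free from parts (1) and (2) of Proposition \ref{propSPopproperties}: those parts say that $\star$ and $\hstar$ are multiplicative in the first argument and send the identity to the identity, so with the second argument fixed at $I_\LL$ we obtain genuine unital $*$-representations (the $*$-property coming from part (5) of the same proposition).

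The main point to verify carefully, rather than any deep obstacle, is that the two different unitary equivalences invoked in Proposition \ref{prop3UE} — one to $\HH^+ \otimes \LL^{+\otimes \N}$ and one to $\KK \otimes \LL^{+\otimes \N}$ — carry $B(\HH^+)$ and $B(\KK)$ respectively into the same algebra of operators on $\HH \star \LL$ in a compatible way, so that the summand-by-summand formulas for $\Phi$ and $\Psi$ really do refer to the same underlying direct-sum decomposition of $\HH \star \LL$. This compatibility is precisely what the five bullet points listing subspace identifications in Proposition \ref{prop3UE} record, so the verification is immediate from that data.
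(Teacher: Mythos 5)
Your proposal is correct and takes exactly the route the paper intends: the sentence immediately preceding Proposition \ref{prop3UEreps} says it is the special case $\HH_1 = \HH_2$, $\LL_1 = \LL_2$, $V = \text{id}$ of Proposition \ref{propSPopsummands}, and that is precisely what you do. (Incidentally, reading off Proposition \ref{propSPopsummands} in this way reveals two apparent typos in the statement you were given: the first displayed line should read $\Phi(b)h = bh$, and the last should read $\Psi(a)(\ell \otimes \xi) = (\zeta \otimes \xi) + (u \otimes \xi)$.)
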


The following is an immediate consequence:

\begin{corollary} \label{corinvariant}
In the situation of Proposition \ref{prop3UEreps},
the subspaces $\HH^+$ and\\
$(\LL^{+ \otimes n} \otimes \LL)
\oplus (\HH \otimes \LL^{+ \otimes n} \otimes \LL)$ of
$\HH \star \LL$ are $\Phi$-invariant, while the subspaces\\
$\HH^+ \oplus \LL$ and $(\HH \otimes \LL^{+ \otimes n} \otimes \LL)
\oplus (\LL^{+ \otimes (n+1)} \otimes \LL)$ are $\Psi$-invariant.
\end{corollary}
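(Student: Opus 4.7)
The corollary is an immediate consequence of the explicit formulas in Proposition \ref{prop3UEreps}, so my plan is simply to read off, for each of the four claimed invariant subspaces, which summands of $\HH \star \LL$ the action of $\Phi(b)$ or $\Psi(a)$ can reach when applied to a vector in that subspace.

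For the $\Phi$-invariance of $\HH^+$, I would apply the first formula of Proposition \ref{prop3UEreps}, namely $\Phi(b) h = b h \in \HH^+$ for $h \in \HH^+$; there is nothing more to check. For the $\Phi$-invariance of $(\LL^{+\otimes n}\otimes \LL)\oplus(\HH\otimes \LL^{+\otimes n}\otimes \LL)$, I would consider separately a vector $\xi$ in the first summand and a vector $h_0 \otimes \xi$ in the second summand. The formulas $\Phi(b)\xi=\alpha\xi+(y\otimes\xi)$ and $\Phi(b)(h_0\otimes\xi)=\beta\xi+(z\otimes\xi)$ place the scalar multiples of $\xi$ back in $\LL^{+\otimes n}\otimes \LL$ and the terms $y\otimes\xi$, $z\otimes\xi$ (with $y,z \in \HH$) in $\HH\otimes\LL^{+\otimes n}\otimes \LL$, so both outputs lie in the claimed direct sum.

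For $\Psi$, the $\Psi$-invariance of $\HH^+\oplus\LL = \KK$ is immediate from $\Psi(a) k = a k$ since $a \in B(\KK)$. For the remaining subspace $(\HH\otimes \LL^{+\otimes n}\otimes \LL)\oplus(\LL^{+\otimes(n+1)}\otimes \LL)$, I would again split into two cases: a vector of the form $h_0 \otimes \xi$ in the first summand and a vector of the form $\ell \otimes \xi$ in the second, where I identify $\LL^+ \otimes \LL^{+\otimes n}\otimes \LL$ with $\LL^{+\otimes(n+1)}\otimes \LL$. The formulas $\Psi(a)(h_0 \otimes \xi) = (\eta \otimes \xi) + (w \otimes \xi)$ and $\Psi(a)(\ell \otimes \xi) = (\zeta \otimes \xi) + (u \otimes \xi)$, with $\eta, \zeta \in \HH$ and $w, u \in \LL^+$, again place each summand of the output in one of the two designated subspaces.

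In sum, there is no real obstacle; the only care required is bookkeeping of which Hilbert space summand each piece of $b\Omega$, $bh_0$, $ah_0$, or $a\ell$ contributes to under the identifications fixed in Proposition \ref{prop3UEreps}. The proof is therefore a short case analysis over the four pairs (operator type, invariant subspace), each a single application of the corresponding formula.
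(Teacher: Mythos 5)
Your proposal is correct and matches the paper's intent exactly: the paper gives no separate argument, stating only that the corollary is ``an immediate consequence'' of Proposition \ref{prop3UEreps}, and your case-by-case reading of those formulas (including the implicit corrections $\Phi(b)h = bh$ and $\Psi(a)(\ell\otimes\xi)$ for the evident typos in the proposition's statement) is precisely the intended verification. Nothing further is needed beyond noting that each claimed subspace is spanned by the vectors you treat, so invariance follows by linearity and continuity.
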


We visualize this corollary using a stairstep diagram:
\[
\begin{matrix}
\HH^+ \\
\LL & (\HH \otimes \LL) \\
& (\LL^+ \otimes \LL) & (\HH \otimes \LL^+ \otimes \LL) \\
& & (\LL^{+ \otimes 2} \otimes \LL) & (\HH \otimes \LL^{+ \otimes 2}
\otimes \LL) \\
& & & \ddots
\end{matrix}
\]
The rows here are $\Phi$-invariant, while the columns are $\Psi$-invariant.  Equivalently, $\Phi$ and $\Psi$
have staggered block-diagonal decompositions:
\[
\Phi = \begin{bmatrix} * & 0  & 0 & 0 & 0 & \dots \\
0 & * & * & 0 & 0 & \dots\\
0 & * & * & 0 & 0 & \dots\\
0 & 0 & 0 & * & * & \dots\\
0 & 0 & 0 & * & * & \dots\\
\vdots & \vdots & \vdots & \vdots & \vdots & \ddots
\end{bmatrix}, \qquad \qquad \qquad
\Psi = \begin{bmatrix} * & *  & 0 & 0 & 0 & 0 & \dots \\
* & * & 0 & 0 & 0 & 0 & \dots\\
0 & 0 & * & * & 0 & 0 & \dots\\
0 & 0 & * & * & 0 & 0 & \dots\\
0 & 0 & 0 & 0 & * & * & \dots\\
0 & 0 & 0 & 0 & * & * & \dots\\
\vdots & \vdots & \vdots & \vdots & \vdots & \vdots & \ddots
\end{bmatrix}
\]
We see that a sufficiently long word $\Phi(b_0) \Psi(a_1)
\Phi(b_1) \Psi(a_2) \cdots$ applied to a vector in one of
these subspaces could have a nonzero component in any other subspace.  Keeping track of such components will become important later on.

\begin{remark} \label{remsauvageotvsfree}
For simplicity of definition, we have thus far begun with Hilbert spaces $\HH$ and $\LL$, and defined the space $\KK = \HH^+ \oplus \LL$ in terms of them.  In application, however, we will begin with an inclusion
$H \subset K$ (obtained from Stinespring dilation), select
 a unit vector $\Omega \in H$, and form the Sauvageot product
 $H^- \star (K \ominus H)$.
As noted above, $B(H^- \star (K \ominus H))$ contains unital
copies of both $B(H)$ and $B(K)$.  In this alone, however, it
is no different from $B(H \otimes K)$ or $B(H * K)$, where
$H * K$ denotes the free product of Hilbert spaces in the
sense of \cite{VoiculescuSymmetries}.  The crucial difference
is that, when both are represented on $H^- \star (K \ominus H)$,
the copy of $B(H)$ is a corner of the copy of $B(K)$; if
$H \subset K$ is a Stinespring dilation, the compression will
implement a given unital completely positive map.
\end{remark}

\section{Sauvageot Products of C$^*$-Algebras and W$^*$-Algebras}
We now begin our construction of the Sauvageot product of
unital C$^*$-algebras with respect to a unital completely
positive map; the construction requires the choice of
a state on one of the C$^*$-algebras, prompting the following definition.

\begin{definition} \label{defCPtuple}
A \textbf{CPC$^*$-tuple} (resp.\ \textbf{CPW$^*$-tuple})
is a quadruple $(A, B, \phi, \omega)$ where
$A$ and $B$ are unital C$^*$-algebras (resp.\ W$^*$-algebras),
$\phi: A \to B$ a unital (normal) completely positive map, and $\omega$ a (normal) state on $B$.
The term \textbf{CP-tuple} will refer to CPC$^*$- and
CPW$^*$-tuples together.  A CP-tuple
is said to be \textbf{faithful} if $\omega$ is a faithful state.
\end{definition}

\begin{definition} \label{defrepCPtuple}
A \textbf{representation} of a CPC$^*$-tuple $(A, B, \phi, \omega)$
is a sextuple $(H, \Omega, \dss{\pi}{R}, K, V, \dss{\pi}{L})$
where
\begin{enumerate}
    \item $H$ is a Hilbert space
    \item $\Omega \in H$ is a unit vector
    \item $\dss{\pi}{R}: B \to B(H)$ is a unital *-homomorphism
    such that $\la \Omega, \dss{\pi}{R}(\cdot) \Omega
    \ra = \omega(\cdot)$
    \item $K$ is a Hilbert space
    \item $V: H \to K$ is an isometry
    \item $\dss{\pi}{L}: A \to B(K)$ is a unital *-homomorphism
    such that $V^* \dss{\pi}{L}(\cdot) V =
    \dss{\pi}{R}(\phi(\cdot))$.
\end{enumerate}
For a CPW$^*$-tuple, we also require that $\dss{\pi}{R}$
and $\dss{\pi}{L}$ be normal.  A representation is
\textbf{right-faithful} if $\dss{\pi}{R}$ is injective (which is
automatically the case for a representation of a faithful
CP-tuple), and \textbf{left-faithful} if $\dss{\pi}{L}$ is
injective.  We also refer to $(H, \Omega, \dss{\pi}{R})$
satisfying the first three criteria as a \textbf{representation} of $(A, \omega)$.
\end{definition}

From now until Definition \ref{defSPCPtuple}
we fix a CP-tuple $(A, B, \phi, \omega)$
and a right-faithful representation
$(H, \Omega, \dss{\pi}{R}, K, V, \dss{\pi}{L})$.
We introduce the following additional notation:
\begin{itemize}
    \item $L=K \ominus VH$
    \item $\Hh = H^- \star L$
    \item $\dss{\psi}{L}: A \to B(\Hh)$ and $\dss{\psi}{R}: B \to B(\Hh)$ are the compositions of $\dss{\pi}{L}$ and $\dss{\pi}{R}$ with the representations of Proposition \ref{prop3UEreps}
    \item $A \star B$ is the C$^*$-subalgebra (resp.\ von Neumann
    subalgebra) of $B(\Hh)$
    generated by the images of $\dss{\psi}{L}$
    and $\dss{\psi}{R}$
    \item $H' = H \ominus \overline{\dss{\pi}{R}(B) \Omega}$, which
    could be zero
    \item $q_n$ for $n \geq 0$ is the projection from
    $\Hh$ onto the subspace $H' \otimes L^{+ \otimes n}
    \otimes L$ of $H \otimes L^{+ \otimes n} \otimes L$
    \item $\Cc: B(\Hh) \to B(\Hh)$ is the non-unital conditional
    expectation
    \[
    \Cc(T) = \dss{P}{H} T \dss{P}{H} + \sum_{n=0}^\infty q_n T q_n.
    \]
\end{itemize}

\begin{proposition} \label{propcovariantcorner}
\[
\Cc \circ \dss{\psi}{L} = \Cc \circ \dss{\psi}{R}
\circ \phi.
\]
\end{proposition}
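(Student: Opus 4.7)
The plan is to verify the identity $\Cc(\dss{\psi}{L}(a)) = \Cc(\dss{\psi}{R}(\phi(a)))$ summand by summand. Since $\Cc(T) = \dss{P}{H} T \dss{P}{H} + \sum_{n \geq 0} q_n T q_n$, it is enough to show (i) $\dss{P}{H} \dss{\psi}{L}(a) \dss{P}{H} = \dss{P}{H} \dss{\psi}{R}(\phi(a)) \dss{P}{H}$, and (ii) $q_n \dss{\psi}{L}(a) q_n = q_n \dss{\psi}{R}(\phi(a)) q_n$ for each $n \geq 0$. Both will be checked by direct computation using the formulas of Proposition \ref{prop3UEreps}, and the key algebraic input in each case is the Stinespring-type intertwining $V^* \dss{\pi}{L}(\cdot) V = \dss{\pi}{R}(\phi(\cdot))$ built into the definition of a representation of a CP-tuple.

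For part (i), the subspace $H \subset \Hh$ sits inside the copy of $\KK = H \oplus L$ embedded in $\Hh$, so for $h \in H$ Proposition \ref{prop3UEreps} gives $\dss{\psi}{L}(a) h = \dss{\pi}{L}(a) h$; applying the projection onto $H$ yields $V^* \dss{\pi}{L}(a) V h = \dss{\pi}{R}(\phi(a)) h$, while $\dss{\psi}{R}(\phi(a)) h = \dss{\pi}{R}(\phi(a)) h$ directly. For part (ii), fix $h_0 \otimes \xi$ with $h_0 \in H' \subset H^-$ and $\xi \in L^{+\otimes n} \otimes L$, and decompose $\dss{\pi}{L}(a) h_0 = \eta + w$ in $K = H^- \oplus L^+$ (with $\eta \in H^-$ and $w \in L^+$) and $\dss{\pi}{R}(\phi(a)) h_0 = \beta \Omega + z$ in $H = H^- \oplus \com\Omega$ (with $z \in H^-$ and $\beta \in \com$). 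Proposition \ref{prop3UEreps} then gives
\[
\dss{\psi}{L}(a)(h_0 \otimes \xi) = \eta \otimes \xi + w \otimes \xi \quad\text{and}\quad \dss{\psi}{R}(\phi(a))(h_0 \otimes \xi) = \beta \xi + z \otimes \xi.
\]
The extra terms $w \otimes \xi$ and $\beta \xi$ land in the summands $L^{+\otimes(n+1)} \otimes L$ and $L^{+\otimes n} \otimes L$, respectively, both of which are orthogonal to the range $H' \otimes L^{+\otimes n} \otimes L$ of $q_n$ and so are annihilated by $q_n$. What remains is $P_{H'} \eta \otimes \xi$ on the left and $P_{H'} z \otimes \xi$ on the right, and the intertwining relation forces the $H$-component of $\dss{\pi}{L}(a) h_0$ to equal $\dss{\pi}{R}(\phi(a)) h_0$, whence $\eta = z$.

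The main obstacle is combinatorial bookkeeping rather than any substantive algebra: one must track how vectors distribute across the many summands of $\Hh = H \oplus \bigoplus_{n \geq 0} [(L^{+\otimes n} \otimes L) \oplus (H^- \otimes L^{+\otimes n} \otimes L)]$ under the various unitary identifications of Proposition \ref{prop3UE}. In particular, one must recognize that $\Omega \otimes \xi$ for $\xi \in L^{+\otimes n} \otimes L$ is a genuinely new vector of the higher summand $L^{+\otimes(n+1)} \otimes L$ under the shift $L^+ \otimes L^{+\otimes\N} \cong L^{+\otimes\N}$, and not a re-expression of $\xi$ itself. Once this is correctly accounted for, the proposition reduces to the single Stinespring intertwining identity.
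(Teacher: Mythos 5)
Your proof is correct and follows essentially the same route as the paper: verify the identity summand by summand on the ranges of $\dss{P}{H}$ and of each $q_n$, using the formulas of Proposition~\ref{prop3UEreps} and the Stinespring intertwining $V^* \dss{\pi}{L}(\cdot) V = \dss{\pi}{R}(\phi(\cdot))$ to match the $H^-$-components. The only difference is that you spell out the orthogonality accounting for the $q_n$ case, which the paper dispatches with ``similarly.''
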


\begin{proof}
For $a \in A$ and $h \in H$, let
$\dss{\pi}{L}(a) h = x + \ell$ with $x \in h$ and $\ell
\in L$; then $x = \dss{P}{H} \dss{\pi}{L}(a) \dss{P}{H} h
= \dss{\pi}{R}(\phi(a)) h$.  It follows from
Proposition \ref{prop3UEreps} that $\dss{\psi}{L}(a) h
= x +\ell$, so that
\[
\dss{P}{H} \dss{\psi}{L}(a) \dss{P}{H}
= x = \dss{P}{H} \dss{\pi}{R}(\phi(a)) \dss{P}{H}.
\]
Similarly, $q_n \dss{\psi}{L}(a) q_n
= q_n \dss{\pi}{R}(\phi(a)) q_n$ for all $n \geq 0$.
Summing over $n$ yields the result.
\end{proof}

For the next lemma and proposition we use
$E_n$ to denote the subspace $L^{+ \otimes n} \otimes L$
of $\Hh$.

\begin{lemma} \label{lemhorriblyunmotivated}
Let $\zeta \in E_n$.
\begin{enumerate}
        \item Let $a \in A$ and $b \in B$ with $\omega(b) = 0$.  Then
        \[
        \big[ \dss{\psi}{L}(a) - \dss{\psi}{R}(\phi(a))  \big] \dss{\psi}{R}(b)  \zeta
        = [ \dss{P}{L^+} \dss{\pi}{L}(a) \dss{\pi}{R}(b) \Omega] \otimes \zeta
        - \omega(\phi(a) b) \zeta \in E_n \oplus E_{n+1}.
        \]

        \item More generally, given $a_1, \dots, a_k \in A$
        and $b_1, \dots, b_k \in B$ such that $\omega(b_i) = 0$ for
        $i = 1, \dots, k$, if we define
        \[
        \zeta_k =  \left( \prod_{i=1}^k \big[ \dss{\psi}{L}(a_i) - \dss{\psi}{R}(\phi(a_i)) \big] \dss{\psi}{R}(b_i)
        \right) \zeta,
        \]
        then
        \[
        \zeta_k \in \bigoplus_{i=0}^k E_{n+i} \text{ with }
        P_{E_n} (\zeta_k)  = (-1)^k \prod_{i=1}^k \omega(\phi(a_i) b_i) \zeta.
        \]
    \end{enumerate}
\end{lemma}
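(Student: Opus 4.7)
My plan is to prove part (1) by direct computation from the explicit formulas for $\Phi$ and $\Psi$ in Proposition \ref{prop3UEreps}, and then to deduce part (2) by induction on $k$, reusing part (1) at each step.

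For part (1), I would first compute $\dss{\psi}{R}(b)\zeta$. Since $\omega(b) = 0$, the vector $h := \dss{\pi}{R}(b)\Omega$ lies in $H^- = \HH$, and the formula for $\Phi$ immediately gives $\dss{\psi}{R}(b)\zeta = h \otimes \zeta$. Next, I would apply $\dss{\psi}{L}(a) = \Psi(\dss{\pi}{L}(a))$ to $h \otimes \zeta$: the formula for $\Psi$ produces $\eta \otimes \zeta + w \otimes \zeta$, where $\dss{\pi}{L}(a)h = \eta + w$ is the decomposition into an $\HH$-part $\eta \in H^-$ and an $\LL^+$-part $w \in L^+$ (the $\com\Omega$ summand of $\LL^+$ absorbing whatever scalar piece happens to sit in $H$). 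The Stinespring identity $V^*\dss{\pi}{L}(a)V = \dss{\pi}{R}(\phi(a))$ identifies the $H$-component of $\dss{\pi}{L}(a)h$ as $\dss{\pi}{R}(\phi(a)b)\Omega$, which further splits as $\omega(\phi(a)b)\Omega + h''$ with $h'' \in H^-$; so $\eta = h''$ and $w = \dss{P}{L^+}\dss{\pi}{L}(a)\dss{\pi}{R}(b)\Omega$. Separately, the formula for $\Phi$ gives $\dss{\psi}{R}(\phi(a))\dss{\psi}{R}(b)\zeta = \Phi(\dss{\pi}{R}(\phi(a)b))\zeta = \omega(\phi(a)b)\zeta + h''\otimes\zeta$. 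Subtracting cancels the $h''\otimes\zeta$ terms and leaves $w\otimes\zeta - \omega(\phi(a)b)\zeta$, which is the stated formula and manifestly lies in $E_{n+1}\oplus E_n$.

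For part (2), I would induct on $k$, with the case $k=1$ being exactly part (1). For the inductive step, factor $\zeta_k = [\dss{\psi}{L}(a_1) - \dss{\psi}{R}(\phi(a_1))]\dss{\psi}{R}(b_1)\, \zeta_{k-1}'$, where $\zeta_{k-1}'$ is the analogous product for $i=2,\ldots,k$. By hypothesis $\zeta_{k-1}' \in \bigoplus_{i=0}^{k-1} E_{n+i}$ with $P_{E_n}(\zeta_{k-1}') = (-1)^{k-1}\prod_{i=2}^k \omega(\phi(a_i)b_i)\zeta$. I would decompose $\zeta_{k-1}' = \sum_{i=0}^{k-1}\xi_i$ with $\xi_i \in E_{n+i}$ and apply part (1) to each $\xi_i$ (using $E_{n+i}$ in the role of $E_n$); the result lies in $E_{n+i}\oplus E_{n+i+1}$, so $\zeta_k \in \bigoplus_{i=0}^k E_{n+i}$. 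Only $\xi_0$ can contribute to the $E_n$ piece of $\zeta_k$, and it does so via the scalar factor $-\omega(\phi(a_1)b_1)$ coming from part (1); combining this with the inductive formula for $P_{E_n}(\zeta_{k-1}')$ yields the asserted $(-1)^k\prod_{i=1}^k\omega(\phi(a_i)b_i)\zeta$.

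The only real obstacle is tracking the identifications carefully: translating between the $H\oplus L$ decomposition of $K$ and the $\HH\oplus\com\Omega\oplus\LL$ decomposition underlying $\KK$, and in particular remembering that the scalar $\omega(\phi(a)b)\Omega$ that naturally sits in $H$ gets repackaged as the $\com\Omega$ summand of $\LL^+$ when one collects the $w$-part in the notation of Proposition \ref{prop3UEreps}. Beyond this bookkeeping, both parts are routine.
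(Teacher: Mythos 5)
Your proposal is correct and follows essentially the same route as the paper's proof: apply the explicit formulas of Proposition \ref{prop3UEreps} to compute $\dss{\psi}{L}(a)(h\otimes\zeta)$ and $\dss{\psi}{R}(\phi(a))(h\otimes\zeta)$ separately (using the Stinespring identity to identify the $H$-component of $\dss{\pi}{L}(a)h$ with $\dss{\pi}{R}(\phi(a)b)\Omega$), subtract, and observe that the $H^-$-parts cancel leaving the $E_{n+1}$- and $E_n$-components stated; part (2) then follows by the routine induction you describe. The only cosmetic difference is that you factor off $T_1$ rather than $T_k$ in the inductive step (i.e.\ you read $\prod_{i=1}^k$ as $T_1\cdots T_k$, whereas the paper's later application in Proposition \ref{propcornerliberation} uses $T_k\cdots T_1$), but this is immaterial here since the result is symmetric: both factorizations produce the same containment $\bigoplus_{i=0}^{k}E_{n+i}$ and the same scalar $(-1)^k\prod_i\omega(\phi(a_i)b_i)$.
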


\begin{proof}
The stipulation that $\omega(b) = 0$ implies that $\dss{\pi}{R}(b) \Omega \in H^-$,
so that
\[
\xi := \dss{\psi}{R}(b) \zeta = (\dss{\pi}{R}(b) \Omega) \otimes \zeta
\in H^- \otimes E_n
\]
where we have used the calculations in Proposition (\ref{prop3UEreps}). We have now to apply two different operators to $\xi$ and subtract the results.  First, when we apply $\dss{\psi}{R}(\phi(a))$ we get
    \[
    \dss{\psi}{R}(\phi(a)) \xi = (\dss{\pi}{R}(\phi(a) b) \Omega) \otimes \zeta
    = [\omega(\phi(a) b) \Omega] \otimes \eta + [\dss{P}{H^-} \dss{\pi}{R}(\phi(a) b) \Omega] \otimes \zeta
    \]
    by virtue of the fact that $\dss{\pi}{R}(\beta) \Omega = \omega(\beta) \Omega + \dss{P}{H^-} (\dss{\pi}{R}(\beta) \Omega)$
    for all $b \in B$.    Secondly, we apply $\dss{\psi}{L}(a)$ as follows:
    \begin{align*}
    \dss{\psi}{L}(a) \xi &= (\dss{\pi}{L}(a) \dss{\pi}{R}(b) \Omega) \otimes \zeta \\
    &= \Big[ \left(\dss{P}{H^-} \dss{\pi}{L}(a) \dss{\pi}{R}(b) \Omega \right)
    \oplus \left(\dss{P}{L^+} \dss{\pi}{L}(a) \dss{\pi}{R}(b) \Omega \right) \Big] \otimes \zeta\\
    &= \Big[ \left(\dss{P}{H^-} P_H \dss{\pi}{L}(a) \dss{\pi}{R}(b) \Omega \right)
    \oplus \left(\dss{P}{L^+} \dss{\pi}{L}(a) \dss{\pi}{R}(b) \Omega \right) \Big] \otimes \zeta\\
    &= \Big[ \left(\dss{P}{H^-} \dss{\pi}{R}(\phi(a)b) \Omega \right) \otimes \zeta\Big]
    \oplus \Big[\left(\dss{P}{L^+} \dss{\pi}{L}(a) \dss{\pi}{R}(b) \Omega \right) \otimes \zeta\Big].
    \end{align*}
    Subtracting  yields the desired result.
    The second assertion of the lemma follows by induction.
\end{proof}

We now connect the current material to chapter \ref{chapliberation}.

\begin{proposition} \label{propcornerliberation}
$(B(\Hh), \dss{\psi}{L}, \dss{\psi}{R}, \Cc)$ is a right-liberating
representation of $(A, B, \phi, \omega)$.
\end{proposition}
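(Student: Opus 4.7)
The plan is to verify the two conditions of Definition \ref{defliberatingrep}: that $\Cc$ is a $\dss{\psi}{R}(B)$-bimodule map, and that $\Cc$ annihilates every alternating word
\[
W = \bigl[\dss{\psi}{L}(a_1) - \dss{\psi}{R}(\phi(a_1))\bigr] \dss{\psi}{R}(\tilde{b}_1) \cdots \dss{\psi}{R}(\tilde{b}_{n-1}) \bigl[\dss{\psi}{L}(a_n) - \dss{\psi}{R}(\phi(a_n))\bigr],
\]
where $\tilde{b}_i = b_i - \omega(b_i) \one$. This simplification of the generic centering uses the unitality of $\dss{\psi}{L}$: since $\omega$ plays the role of $\psi$ and takes values in the scalar subalgebra of $A$, each factor $g(b_i) - f(\psi(b_i))$ collapses to $\dss{\psi}{R}(\tilde{b}_i)$.

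For the bimodule property, because $\Cc(T) = \dss{P}{H} T \dss{P}{H} + \sum_n q_n T q_n$, it suffices to show that $\dss{P}{H}$ and each $q_n$ commute with $\dss{\psi}{R}(B)$. For $\dss{P}{H}$, this is immediate from Corollary \ref{corinvariant}, since $H=\HH^+$ and its complement in $\Hh$ are both $\Phi$-invariant. For $q_n$, I would first note that on the summand $H \otimes L^{+\otimes n} \otimes L$ of $\Hh$, the operator $\dss{\psi}{R}(b)$ acts as $\dss{\pi}{R}(b) \otimes I$ (by direct inspection of Proposition \ref{prop3UEreps}); then, because $H' = H \ominus \overline{\dss{\pi}{R}(B)\Omega}$ is the orthogonal complement of a $\dss{\pi}{R}(B)$-invariant cyclic subspace, $H'$ is itself $\dss{\pi}{R}(B)$-invariant, so $H' \otimes L^{+\otimes n} \otimes L$ is $\dss{\psi}{R}(B)$-invariant and $q_n$ commutes with $\dss{\psi}{R}(B)$.

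For the moment condition, I would show $\dss{P}{H} W \dss{P}{H} = 0$ and $q_m W q_m = 0$ for every $m \geq 0$, which together with the orthogonality of the ranges of these projections gives $\Cc(W) = 0$. The main tool is Lemma \ref{lemhorriblyunmotivated}, which already handles the left segment $W' = \prod_{i=1}^{n-1}\bigl[\dss{\psi}{L}(a_i) - \dss{\psi}{R}(\phi(a_i))\bigr] \dss{\psi}{R}(\tilde{b}_i)$: applied to any vector in $E_\ell$, it returns a vector in $\bigoplus_{i=0}^{n-1} E_{\ell+i}$. So my task reduces to understanding how the rightmost factor $A_n := \dss{\psi}{L}(a_n) - \dss{\psi}{R}(\phi(a_n))$ sends the starting subspace into a sum of $E_\bullet$'s. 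For $h \in H$, the covariance identity $\dss{P}{H} \dss{\pi}{L}(a_n) \dss{P}{H} = \dss{\pi}{R}(\phi(a_n))$ of Proposition \ref{propcovariantcorner} forces $A_n h = \dss{P}{L} \dss{\pi}{L}(a_n) h \in E_0$, and then $W h \in \bigoplus_{i=0}^{n-1} E_i$, which is orthogonal to $H$. For $v = h_0 \otimes \xi \in H' \otimes E_m$, combining Proposition \ref{prop3UEreps} with the same covariance causes the $H^- \otimes E_m$-components of $\dss{\psi}{L}(a_n) v$ and $\dss{\psi}{R}(\phi(a_n)) v$ to cancel, placing $A_n v \in E_m \oplus E_{m+1}$; consequently $W v \in \bigoplus_{i=0}^{n} E_{m+i}$, which is orthogonal to $H' \otimes E_m$.

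The principal obstacle is verifying the cancellation performed by $A_n$ on $H' \otimes E_m$. Both $\dss{\psi}{L}(a_n)(h_0 \otimes \xi)$ and $\dss{\psi}{R}(\phi(a_n))(h_0 \otimes \xi)$ split, under Proposition \ref{prop3UEreps}, into pieces occupying several different summands of the stairstep decomposition of Corollary \ref{corinvariant}. Only by carefully identifying the Proposition \ref{prop3UEreps} decomposition $\dss{\pi}{L}(a_n) h_0 = \eta + w$ with the $K = H \oplus L$ decomposition, and then invoking the covariance identity to match coefficients, does one see that the $H^- \otimes E_m$ contributions from the two terms cancel exactly, while the surviving pieces lie in $E_m$ and $E_{m+1}$. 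Once that key calculation is in hand, the rest of the proof is a routine appeal to Lemma \ref{lemhorriblyunmotivated} and the orthogonality of the summands of $\Hh$.
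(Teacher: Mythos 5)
Your proposal is correct and takes essentially the same approach as the paper: both establish the bimodule property from $\dss{\psi}{R}(B)$-invariance of the projections $\dss{P}{H}$ and $q_n$, and both show the compressions of the alternating word vanish by combining Lemma \ref{lemhorriblyunmotivated} with a direct covariance-identity calculation tracking how a centered $\dss{\psi}{L}(a) - \dss{\psi}{R}(\phi(a))$ factor moves vectors out of $H$ and $H' \otimes E_m$. The only difference is bookkeeping---you peel off the rightmost centered $a$-factor and analyze it first, whereas the paper bundles an auxiliary $\dss{\psi}{R}(b_1)$ onto the right of the word so the Lemma applies to the whole product, then removes that extra $b$ afterward via the bimodule property.
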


\begin{proof}

   Since $H$ and each $H' \otimes E_n$ are
    $\dss{\psi}{R}$-invariant subspaces by Proposition
    \ref{prop3UEreps}, their projections all commute
    with $\dss{\psi}{R}$, so that $\Cc$ is a
    $\dss{\psi}{R}(B)$-bimodule map.  Now
    let $\xi \in H$ and let $a_1, \dots, a_n \in A$,
    $b_1, \dots, b_n \in B$ such that $\omega(b_2) = \dots = \omega(b_n) = 0$.
    Define the operators
    \[
    T_k = \Big(\dss{\psi}{L}(a_k) - \dss{\psi}{R}(\phi(a_k)) \Big)\dss{\psi}{R}(b_k)
    \]
    on $\Hh$, and the vectors
    \[
    \zeta_k = T_k \dots T_1 \xi \in \Hh.
    \]
    We will show by induction that $\zeta_k \in \displaystyle\bigoplus_{j=0}^{k-1} E_j$,
    which is contained in the kernel of $\dss{P}{H}$; it will follow that
    $\dss{P}{H} T_k \dots T_1 \dss{P}{H} = 0$.  For the base case $k=1$, we have
    $\dss{\psi}{R}(b_1) \xi = \dss{\pi}{R}(b_1) \xi,$ so that
    $\dss{\psi}{R}(\phi(a_1)) \dss{\psi}{R}(b_1) \xi = \dss{\pi}{R}(\phi(a_1) b_1) \xi$.
    We also have
    \begin{align*}
    \dss{\psi}{L}(a_1) \dss{\psi}{R}(b_1) \xi &= \dss{\pi}{L}(a_1) \dss{\pi}{R}(b_1) \xi \\
    &= \Big( \dss{P}{L} \dss{\pi}{L}(a_1) \dss{\pi}{R}(b_1) \xi \oplus
    \dss{P}{H} \dss{\pi}{L}(a_1) \dss{\pi}{R}(b_1) \xi \Big) \\
    &= \Big( \dss{P}{L} \dss{\pi}{L}(a_1) \dss{\pi}{R}(b_1) \xi \oplus
    \dss{P}{H} \dss{\pi}{L}(a_1) \dss{P}{H} \dss{\pi}{R}(b_1) \xi \Big) \\
    &= \Big( \dss{P}{L} \dss{\pi}{L}(a_1) \dss{\pi}{R}(b_1) \xi \oplus
    \dss{\pi}{R}(\phi(a_1)) \dss{\pi}{R}(b_1) \xi \Big)
    \end{align*}
    and subtracting yields
    \[
    \zeta_1 = \dss{P}{L} \dss{\pi}{L}(a_1) \dss{\pi}{R}(b_1) \xi \in E_0
    \]
    as desired.  The inductive step is immediate from Lemma
    \ref{lemhorriblyunmotivated}.

    Similarly, for $\xi \in H'$ and $\eta \in E_n$,
    $\dss{\psi}{R}(b_1)(\xi \otimes \eta) = (\dss{\pi}{R}(b_1) \xi) \otimes \eta$ so that
    \[
    \dss{\psi}{R}(\phi(a_1)) \dss{\psi}{R}(b_1) (\xi \otimes \eta)
    = [\dss{\pi}{R}(\phi(a_1) b_1) \xi] \otimes \eta.
    \]
    Then
    \begin{align*}
    \dss{\psi}{L}(a_1) \dss{\psi}{R}(b_1) (\xi \otimes \eta)
    &= \dss{\psi}{L}(a_1)[(\dss{\pi}{R}(b_1) \xi) \otimes \eta] \\
    &= [\dss{\pi}{L}(a_1) \dss{\pi}{R}(b_1) \xi] \otimes \eta \\
    &= \Big[ \dss{P}{L} \dss{\pi}{L}(a_1) \dss{\pi}{R}(b_1) \xi \oplus
    \dss{P}{H} \dss{\pi}{L}(a_1) \dss{\pi}{R}(b_1) \xi \Big] \otimes \eta \\
    &= \Big[ \dss{P}{L} \dss{\pi}{L}(a_1) \dss{\pi}{R}(b_1) \xi \oplus
    \dss{P}{H} \dss{\pi}{L}(a_1) \dss{P}{H} \dss{\pi}{R}(b_1) \xi \Big] \otimes \eta \\
    &= \Big[ \dss{P}{L} \dss{\pi}{L}(a_1) \dss{\pi}{R}(b_1) \xi \oplus
    \dss{\pi}{R}(\phi(a_1)) \dss{\pi}{R}(b_1) \xi \Big] \otimes \eta
    \end{align*}
    and subtracting yields
    \[
    \zeta_1 = [\dss{P}{L} \dss{\pi}{L}(a_1) \dss{\pi}{R}(b_1) \xi] \otimes \eta \in E_{n+1}.
    \]
    It follows by induction that $\zeta_k \in \displaystyle\bigoplus_{j=0}^{k-1} E_{j+n}$, so that $q_n \zeta_k = 0$; hence
    $q_n T_k \dots T_1 q_n = 0$.  Summing over $n$,
    we have $\Cc(T_k \dots T_1) = 0$.
\end{proof}

\begin{corollary} \label{corcornermapsinto}
\[
\Cc(A \star B) = \Cc(\dss{\psi}{R}(B)).
\]
\end{corollary}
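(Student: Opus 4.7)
The plan is to recognize this corollary as essentially the specialization of Corollary \ref{corrightlibrepgenerated} to the right-liberating representation constructed in the preceding proposition, then handle the passage from algebraic to topological closure.

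First I would invoke Proposition \ref{propcornerliberation}, which establishes that $(B(\Hh), \dss{\psi}{L}, \dss{\psi}{R}, \Cc)$ is a right-liberating representation of $(A, B, \phi, \omega)$. Applying Corollary \ref{corrightlibrepgenerated} directly to this representation yields
\[
\Cc \Big[ \la \dss{\psi}{L}(A), \dss{\psi}{R}(B) \ra_{\text{alg}} \Big] = \Cc \big[ \dss{\psi}{R}(B) \big],
\]
where $\la \cdot, \cdot \ra_{\text{alg}}$ denotes the \emph{algebraically} generated subalgebra. Since $A \star B$ is defined as the norm closure (in the C$^*$ case) or ultraweak closure (in the W$^*$ case) of this algebraic subalgebra, it remains to promote the equality through the closure.

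The promotion rests on two observations. The first is that $\Cc$ is continuous in the relevant topology: it is norm continuous as a sum of compressions, and in the W$^*$ case it is normal because the projections $\dss{P}{H}, q_0, q_1, \ldots$ form a mutually orthogonal family whose sum is a projection, making $\Cc$ a normal conditional expectation onto a block-diagonal subalgebra. The second is that $\Cc[\dss{\psi}{R}(B)]$ is itself closed: the bimodule property established in the proof of Proposition \ref{propcornerliberation} gives $\Cc(b) = b \Cc(\one) = \Cc(\one) b$ for $b \in \dss{\psi}{R}(B)$, so the projection $p = \Cc(\one)$ commutes with $\dss{\psi}{R}(B)$, and hence $\Cc[\dss{\psi}{R}(B)] = p \dss{\psi}{R}(B) p$ is a corner of a C$^*$- (resp.\ W$^*$-) algebra by a commuting projection, and therefore closed in the appropriate topology.

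Combining these, $\Cc[A \star B] \subseteq \overline{\Cc[\la \dss{\psi}{L}(A), \dss{\psi}{R}(B) \ra_{\text{alg}}]} = \overline{\Cc[\dss{\psi}{R}(B)]} = \Cc[\dss{\psi}{R}(B)]$, while the reverse inclusion is trivial since $\dss{\psi}{R}(B) \subseteq A \star B$. The only nonroutine point is the closedness of $\Cc[\dss{\psi}{R}(B)]$, but this follows cleanly from the bimodule identity already in hand, so no serious obstacle arises.
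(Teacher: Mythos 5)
Your proposal is correct and follows essentially the same route as the paper, which likewise derives the corollary from Proposition \ref{propcornerliberation} together with Corollary \ref{corrightlibrepgenerated} and the norm continuity (resp.\ normality) of $\Cc$. Your additional verification that $\Cc[\dss{\psi}{R}(B)]$ is closed, via the bimodule identity and the commuting projection $\Cc(\one)$, is a detail the paper leaves implicit but is handled correctly here.
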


\begin{proof}
This is an immediate consequence of Proposition
\ref{propcornerliberation} together with
Corollary \ref{corrightlibrepgenerated} and the
norm continuity and normality of $\Cc$.
\end{proof}

Before making our next definition, we note that the
right-faithfulness of our representation guarantees that
$b \mapsto \dss{P}{H} \dss{\psi}{R}(b) \dss{P}{H}$ is
injective, so that $\Cc \circ \dss{\psi}{R}$ is
injective as well.

\begin{definition} \label{defSPretraction}
The \textbf{Sauvageot retraction} for the given tuple
and representation is the map $\theta: A \star B \to B$
given by
\[
\theta = (\Cc \circ \dss{\psi}{R})\inv \circ \Cc.
\]
\end{definition}

The Sauvageot retraction is well-defined by Corollary
 \ref{corcornermapsinto}, and is
 a retraction with respect to $\dss{\psi}{R}$; furthermore,
 as a consequence of Proposition \ref{propcovariantcorner},
 it factors $\phi$ in the sense that
 \begin{equation} \label{eqnthetafactorsphi}
 \theta \circ \dss{\psi}{L} = \phi.
 \end{equation}

 Furthermore, the following is an immediate consequence
 of Proposition \ref{propcornerliberation}:

\begin{corollary} \label{corthetaliberates}
$(A \star B, \dss{\psi}{L}, \dss{\psi}{R},
\dss{\psi}{R} \circ \theta)$ is a right-liberating
representation of $(A, B, \phi, \omega)$.
\end{corollary}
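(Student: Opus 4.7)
The plan is to verify directly the two clauses of Definition \ref{defliberatingrep} for the quadruple $(A \star B, \dss{\psi}{L}, \dss{\psi}{R}, \dss{\psi}{R} \circ \theta)$, bootstrapping almost entirely from Proposition \ref{propcornerliberation}, which has already established the analogous properties for $\Cc$ in place of $\dss{\psi}{R} \circ \theta$. The key linking fact is that by definition $\theta = (\Cc \circ \dss{\psi}{R})\inv \circ \Cc$, so that on $A \star B$ one has the identity $\Cc = \Cc \circ \dss{\psi}{R} \circ \theta$, and $\Cc \circ \dss{\psi}{R}$ is injective.

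First I would verify that $\dss{\psi}{R} \circ \theta$ is a $\dss{\psi}{R}(B)$-bimodule map on $A \star B$. Given $b_1, b_2 \in B$ and $x \in A \star B$, the bimodule property of $\Cc$ (recorded in the proof of Proposition \ref{propcornerliberation}) gives
\[
\Cc \big(\dss{\psi}{R}(b_1)\, x\, \dss{\psi}{R}(b_2) \big) = \dss{\psi}{R}(b_1)\, \Cc(x) \, \dss{\psi}{R}(b_2) = \dss{\psi}{R}(b_1)\, \Cc\big(\dss{\psi}{R}(\theta(x))\big)\, \dss{\psi}{R}(b_2),
\]
and applying the bimodule property once more rewrites the right-hand side as $\Cc\big(\dss{\psi}{R}(b_1 \theta(x) b_2)\big)$. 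Injectivity of $\Cc \circ \dss{\psi}{R}$ then forces $\theta(\dss{\psi}{R}(b_1)\, x\, \dss{\psi}{R}(b_2)) = b_1 \theta(x) b_2$, and applying $\dss{\psi}{R}$ (a *-homomorphism) gives the bimodule identity for $\dss{\psi}{R} \circ \theta$.

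For the vanishing condition, pick $a_1, \dots, a_n \in A$ and $b_1, \dots, b_{n-1} \in B$ and form the alternating centered word
\[
w = \big(\dss{\psi}{L}(a_1) - \dss{\psi}{R}(\phi(a_1))\big)\big(\dss{\psi}{R}(b_1) - \omega(b_1)\one\big)\cdots \big(\dss{\psi}{L}(a_n) - \dss{\psi}{R}(\phi(a_n))\big).
\]
Since $w$ is a polynomial in elements of $\dss{\psi}{L}(A)$, $\dss{\psi}{R}(B)$, and $\com \cdot \one$, it lies in $A \star B$, so $\theta(w)$ is defined. By Proposition \ref{propcornerliberation}, $\Cc(w) = 0$; hence $\theta(w) = (\Cc \circ \dss{\psi}{R})\inv(0) = 0$, and therefore $(\dss{\psi}{R} \circ \theta)(w) = 0$ as required.

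There is no real obstacle here; the work has all been done in Proposition \ref{propcornerliberation} and in establishing that $\theta$ is well-defined via Corollary \ref{corcornermapsinto} together with injectivity of $\Cc \circ \dss{\psi}{R}$. The only care is to notice that the centered word $w$ automatically lives in $A \star B$, so that applying $\theta$ and then $\dss{\psi}{R}$ is legitimate.
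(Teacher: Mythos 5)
Your proof is correct and takes exactly the route the paper intends: the paper states this corollary as an ``immediate consequence'' of Proposition \ref{propcornerliberation} without writing out the verification, and your argument---transferring the bimodule property and the vanishing of centered alternating words from $\Cc$ to $\dss{\psi}{R}\circ\theta$ via the identity $\Cc=\Cc\circ\dss{\psi}{R}\circ\theta$ on $A\star B$ together with the injectivity of $\Cc\circ\dss{\psi}{R}$---is precisely the omitted bookkeeping. The observation that the centered word already lies in $A\star B$ is the right point of care, and it holds since the word is a polynomial in $\dss{\psi}{L}(A)$, $\dss{\psi}{R}(B)$, and the unit.
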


\begin{definition} \label{defSPCPtuple}
Given a CP-tuple $(A, B, \phi, \omega)$ and a right-liberating
representation $(H, \Omega, \dss{\pi}{R}, K, V, \dss{\pi}{L})$,
the \textbf{Sauvageot product of the tuple realized by
the representation} is the tuple $(A \star B, \dss{\psi}{L},
\dss{\psi}{R}, \theta)$ of objects constructed as above.
\end{definition}

\section{Induced Morphisms and Uniqueness}
We pause now to consider an analogy with other product constructions.  In building either the (minimal) tensor product or the free product of C$^*$-algebras (resp.\ W$^*$-algebras) $A$ and $B$, one can proceed as follows:
\begin{enumerate}
    \item Represent $A$ and $B$ on Hilbert spaces $H$ and $K$
    \item Form the product Hilbert space $H \otimes K$ or $H * K$
    \item Lift the representations of $A$ and $B$ to
    representations of each on this product Hilbert space
    \item Take the C$^*$-subalgebra (resp.\ von Neumann subalgebra)
    generated by the images
    of these representations.
\end{enumerate}
In both cases, one can show that the resulting C$^*$-algebra
(resp.\ W$^*$-algebra)
$A \otimes B$ or $A * B$ is, up to isomorphism, independent of
the choice of the representations of $A$ and $B$ provided
both are faithful.

We have followed the same outline in constructing $A \star B$,
and come now to the question of the independence of this object
from the representations used to produce it.  It turns out that
we need some more complicated hypotheses on the representation,
resulting from the fact that a representation of a CP-tuple
is more complicated than a representation of a C$^*$-algebra
or W$^*$-algebra,
as well as the fact that the product $A \star B$ comes with
the additional information of a retraction onto $B$.

\begin{definition} \label{defdecompfaithful}
Let $(A, B, \phi, \omega)$ be a CP-tuple,
$(H, \Omega, \dss{\pi}{R}, K, V, \dss{\pi}{L})$ a
representation, and $L = K \ominus VH$.
\begin{itemize}
    \item A \textbf{decomposition} of the representation is
    a pair $(L', L'')$ of $\dss{\pi}{L}$-invariant
    subspaces $L' \subset L$ and $L'' \subset L^+$, with the properties
    \begin{align*}
    L &\subset L' + \overline{\dss{\pi}{L}(A) VH}\\
    L^+ &\subset L'' + \overline{\dss{\pi}{L}(A) V H^-}.
    \end{align*}
    \item A decomposition is \textbf{faithful} if the subrepresentation
    $\dss{\pi}{L} \big|_{L'}$ is faithful.
    \item A representation for which there exists a
    faithful decomposition is \textbf{faithfully
    decomposable}.
    \item A representation is \textbf{faithful} if it
    is right-faithful and faithfully decomposable.
\end{itemize}
\end{definition}

\begin{proposition} \label{propexistsfaithfulrep}
Every CP-tuple has a faithful representation.
\end{proposition}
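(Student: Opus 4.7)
The plan is to assemble the representation from standard ingredients: a GNS construction to produce the distinguished vector implementing $\omega$, faithful representations of $A$ and $B$ to secure injectivity on both sides, and a Stinespring dilation in the middle to relate them. The only non-routine step will be exhibiting a faithful decomposition $(L', L'')$ satisfying both inclusion conditions of Definition \ref{defdecompfaithful}.

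First I would construct $(H, \Omega, \dss{\pi}{R})$. Let $(H_\omega, \Omega_\omega, \pi_\omega)$ be the GNS triple for $(B, \omega)$, and let $\rho_B \colon B \to B(H_B)$ be any faithful $*$-representation, chosen normal in the W$^*$ case (which is possible by the definition of a W$^*$-algebra). Set $H = H_\omega \oplus H_B$, $\Omega = \Omega_\omega \oplus 0$, and $\dss{\pi}{R} = \pi_\omega \oplus \rho_B$; then $\dss{\pi}{R}$ is injective via $\rho_B$, implements $\omega$ via $\Omega$, and is normal in the W$^*$ case because GNS representations of normal states are normal. Next I would form the minimal Stinespring dilation $(K_0, V_0, \pi_0)$ of the unital completely positive map $\dss{\pi}{R} \circ \phi \colon A \to B(H)$ (normal in the W$^*$ case, so $\pi_0$ is as well), and pick a faithful (normal) representation $\rho_A \colon A \to B(K_A)$. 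Setting $K = K_0 \oplus K_A$, $V = V_0 \oplus 0$, and $\dss{\pi}{L} = \pi_0 \oplus \rho_A$ yields
\[
V^* \dss{\pi}{L}(\cdot) V = V_0^* \pi_0(\cdot) V_0 = \dss{\pi}{R}(\phi(\cdot)),
\]
so this sextuple is a (normal) representation of the CP-tuple, right-faithful because $\rho_B$ is.

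The main obstacle is the choice of $(L', L'')$. Writing $L = K \ominus VH = (K_0 \ominus V_0 H) \oplus K_A$, the summand $L' := K_A$ is $\dss{\pi}{L}$-invariant, and $\dss{\pi}{L}$ restricts to $\rho_A$ on it, hence faithfully. Minimality of the Stinespring dilation gives $\overline{\pi_0(A) V_0 H} = K_0$, so $L' + \overline{\dss{\pi}{L}(A) VH} = K \supseteq L$, verifying the first inclusion. For the second inclusion the parallel choice $L'' = L'$ will not do in general: one would need $K_0 \ominus V_0 H^- \subseteq \overline{\pi_0(A) V_0 H^-}$, which forces $V_0 \Omega \in \overline{\pi_0(A) V_0 H^-}$ and can easily fail. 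My fix is to define
\[
L'' := K \ominus \overline{\dss{\pi}{L}(A) V H^-}.
\]
As the orthogonal complement in $K$ of a $\dss{\pi}{L}$-invariant subspace, $L''$ is itself $\dss{\pi}{L}$-invariant; because $V H^- \subseteq \overline{\dss{\pi}{L}(A) V H^-}$ one has $L'' \subseteq K \ominus V H^- = L^+$; and the orthogonal decomposition $K = L'' \oplus \overline{\dss{\pi}{L}(A) V H^-}$ yields $L^+ \subseteq L'' + \overline{\dss{\pi}{L}(A) V H^-}$ for free. With these choices $(L', L'')$ is a faithful decomposition, and the representation is faithful in the sense of Definition \ref{defdecompfaithful}.
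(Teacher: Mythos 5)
Your proof is correct and follows essentially the same route as the paper: GNS for $(B,\omega)$ augmented by a faithful representation of $B$, a minimal Stinespring dilation of $\dss{\pi}{R}\circ\phi$ augmented by a faithful representation of $A$, with $L'$ the faithful summand (equivalently $K\ominus\overline{\dss{\pi}{L}(A)VH}$) and $L''=K\ominus\overline{\dss{\pi}{L}(A)VH^-}$. Your write-up is in fact more careful than the paper's, since you explicitly verify the two inclusion conditions of Definition \ref{defdecompfaithful}, which the paper leaves implicit.
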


\begin{proof}
We begin by letting $(H, \Omega, \dss{\pi}{R})$ be the GNS
construction for $(B, \omega)$; if $\omega$ is not
faithful, we replace $(H, \dss{\pi}{R})$ by its direct
sum with some faithful representation of $B$.  This guarantees
that our representation will be right-faithful.

Next, let $(K, V, \dss{\pi}{L})$ be the minimal
Stinespring dilation of $\dss{\pi}{R} \circ \phi$, and
let $L' = K \ominus \overline{\dss{\pi}{L}(A) VH}$
and $L'' = K \ominus \overline{\dss{\pi}{L}(A) H^-}$.
If $\dss{\pi}{L} \Big|_{L'}$ is not faithful,
we replace $(K, \dss{\pi}{L})$ by its direct sum with
some faithful representation of $A$, thereby
guaranteeing faithful decomposability.
\end{proof}

When some of these additional hypotheses are satisfied,
we can define a retraction from $A \star B$ to $A$ which
has properties analogous to the retraction $A \star B
\sa{\theta} B$ already discussed.  We continue our standard
notation for a CP-tuple, a right-faithful representation,
and the corresponding realization of the Sauvageot product,
and now fix a decomposition $(L', L'')$ as well (not assumed faithful unless specified).  We introduce additional notation:
\begin{itemize}
    \item $E_0'$ is the subspace $L' \subset L$
    \item For $n \geq 1$, $E_n'$ is the subspace
    $L'' \otimes L^{+ \otimes (n-1)} \otimes L
    \subset E_n$
    \item For all $n \geq 0$, $p_n$ is the projection
    from $\Hh$ onto $E_n'$
    \item For all $n \geq 0$, $F_n = H \otimes E_n$
    and $F_n' = H' \otimes E_n$; recall
    that $q_n$ is the projection onto $F_n'$
    \item $E_{-1} = \com \Omega$ and $F_{-1} = H^-$.
\end{itemize}

\begin{definition} \label{defleftcornermap}
The \textbf{left corner map}
for the given realization and decomposition is
the non-unital conditional expectation
$\Cc'$ on $B(\Hh)$ defined by
\[
\Cc'(T) = \sum_{n=0}^\infty p_n T p_n.
\]
\end{definition}

\begin{lemma} \label{lemintersectkernels}
Let $J \subset A \star B$ be an ideal contained
in the intersection of the kernels of $\Cc$ and $\Cc'$.
Then $J = \{0\}$.
\end{lemma}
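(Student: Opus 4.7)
The plan is to fix an arbitrary $x \in J$ and show $x = 0$. Since $J$ is a (closed, hence self-adjoint) two-sided ideal, $x^*x \in J \subseteq \ker\Cc \cap \ker\Cc'$. The identity $\Cc(x^*x) = \dss{P}{H} x^* x \dss{P}{H} + \sum_n q_n x^* x q_n$ expresses $0$ as a sum of pairwise orthogonal nonnegative operators, so each term must vanish; rewriting them as $(x\dss{P}{H})^*(x\dss{P}{H})$ and $(xq_n)^*(xq_n)$ shows that $x$ annihilates $H$ and every $H' \otimes E_n$. Similarly $\Cc'(x^*x) = 0$ yields $xp_n = 0$ for every $n$, so $x$ annihilates $L'$ and each $L'' \otimes E_{n-1}$ for $n \geq 1$. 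Setting
\[
\mathcal{N} := \overline{\mathrm{span}}\Bigl( H \,\cup\, \bigcup_{n \geq 0}(H' \otimes E_n) \,\cup\, L' \,\cup\, \bigcup_{n \geq 1}(L'' \otimes E_{n-1}) \Bigr),
\]
we therefore have $\mathcal{N} \subseteq \ker x$.

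Applying the same reasoning to $xy \in J$ for arbitrary $y \in A \star B$ yields $y\mathcal{N} \subseteq \ker x$, hence $(A \star B)\mathcal{N} \subseteq \ker x$; closedness of $\ker x$ then gives $\overline{(A \star B)\mathcal{N}} \subseteq \ker x$. It thus suffices to prove the cyclicity statement $\overline{(A \star B)\mathcal{N}} = \Hh$, since then $x = 0$ and, as $x$ was arbitrary, $J = \{0\}$.

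I would establish this cyclicity by induction on the summand-index of the decomposition $\Hh = H \oplus \bigoplus_{n \geq 0}\bigl(E_n \oplus (H^- \otimes E_n)\bigr)$. For the base case, $H \subseteq \mathcal{N}$ is explicit; by Proposition \ref{prop3UEreps}, $\dss{\psi}{L}(a)$ acts on $H$ as $\dss{\pi}{L}(a)$ within $K = H \oplus L$, so the $L$-components of $\dss{\psi}{L}(A) H$ span $\overline{\dss{P}{L}\dss{\pi}{L}(A)VH}$, which combined with $L' \subseteq \mathcal{N}$ covers $E_0 = L$ by the decomposition condition $L \subseteq L' + \overline{\dss{\pi}{L}(A)VH}$. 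For the inductive step, having $E_n$ in hand, Proposition \ref{prop3UEreps} shows that $\dss{\psi}{R}(b)$ applied to $\xi \in E_n$ produces $\omega(b)\xi + (\dss{P}{H^-}\dss{\pi}{R}(b)\Omega) \otimes \xi$; the second summands span $H^{-}_1 \otimes E_n$ for $H^{-}_1 := H^- \cap \overline{\dss{\pi}{R}(B)\Omega}$, which together with $H' \otimes E_n \subseteq \mathcal{N}$ exhausts $H^- \otimes E_n$ via the orthogonal decomposition $H^- = H' \oplus H^{-}_1$. Then applying $\dss{\psi}{L}(a)$ to $h_0 \otimes \xi \in H^- \otimes E_n$ produces a new $E_{n+1}$-component $(\dss{P}{L^+}\dss{\pi}{L}(a)h_0) \otimes \xi \in L^+ \otimes E_n = E_{n+1}$; the span of these over $a \in A$, $h_0 \in H^-$, combined with $L'' \otimes E_n \subseteq \mathcal{N}$, covers $E_{n+1}$ via the decomposition condition $L^+ \subseteq L'' + \overline{\dss{\pi}{L}(A)VH^-}$.

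The chief technical obstacle is the careful bookkeeping of the identifications from Proposition \ref{prop3UE}: tracking how the summands of $\Hh$ correspond to subspaces of $\KK \otimes L^{+\otimes\N}$ so that Proposition \ref{prop3UEreps} correctly describes the actions of $\dss{\psi}{L}$ and $\dss{\psi}{R}$ on each summand, and verifying at each inductive stage that the newly generated pieces exactly complement the directly-contained subspaces $H', L', L''$ of $\mathcal{N}$. Once this is done the induction closes, yielding $\overline{(A\star B)\mathcal{N}} = \Hh$ and hence $J = \{0\}$.
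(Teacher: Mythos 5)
Your proposal is correct and follows essentially the same route as the paper: the paper phrases the argument in terms of the annihilator $J^0 = \bigcap_{x \in J} \ker x$, shows exactly as you do that $\Cc(x^*x) = \Cc'(x^*x) = 0$ forces $H$, $H' \otimes E_n$, $L'$, and $L'' \otimes E_{n-1}$ into $J^0$, uses the ideal property to make $J^0$ stable under $\dss{\psi}{L}(A)$ and $\dss{\psi}{R}(B)$, and then runs the same ping-pong induction between $E_n$ and $H^- \otimes E_n$ via Proposition \ref{prop3UEreps} and the two decomposition conditions. Your reformulation as the cyclicity statement $\overline{(A \star B)\mathcal{N}} = \Hh$ is a cosmetic repackaging of the same induction (and the parenthetical appeal to closedness of $J$ is unneeded: $x^*x \in J$ already follows from $J$ being a two-sided ideal).
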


\begin{proof}
Let $J^0$ denote the annihilator of $J$ in $\Hh$, i.e. the largest (necessarily closed)
subspace such that $J J^0 = \{0\}$.
\begin{enumerate}
    \item For any $\alpha \in J$, note that $\alpha^* \alpha \in J$; then
    \[
    (\alpha \dss{P}{H})^* (\alpha \dss{P}{H}) = \dss{P}{H} \alpha^* \alpha \dss{P}{H} \leq \Cc[\alpha^* \alpha] = 0,
    \]
    so that $\alpha \dss{P}{H} = 0$.  Similarly, $\alpha q_n = \alpha p_n = 0$ for all $n$.  Hence $H \subseteq J^0,$ and for each
    $n \geq 0$, $E_n'  \subseteq J^0$ and $H' \otimes E_n\subseteq J^0$.

    \item We will prove by induction
    that $E_n \subseteq J^0$ and $F_n \subseteq J^0$;
    the base case $n = -1$ was just established,
    since $E_{-1} \oplus F_{-1} = H \subset J^0$.

    \item Suppose $E_n \subseteq J^0$ and $F_n \subseteq J^0$.
    \begin{enumerate}
        \item Since $J \psi_L(A) F_n \subseteq J F_n = \{0\}$, we have $\psi_L(A) F_n \subseteq J^0$.
        \item By Proposition \ref{prop3UEreps}, $[\psi_L(A) H^-] \otimes E_n \subseteq \psi_L(A) F_n$,
        so that $[\psi_L(A) H^-] \otimes E_n \subseteq J^0$, for the case $n \geq 0$; for $n = -1$,
        we have $\psi_L(A) H \subseteq J^0$.
        \item Since we already know $L' \subseteq J^0$ (in the case $n = -1$) or
        $L'' \otimes E_n \subseteq J^0$ (in the case $n \geq 0$), it follows that
        $E_{n+1} = L \subset \overline{\psi_L(A) H} + L' \subseteq J^0$ for the case $n = -1$,
        or $E_{n+1} = L^+ \otimes E_n \subset [\overline{\pi_L(A) H^-} + L''] \otimes E_n \subseteq J_0$
        for the case $n \geq 0$.
        \item Since $J \psi_R(B) E_{n+1} \subseteq J E_{n+1} = \{0\}$, we have $\psi_R(B) E_{n+1} \subseteq J^0$.
        \item By Proposition \ref{prop3UEreps}, this
        implies
        $\overline{\pi_R(B) \Omega} \otimes E_{n+1} \subseteq \overline{\psi_R(B) E_{n+1}} \subseteq J^0$.
        \item Since we also have $F_n' \subseteq J^0$,
        \begin{align*}
        F_{n+1} &= H^- \otimes E_{n+1}
        \subseteq H \otimes E_{n+1} \\
        &= (\overline{\pi_R(B) \Omega} + H') \otimes E_{n+1} \subseteq \overline{\psi_R(B) E_{n+1}} + F_{n+1}' \subseteq J^0.
        \end{align*}
    \end{enumerate}
\end{enumerate}
\end{proof}

\begin{remark} \label{remideals}
We note that if $L'$ and $L''$ are both zero (for instance,
when $K$ is given by a minimal Stinespring dilation),
then $\Cc'$ is the zero map; in this case, the lemma
says that $\Cc$ has no nontrivial ideals in its kernel.
This corresponds to the fact that $A$ and $B$ together move $H$ around to all the other components of $\Hh$, in the sense that
  $\overline{(A \star B) H} = \Hh$.  On the other extreme,
if $(L', L'')$ is a faithful decomposition, one has
instead that $\overline{(A \star B) H
+ (A \star B) L' + (A \star B) L''} = \Hh$, but none
of $H, L', L''$ by itself is enough to reach all of
$\Hh$.  As a result, $\Cc$ and $\Cc'$ may each contain
ideals in their kernel, but these ideals are ``orthogonal''
in the sense of the lemma.
\end{remark}

%

\begin{proposition} \label{propleftcornerliberated}
$(B(\Hh), \dss{\psi}{L}, \dss{\psi}{R}, \Cc')$ is
a left-liberating representation of $(A, B, \phi, \omega)$.
\end{proposition}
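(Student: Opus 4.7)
The plan is to mirror the proof of Proposition \ref{propcornerliberation}, replacing the role of the subspaces $H$ and $H'\otimes E_k$ (the range of $\Cc$) with the subspaces $E_k'$ (the range of $\Cc'$). Writing $\tilde{b}_i = \dss{\psi}{R}(b_i) - \omega(b_i) I$ and $\mathring{a}_i = \dss{\psi}{L}(a_i) - \dss{\psi}{R}(\phi(a_i))$, the goal is to show that the word
\[
W = \tilde{b}_1 \mathring{a}_1 \tilde{b}_2 \mathring{a}_2 \cdots \mathring{a}_{n-1} \tilde{b}_n
\]
maps every $\xi \in E_m'$ into the orthogonal complement of $E_m'$; this yields $p_m W p_m = 0$ for every $m$, and hence $\Cc'[W] = 0$.

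I would first check the bimodule condition. From Proposition \ref{prop3UEreps} together with the $\dss{\pi}{L}$-invariance of $L'$ and $L''$ built into the definition of a decomposition, each $E_k'$ is $\dss{\psi}{L}(A)$-invariant, so each $p_k$ commutes with $\dss{\psi}{L}(A)$ and $\Cc' = \sum_k p_k(\cdot) p_k$ is automatically a $\dss{\psi}{L}(A)$-bimodule map.

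The main computation is an analog of Lemma \ref{lemhorriblyunmotivated}: for any $\eta \in E_j$, $a \in A$, and $b \in B$,
\[
\mathring{a}\tilde{b}\eta = w\otimes\eta - \beta\eta \in E_{j+1}\oplus E_j,
\]
where $w = \dss{P}{L^+}\dss{\pi}{L}(a)\dss{P}{H^-}\dss{\pi}{R}(b)\Omega \in L^+$ and $\beta \in \com$. This is verified using Proposition \ref{prop3UEreps}: $\tilde{b}\eta$ reduces to $y\otimes\eta$ with $y = \dss{P}{H^-}\dss{\pi}{R}(b)\Omega \in H^-$, and then the $H^-$-parts of $\dss{\psi}{L}(a)(y\otimes\eta)$ and $\dss{\psi}{R}(\phi(a))(y\otimes\eta)$ cancel. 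The cancellation is the key algebraic step: identifying $H$ with $VH \subset K$, the Stinespring relation $\dss{\pi}{R}(\phi(a)) = V^*\dss{\pi}{L}(a)V$ gives $\dss{P}{H^-}\dss{\pi}{R}(\phi(a)) y = \dss{P}{H^-}\dss{\pi}{L}(a) y$, matching the two $H^-$-contributions exactly.

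Finally I run the iteration. Starting with $\xi \in E_m' \subset E_m$ and reading $W$ right to left, $\tilde{b}_n \xi$ lies in $F_m$; each successive pair $\mathring{a}_k \tilde{b}_{k+1}$ then sends each $E_j$-component into $E_j \oplus E_{j+1}$ by the key identity, so after all $n-1$ pairs the intermediate vector sits in $\bigoplus_{j=0}^{n-1} E_{m+j}$. The final application of $\tilde{b}_1$ tensors each component on the left with a vector in $H^-$, placing $W\xi$ in $\bigoplus_{j=0}^{n-1} F_{m+j}$. Since each $F_k$ is a summand of $\Hh$ disjoint from the one containing $E_m'$, we conclude $W\xi \perp E_m'$, so $p_m W p_m = 0$ for every $m$. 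The main technical hurdle is the $H^-$-cancellation in the key computation; after that the iteration is a direct parallel of the right-liberation proof.
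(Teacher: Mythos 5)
Your proof is correct and follows essentially the same route as the paper's: verify the bimodule property of $\Cc'$ from the $\dss{\psi}{L}$-invariance of the $E_k'$, then show that the alternating centered word starting and ending in a $\tilde{b}$ sends $E_m'$ into a direct sum of $H^- \otimes E_{m+j}$'s, which are orthogonal to $E_m$ and hence to $E_m' \subseteq E_m$. The paper reduces to $\omega(b_i)=0$ first and invokes Lemma \ref{lemhorriblyunmotivated} directly; you carry the centering along and rederive the Stinespring $H^-$-cancellation inline, which amounts to the same computation.
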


\begin{proof}
By Proposition \ref{prop3UEreps}, all of the
    $E_n'$ are $\dss{\psi}{L}$-invariant, so their
    projections commute with $\dss{\psi}{L}$; hence
    $\Cc'$ is a $\dss{\psi}{L}(A)$-bimodule map.  Now
    let $a_1, \dots, a_n \in A$
and $b_0, \dots, b_n \in B$ with $\omega(b_i) = 0$.  Let
$m \geq 0$ and $\xi \in E_m'$.  By Lemma \ref{lemhorriblyunmotivated},
\[
\prod_{k=1}^n \left[ \dss{\psi}{L}(a_k) - \dss{\psi}{R}(\phi(a_k))
\right] \xi \in \bigoplus_{k=0}^m E_{n+k}.
\]
Since $\omega(b_0) = 0$, it follows that $\dss{\pi}{R}
(b_0) \Omega \in H^-$, so that by Proposition \ref{prop3UEreps}
we obtain
\[
\dss{\psi}{R}(b_0) \prod_{k=1}^n \left[ \dss{\psi}{L}(a_k) - \dss{\psi}{R}(\phi(a_k))
\right] \xi \in \bigoplus_{k=0}^m H^- \otimes E_{n+k}.
\]
From this we see that
\[
p_n \dss{\psi}{R}(b_0) \prod_{k=1}^n \left[ \dss{\psi}{L}(a_k) - \dss{\psi}{R}(\phi(a_k))
\right] p_n = 0,
\]
and summing over $n$ finishes the proof.
\end{proof}

\begin{corollary} \label{corleftcornermapsinto}
$\Cc'(A \star B) = \Cc'(A)$.
\end{corollary}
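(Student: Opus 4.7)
The statement is the left-handed mirror of Corollary \ref{corcornermapsinto}, and the plan is to reproduce the same one-line argument. The essential input is already in hand: by Proposition \ref{propleftcornerliberated}, the quadruple $(B(\Hh), \dss{\psi}{L}, \dss{\psi}{R}, \Cc')$ is a left-liberating representation of $(A, B, \phi, \omega)$. Feeding this directly into Corollary \ref{corleftlibrepgenerated} yields the algebraic identity
\[
\Cc'\bigl[\la \dss{\psi}{L}(A), \dss{\psi}{R}(B)\ra\bigr] = \Cc'\bigl[\dss{\psi}{L}(A)\bigr],
\]
where the angle brackets denote the subalgebra generated (without any closure) by the two images inside $B(\Hh)$.

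To upgrade this to the topological identity $\Cc'(A \star B) = \Cc'(\dss{\psi}{L}(A))$, I would invoke the continuity properties of $\Cc'$. In the C$^*$-case, $\Cc'$ is norm-contractive, being built as a sum of compressions by mutually orthogonal projections $p_n$, and hence is norm-continuous. In the W$^*$-case one needs normality; this is established exactly as one would for the right corner map $\Cc$, by writing $\Cc'(T) = \sum_n p_n T p_n$ as a strong-operator limit of uniformly bounded normal compressions and transposing against functionals in the predual (the orthogonality of the $p_n$ ensures that the corresponding series in the predual converges in norm). Since $\dss{\psi}{L}(A)$ is itself a C$^*$- (or von Neumann) subalgebra of $B(\Hh)$ and is therefore already closed in the relevant topology, and since $\la \dss{\psi}{L}(A), \dss{\psi}{R}(B) \ra$ is by the definition of $A \star B$ dense in it, passing to closures on both sides of the displayed algebraic equality gives the corollary.

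The only potential obstacle is the verification of normality of $\Cc'$ in the W$^*$-setting, but this is the same routine computation that was tacitly used to justify Corollary \ref{corcornermapsinto}; no new ideas beyond the moment-function machinery of Chapter \ref{chapliberation} are needed.
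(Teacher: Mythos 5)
Your proposal is correct and follows the same route as the paper: invoke Proposition \ref{propleftcornerliberated} to get the left-liberating representation, apply the moment-function corollary to the algebra generated by $\dss{\psi}{L}(A)$ and $\dss{\psi}{R}(B)$, and pass to closures using norm-contractivity (resp.\ normality in the W$^*$-case) of $\Cc'$. The only cosmetic difference is that you cite the representation form (Corollary \ref{corleftlibrepgenerated}) while the paper cites the subalgebra form (Corollary \ref{corleftlibgenerated}); these are interchangeable here by Remark \ref{remreduceliberatingrep} since $\dss{\psi}{L}$ and $\dss{\psi}{R}$ are injective.
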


\begin{proof}
This is an immediate consequence of Proposition \ref{propleftcornerliberated}, Corollary \ref{corleftlibgenerated},
and the contractivity (and, in case $(A, B, \phi, \omega)$
is a CPW$^*$-tuple, the normality) of $\Cc'$.
\end{proof}

In the case of a faithful decomposition, $\Cc' \circ \dss{\psi}{L}$ is injective, which allows us to make the following definition:

\begin{definition} \label{defleftretraction}
Given a CP-tuple, a faithful representation, and a choice
of faithful decomposition, the \textbf{left retraction} for the given tuple
and representation is the map $\theta: A \star B \to A$
given by
\[
\theta = (\Cc' \circ \dss{\psi}{L})\inv \circ \Cc'.
\]
This is well-defined by Corollary
 \ref{corleftcornermapsinto}, and is
 a retraction with respect to $\dss{\psi}{L}$.
\end{definition}

We come now to the main result of this section.

\begin{theorem} \label{thmSPhoms}
Let $(A_1, B_1, \phi_1, \omega_1)$ and $(A_2, B_2, \phi_2, \omega_2)$ be CPC$^*$-tuples (resp.\ CPW$^*$-tuples),
$(H_1, \Omega_1, \dss{\pi}{R}^{(1)},
K_1, V_1, \dss{\pi}{L}^{(1)})$ a faithful representation of the former, \\
$(H_2, \Omega_2, \dss{\pi}{R}^{(2)}, K_2, V_2, \dss{\pi}{L}^{(2)})$ a right-faithful representation of the latter, and\\
$(A_1 \star B_1, \dss{\psi}{L}^{(1)}, \dss{\psi}{R}^{(1)},\theta_1)$ and $(A_2 \star B_2,
\dss{\psi}{L}^{(2)}, \dss{\psi}{R}^{(2)}, \theta_2)$ the Sauvageot products realized by these
representations.  Let $f: A_1 \to A_2$ and $g: B_1 \to B_2$
be unital (normal) *-homomorphisms
satisfying $\phi_2 \circ f = g \circ \phi_1$ and
$\omega_2 \circ g = \omega_1$.  Then there is a unique (normal) unital *-homomorphism
$f \star g: A_1 \star B_1 \to A_2 \star B_2$
with the properties that
\begin{enumerate}
    \item $(f \star g) \circ \dss{\psi}{L}^{(1)}
    = \dss{\psi}{L}^{(2)} \circ f$
    \item $(f \star g) \circ \dss{\psi}{R}^{(1)} =
    \dss{\psi}{R}^{(2)} \circ g$
    \item $\theta_2 \circ (f \star g) = g \circ \theta_1$
\end{enumerate}
If $f$ and $g$ are both injective and
$(H_2, \Omega_2, \dss{\pi}{R}^{(2)},K_2, V_2, \dss{\pi}{L}^{(2)})$
is faithful, then $f \star g$ is injective.
\end{theorem}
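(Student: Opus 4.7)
Uniqueness is immediate: conditions (1) and (2) specify $f \star g$ on the generating set $\dss{\psi}{L}^{(1)}(A_1) \cup \dss{\psi}{R}^{(1)}(B_1)$ of $A_1 \star B_1$, and norm (resp.\ ultraweak) continuity forces uniqueness on the closure. Condition (3) is then automatic: it holds on generators, and the moment formula of Theorem \ref{thmrightlibrepmoments} together with the identity $\MM(\hat{x}; \phi_2; \omega_2) = g(\MM(x; \phi_1; \omega_1))$ (where $\hat{x}$ denotes the word obtained from $x$ by applying $f$ and $g$ entrywise) extends it to all polynomial words; that identity is a routine induction on word length using $\phi_2 \circ f = g \circ \phi_1$ and $\omega_2 \circ g = \omega_1$.

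For existence, the plan is to construct an intertwining isometry $W: \Hh_1 \to \Hh_2$ with $W\Omega_1 = \Omega_2$, $W \dss{\psi}{L}^{(1)}(a) = \dss{\psi}{L}^{(2)}(f(a)) W$, and $W \dss{\psi}{R}^{(1)}(b) = \dss{\psi}{R}^{(2)}(g(b)) W$, and then to define $(f \star g)(T)$ for a polynomial $T$ by the corresponding substitution $\dss{\psi}{L}^{(1)} \mapsto \dss{\psi}{L}^{(2)}\circ f$, $\dss{\psi}{R}^{(1)} \mapsto \dss{\psi}{R}^{(2)}\circ g$. Starting from $W\Omega_1 = \Omega_2$, the intertwining relations determine $W$ inductively on the cyclic subspace $\mathcal{N}_1 = \overline{(A_1 \star B_1)\Omega_1}$; well-definedness and isometricity on $\mathcal{N}_1$ reduce to the identity $\omega_1 \circ \theta_1(Q^{(1)}) = \omega_2 \circ \theta_2(Q^{(2)})$ for polynomial words $Q$, which follows from the evaluation $\langle \Omega_i, T\Omega_i\rangle = \omega_i(\theta_i(T))$ for $T \in A_i \star B_i$ combined with the moment-matching identity above.

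The hard part will be showing that the polynomial substitution descends to a well-defined map on all of $A_1 \star B_1$, since a single operator admits many polynomial expressions and these must all yield the same element of $A_2 \star B_2$. The faithful-decomposition hypothesis is essential here: by Remark \ref{remideals} it implies that $\Hh_1$ is cyclically generated under $A_1 \star B_1$ by $\Omega_1$, $L_1'$, and $L_1''$, so a polynomial $P$ vanishes as an operator on $\Hh_1$ exactly when it vanishes on each of these three cyclic closures. Right-liberation and moment-matching handle the $\Omega_1$-closure, while a parallel argument using Theorem \ref{thmleftlibrepmoments} and the left-moment identity $\widehat{\MM}(\hat{y}; \phi_2; \omega_2) = f(\widehat{\MM}(y; \phi_1; \omega_1))$ handles the other two. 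In each case, vanishing on the closure translates, via the appropriate corner map, into moment expressions whose image in the second tuple is forced to vanish by the matching identities; the delicate bookkeeping between right and left liberations is what the proof must flesh out, after which $f \star g$ extends by continuity to the desired unital $*$-homomorphism and conditions (1)--(3) are direct verifications on generators.

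For the injectivity assertion, assume $f$ and $g$ are injective and the second representation is faithful, so that the left retraction $\theta_2': A_2 \star B_2 \to A_2$ is also defined. For $T \in \ker(f \star g)$, condition (3) gives $g(\theta_1(T)) = \theta_2((f \star g)(T)) = 0$; injectivity of $g$ yields $\theta_1(T) = 0$, whence $\Cc(T) = 0$ since $\Cc \circ \dss{\psi}{R}^{(1)}$ is injective. A symmetric argument using condition (1), the identity $\theta_2' \circ (f \star g) = f \circ \theta_1'$ (obtained on polynomial words via the left-moment formula and extended by continuity), and injectivity of $f$ gives $\Cc'(T) = 0$. Thus $\ker(f \star g) \subset \ker(\Cc) \cap \ker(\Cc')$, which is zero by Lemma \ref{lemintersectkernels}, and so $f \star g$ is injective.
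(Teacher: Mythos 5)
Your uniqueness argument and your reading of the injectivity claim are both in the spirit of the paper, and your observation that property (3) follows from the moment-matching identity $\MM(\hat{x};\phi_2;\omega_2) = g(\MM(x;\phi_1;\omega_1))$ is correct. But the existence step contains a genuine gap, and it is exactly the one you flag as ``the hard part'' without closing it. You want to show that if a polynomial $P$ in $\dss{\psi}{L}^{(1)}(A_1)$ and $\dss{\psi}{R}^{(1)}(B_1)$ vanishes as an operator on $\Hh_1$, then the substituted polynomial $\hat P$ vanishes on $\Hh_2$. The decomposition of $\Hh_1$ into cyclic closures over $\Omega_1$, $L_1'$, $L_1''$ characterizes vanishing of $P$ --- but that is the wrong direction. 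To conclude $\hat P = 0$ you must kill all matrix entries $\la \eta, \hat P \xi \ra$ for $\xi,\eta$ in a total subset of $\Hh_2$, and by Remark \ref{remideals} such a total subset is obtained by applying the \emph{full} algebra $A_2 \star B_2$ to $H_2 \cup L_2' \cup L_2''$. The resulting moments involve words that interleave $\hat P$ with arbitrary elements of $A_2$ and $B_2$ not lying in the images of $f$ and $g$, and the moment-matching identities say nothing about these. Relatedly, the set of elements annihilated by both corner maps of the second product is an ideal only in the subalgebra generated by $\dss{\psi}{L}^{(2)}(f(A_1))$ and $\dss{\psi}{R}^{(2)}(g(B_1))$, whereas Lemma \ref{lemintersectkernels} requires a two-sided ideal of $A_2 \star B_2$ --- its proof uses the cyclicity of the full $\dss{\pi}{L}(A_2)$ and $\dss{\pi}{R}(B_2)$ relative to the decomposition. (A further hypothesis mismatch: the second representation is only assumed right-faithful in the existence part, so the left retraction $\theta_2'$ you invoke need not exist there.)

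The paper closes this gap with a different device that you may want to study: it forms an auxiliary right-faithful representation of the \emph{first} tuple on $H = H_1 \oplus H_2$, $K = K_1 \oplus K_2$, letting $A_1, B_1$ act on the second summands through $f$ and $g$. The product $A_1 \tstar B_1$ realized there compresses (via the isometries induced by the inclusions $H_1 \subset H$, $L_1 \subset L$ and $H_2 \subset H$, $L_2 \subset L$) to a surjection $\Psi$ onto $A_1 \star B_1$ and a homomorphism $\Xi$ into $A_2 \star B_2$. The kernel of $\Psi$ is then a genuine two-sided ideal of the full auxiliary product, contained in the kernels of both corner maps because $\Psi$ intertwines the retractions; Lemma \ref{lemintersectkernels} applies (the auxiliary representation inherits a faithful decomposition from the first one), so $\Psi$ is an isomorphism and $f \star g = \Xi \circ \Psi\inv$. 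This converts your well-definedness problem into an injectivity statement about an honest ideal, which is precisely what the lemma was built for. Your final paragraph on injectivity of $f \star g$ is then essentially the paper's argument applied to $\Xi$.
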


\begin{proof}
Let $H = H_1 \oplus H_2$, $\Omega = \Omega_1$, $\dss{\pi}{R} = \dss{\pi}{R}^{(1)} \oplus (\dss{\pi}{R}^{(2)} \circ g)$,
$K = K_1 \oplus K_2$,
$V = V_1 \oplus V_2$, $\dss{\pi}{L} =
\dss{\pi}{L}^{(1)} \oplus (\dss{\pi}{L}^{(2)} \circ f)$.  Then
$(H, \Omega, \dss{\pi}{R}, K, V, \dss{\pi}{L})$ is another right-faithful representation of $(A_1, B_1, \phi_1, \omega_1)$.  Moreover, if $(L_1', L_1'')$ is a decomposition for $(H_1, \dots, \dss{\pi}{L}^{(1)})$, then
$L' = L_1' \oplus L_2$, $L'' = L_1'' \oplus L_2$
defines a decomposition $(L', L'')$
of $(H, \dots, \dss{\pi}{L})$, and the faithfulness
of $\dss{\pi}{L}^{(1)}$ on $L_1'$ implies the faithfulness
of $\dss{\pi}{L}$ on $L'$.

Let $(A_1 \tstar B_1, \dss{\psi}{L}, \dss{\psi}{R}, 
\theta)$ be the Sauvageot product realized
by $(H, \dots, \dss{\pi}{L})$, on the Hilbert space
$\Mm = H^- \star L$.

The inclusions of $H_1$ into $H$ and of $L_1$ into $L$ induce
an isometry $W: \Hh_1 \to \Hh$ as in Remark (\ref{remSPsubspaces}).  Moreover, by Equation (\ref{eqnSPopsintertwines}), this
isometry satisfies
\begin{equation} \label{eqnWintertwines}
W \circ \dss{\psi}{L}^{(1)}(\cdot) = \psi_L(\cdot) \circ W, \qquad
W \circ \dss{\psi}{R}^{(1)}(\cdot) = \psi_R(\cdot) \circ W.
\end{equation}
Let $\Psi$ be the restriction to $A_1 \tstar B_1$
of the (normal) unital CP map $T \mapsto W^* T W$, which
maps $B(\Hh)$ to $B(\Hh_1)$.  It follows from
Equation (\ref{eqnWintertwines}) that the image
of $W$ is invariant under $\dss{\psi}{L}$
and $\dss{\psi}{R}$, so that $W W^*$ commutes
with $A_1 \tstar B_1$.  Then
    \[
    \Psi(XY) = W^* XY W = W^*
    W W^* XY W =
    W^* X W W^* Y W
    = \Psi(X) \Psi(Y)
    \]
    for $X,Y \in A_1 \tstar B_1$, so that
    $\Psi$ is a *-homomorphism.

    Next, we show that
    $\Psi$ intertwines the representations,
    states, and retractions:
\begin{itemize}
    \item $\Psi \circ \dss{\psi}{L} = \dss{\psi}{L}^{(1)}$
    \item $\Psi \circ \dss{\psi}{R} = \dss{\psi}{R}^{(1)}$
    \item $\theta_1 \circ \Psi = \theta$
    \item $\theta_1' \circ \Psi = \theta'$
\end{itemize}

The first three are immediate consequences of Equation (\ref{eqnWintertwines}).  For the fourth and fifth, we have
by Equation (\ref{eqnSPopsintertwines}) that
\[
\Psi \circ \Cc(T) = W^*\dss{P}{H} T \dss{P}{H} W^* + \sum_n W^* p_n T p_n W = \dss{P}{H_1} W^* T W \dss{P}{H_1} + \sum_n p_{n,1} W^* T W p_{n,1} = \Cc_1 \circ \Psi(T)
\]
so that $\Psi \circ \Cc = \Cc_1 \circ \Psi$,
and similarly for $\Cc'$ and $\Cc_1'$.  Now
\[
\Psi \circ \Cc \circ \dss{\psi}{R} =
\Cc_1 \circ \Psi \circ \dss{\psi}{R} = \Cc_1 \circ \dss{\psi}{R}^{(1)},
\]
which is invertible; then
\[
(\Cc \circ \dss{\psi}{R})\inv =
(\Psi \circ \Cc \circ \dss{\psi}{R})\inv\circ \Psi
\circ \Cc \circ \dss{\psi}{R} \circ (\Cc \circ \dss{\psi}{R})\inv
= (\Psi \circ \Cc \circ \dss{\psi}{R})\inv \circ \Psi
\]
from which it follows that
\begin{align} \label{eqnintertwineretractions}
\theta &= (\Cc \circ \dss{\psi}{R})\inv \circ \Cc \nonumber\\
&= (\Psi \circ \Cc \circ \dss{\psi}{R})\inv \circ \Psi \circ \Cc\nonumber\\
&= (\Cc_1 \circ \dss{\psi}{R}^{(1)})\inv \circ \Cc_1 \circ \Psi \nonumber\\
&= \theta_1 \circ \Psi
\end{align}
and similarly
\begin{equation} \label{eqnintertwinesretractions2}
\theta' = \theta_1' \circ \Psi.
\end{equation}

Note that $\Psi$ maps into $A_1 \star B_1$,
as it maps both $\dss{\psi}{L}(A_1)$ and $\dss{\psi}{R}(B_1)$
into $A_1 \star B_1$, hence also the C$^*$-algebra (resp.\ von Neumann algebra) that they generate; moreover, it is onto $A_1 \star B_1$, since its range is a C$^*$-algebra (resp.\ von Neumann algebra)
which includes
both $\dss{\psi}{L}^{(1)}(A_1)$ and $\dss{\psi}{R}^{(1)}(B_1)$.

Next, $\Psi$ is injective, because
its kernel is an ideal in $A_1  \tstar B_1$ which, by equations
\ref{eqnintertwineretractions} and \ref{eqnintertwinesretractions2},
is contained in the kernels of both $\theta$ and $\theta'$, therefore also in the kernels of both $\Cc$ and $\Cc'$, and
hence is the zero ideal by Lemma \ref{lemintersectkernels}.
So $\Psi$ is an isomorphism from $A_1 \tstar B_1$
to $A_1 \star B_1$.

We repeat the above analysis for the inclusions of $H_2$ into $H$
and $K_2$ into $K$ to obtain a unital *-homomorphism
$\Xi: A_1 \tstar B_1 \to A_2 \star B_2$ such that
\begin{itemize}
    \item $\Xi \circ \dss{\psi}{L} = \dss{\psi}{L}^{(2)} \circ f$
    \item $\Xi \circ \dss{\psi}{R} = \dss{\psi}{R}^{(2)} \circ g$
    \item $\theta_2 \circ \Xi = g \circ \theta$
\end{itemize}

We can now define $f \star g = \Xi \circ \Psi\inv:
A_1 \star B_1 \to A_2 \star B_2$.  Then we
obtain the enumerated properties of $f \star g$
by combining the lists of properties for $\Psi$
and $\Xi$, as
\[
(f \star g) \circ \dss{\psi}{L}^{(1)} = \Xi \circ \Psi\inv
\circ \dss{\psi}{L}^{(1)} = \Xi \circ \dss{\psi}{L} = \dss{\psi}{L}^{(2)} \circ f
\]
and similarly.

The uniqueness of $f \star g$ follows from the fact that it is contractive (resp.\ normal)
and is determined on the dense subalgebra of $A_1 \star B_1$
generated by $\dss{\psi}{L}^{(1)}(A_1)$ and $\dss{\psi}{R}^{(1)}(B_1)$.

Finally, if $f$ and $g$ are both injective and
$(H_2, \dots, \dss{\pi}{L}^{(2)})$ is faithful,
we can prove the additional property
$\theta_2' \circ \Xi = \Xi \circ f$,
 after which we prove $\Xi$ to be injective exactly as we did with
 $\Psi$.  Hence $f \star g$ is a composition
of injective maps.
\end{proof}

\begin{corollary} \label{corunique}
Let $(A, B, \phi, \omega)$ be a CP-tuple.  Then the realizations
of the Sauvageot product by any two faithful representations
are isomorphic.
\end{corollary}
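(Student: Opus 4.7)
The proof is essentially a formal consequence of Theorem \ref{thmSPhoms}, applied in both directions with the identity maps.

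The plan is to take two faithful representations $\RR_1$ and $\RR_2$ of $(A,B,\phi,\omega)$, giving rise to realizations $(A \star_1 B, \dss{\psi}{L}^{(1)}, \dss{\psi}{R}^{(1)}, \theta_1)$ and $(A \star_2 B, \dss{\psi}{L}^{(2)}, \dss{\psi}{R}^{(2)}, \theta_2)$, and apply Theorem \ref{thmSPhoms} with $f = \textup{id}_A$ and $g = \textup{id}_B$. The hypotheses of that theorem are satisfied trivially: $\phi \circ \textup{id}_A = \textup{id}_B \circ \phi$, $\omega \circ \textup{id}_B = \omega$, and $\textup{id}_A, \textup{id}_B$ are injective. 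Since both representations are faithful, the theorem produces an injective (normal) unital *-homomorphism
\[
\Phi_{12} := \textup{id}_A \star \textup{id}_B : A \star_1 B \to A \star_2 B
\]
satisfying $\Phi_{12} \circ \dss{\psi}{L}^{(1)} = \dss{\psi}{L}^{(2)}$, $\Phi_{12} \circ \dss{\psi}{R}^{(1)} = \dss{\psi}{R}^{(2)}$, and $\theta_2 \circ \Phi_{12} = \theta_1$.

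Swapping the roles of $\RR_1$ and $\RR_2$ and invoking the theorem again produces $\Phi_{21}: A \star_2 B \to A \star_1 B$ with the analogous intertwining properties. The composition $\Phi_{21} \circ \Phi_{12}$ is then a unital *-homomorphism $A \star_1 B \to A \star_1 B$ which intertwines $\dss{\psi}{L}^{(1)}, \dss{\psi}{R}^{(1)}, \theta_1$ with themselves; but the identity map on $A \star_1 B$ enjoys the same intertwining. Applying Theorem \ref{thmSPhoms} one more time (with both representations equal to $\RR_1$ and both $f,g$ the identities), the uniqueness clause forces $\Phi_{21} \circ \Phi_{12} = \textup{id}_{A \star_1 B}$. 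Symmetrically $\Phi_{12} \circ \Phi_{21} = \textup{id}_{A \star_2 B}$, so $\Phi_{12}$ is the desired isomorphism, automatically respecting the embeddings of $A$ and $B$ and the retraction onto $B$.

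No step is really difficult here; the only subtlety is making sure the hypotheses of Theorem \ref{thmSPhoms} are indeed in force when we invoke it the third time, namely that the uniqueness of the induced morphism applies to the identity endomorphism. Since uniqueness in the theorem follows only from density of the subalgebra generated by $\dss{\psi}{L}^{(1)}(A)$ and $\dss{\psi}{R}^{(1)}(B)$ together with (normality or) contractivity of the map, both $\textup{id}_{A \star_1 B}$ and $\Phi_{21} \circ \Phi_{12}$ qualify, and the argument goes through without obstruction.
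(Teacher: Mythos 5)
Your proposal is correct and follows essentially the same approach as the paper, which simply invokes Theorem \ref{thmSPhoms} with $f = \dss{\text{id}}{A}$ and $g = \dss{\text{id}}{B}$. Your version is a bit more explicit: the paper leaves implicit that $\dss{\text{id}}{A} \star \dss{\text{id}}{B}$ is bijective (injectivity follows from the last clause of the theorem since both representations are faithful and the identities are injective; surjectivity follows because the image contains $\dss{\psi}{L}^{(2)}(A)$ and $\dss{\psi}{R}^{(2)}(B)$, which generate $A \star_2 B$), whereas you instead construct both $\Phi_{12}$ and $\Phi_{21}$ and use the uniqueness clause to show they are mutually inverse. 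Both are standard ways to finish, and yours has the minor advantage of not needing to appeal separately to the generating property; it is a completely sound rendering of the paper's one-line argument.
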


Here ``isomorphic''  refers to an isomorphism which
intertwines the appropriate maps.  The proof is
simply to take the map $\dss{\text{id}}{A} \star
\dss{\text{id}}{B}$ constructed in the theorem.  Based on this corollary, we may now speak of \emph{the}
Sauvageot product of a CP-tuple.

Another special case of interest occurs when one, but
not both, of the initial maps is the identity.  The
results are summarized as follows.

\begin{corollary} \label{corcommutingsquare}
Let $A, B$ be unital C$^*$-algebras (resp.\ W$^*$-algebras),
$A \sa{f} B$ a unital (normal) *-homomorphism
and $C$ another unital C$^*$-algebra (resp.\ W$^*$-algebra).

\begin{enumerate}
    \item Let $B \sa{\phi} C$ be a (normal) unital CP map
    and $\omega$ a (normal) state on $C$.  Then,
    for the CP-tuples $(A, C, \phi \circ f, \omega)$
    and $(B, C, \phi, \omega)$ with Sauvageot retractions
    $A \star C \sa{\theta} C$ and $B \star C \sa{\eta} C$, the diagrams
    \[ \xymatrix{
    A \ar[r]^f \ar[d] & B \ar[d]\\
    A \star C \ar[r]_{f \star \text{id}} & B \star C
    } \qquad \qquad \qquad \xymatrix{
    A \star C \ar[r]^{f \star \text{id}} \ar[rd]_{\theta}
    & B \star C \ar[d]^{\eta}\\
    & C
    }\]
    commute.

    \item Let $C \sa{\phi} A$ be a (normal) unital CP
    map and $\omega$ a (normal) state on $B$.  Then, for
    the CP-tuples $(C, A, \phi, \omega \circ f)$ and
    $(C, B, f \circ \phi, \omega)$, the square
    \[ \xymatrix{
    A \ar[r]^f \ar[d] & B \ar[d]\\
    C \star A \ar[r]_{\text{id} \star f} & C \star B
    } \]
    commutes.
\end{enumerate}
\end{corollary}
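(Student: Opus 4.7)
The plan is to deduce both parts of the corollary as direct specializations of Theorem \ref{thmSPhoms}. For part (1), I would take the source CP-tuple to be $(A_1, B_1, \phi_1, \omega_1) = (A, C, \phi \circ f, \omega)$ and the target CP-tuple to be $(A_2, B_2, \phi_2, \omega_2) = (B, C, \phi, \omega)$, with morphism pair $f: A \to B$ and $\text{id}_C: C \to C$. I would then verify the two compatibility conditions required by the theorem: $\phi_2 \circ f = \text{id}_C \circ \phi_1$ and $\omega_2 \circ \text{id}_C = \omega_1$, both of which collapse to tautologies ($\phi \circ f = \phi \circ f$ and $\omega = \omega$). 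By Proposition \ref{propexistsfaithfulrep} I may choose a faithful representation of the source tuple and a right-faithful representation of the target tuple, and by Corollary \ref{corunique} these choices are immaterial to the resulting Sauvageot products. Theorem \ref{thmSPhoms} then produces the (normal) unital *-homomorphism $f \star \text{id}: A \star C \to B \star C$, and the two requested diagrams are conclusions (1) and (3) of the theorem with $g = \text{id}_C$: conclusion (1) is $(f \star \text{id}) \circ \dss{\psi}{L}^{(1)} = \dss{\psi}{L}^{(2)} \circ f$, which, reading the $\dss{\psi}{L}^{(i)}$ as the canonical unital embeddings depicted by the downward arrows, is the first square; while conclusion (3), specialized, reads $\eta \circ (f \star \text{id}) = \theta$, which is the triangle.

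For part (2), I would instead take the source tuple to be $(C, A, \phi, \omega \circ f)$ and the target tuple to be $(C, B, f \circ \phi, \omega)$, with morphism pair $\text{id}_C$ and $f$. The required compatibilities $f \circ \phi = (f \circ \phi) \circ \text{id}_C$ and $\omega \circ f = \omega \circ f$ are again trivial, and after again invoking Proposition \ref{propexistsfaithfulrep} and Corollary \ref{corunique}, Theorem \ref{thmSPhoms} furnishes $\text{id} \star f: C \star A \to C \star B$. Its conclusion (2), with $g = f$, reads $(\text{id} \star f) \circ \dss{\psi}{R}^{(1)} = \dss{\psi}{R}^{(2)} \circ f$; reading $\dss{\psi}{R}^{(1)}$ and $\dss{\psi}{R}^{(2)}$ as the canonical unital embeddings of $A$ into $C \star A$ and of $B$ into $C \star B$, this is exactly the asserted commuting square.

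The only spot that could conceivably require care is the verification of the hypotheses of Theorem \ref{thmSPhoms}, but these reduce to identity statements once one unpacks the CP-tuples written down in the corollary. Consequently I anticipate no real obstacle: the corollary is essentially a bookkeeping exercise, in which the morphism of Sauvageot products produced by the theorem is applied with one of the two constituent *-homomorphisms chosen to be an identity map. No new construction and no further liberation estimates are needed beyond those already packaged into Theorem \ref{thmSPhoms}.
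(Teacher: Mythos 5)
Your proposal is correct and matches what the paper intends: the text preceding the corollary explicitly flags it as the case of Theorem \ref{thmSPhoms} where exactly one of $f,g$ is the identity, and your verification that the compatibility hypotheses reduce to tautologies and that conclusions (1), (3), and (2) of the theorem become the first square, the triangle, and the second square respectively is exactly right.
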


The composition of Sauvageot products of maps obeys
the obvious functorial property:

\begin{proposition} \label{propfunctorial}
For $i = 1,2,3$ let $(A_i, B_i, \phi_i, \omega_i)$ be
CP-tuples, and for $i = 1,2$ let $A_i \sa{f_i} A_{i+1}$
and $B_i \sa{g_i} B_{i+1}$ be (normal) unital *-homomorphisms,
such that the diagram
\[ \xymatrix{
A_1 \ar[r]^{f_1} \ar[d]_{\phi_1} &A_2 \ar[r]^{f_2} \ar[d]_{\phi_2} &A_3 \ar[d]_{\phi_3} \\
B_1 \ar[r]^{g_1} \ar[rd]_{\omega_1} &B_2 \ar[r]^{g_2} \ar[d]_{\omega_2} &B_3 \ar[ld]^{\omega_3} \\
& \com &
} \]
commutes.  Then
\[
(f_2 \circ f_1) \star (g_2 \circ g_1)
= (f_2 \star g_2) \circ (f_1 \star g_1).
\]
\end{proposition}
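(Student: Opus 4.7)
The plan is to prove this purely by uniqueness, using Theorem \ref{thmSPhoms}. That theorem asserts that the induced morphism $f \star g$ is \emph{uniquely} characterized by three intertwining conditions: intertwining with the left representations, with the right representations, and with the Sauvageot retractions. Hence I only need to show that the composition $(f_2 \star g_2) \circ (f_1 \star g_1)$ satisfies these three properties for the pair $(f_2 \circ f_1, g_2 \circ g_1)$ in place of $(f,g)$, and the conclusion is immediate.

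First I would check that the pair $(f_2 \circ f_1, g_2 \circ g_1)$ is eligible input for Theorem \ref{thmSPhoms} relative to the tuples $(A_1, B_1, \phi_1, \omega_1)$ and $(A_3, B_3, \phi_3, \omega_3)$. The compatibility $\phi_3 \circ (f_2 \circ f_1) = (g_2 \circ g_1) \circ \phi_1$ follows by applying the two commuting squares of the hypothesis in sequence, namely $\phi_3 \circ f_2 \circ f_1 = g_2 \circ \phi_2 \circ f_1 = g_2 \circ g_1 \circ \phi_1$. The state-intertwining identity $\omega_3 \circ (g_2 \circ g_1) = \omega_1$ follows from the two triangle relations $\omega_3 \circ g_2 = \omega_2$ and $\omega_2 \circ g_1 = \omega_1$. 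The composition is also a (normal) unital *-homomorphism $A_1 \star B_1 \to A_3 \star B_3$ since each factor is.

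Next I would verify the three intertwining conditions for the composition, using the corresponding conditions satisfied by each $f_i \star g_i$:
\begin{align*}
((f_2 \star g_2) \circ (f_1 \star g_1)) \circ \dss{\psi}{L}^{(1)}
&= (f_2 \star g_2) \circ \dss{\psi}{L}^{(2)} \circ f_1
= \dss{\psi}{L}^{(3)} \circ f_2 \circ f_1, \\
((f_2 \star g_2) \circ (f_1 \star g_1)) \circ \dss{\psi}{R}^{(1)}
&= (f_2 \star g_2) \circ \dss{\psi}{R}^{(2)} \circ g_1
= \dss{\psi}{R}^{(3)} \circ g_2 \circ g_1, \\
\theta_3 \circ (f_2 \star g_2) \circ (f_1 \star g_1)
&= g_2 \circ \theta_2 \circ (f_1 \star g_1)
= g_2 \circ g_1 \circ \theta_1.
\end{align*}
These are exactly the three properties that characterize $(f_2 \circ f_1) \star (g_2 \circ g_1)$.

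Finally I would invoke the uniqueness clause of Theorem \ref{thmSPhoms} to conclude that $(f_2 \star g_2) \circ (f_1 \star g_1) = (f_2 \circ f_1) \star (g_2 \circ g_1)$. There is no real obstacle here—the entire argument is categorical bookkeeping once the uniqueness theorem is in hand—the only point to watch is that Theorem \ref{thmSPhoms} requires a \emph{faithful} representation on the source side (which we may always produce by Proposition \ref{propexistsfaithfulrep}) and a right-faithful representation on the target side; since $(f \star g)$ is by construction independent of the choice of such representations (Corollary \ref{corunique}), the computation above is legitimate no matter which representations realize each Sauvageot product.
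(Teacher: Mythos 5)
Your proof is correct and follows the natural approach the paper implicitly intends (the paper states the proposition as ``the obvious functorial property'' without proof): verify the compatibility of $(f_2\circ f_1, g_2\circ g_1)$ with the CP-tuples, check the three intertwining identities for the composition $(f_2\star g_2)\circ(f_1\star g_1)$, and invoke the uniqueness clause of Theorem~\ref{thmSPhoms}. Your closing caveat about representations is well placed; the cleanest way to make it airtight is to fix a faithful representation of each of the three CP-tuples (via Proposition~\ref{propexistsfaithfulrep}), so that $A_2\star B_2$ is literally the same object as source of $f_2\star g_2$ and target of $f_1\star g_1$, after which Corollary~\ref{corunique} is not even needed.
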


Next, we note that the Sauvageot retraction possesses a certain
universal property.

\begin{proposition} \label{propuniqueretraction}
Let $(A, B, \phi, \omega)$ be a CP-tuple, with Sauvageot product
$A \star B$ and retraction $\theta: A \star B \to B$.
Suppose $\hat{\theta}: A \star B \to B$ is another (normal) retraction
with respect to $\dss{\psi}{R}$ such that
$(A \star B, \dss{\psi}{L}, \dss{\psi}{R},
\hat{\theta} \circ \dss{\psi}{R})$ is a right-liberating representation for $(A, B, \phi, \omega)$.  Then
$\hat{\theta} = \theta$.
\end{proposition}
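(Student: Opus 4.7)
The plan is to apply the moment formula of Theorem \ref{thmrightlibrepmoments} to both $\theta$ and $\hat\theta$, conclude that they agree on the alternating products generating a dense subalgebra of $A \star B$, and finish by linearity and continuity.

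First I observe that because $\hat\theta$ is a retraction with respect to $\dss{\psi}{R}$ and $\dss{\psi}{R}$ is unital, we have $\hat\theta(\one) = \hat\theta(\dss{\psi}{R}(\one_B)) = \one_B$; thus the map $\eE := \dss{\psi}{R}\circ\hat\theta$ satisfies $\eE(\one) = \one$. By Corollary \ref{corthetaliberates} the same holds for $\dss{\psi}{R}\circ\theta$. Applying Theorem \ref{thmrightlibrepmoments} to each of the two right-liberating representations yields, for every $x = (b_0, a_1, b_1, \ldots, a_\ell, b_\ell) \in \WW_\#$,
\[
\dss{\psi}{R}\bigl(\theta[(\dss{\psi}{L}\times\dss{\psi}{R})(x)]\bigr) \;=\; \dss{\psi}{R}(\MM(x)) \;=\; \dss{\psi}{R}\bigl(\hat\theta[(\dss{\psi}{L}\times\dss{\psi}{R})(x)]\bigr).
\]
Right-faithfulness of the underlying representation forces $\dss{\psi}{R}$ to be injective, so $\theta$ and $\hat\theta$ already coincide on every alternating product $\dss{\psi}{R}(b_0)\dss{\psi}{L}(a_1)\cdots\dss{\psi}{R}(b_\ell)$.

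Next I extend the agreement by linearity. Since $B$ is unital and $\dss{\psi}{L},\dss{\psi}{R}$ are unital *-homomorphisms, any word in generators drawn from $\dss{\psi}{L}(A)\cup\dss{\psi}{R}(B)$ can be padded, by inserting $\dss{\psi}{R}(\one_B) = \one$ at the ends and amalgamating consecutive factors from the same side, into a word of the form $(\dss{\psi}{L}\times\dss{\psi}{R})(x)$ for some $x\in\WW_\#$. Therefore $\theta$ and $\hat\theta$ agree on the *-subalgebra $\AAA_0 \subset A\star B$ generated by $\dss{\psi}{L}(A)\cup\dss{\psi}{R}(B)$, which by construction is dense in $A\star B$.

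Finally I pass to the closure. In the C$^*$ case both retractions are completely positive with $\theta(\one) = \hat\theta(\one) = \one$, hence contractive, and $\AAA_0$ is norm-dense in $A\star B$, so the equality propagates. In the W$^*$ case $\hat\theta$ is normal by hypothesis and $\theta$ is normal as a composition of normal building blocks ($\Cc$ is a normal conditional expectation and $(\Cc\circ\dss{\psi}{R})^{-1}$ is the inverse of an injective normal map onto its range), so both agree on all of $A\star B$ by ultraweak density of $\AAA_0$. The only subtle point I anticipate is bookkeeping around the padding used to fit arbitrary words into the $B,A,B,\ldots,A,B$ template of $\WW_\#$, and verifying normality of $\theta$ in the W$^*$ setting; everything else follows mechanically from the moment computation already done in Chapter \ref{chapliberation}.
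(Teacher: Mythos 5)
Your argument is correct and is essentially the paper's own proof: apply the moment formula (the paper cites Theorem \ref{thmrightlibmoments}, you the directly-applicable representation version \ref{thmrightlibrepmoments}, but these differ only in packaging) to both $\dss{\psi}{R}\circ\theta$ and $\dss{\psi}{R}\circ\hat\theta$, observe they agree on alternating products and hence on the dense $*$-subalgebra they generate, pass to the closure by contractivity/normality, and cancel $\dss{\psi}{R}$ by injectivity. You fill in the small details (unitality of $\hat\theta$ fixing the $\eE[\one]$ factor, the padding/amalgamation of words into the $B,A,\dots,B$ template, and normality of $\theta$ as a composite of $\Cc$ with the inverse of a normal injective $*$-homomorphism) that the paper leaves implicit, but the route is the same.
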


\begin{proof}
Applying Theorem \ref{thmrightlibmoments} to
the conditional expectations $\dss{\psi}{R} \circ \theta$
and $\dss{\psi}{R} \circ \hat{\theta}$, we see that
they agree on a dense *-subalgebra of $A \star B$, hence
on the whole by continuity.  Since $\dss{\psi}{R}$ is
injective, this implies $\theta = \hat{\theta}$.
\end{proof}
\section{Trivial Cases of the Sauvageot Product}
Tensor products have the property that $A \otimes \com
\simeq A \simeq \com \otimes A$ for any commutative unital
C$^*$-algebra $A$; similarly, unital free products have
the property that $A * \com \simeq A \simeq \com * A$
for any unital C$^*$-algebra $A$.  Moreover, amalgamated
free products satisfy $A *_A A \simeq A$.  We
now consider analogues of these properties for
the Sauvageot product.  These are of interest not only
for their own sake, but also as the base cases in the inductive
system of the next chapter.

\begin{proposition}[$\com \star \AAA \simeq \AAA$] \label{propcomstarA}
Let $\AAA$ be any unital C$^*$-algebra (resp.\ W$^*$-algebra),
$\upsilon: \com \to \AAA$ the embedding of $\com$,
and $\omega$ any (normal) state on $\AAA$.
Then the Sauvageot product $\com \star \AAA$ of the CP-tuple
$(\com, \AAA, \upsilon, \omega)$ is isomorphic
to $\AAA$; modulo this identification,
the embedding $\dss{\psi}{L}:
\com \to \com \star \AAA$ is $\upsilon$,
and $\dss{\psi}{R}: \AAA \to \com \star \AAA$
and $\E: \com \star \AAA \to \AAA$ are both the
identity map.
\end{proposition}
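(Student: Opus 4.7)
The plan is to pick a faithful representation of the CP-tuple $(\com, \AAA, \upsilon, \omega)$, compute the Sauvageot product in that realization, and then invoke Corollary \ref{corunique} to transfer the conclusion to the abstract Sauvageot product. Such a representation exists by Proposition \ref{propexistsfaithfulrep}; I would fix one, writing $(H, \Omega, \dss{\pi}{R}, K, V, \dss{\pi}{L})$ for the sextuple, $\Hh = H^- \star L$ for the underlying Hilbert space, and $(\com \star \AAA, \dss{\psi}{L}, \dss{\psi}{R}, \theta)$ for the realized Sauvageot product inside $B(\Hh)$.

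The crux is a one-line observation. Because $\dss{\pi}{L}: \com \to B(K)$ is a unital $*$-homomorphism with source $\com$, necessarily $\dss{\pi}{L}(\lambda) = \lambda I_K$; composing with the representation $\Psi$ of Proposition \ref{prop3UEreps} yields $\dss{\psi}{L}(\lambda) = \lambda I_\Hh$. By unitality of $\dss{\psi}{R}$ we also have $\dss{\psi}{R}(\upsilon(\lambda)) = \lambda \dss{\psi}{R}(\one_\AAA) = \lambda I_\Hh$, so
\[
\dss{\psi}{L}(\lambda) = \dss{\psi}{R}(\upsilon(\lambda)) \qquad \text{for all } \lambda \in \com.
\]
In particular $\dss{\psi}{L}(\com) \subset \dss{\psi}{R}(\AAA)$, so the C$^*$-algebra (resp.\ von Neumann algebra) generated in $B(\Hh)$ by $\dss{\psi}{L}(\com) \cup \dss{\psi}{R}(\AAA)$ coincides with $\dss{\psi}{R}(\AAA)$ itself. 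Right-faithfulness makes $\dss{\pi}{R}$ injective, and $\Phi$ is always injective (by Proposition \ref{prop3UEreps}, $\Phi(b) h = bh$ for $h \in H$, so $\Phi(b) = 0$ forces $b = 0$); hence $\dss{\psi}{R} = \Phi \circ \dss{\pi}{R}$ is an injective $*$-homomorphism, and in the W$^*$-case normality of both factors keeps the image weakly closed. Thus $\dss{\psi}{R}$ furnishes a $*$-isomorphism $\AAA \simeq \com \star \AAA$.

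With this identification the remaining assertions are immediate: $\dss{\psi}{R}$ becomes the identity by construction, $\dss{\psi}{L}$ becomes $\upsilon$ by the displayed equation, and the Sauvageot retraction satisfies $\theta \circ \dss{\psi}{R} = \text{id}_\AAA$ directly from Definition \ref{defSPretraction}, so it too reduces to the identity. Corollary \ref{corunique} then promotes the result from this particular faithful realization to the abstract Sauvageot product. There is no serious obstacle: the triviality of the source algebra $\com$ forces $\dss{\psi}{L}$ to coincide with $\dss{\psi}{R} \circ \upsilon$, collapsing the whole product onto the copy of $\AAA$.
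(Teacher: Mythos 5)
Your proof is correct and is essentially the paper's own (first) argument, fleshed out: the paper's proof likewise observes that $\dss{\psi}{L}(\com)\subset\dss{\psi}{R}(\AAA)$ in any realization, so the generated algebra collapses to $\dss{\psi}{R}(\AAA)\simeq\AAA$, with uniqueness supplied by Corollary \ref{corunique}. (The paper also sketches a second route, noting that right-liberation becomes trivial when one algebra is $\com$ so that $\E$ is multiplicative and inverts $\dss{\psi}{R}$, but your argument does not need it.)
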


\begin{proof}
One can prove this by constructing a representation of
this CP-tuple; on the space $\Hh$, one
has $\dss{\psi}{L}$ mapping into $\dss{\psi}{R}(\AAA)$,
so that the algebra generated by both the images together
is isomorphic to $\AAA$.  Alternatively, right-liberation becomes trivial when one of the algebras involved is $\com$, so that $\E$ is multiplicative and hence is a *-homomorphic inverse for
$\dss{\psi}{R}$.
\end{proof}

\begin{remark} \label{remproductwithsubalgebra}
One might conjecture that, more generally, the Sauvageot product with respect to an embedding is trivial; that is, if
$A \sa{\iota} B$ is an embedding, or equivalently
if $A \subset B$ is an embedding, that $A \star B \simeq B$.

This turns out not to be the case.  We are interested in
whether $\dss{\psi}{L} = \dss{\psi}{R} \circ \iota$;
but on the subspace $L'$ in a faithful decomposition,
$\dss{\psi}{L}$ acts faithfully, whereas $\dss{\psi}{R}
\circ \iota$ acts in a trivial fashion (in particular,
the component in $L'$ of $\dss{\psi}{R}(\iota(a)) \xi$
for $\xi \in L'$ must be a scalar multiple of $\xi$).

This illustrates an important feature of the Sauvageot
product.  If we were to start by representing
$B$ on some $H$ through the GNS construction, then
use Stinespring dilation to obtain a representation of
$A$ on $K$, then
in the special case that the map from $A$ to $B$ is an embedding
(indeed, any homomorphism) one would have $K = H$ and therefore
$L = \{0\}$, from which it would follow that $\Hh \simeq H$
as well, and $A \star B \simeq B$.  But the Sauvageot product
is defined with respect to a \emph{faithful} representation, which involves taking direct sums at various points in the process so as
to avoid collapsing into triviality.
\end{remark}

\begin{proposition}[$\AAA \star \com \simeq \com$]
Let $\AAA$ be any unital C$^*$-algebra (resp.\ W$^*$-algebra),
and $\omega$ any (normal) state on $\AAA$.
Then the Sauvageot product $\AAA \star \com$ of the
CP-tuple $(\AAA, \com, \omega, \dss{\text{id}}{\com})$ is
isomorphic to $\AAA$; modulo this identification,
the left embedding $\dss{\psi}{L}: \AAA \to \AAA \star \com$
is the identity map, the right embedding
$\dss{\psi}{R}: \com \to \AAA \star \com$ is $\upsilon$,
and the retraction $\E: \AAA \star \com \to \com$
is $\omega$.
\end{proposition}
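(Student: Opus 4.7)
The plan is to prove the statement by exhibiting a convenient faithful representation and directly computing the generated algebra; by Corollary~\ref{corunique}, the resulting Sauvageot product is canonical up to the claimed identifications. The key simplifying feature of the CP-tuple $(\AAA, \com, \omega, \dss{\text{id}}{\com})$ is that $B = \com$, which forces $\dss{\pi}{R}$ (and hence $\dss{\psi}{R}$) to be scalar-valued in a very strong sense.

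First, I would fix any faithful representation $(H, \Omega, \dss{\pi}{R}, K, V, \dss{\pi}{L})$ of this tuple, which exists by Proposition~\ref{propexistsfaithfulrep}. Since $\dss{\pi}{R}: \com \to B(H)$ is a unital $*$-homomorphism, it sends $\lambda \in \com$ to $\lambda \cdot \text{id}_H$. Passing to $\dss{\psi}{R} = \Phi \circ \dss{\pi}{R}$ gives $\dss{\psi}{R}(\lambda) = \lambda \cdot \text{id}_\Hh$, since $\Phi$ is a unital $*$-homomorphism. Consequently $\dss{\psi}{R}(\com)$ lies in the scalars of $B(\Hh)$, and in particular in the image of any unital map. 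Since $\dss{\psi}{L}$ is unital, this gives $\dss{\psi}{R}(\com) \subseteq \dss{\psi}{L}(\AAA)$.

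Second, the image $\dss{\psi}{L}(\AAA)$ is already a C$^*$-subalgebra (resp.\ von Neumann subalgebra) of $B(\Hh)$, being the image of a (normal) $*$-homomorphism out of a unital C$^*$-algebra (resp.\ W$^*$-algebra). By the previous step it also contains $\dss{\psi}{R}(\com)$, so the subalgebra generated by $\dss{\psi}{L}(\AAA)$ and $\dss{\psi}{R}(\com)$ coincides with $\dss{\psi}{L}(\AAA)$; that is, $\AAA \star \com = \dss{\psi}{L}(\AAA)$. Next I would verify injectivity of $\dss{\psi}{L}$: writing $\dss{\psi}{L} = \Psi \circ \dss{\pi}{L}$, the representation $\Psi$ of Proposition~\ref{prop3UEreps} acts by $T \mapsto T \otimes \text{id}_{L^{+ \otimes \N}}$ under the unitary equivalence of Proposition~\ref{prop3UE}, which is faithful; combined with the faithfulness of $\dss{\pi}{L}$ (guaranteed by the faithful-decomposition hypothesis), this shows $\dss{\psi}{L}$ is injective. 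Thus $\dss{\psi}{L}: \AAA \to \AAA \star \com$ is a (normal) $*$-isomorphism.

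Finally, under this identification the structural maps fall into place: $\dss{\psi}{L}$ is by construction the identity; $\dss{\psi}{R}(\lambda) = \lambda \cdot \text{id}_\Hh$ corresponds to $\lambda \cdot \one \in \AAA$, i.e., to $\upsilon$; and the Sauvageot retraction $\theta: \AAA \star \com \to \com$, which by equation (\ref{eqnthetafactorsphi}) satisfies $\theta \circ \dss{\psi}{L} = \phi = \omega$, corresponds to $\omega$ under the identification. The only subtlety, which is really a bookkeeping matter rather than a genuine obstacle, is ensuring injectivity of $\dss{\psi}{L}$; this is handled by invoking the faithfulness assumption on the representation and the fact that the lifting $\Psi$ is faithful.
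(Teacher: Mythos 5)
Your proof is correct and takes essentially the same approach as the paper: the paper's proof of this proposition is just ``As with the previous proposition,'' and for the previous proposition ($\com \star \AAA \simeq \AAA$) the first alternative offered is exactly your observation that the scalar-valued embedding lands inside the image of the other (unital) embedding, so the generated algebra collapses. You fill in the injectivity of $\dss{\psi}{L}$ via faithfulness of $\Psi$ and of the decomposition, and you track the identifications of $\dss{\psi}{R}$ and $\theta$ carefully --- all of which is consistent with what the paper sketches.
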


\begin{proof}
As with the previous proposition.
\end{proof}

\begin{remark} \label{remstarwithcom}
Now given a CP-tuple $(A, B, \phi, \omega)$, one
can identify $A$ with $\com \star A$ (resp.\ $A \star \com$)
and $B$ with $\com \star B$ (resp.\ $B \star \com$); it
is then natural to ask whether $\phi$ is thereby identified
with $\dss{\text{id}}{\com} \star \phi$ (resp.\
$\phi \star \dss{\text{id}}{\com}$).  The answer is yes;
indeed, this is a special case of Corollary \ref{corcommutingsquare}.
\end{remark}


\chapter{Algebraic C$^*$-Dilations through Iterated Products} \label{chapiteratedproducts}

\section{Introduction}
Having shown how to construct the Sauvageot product of a CP-tuple,
we now broach the question of how to iterate this product in order to construct dilations.  For motivation, we return again to the Daniell-Kolmogorov construction as viewed through the lens of the tensor product (Example \ref{exmarkovdilation}).

Recall that we begin with a compact Hausdorff space $S$ (the state
space of a Markov process), with corresponding path space
$\SSSS = S^{[0,\infty)}$; we use $\AAA$ to denote $C(S)$ and
$\Aa$ to denote $C(\SSSS)$, though we seek here to
construct $\Aa$ only through C$^*$-algebraic means, without
reference to $\SSSS$.  For
each finite subset $\gamma \subset [0,\infty)$, we let
$\AAA_\gamma$ denote a tensor product of $|\gamma|$ copies
of $C(S)$ with itself.  When we have constructed $\Aa$,
we will embed such an $\AAA_\gamma$ into it,
corresponding to those functions on the path space which only depend on times in $\gamma$.

For $\beta \leq \gamma$ we can embed $\AAA_\beta$ into
$\AAA_\gamma$ by tensoring with $\one$'s in all the missing coordinates. It is difficult to find notation which makes this more precise while maintaining the basic simplicity of the concept, but here are two attempts.  First, an example:  If $\gamma = \{t_1, \dots, t_7\}$ with
the times listed in increasing order, and $\beta = \{t_2, t_5, t_6\}$, then one embeds $\AAA_\beta$ into $\AAA_\gamma$ via
\[
f \otimes g \otimes h \longmapsto \one \otimes f \otimes \one
\otimes \one \otimes g \otimes h \otimes \one.
\]
Second, a general observation: Such an embedding can be built from repeated embeddings corresponding to adding a single time, so we
may reduce to the case $\beta = \{t_1, \dots, t_n\}$ and
$\gamma = \{t_1, \dots, t_k, \tau, t_{k+1}, \dots, t_n\}$ where
again we assume the times are in increasing order.  In this
case the embedding is
\[
f_1 \otimes \dots \otimes f_n
\longmapsto f_1 \otimes \dots \otimes f_k
\otimes \one \otimes f_{k+1} \otimes \dots \otimes f_n.
\]
It is easy to see that the family of embeddings under consideration form an inductive system, so that we may take the limit to obtain a
C$^*$-algebra $\Aa$ generated by copies of the $\AAA_\gamma$.


Having constructed the limit algebra $\Aa$, with the embedding
$\AAA \hookrightarrow \Aa$ corresponding to the identification of $\AAA$ with $\AAA_{\{0\}}$, we are left with the task of constructing the retraction $\E: \Aa \to \AAA$.  We do this by first constructing a consistent family of retractions $\AAA_\gamma \to \AAA_\beta$
for $\beta \leq \gamma$, then showing how to use a limiting process
to induce the retraction $\Aa \to \AAA$.  First, we reduce as
before to the case where $\gamma$ contains one more point
than $\beta$, then retract
\[
f_1 \otimes \dots \otimes f_k \otimes g
\otimes f_{k+1} \otimes \dots \otimes f_n
\longmapsto f_1 \otimes \dots \otimes (f_k P_{\tau-t_k} g)
\otimes f_{k+1} \otimes \dots \otimes f_n.
\]
Note that in particular, when $\gamma$ contains 0 and
one identifies $\AAA$ with $\AAA_{\{0\}}$, repeated application
of this rule yields the retraction $\AAA_\gamma \to \AAA$ given
on simple tensors by
\[
f_1 \otimes \dots \otimes f_n
\longmapsto f_1 P_{t_2-t_1} \Big( f_2 P_{t_3-t_2}
\big(f_3 \cdots P_{t_n-t_{n-1}}(f_n)\big) \cdots \Big).
\]
Again, one can check that this family of retractions is consistent with the inductive system, so that it yields a well-defined and contractive map onto $\AAA$ from the dense subalgebra of $\Aa$ generated by the images of all the $\AAA_\gamma$; as this map is contractive, it
extends to a retraction on all of $\Aa$.

When seeking to carry this method across to the Sauvageot product,
one runs into several hurdles.  First, one does not form the Sauvageot product merely of two C$^*$-algebras, but rather of a CP-tuple; hence, one cannot begin by defining $\AAA_\gamma = \AAA \star \dots \star \AAA$
without specifying what maps are used between the various copies of $\AAA$.  Related, but more profound, is the failure of associativity; even when the relevant maps have been selected to make the notation well-defined, in general one does not have $(A \star A) \star (A \star A)$ isomorphic to $((A \star A) \star A) \star A$.
Hence, we are led to adopt a more laborious inductive construction, though we follow the same high-level strategy as in the commutative case.

For the remainder of the chapter, we fix a unital C$^*$-algebra (resp. W$^*$-algebra) $\AAA$, a faithful (normal) state
$\omega$ on $\AAA$, and a cp$_0$-semigroup $\{\phi_t\}$ on $\AAA$.
We use $\FF$ to denote the set of finite subsets of $[0,\infty)$.
Throughout, we assume unless otherwise indicated that times within
such sets are listed in increasing order; hence, writing
$\gamma = \{t_1, \dots, t_n\}$ implies $t_1 < \dots < t_n$.

\section{Construction of the Inductive System and Limit}
\subsection{Objects and Immediate-Tail Morphisms}
\begin{definition} \label{deftail}
Let $\beta, \gamma \in \FF$ with
$\gamma = \{t_1,  \dots, t_n\}$.  We call $\beta$ an
\textbf{initial segment} of $\gamma$ if
$\beta = \{t_1, \dots, t_m\}$ for some $1 \leq m \leq n$,
and a \textbf{tail}
of $\gamma$ if $\beta = \{t_\ell, \dots t_n\}$
for some $1 \leq \ell \leq n$.
If $\ell = 2$ we call $\beta$ an \textbf{immediate tail}
with \textbf{distance} $t_2 - t_1$.
\end{definition}

We are now able to define the objects of our inductive system,
as well as some of the morphisms.

\begin{definition} \label{definductiveobjects}
For nonempty $\gamma \in \FF$ we define inductively
\begin{enumerate}
    \item a unital C$^*$-algebra (resp. W$^*$-algebra)
    $\AAA_\gamma$
    \item a unital embedding $\iota_\gamma: \AAA \to \AAA_\gamma$
    \item a retraction $\epsilon_\gamma: \AAA_\gamma \to \AAA$
\end{enumerate}
as follows:
\begin{itemize}
    \item If $\gamma$ is a singleton,
    then $\AAA_\gamma = \AAA$ and both $\iota_\gamma$
     and $\epsilon_\gamma$ are the identity.
    \item If $\beta$ is an immediate tail of $\gamma$
    with distance $\tau$, let $\Phi
    = \phi_\tau \circ \epsilon_\beta: \AAA_\beta \to \AAA$, and
    form the CP-tuple $(\AAA_\beta, \AAA, \Phi, \omega)$.
    Then $\AAA_\gamma$ is the Sauvageot product
    $\AAA_\beta \star \AAA$, $\iota_\gamma$ is the
    embedding of $\AAA$ into $\AAA_\beta \star \AAA$ (denoted
    $\dss{\psi}{R}$ in the previous chapter), and
    $\epsilon_\gamma$ is the Sauvageot retraction from
    $\AAA_\beta \star \AAA$ onto $\AAA$ (denoted $\theta$
    in the previous chapter).
\end{itemize}
We also define $\AAA_\emptyset = \com$.
\end{definition}

Note that this definition also implicitly gives us
embeddings $\AAA_\beta \hookrightarrow \AAA_\gamma$ in the special
case where $\beta$ is an immediate tail of $\gamma$; this is
just the canonical embedding of $\AAA_\beta$ into $\AAA_\beta
\star \AAA$, the map denoted in the previous chapter
by $\dss{\psi}{L}$.

We turn
next to the question of how to embed $\AAA_\beta$ into
$\AAA_\gamma$ when $\beta \leq \gamma$ more generally.

\subsection{General Morphisms}
Consider now any inclusion $\beta \leq \gamma$
of nonempty elements of $\FF$.
Let $\gamma = \{t_1, \dots, t_n\}$ and for each $\ell \in \{1,
\dots, n\}$ define subsets $\gamma(\ell) \leq \gamma$
and $\beta(\ell) \leq \beta$ by
\[
\gamma(\ell) = \gamma \cap \{t_\ell, \dots, t_n\},
\quad \beta(\ell) = \beta \cap \{t_\ell, \dots, t_n\}.
\]
Then each $\gamma(\ell)$ is a tail of $\gamma$, with
$\gamma(1) = \gamma$, and similarly for $\beta$.
(Note that some of the $\beta(\ell)$ may be empty, if
$t_n \notin \beta$.)

\begin{definition} \label{definductivemorphisms}
For $\beta, \gamma$ as above, we define an embedding
$\AAA_\beta \sa{f} \AAA_\gamma$ by recursively
defining embeddings $\AAA_{\beta(\ell)} \sa{f_\ell} \AAA_{\gamma(\ell)}$
and letting $f = f_1$.  The embeddings are as follows:
\begin{itemize}
    \item In the base case $\ell = n$, the embedding $f_n$ is
    the identity map in case $t_n \in \beta$, or the canonical
    embedding $\com \hookrightarrow \AAA$ otherwise.
    \item Given $f_{\ell+1}$,
    let $\BB$ denote either $\AAA$ in the case that
    $t_\ell \in \beta$, or $\com$ otherwise;
    more succinctly, $\BB = \AAA_{\beta \cap \{t_\ell\}}$.  Let
    $\BB \sa{\psi} \AAA$ be either the identity map
    or the embedding of $\com$, accordingly.  Then
    \[
    f_{\ell} = f_{\ell+1} \star \psi.
    \]
\end{itemize}
\end{definition}

\begin{proposition} \label{propinductivesystem}
The family of embeddings $\AAA_\beta \hookrightarrow \AAA_\gamma$
in Definition \ref{definductivemorphisms} is an inductive system.
\end{proposition}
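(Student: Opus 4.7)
The plan is to verify the two defining conditions of an inductive system: that $f^{\alpha\to\alpha} = \textup{id}_{\AAA_\alpha}$, which is immediate from the recursion in Definition \ref{definductivemorphisms} since every constituent $f_\ell$ is a Sauvageot product of identity maps; and that compositions agree, $f^{\beta\to\gamma}\circ f^{\alpha\to\beta} = f^{\alpha\to\gamma}$, whenever $\alpha\leq\beta\leq\gamma$ in $\FF$. I would prove the composition law by induction on $|\gamma|$, the base case $|\gamma|\leq 1$ being direct from the base of the recursion.

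For the inductive step, write $\gamma = \{t_1\}\cup\gamma'$ with $\gamma' = \gamma(2)$, and set $\alpha' = \alpha\cap\gamma'$, $\beta' = \beta\cap\gamma'$. Peeling off one level of Definition \ref{definductivemorphisms} gives $f^{\alpha\to\gamma} = f^{\alpha'\to\gamma'}\star\psi^\alpha$ and $f^{\beta\to\gamma} = f^{\beta'\to\gamma'}\star\psi^\beta$, where each $\psi^{-}$ is $\textup{id}_\AAA$ or the canonical embedding $\com\hookrightarrow\AAA$ according as $t_1$ lies in the indicated set. To put $f^{\alpha\to\beta}$ in the same form I split on whether $t_1\in\beta$: if so, its own top-level recursion yields $f^{\alpha\to\beta} = f^{\alpha'\to\beta'}\star\psi^{\alpha,\beta}$ directly; if not, then $\alpha\subseteq\beta\subseteq\gamma'$, so $\alpha=\alpha'$ and $\beta=\beta'$, and I rewrite $f^{\alpha\to\beta} = f^{\alpha'\to\beta'}$ as $f^{\alpha'\to\beta'}\star\textup{id}_\com$ via the trivial identification $\AAA\star\com\simeq\AAA$ from the previous chapter. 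Applying functoriality (Proposition \ref{propfunctorial}) and the inductive hypothesis $f^{\beta'\to\gamma'}\circ f^{\alpha'\to\beta'} = f^{\alpha'\to\gamma'}$ then gives
\[
f^{\beta\to\gamma}\circ f^{\alpha\to\beta} = (f^{\beta'\to\gamma'}\circ f^{\alpha'\to\beta'}) \star (\psi^\beta\circ\psi^{\alpha,\beta}) = f^{\alpha'\to\gamma'} \star (\psi^\beta\circ\psi^{\alpha,\beta}),
\]
and a routine three-case check ($t_1\in\alpha$; $t_1\in\beta\setminus\alpha$; $t_1\notin\beta$) shows $\psi^\beta\circ\psi^{\alpha,\beta} = \psi^\alpha$, yielding $f^{\alpha'\to\gamma'}\star\psi^\alpha = f^{\alpha\to\gamma}$ and closing the induction.

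The main obstacle is not this combinatorial bookkeeping but the justification that each invocation of Proposition \ref{propfunctorial} actually has its hypotheses met, namely that the relevant squares of CP-tuples commute. Writing $t = \min\gamma'$ and $s = \min\beta'$, the key compatibility needed is $\epsilon_{\gamma'}\circ f^{\beta'\to\gamma'} = \phi_{s-t}\circ\epsilon_{\beta'}$, after which the semigroup identity $\phi_{t-t_1}\circ\phi_{s-t} = \phi_{s-t_1}$ converts the $\gamma$-gap CP map $\phi_{t-t_1}\circ\epsilon_{\gamma'}$ into the $\beta$-gap CP map $\phi_{s-t_1}\circ\epsilon_{\beta'}$ precisely along $f^{\beta'\to\gamma'}$, making the required squares commute. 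I would establish this retraction--embedding compatibility by the same induction on $|\gamma|$, using Proposition \ref{propuniqueretraction} to pin down the Sauvageot retractions uniquely from the right-liberation property, and invoking the trivial-case propositions whenever a $\com$-factor appears. With that compatibility in hand, all Sauvageot products in the argument are legitimately defined and every commuting-square hypothesis of Proposition \ref{propfunctorial} is automatic.
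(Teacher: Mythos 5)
Your argument is essentially the paper's: both reduce the cocycle identity $f^{\beta\to\gamma}\circ f^{\alpha\to\beta}=f^{\alpha\to\gamma}$ to an induction over the time points of the largest set, applying Proposition \ref{propfunctorial} at each stage and checking that the single-point maps ($\text{id}_{\AAA}$ or $\com\hookrightarrow\AAA$) compose correctly; your forward induction on $|\gamma|$ peeling off $\min\gamma$ is the paper's reverse induction on the tail index read in the opposite direction, and your ``product of identity maps'' observation for $f^{\gamma\to\gamma}=\text{id}$ is exactly what the paper justifies by Corollary \ref{corunique}. Your closing paragraph on verifying the commuting-square hypotheses of Proposition \ref{propfunctorial} --- the compatibility $\epsilon_{\gamma'}\circ f^{\beta'\to\gamma'}=\phi_{s-t}\circ\epsilon_{\beta'}$ --- addresses a point the paper's proof leaves implicit (it is already needed for Definition \ref{definductivemorphisms} to be well-posed, and appears in the paper only later as Lemma \ref{lemtailretractions} and Proposition \ref{propconsistentwith0}), so carrying it along in the same simultaneous induction is a legitimate, and arguably necessary, refinement rather than a different route.
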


\begin{proof}
Let $\beta \leq \gamma \leq \delta$ be nonempty sets in $\FF$.
Write $\delta = \{t_1, \dots, t_n\}$.  We first prove
 that the embedding $\AAA_\delta \hookrightarrow \AAA_\delta$
 is the identity map.  We prove this for the embeddings
 $\AAA_{\delta(\ell)} \hookrightarrow \AAA_{\delta(\ell)}$
 by reverse induction; the base case $\ell=n$ is trivial,
 and the inductive step is just Corollary \ref{corunique}.

Now for each $\ell = 1, \dots, n$ let
\begin{align*}
\AAA_{\beta(\ell)} &\longsa{g_\ell} \AAA_{\gamma(\ell)}\\
\AAA_{\gamma(\ell)} &\longsa{f_\ell} \AAA_{\delta(\ell)}\\
\AAA_{\beta(\ell)} &\longsa{h_\ell} \AAA_{\delta(\ell)}
\end{align*}
be the embeddings from Definition \ref{definductivemorphisms}.
We will prove by reverse induction for $\ell = n, \dots, 1$
that $f_\ell \circ g_\ell = h_\ell$.  The base case $\ell=n$
is trivial, as each of the three maps
in question is either the identity map or the embedding
$\com \hookrightarrow \AAA$.  Supposing now the result
to be established for $\ell+1$, let $\BB = \AAA_{\beta \cap \{t_\ell\}}$
and $\CC = \AAA_{\gamma \cap \{t_\ell\}}$, and let
$\BB \sa{\psi} \CC \sa{\eta} \AAA$ be the corresponding
embeddings.  Then  by Proposition \ref{propfunctorial},
\[
    h_\ell = h_{\ell+1} \star (\eta \circ \psi)
    = (f_{\ell+1} \circ g_{\ell+1}) \star(\eta \circ \psi)
    = (f_{\ell+1} \star \eta)
    \circ (g_{\ell+1} \star \psi)
    = f_\ell \circ g_\ell.
    \]
\end{proof}

\section{Endomorphisms of the Limit Algebra}
We have constructed unital C$^*$-algebras (resp.\
W$^*$-algebras) $\AAA_\gamma$ for each $\gamma \in \FF$, together
with (normal) embeddings $\AAA_\beta \hookrightarrow \AAA_\gamma$
for $\beta \leq \gamma$, which we now denote $\dss{f}{\gamma, \beta}$,
satisfying the inductive properties
\begin{align*}
\dss{f}{\gamma, \gamma} &= \text{id}_{\AAA_\gamma}\\
\dss{f}{\delta, \beta} &= \dss{f}{\delta,\gamma} \circ
\dss{f}{\gamma, \beta} \qquad \text{ for }
\beta \leq \gamma \leq \delta.
\end{align*}
By a standard construction (see for instance section 1.23 of \cite{Sakai}, Proposition 11.4.1 of \cite{KadisonRingrose2},
or section II.8.2 of \cite{Blackadar}) we obtain an inductive
limit, that is, a unital C$^*$-algebra $\Aa$ and
embeddings $\dss{f}{\infty,\gamma}: \AAA_\gamma \to \Aa$
such that $\dss{f}{\infty,\gamma} \circ \dss{f}{\gamma, \beta}
= \dss{f}{\infty,\beta}$ for all $\beta \leq \gamma$, and
with the universal property that, given any other unital C$^*$-algebra
$\Bb$ and *-homomorphisms (not necessarily embeddings)  $\dss{g}{\infty,\gamma}: \AAA_\gamma \to \Bb$ satisfying $\dss{g}{\infty,\gamma}
\circ \dss{f}{\gamma, \beta} = \dss{g}{\infty,\beta}$, there
is a unique unital *-homomorphism $\Phi: \Aa \to \Bb$
satisfying $\dss{g}{\infty,\gamma} = \Phi \circ \dss{f}{\infty,\gamma}$
for all $\gamma$.  We denote by $i$ the distinguished embedding 
$f_{\infty, \{0\}}: A \to \Aa$.

We note that inductive limits do not always exist in the category
of W$^*$-algebras and normal *-homomorphisms; hence, $\Aa$
will not in general be a W$^*$-algebra even when $\AAA$ is.
We postpone until the next chapter the question of how to
adapt our construction to the W$^*$-category, and continue
for the time being with a purely C$^*$-construction.

Our next task is to define a semigroup of unital *-endomorphisms
of $\Aa$.  For this we note that for any $\gamma \in \FF$
and any $\tau \geq 0$, if $\gamma + \tau$ denotes
the set $\{t + \tau \mid t \in \gamma\}$, then
$\AAA_{\gamma+\tau} = \AAA_\gamma$.  (This
is an \emph{equality}, not just an isomorphism.)  This
is immediate from Definition \ref{definductiveobjects} by induction
on $|\gamma|$.  Similarly,
$\dss{f}{\gamma+t, \beta+t} = \dss{f}{\gamma, \beta}$.  But
this latter equation implies that $\dss{f}{\infty, \gamma+t}
\circ \dss{f}{\gamma, \beta} = \dss{f}{\infty, \beta+t}$ for
any $\beta \leq \gamma$, allowing us to make the following definition.

\begin{definition} \label{defsigmat}
For each $t \geq 0$ let $\sigma_t: \Aa \to \Aa$ denote the
unital *-endomorphism obtained through the inductive limit
as the unique map for which all the diagrams
\[ \xymatrix{
\AAA_\gamma \ar[rd]_{f_{\infty,\gamma+t}} \ar[r]^{f_{\infty,\gamma}} &\Aa \ar[d]^{\sigma_t}\\
& \Aa
} \]
commute.
\end{definition}

The universal property of the inductive limit then immediately
implies:

\begin{proposition} \label{prope0semigroup}
The maps $\{\sigma_t\}_{t \geq 0}$ form an e$_0$-semigroup on $\Aa$.
That is, $\sigma_0 = \dss{\text{id}}{\Aa}$, and for all
$s,t \geq 0$,
\[
\sigma_t \circ \sigma_s = \sigma_{s+t}.
\]
\end{proposition}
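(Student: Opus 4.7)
The proof plan is to argue purely from the universal property of the inductive limit, together with the equalities $\AAA_{\gamma+\tau} = \AAA_\gamma$ and $\dss{f}{\gamma+t,\beta+t} = \dss{f}{\gamma,\beta}$ that were observed just before Definition \ref{defsigmat}. Because each $\sigma_t$ is characterized by its compatibility with the family $\{\dss{f}{\infty,\gamma}\}$, showing that two maps both satisfy the same compatibility system suffices to identify them.

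First I would verify $\sigma_0 = \dss{\text{id}}{\Aa}$. Since $\gamma + 0 = \gamma$, the diagram defining $\sigma_0$ asserts $\sigma_0 \circ \dss{f}{\infty,\gamma} = \dss{f}{\infty,\gamma}$ for every $\gamma \in \FF$, which is also trivially satisfied by $\dss{\text{id}}{\Aa}$. The uniqueness clause in the universal property of the inductive limit then forces $\sigma_0 = \dss{\text{id}}{\Aa}$.

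Next, for the semigroup law $\sigma_t \circ \sigma_s = \sigma_{s+t}$, I would show that the left-hand side satisfies the defining property of the right-hand side. For each $\gamma \in \FF$, compute
\begin{align*}
(\sigma_t \circ \sigma_s) \circ \dss{f}{\infty,\gamma}
&= \sigma_t \circ (\sigma_s \circ \dss{f}{\infty,\gamma})
= \sigma_t \circ \dss{f}{\infty,\gamma+s}\\
&= \dss{f}{\infty,(\gamma+s)+t}
= \dss{f}{\infty,\gamma+(s+t)},
\end{align*}
using Definition \ref{defsigmat} twice and the associativity of addition. Since $\sigma_{s+t}$ is by definition the unique unital *-homomorphism $\Aa \to \Aa$ making $\sigma_{s+t} \circ \dss{f}{\infty,\gamma} = \dss{f}{\infty,\gamma+(s+t)}$ commute for every $\gamma$, and since $\sigma_t \circ \sigma_s$ is also a unital *-endomorphism of $\Aa$, the uniqueness part of the universal property gives $\sigma_t \circ \sigma_s = \sigma_{s+t}$.

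I do not anticipate any serious obstacle: the entire argument is a two-line diagram chase leveraging the universal property, and all the substantive work (well-definedness and functoriality of $\sigma_t$, the equalities $\AAA_{\gamma+t} = \AAA_\gamma$ and $\dss{f}{\gamma+t,\beta+t} = \dss{f}{\gamma,\beta}$) has already been done in the preceding paragraphs. The only point worth mentioning explicitly is that the *-endomorphism property of each $\sigma_t$ was established in Definition \ref{defsigmat} itself, so no additional verification of multiplicativity or unitality is required here.
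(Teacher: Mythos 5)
Your proof is correct and is exactly the argument the paper intends: the paper simply remarks that the proposition follows "immediately" from the universal property of the inductive limit, and your two-line diagram chase (checking that $\sigma_0$ and $\dss{\text{id}}{\Aa}$, and likewise $\sigma_t \circ \sigma_s$ and $\sigma_{s+t}$, satisfy the same compatibility system and invoking uniqueness) is precisely the verification being alluded to.
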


\section{The Limit Retraction}
We now turn to the construction of our retraction.
In the commutative analogue, for a set $\gamma$ with
minimum time $\tau$, the retraction $\epsilon_\gamma$
would (when composed with the embedding $\AAA \hookrightarrow
\Aa$) correspond to a conditional expectation onto the
subalgebra of $\Aa$ consisting of functions which depend only
on the location of a path at time $\tau$.  This does not
form a consistent system with respect to the embeddings
$\dss{f}{\gamma, \beta}$, because for $\beta \leq \gamma$
one could have times in $\gamma$ earlier than any in $\beta$.
However, the restriction to time sets which contain 0 is
consistent, which we now show in the noncommutative case.
We first consider how to relate the retraction for a given
set to the retractions for its tails.

\begin{lemma} \label{lemtailretractions}
Let $\gamma = \{t_1, \dots, t_n\} \in \FF$ and
$1 \leq \ell \leq n$.  Then
\[
\epsilon_\gamma \circ \dss{f}{\gamma, \gamma(\ell)}
= \phi_{t_\ell-t_1} \circ \epsilon_{\gamma(\ell)}.
\]
\end{lemma}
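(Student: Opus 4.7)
The plan is to proceed by induction on $\ell$, with the base case $\ell=1$ being immediate: $\gamma(1) = \gamma$, so $\dss{f}{\gamma,\gamma(1)}$ is the identity on $\AAA_\gamma$ (by Proposition \ref{propinductivesystem}), $\phi_{t_1-t_1} = \phi_0 = \text{id}_\AAA$, and both sides reduce to $\epsilon_\gamma$. For the inductive step, I would factor $\dss{f}{\gamma,\gamma(\ell+1)} = \dss{f}{\gamma,\gamma(\ell)} \circ \dss{f}{\gamma(\ell),\gamma(\ell+1)}$ using the inductive-system property from Proposition \ref{propinductivesystem}, apply the inductive hypothesis to rewrite $\epsilon_\gamma \circ \dss{f}{\gamma,\gamma(\ell)}$ as $\phi_{t_\ell - t_1} \circ \epsilon_{\gamma(\ell)}$, and combine with the semigroup identity $\phi_{t_{\ell+1}-t_1} = \phi_{t_\ell-t_1} \circ \phi_{t_{\ell+1}-t_\ell}$. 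This reduces everything to the \emph{immediate-tail} case: for any $\delta = \{s_1,\dots,s_m\} \in \FF$ with immediate tail $\eta = \{s_2,\dots,s_m\}$ and distance $\tau = s_2 - s_1$, one must show $\epsilon_\delta \circ \dss{f}{\delta,\eta} = \phi_\tau \circ \epsilon_\eta$.

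The immediate-tail case is essentially built into the construction. By Definition \ref{definductiveobjects}, $\AAA_\delta$ is realized as the Sauvageot product $\AAA_\eta \star \AAA$ of the CP-tuple $(\AAA_\eta, \AAA, \phi_\tau \circ \epsilon_\eta, \omega)$, and $\epsilon_\delta$ equals the Sauvageot retraction $\theta$. Equation (\ref{eqnthetafactorsphi}) then yields $\theta \circ \dss{\psi}{L} = \phi_\tau \circ \epsilon_\eta$ directly, provided one knows that $\dss{f}{\delta,\eta}$ coincides with the canonical left embedding $\dss{\psi}{L}: \AAA_\eta \to \AAA_\eta \star \AAA$.

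Verifying $\dss{f}{\delta,\eta} = \dss{\psi}{L}$ by unpacking Definition \ref{definductivemorphisms} is the main bookkeeping task, and the only real source of friction in the proof. Taking the big set to be $\delta$ and the small set to be $\eta$: for $j \geq 2$ the subsets $\eta(j)$ and $\delta(j)$ coincide (both equal $\{s_j,\dots,s_m\}$), so by the argument used at the start of the proof of Proposition \ref{propinductivesystem}, each $f_j$ is the identity; at $j=1$ we have $s_1 \notin \eta$, so $\BB = \com$, $\psi = \upsilon$, and $f_1 = f_2 \star \upsilon = \text{id}_{\AAA_\eta} \star \upsilon$. After the trivial identification $\AAA_\eta \star \com \simeq \AAA_\eta$ from Proposition \ref{propcomstarA} (and its $\AAA \star \com$ counterpart), this map is precisely $\dss{\psi}{L}$, completing the argument.
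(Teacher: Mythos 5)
Your proposal follows the same route as the paper: forward induction on $\ell$, with the inductive step splitting via $\dss{f}{\gamma,\gamma(\ell+1)} = \dss{f}{\gamma,\gamma(\ell)} \circ \dss{f}{\gamma(\ell),\gamma(\ell+1)}$, and the key immediate-tail identity $\epsilon_{\gamma(\ell)} \circ \dss{f}{\gamma(\ell),\gamma(\ell+1)} = \phi_{t_{\ell+1}-t_\ell} \circ \epsilon_{\gamma(\ell+1)}$ obtained from Equation (\ref{eqnthetafactorsphi}). The one place where you go further than the paper is the final bookkeeping paragraph: the paper simply asserts that $\dss{f}{\gamma(\ell),\gamma(\ell+1)}$ coincides with the canonical left embedding $\dss{\psi}{L}$ (relying on the remark after Definition \ref{definductiveobjects}), whereas you verify it by unwinding Definition \ref{definductivemorphisms}, showing $f_j$ is the identity for $j \geq 2$ and identifying $f_1 = \text{id} \star \upsilon$ with $\dss{\psi}{L}$ via the $\AAA \star \com \simeq \AAA$ identification; this is a genuine (if small) gap in the paper's exposition that you correctly fill.
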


\begin{proof}

We proceed by (forward!) induction on $\ell$.
The base case $\ell=1$ is trivial.  Now supposing
the result is true for $\ell$, recall that $\AAA_{\gamma(\ell)}$
is the product $\AAA_{\gamma(\ell+1)} \star \AAA$
with respect to the map $\phi_{t_{\ell+1}-t_\ell}
\circ \epsilon_{\gamma(\ell+1)}: \AAA_{\gamma(\ell+1)} \to \AAA$, that $\dss{f}{\gamma(\ell),\gamma(\ell+1)}$ is the
embedding of $\AAA_{\gamma(\ell+1)}$ into this product, and that $\epsilon_{\gamma(\ell)}$ is the Sauvageot retraction.  By Equation \ref{eqnthetafactorsphi} we therefore have
\[
\epsilon_{\gamma(\ell)} \circ \dss{f}{\gamma(\ell),\gamma(\ell+1)}
= \phi_{t_{\ell+1}-t_\ell} \circ \epsilon_{\gamma(\ell+1)}
\]
so that
\begin{align*}
\epsilon_\gamma \circ \dss{f}{\gamma,\gamma(\ell+1)}
&= \epsilon_\gamma \circ \dss{f}{\gamma,\gamma(\ell)}
\circ \dss{f}{\gamma(\ell),\gamma(\ell+1)}\\
&= \phi_{t_\ell-t_1} \circ \epsilon_{\gamma(\ell)}
\circ \dss{f}{\gamma(\ell),\gamma(\ell+1)}\\
&= \phi_{t_\ell-t_1} \circ \phi_{t_{\ell+1}-t_\ell} \circ
\epsilon_{\gamma(\ell+1)}\\
&= \phi_{t_{\ell+1}-t_1} \circ \epsilon_{\gamma(\ell+1)}.
\end{align*}
\end{proof}

\begin{proposition} \label{propconsistentwith0}
Let $\beta \leq \gamma \in \FF$ such that the
minimum time in $\gamma$ is also in $\beta$.  Then
\[
\epsilon_\gamma \circ \dss{f}{\gamma,\beta} = \epsilon_\beta.
\]
\end{proposition}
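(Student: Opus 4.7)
The plan is to proceed by induction on $n = |\gamma|$. The base case $n = 1$ is immediate: $\beta = \gamma$ is a singleton, $\AAA_\gamma = \AAA$, and both $\epsilon_\gamma$ and $f_{\gamma, \beta}$ are the identity map. For the inductive step write $\gamma = \{t_1, \ldots, t_n\}$ with $t_1 \in \beta$, and split into cases according to whether or not the second-smallest time $t_2 \in \gamma$ belongs to $\beta$. The central tool in both cases is Corollary \ref{corcommutingsquare}(1), which says that if $A \sa{f} B$ is a unital *-homomorphism and $B \sa{\phi} C$ is a unital CP map, then the Sauvageot retractions $\theta : A \star C \to C$ and $\eta : B \star C \to C$ built from the tuples $(A, C, \phi \circ f, \omega)$ and $(B, C, \phi, \omega)$ satisfy $\eta \circ (f \star \text{id}_C) = \theta$.

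\textbf{Case $t_2 \in \beta$.} Definition \ref{definductivemorphisms} unfolds $f_{\gamma, \beta}$ as $f_{\gamma(2), \beta \setminus \{t_1\}} \star \text{id}_\AAA$. The pair $(\gamma(2), \beta \setminus \{t_1\})$ satisfies the inductive hypothesis (since $t_2 = \min \gamma(2)$ lies in $\beta \setminus \{t_1\}$), yielding $\epsilon_{\gamma(2)} \circ f_{\gamma(2), \beta \setminus \{t_1\}} = \epsilon_{\beta \setminus \{t_1\}}$. Composing with $\phi_{t_2 - t_1}$ produces the CP map $\phi_{t_2 - t_1} \circ \epsilon_{\beta \setminus \{t_1\}}$ that defines $\AAA_\beta = \AAA_{\beta \setminus \{t_1\}} \star \AAA$, so Corollary \ref{corcommutingsquare}(1) gives $\epsilon_\gamma \circ f_{\gamma, \beta} = \epsilon_\beta$.

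\textbf{Case $t_2 \notin \beta$.} Set $\beta' = \beta \cup \{t_2\}$ and factor $f_{\gamma, \beta} = f_{\gamma, \beta'} \circ f_{\beta', \beta}$ via Proposition \ref{propinductivesystem}. The first factor falls under the previous case, so $\epsilon_\gamma \circ f_{\gamma, \beta'} = \epsilon_{\beta'}$. For the second factor, $(\beta', \beta)$ has $\beta = \beta' \setminus \{t_2\}$ with $t_2$ the second element of $\beta'$; unfolding Definition \ref{definductivemorphisms} once more expresses $f_{\beta', \beta}$ as $f_{\beta'(2), \beta'(3)} \star \text{id}_\AAA$, where $f_{\beta'(2), \beta'(3)}$ is the canonical left embedding of $\AAA_{\beta'(3)}$ into $\AAA_{\beta'(2)} = \AAA_{\beta'(3)} \star \AAA$. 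Lemma \ref{lemtailretractions} computes $\epsilon_{\beta'(2)} \circ f_{\beta'(2), \beta'(3)} = \phi_{s - t_2} \circ \epsilon_{\beta'(3)}$, where $s$ is the second element of $\beta$, and the semigroup law $\phi_{t_2 - t_1} \circ \phi_{s - t_2} = \phi_{s - t_1}$ makes this exactly the CP map defining $\AAA_\beta$; a second application of Corollary \ref{corcommutingsquare}(1) closes the subcase.

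The main obstacle is pure bookkeeping: at each step one must verify that the recursive unfolding of $f_{\gamma, \beta}$ in Definition \ref{definductivemorphisms} actually takes the shape $f \star \text{id}_\AAA$ required by Corollary \ref{corcommutingsquare}(1), and that the composite CP map produced by the inductive hypothesis (or by Lemma \ref{lemtailretractions}) coincides, on the nose, with the map $\phi_{s - t_1} \circ \epsilon_{\beta \setminus \{t_1\}}$ prescribed by Definition \ref{definductiveobjects} for $\AAA_\beta$. Once these identifications are in place, Corollary \ref{corcommutingsquare}(1) does all the substantive work and no further examination of the internal structure of the Sauvageot products is required.
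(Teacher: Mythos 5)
Your proof follows the same essential route as the paper's---Lemma \ref{lemtailretractions}, Corollary \ref{corcommutingsquare}(1), and the consistency of the embeddings from Proposition \ref{propinductivesystem}---and is correct except for one overlooked edge case. In Case~$t_2\notin\beta$ with $\beta=\{t_1\}$ a singleton, the argument for the second factor breaks down: then $\beta'=\{t_1,t_2\}$, so $\beta'(3)=\emptyset$, the phrase ``$s$ is the second element of $\beta$'' has no referent, Lemma \ref{lemtailretractions} cannot be applied to $\beta'(2)=\{t_2\}$ with $\ell=2$, and there is no ``CP map defining $\AAA_\beta$'' because $\AAA_\beta=\AAA$ is not presented as a nontrivial Sauvageot product. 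The conclusion still holds in this subcase---indeed $\dss{f}{\beta',\beta}=\iota_{\beta'}$ and $\epsilon_{\beta'}$ is a retraction with respect to $\iota_{\beta'}$, so $\epsilon_{\beta'}\circ \dss{f}{\beta',\beta}=\text{id}_\AAA=\epsilon_\beta$---but this requires a separate, if trivial, verification that your write-up omits. The paper sidesteps this by making the singleton case the base of its induction: its base case is precisely the maximal $\ell$ with $t_\ell\in\beta$, where $\beta(\ell)$ is a one-point set and $\dss{f}{\gamma(\ell),\beta(\ell)}=\iota_{\gamma(\ell)}$.

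Beyond that, the difference from the paper is organizational rather than substantive. You induct on $|\gamma|$ and, when $t_2\notin\beta$, pass through the auxiliary set $\beta'=\beta\cup\{t_2\}$ to split off a factor governed directly by Lemma \ref{lemtailretractions}; the paper instead runs a single reverse induction over the indices $\ell$ with $t_\ell\in\beta$, jumping from one element of $\beta$ to the next inside the fixed $\gamma$ and using Lemma \ref{lemtailretractions} to cross all the intervening times of $\gamma$ in one computation. In both presentations the core step is identifying the composite $\phi_{t_{\ell+1}-t_\ell}\circ\epsilon_{\gamma(\ell+1)}\circ\dss{f}{\gamma(\ell+1),\beta(\ell+1)}$ with the CP map that Definition \ref{definductiveobjects} prescribes for $\AAA_{\beta(\ell)}$, after which Corollary \ref{corcommutingsquare}(1) closes the step, so your factorization through $\beta'$ is just a different packaging of the same verification.
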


%

\begin{proof}
Let $\gamma = \{t_1, \dots, t_n\}$.
We will prove
that
\[
\epsilon_{\gamma(\ell)}
\circ \dss{f}{\gamma(\ell),\beta(\ell)} = \epsilon_{\beta(\ell)}
\]
for all $\ell$ such that $t_\ell \in \beta$.
For the base case with the maximal such $\ell$,
 $\dss{f}{\gamma(\ell), \beta(\ell)}$ is equal
to $\iota_{\gamma(\ell)}$, and since $\epsilon_{\gamma(\ell)}$
is a corresponding retraction, their composition is
$\dss{\text{id}}{\AAA} = \epsilon_{\beta(\ell)}$.
Inductively, suppose $t_\ell \in \beta$ and $t_{\ell+k}$ is
the next time in $\beta$, so that $\beta(\ell+1) =
\beta(\ell+k)$; then
\begin{align*}
\phi_{t_{\ell+1}-t_\ell} \circ \epsilon_{\gamma(\ell+1)}
\circ \dss{f}{\gamma(\ell+1),\beta(\ell+1)} &=
\phi_{t_{\ell+1}-t_\ell} \circ \epsilon_{\gamma(\ell+1)}
\circ \dss{f}{\gamma(\ell+1),\beta(\ell+k)} \\
&=\phi_{t_{\ell+1}-t_\ell} \circ \epsilon_{\gamma(\ell+1)}
\circ \dss{f}{\gamma(\ell+1),\gamma(\ell+k)} \circ
\dss{f}{\gamma(\ell+k),\beta(\ell+k)} \\
&= \phi_{t_{\ell+1}-t_\ell} \circ \phi_{t_{\ell+k}-t_{\ell+1}}
\circ \epsilon_{\gamma(\ell+k)} \circ \dss{f}{\gamma(\ell+k),\beta(\ell+k)}\\
&= \phi_{t_{\ell+k}-t_\ell} \circ \epsilon_{\beta(\ell+k)}
\end{align*}
where the equalities follow respectively from the assumption that
$\beta(\ell+1) = \beta(\ell+k)$, the consistency of
the $f$'s, Lemma \ref{lemtailretractions}, and
induction.  It then follows from Corollary \ref{corcommutingsquare} that
\[
\epsilon_{\gamma(\ell)} \circ \dss{f}{\gamma(\ell),
\beta(\ell)} = \epsilon_{\gamma(\ell)} \circ
(\dss{f}{\gamma(\ell+1),\beta(\ell+1)} \star \text{id})
= \epsilon_{\beta(\ell)}
\]
as desired.  The case $\ell=1$ gives us the result.
\end{proof}

\begin{corollary} \label{corconsistentretractions}
The restriction of the family of retractions $\{\epsilon_\gamma\}$ to the subset $\FF_0 \subset \FF$ of sets containing 0
is consistent.
\end{corollary}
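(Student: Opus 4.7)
The plan is to observe that this is an essentially immediate consequence of Proposition \ref{propconsistentwith0}. Concretely, suppose $\beta \leq \gamma$ are both elements of $\FF_0$, so that $0 \in \beta$ and $0 \in \gamma$. Since every element of $\gamma \subset [0,\infty)$ is nonnegative and $0 \in \gamma$, the minimum time in $\gamma$ must be $0$, and by hypothesis this minimum time also lies in $\beta$. Therefore the hypothesis of Proposition \ref{propconsistentwith0} is satisfied, and we conclude $\epsilon_\gamma \circ \dss{f}{\gamma,\beta} = \epsilon_\beta$, which is precisely the statement that the family $\{\epsilon_\gamma\}_{\gamma \in \FF_0}$ is consistent with respect to the inductive system of embeddings $\dss{f}{\gamma,\beta}$.

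There is essentially no obstacle here, since all the real work has been done in Lemma \ref{lemtailretractions} and Proposition \ref{propconsistentwith0}. The only thing to verify is that the hypothesis of Proposition \ref{propconsistentwith0} ("the minimum time in $\gamma$ is also in $\beta$") is automatic once both sets contain $0$, which follows from the ordering on $[0,\infty)$. The role of the restriction to $\FF_0$ is precisely to guarantee this hypothesis uniformly across all pairs $\beta \leq \gamma$ in the subsystem; without it, one could have $\gamma$ containing times strictly smaller than any in $\beta$, in which case the retractions are not consistent because additional applications of $\phi_t$ would be required, as already foreshadowed in the motivating commutative discussion at the start of the section.
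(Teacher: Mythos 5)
Your proof is correct and matches the paper's intent exactly: the corollary is stated without proof precisely because it follows immediately from Proposition \ref{propconsistentwith0} once one notes that $0 \in \gamma$ forces the minimum of $\gamma$ to be $0$, which lies in $\beta$ by hypothesis. Nothing further is needed.
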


Since $\FF_0$ is a tail of $\FF$, the limit $\Aa$ is generated
by images of $\AAA_\gamma$ with $\gamma \in \FF_0$.
Hence, Corollary \ref{corconsistentretractions} implies
the existence of a retraction (with respect to $i$) \\
$\E: \Aa \to \AAA$
with the property that $\E \circ \dss{f}{\infty,\gamma}
= \epsilon_\gamma$ for all $\gamma \in \FF_0$.

\begin{definition} \label{defdilationretraction}
The \textbf{Sauvageot dilation retraction} for
$(\AAA, \{\phi_t\}, \omega)$ is the map
$\E: \Aa \to \AAA$ characterized by
\[
\E \circ \dss{f}{\infty, \gamma} = \epsilon_\gamma
\qquad \text{ for all } 0 \in \gamma \in \FF.
\]
\end{definition}

We now prove that $(\E, \{\sigma_t\})$ provides a strong dilation
of the semigroup $\{\phi_t\}$.

\begin{theorem} \label{thmstrongdilation}
For all $t \geq 0$,
\[
\E \circ \sigma_t = \phi_t \circ \E.
\]
\end{theorem}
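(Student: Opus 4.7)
The plan is to use the universal property of the inductive limit: since $\Aa$ is generated by $\bigcup_{\gamma \in \FF_0} f_{\infty,\gamma}(\AAA_\gamma)$ and both $\E \circ \sigma_t$ and $\phi_t \circ \E$ are continuous (in fact norm-continuous linear maps), it suffices to check the identity on each $f_{\infty,\gamma}(\AAA_\gamma)$ with $0 \in \gamma \in \FF$. Equivalently, for each such $\gamma$ I need to show
\[
\E \circ \sigma_t \circ f_{\infty,\gamma} \;=\; \phi_t \circ \E \circ f_{\infty,\gamma}.
\]
The right-hand side equals $\phi_t \circ \epsilon_\gamma$ by the defining property of $\E$. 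For the left-hand side, the construction of $\sigma_t$ in Definition \ref{defsigmat} gives $\sigma_t \circ f_{\infty,\gamma} = f_{\infty,\gamma+t}$, so the task reduces to computing $\E \circ f_{\infty,\gamma+t}$.

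The subtlety is that $\gamma+t$ need not lie in $\FF_0$, so I cannot directly invoke the definition of $\E$. The fix is to factor through $\gamma' := \{0\} \cup (\gamma+t)$, which does contain $0$. For $t > 0$ (the case $t=0$ is trivial), the minimum element of $\gamma'$ is $0$ and its immediate successor as a tail is $\gamma+t$ itself (since $0 \notin \gamma+t$). Using the consistency of the embeddings,
\[
f_{\infty,\gamma+t} \;=\; f_{\infty,\gamma'} \circ f_{\gamma', \gamma+t},
\]
and applying $\E$ gives $\E \circ f_{\infty,\gamma+t} = \epsilon_{\gamma'} \circ f_{\gamma', \gamma+t}$.

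Now I invoke Lemma \ref{lemtailretractions}, applied to $\gamma'$ with the tail $\gamma+t$: writing $\gamma' = \{0, t, t_2+t, \dots, t_n+t\}$ where $\gamma = \{0, t_2, \dots, t_n\}$, the tail $\gamma+t$ starts at time $t$, so
\[
\epsilon_{\gamma'} \circ f_{\gamma', \gamma+t} \;=\; \phi_{t - 0} \circ \epsilon_{\gamma+t} \;=\; \phi_t \circ \epsilon_{\gamma+t}.
\]
Finally, the time-translation invariance noted just before Definition \ref{defsigmat} (namely $\AAA_{\gamma+t} = \AAA_\gamma$ as an equality, and hence $\epsilon_{\gamma+t} = \epsilon_\gamma$, which follows by induction on $|\gamma|$ from the fact that the semigroup $\{\phi_t\}$ and the state $\omega$ are the same data used to build $\AAA_\gamma$ and $\AAA_{\gamma+t}$) gives $\phi_t \circ \epsilon_{\gamma+t} = \phi_t \circ \epsilon_\gamma$, matching the right-hand side.

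The main obstacle, such as it is, is the bookkeeping of whether $\gamma+t$ contains $0$ and the identification $\epsilon_{\gamma+t} = \epsilon_\gamma$; no hard analysis is required, since Lemma \ref{lemtailretractions} already packages the relevant computation with the Sauvageot retraction.
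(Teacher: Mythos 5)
Your proof is correct and takes essentially the same route as the paper. The set $\gamma'=\{0\}\cup(\gamma+t)$ you introduce is exactly the $\delta$ of the paper's proof; where you cite Lemma \ref{lemtailretractions} (in its immediate-tail case $\ell=2$) together with the translation invariance $\epsilon_{\gamma+t}=\epsilon_\gamma$, the paper collapses those two steps by observing directly that $\AAA_\delta$ is the Sauvageot product $\AAA_{\gamma+t}\star\AAA$ with respect to the map $\phi_t\circ\epsilon_\gamma$ and invoking Equation \ref{eqnthetafactorsphi}; these are the same computation, and you are in fact slightly more careful in making the identification $\epsilon_{\gamma+t}=\epsilon_\gamma$ explicit.
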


\begin{proof}
The case $t = 0$ is trivial.  Now
let $\gamma \in \FF$ be nonempty and $t > 0$.  Let
$\delta = (\gamma+t) \cup \{0\}$; then $\AAA_\delta$ is
the Sauvageot product $\AAA_{\gamma+t} \star \AAA$ with
respect to the map $\phi_t \circ \epsilon_\gamma$.  By
Equation \ref{eqnthetafactorsphi}, it follows
that
\[
\epsilon_\delta \circ \dss{f}{\delta, \gamma+t}
= \phi_t \circ \epsilon_\gamma.
\]
Then
\begin{align*}
\E \circ \sigma_t \circ \dss{f}{\infty,\gamma}
&= \E \circ \dss{f}{\infty,\gamma+t}\\
&= \E \circ \dss{f}{\infty,\delta} \circ \dss{f}{\delta,\gamma+t}\\
&= \epsilon_\delta \circ \dss{f}{\delta,\gamma+t}\\
&= \phi_t \circ \epsilon_\gamma\\
&= \phi_t \circ \E \circ \dss{f}{\infty,\gamma}.
\end{align*}
So $\E \circ \sigma_t$ and $\phi_t \circ \E$ agree on
the dense subalgebra of $\Aa$ consisting of the images of
all the $\dss{f}{\infty,\gamma}$; as both are
contractive, they are equal.
\end{proof}

This concludes our construction of unital e$_0$-dilations
for cp$_0$-semigroups on C$^*$-algebras.  We summarize the
result in the following theorem.

\begin{theorem}
Let $\AAA$ be a unital C$^*$-algebra on which there exists
a faithful state.  Then every cp$_0$-semigroup on $\AAA$
has a strong unital e$_0$-dilation.
\end{theorem}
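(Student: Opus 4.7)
The plan is to assemble the objects constructed earlier in this chapter and verify that together they satisfy every clause in the definition of a strong unital e$_0$-dilation. Using the given faithful state $\omega$ (needed throughout the construction, since each Sauvageot product requires a state on one factor and faithfulness of $\omega$ guarantees the existence of a faithful representation by Proposition \ref{propexistsfaithfulrep}), I would feed the data $(\AAA, \{\phi_t\}, \omega)$ through Definitions \ref{definductiveobjects}, \ref{definductivemorphisms}, \ref{defsigmat}, and \ref{defdilationretraction} to produce $\Aa$, the distinguished embedding $i = \dss{f}{\infty, \{0\}}: \AAA \to \Aa$, the e$_0$-semigroup $\{\sigma_t\}$, and the retraction $\E: \Aa \to \AAA$, respectively. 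The theorem then reduces to four checks.

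First, that $\Aa$ is a unital C$^*$-algebra and $i$ a unital embedding: unitality is inherited from the fact that each morphism $\dss{f}{\gamma,\beta}$ is built by iterating $-\star -$ on unital *-homomorphisms (the identity and the embedding $\com \hookrightarrow \AAA$), and Theorem \ref{thmSPhoms} guarantees that Sauvageot products of unital *-homomorphisms are themselves unital *-homomorphisms; injectivity of $i$ follows from the injectivity clause of the same theorem together with Proposition \ref{propinductivesystem}, and a standard inductive-limit construction produces the unital C$^*$-algebra $\Aa$ with unital embedding $i$. Second, that $\{\sigma_t\}$ is an e$_0$-semigroup on $\Aa$ is exactly Proposition \ref{prope0semigroup}. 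Third, that $\E$ is a retraction with respect to $i$: applying Definition \ref{defdilationretraction} to $\gamma = \{0\}$ gives $\E \circ i = \E \circ \dss{f}{\infty, \{0\}} = \epsilon_{\{0\}} = \dss{\text{id}}{\AAA}$, the last equality by Definition \ref{definductiveobjects}. Fourth, the strong dilation identity $\E \circ \sigma_t = \phi_t \circ \E$ is Theorem \ref{thmstrongdilation}, from which the weaker factorization $\phi_t = \E \circ \sigma_t \circ i$ follows immediately by postcomposing with $i$ and invoking the retraction property.

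Since the main theorems have already been proved, no substantial obstacle remains. The only residual point worth mentioning is to confirm inductively that each auxiliary CP-tuple $(\AAA_\beta, \AAA, \phi_\tau \circ \epsilon_\beta, \omega)$ appearing in Definition \ref{definductiveobjects} genuinely is a CP-tuple with faithful state, so that the Sauvageot product $\AAA_\beta \star \AAA$ is well-defined up to canonical isomorphism by Corollary \ref{corunique}. This is automatic: $\omega$ sits on the right-hand factor $\AAA$ and is faithful there by hypothesis, while $\phi_\tau \circ \epsilon_\beta$ is a composition of unital completely positive maps and hence itself unital completely positive. With this observation in place, the four checks above complete the proof.
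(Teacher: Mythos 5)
Your proposal is correct and takes essentially the same route as the paper, which offers no explicit proof here: the theorem is stated as a summary of the chapter's construction (objects from Definitions \ref{definductiveobjects}--\ref{defdilationretraction}, Propositions \ref{propinductivesystem} and \ref{prope0semigroup}, and Theorem \ref{thmstrongdilation}), and your write-up simply makes that bookkeeping explicit. Your closing remark about verifying that each $(\AAA_\beta, \AAA, \phi_\tau \circ \epsilon_\beta, \omega)$ is a genuine faithful CP-tuple is exactly the right residual point to flag.
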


\chapter{Continuous W$^*$-Dilations}\label{chapcontinuous}
In the previous chapter we saw how to construct a unital e$_0$-dilation of a cp$_0$-semigroup.  It remains to investigate whether such a construction dilates a continuous semigroup to a continuous semigroup (that is, whether it produces a unital E$_0$-dilation of a CP$_0$-semigroup), or, failing that, whether the construction can be modified to achieve this result.  Additionally, we have not yet resolved the question of how to adapt our C$^*$ construction to the W$^*$ setting.  To these issues we now turn our attention.

\section{Introduction: The Problem of Continuity}
The first question to consider is whether the existing dilation
may already be continuous.  It turns out that this is
\emph{never} the case unless $\AAA = \com$.  Consider a
nontrivial $\AAA$ with faithful state $\omega$, and
let $a$ be any nonzero element of $\ker \omega$.
Fixing some faithful representation $(H, \Omega, \dss{\pi}{R})$
of $(\AAA, \omega)$, let $h = \dss{\pi}{R}(a) \Omega$, which
is orthogonal to $\Omega$.  For each $t > 0$ there
is a faithful representation $(H, \Omega, \dss{\pi}{R},
K^{(t)}, V^{(t)}, \dss{\pi}{L}^{(t)})$ of $(A, A, \phi_t, \omega)$.
Form the Sauvageot product $\Hh^{(t)} = H^-
\star L^{(t)}$, and let $\xi$ be any unit vector in $L^{(t)}$.  By
Proposition \ref{prop3UEreps} we see that $\dss{\psi}{L}^{(t)}(a) \xi$ is a vector in $L^{(t)}$, whereas
$\dss{\psi}{R}^{(t)}(a) \xi = h \otimes \xi$ is
in $H^- \otimes L^{(t)}$.  Since these are orthogonal subspaces
of $\Hh^{(t)}$,
\[
\|\dss{\psi}{L}^{(t)}(a) \xi - \dss{\psi}{R}^{(t)}(a) \xi\|
\geq \|h \otimes \xi\| = \|h\| \|\xi\|
\]
which implies
\[
\|\dss{\psi}{L}^{(t)}(a) - \dss{\psi}{R}^{(t)}(a) \| \geq
\|h\|.
\]
Now letting $\gamma = \{0,t\}$, we have $\AAA_\gamma$ as the
Sauvageot product $\AAA \star \AAA$ with respect to $\phi_t$,
so that $\dss{\psi}{L}^{(t)}(a) - \dss{\psi}{R}^{(t)}(a)$
is the element $\dss{f}{\gamma, \{t\}}(a) -
\dss{f}{\gamma, \{0\}}(a)$ of $\AAA_\gamma$.  By
the above, this element has norm at least $\|h\|$.
Now because $\dss{f}{\infty, \gamma}$ is isometric,
\begin{align*}
\| \sigma_t(\iota(a)) - \iota(a) \|
&= \|\dss{f}{\infty, \{t\}}(a) - \dss{f}{\infty, \{0\}}(a)\|\\
&= \left\|\dss{f}{\infty,\gamma} \Big(
\dss{f}{\gamma, \{t\}}(a) - \dss{f}{\gamma, \{0\}}(a)\Big) \right\|\\
&= \|\dss{f}{\gamma, \{t\}}(a) - \dss{f}{\gamma, \{0\}}(a)\|
\geq \|h\|.
\end{align*}
It follows that $\|\sigma_t(\iota(a)) - \iota(a)\| \not \to 0$
as $t \to 0^+$.

Upon further reflection, the discontinuity of $\{\sigma_t\}$ is
not surprising, because it appears in the commutative dilation that the Sauvageot construction mimics.  Consider again the case
$\AAA = C(S)$, $\Aa = C(\mathscr{S})$ of Example
\ref{exmarkovdilation}. Given a regular Borel
probability measure $\mu_0$ on $S$, we obtain via Riesz representation
a regular Borel probability measure $\mu$ on $\SSSS$ characterized by
\[
\forall f \in \Aa: \qquad \int_\mathscr{S} f \, d\mu = \int_S (\E f) \, d\mu_0.
\]
Let us call the semigroup $\{\sigma_t\}$ ``point-pointwise'' continuous if for any fixed path $\pP \in \SSSS$ and any
$f \in \Aa$,
$(\sigma_t f - f)(\pP) \to 0$.  The failure of point-pointwise
continuity certainly implies the failure of point-norm continuity.  Now
let $\pP$ be any path not continuous at time 0, let
$\phi: S \to [0,1]$ be a continuous function such that
$\phi(\pP(t)) \not \to \phi(\pP(0))$ as $t \to 0^+$ (which
exists by Urysohn's lemma),
and let $f \in \Aa$ be defined by $f(p) = \phi(p(0))$.
Then
\[
\lim_{t \to 0^+} (\sigma_t f - f)(\pP)
= \lim_{t \to 0^+} \phi(\lambda_t \pP) - \phi(\pP)
= \lim_{t \to 0^+} \phi(\pP(t)) - \phi(\pP(0)) \neq 0.
\]

To remedy the problem, we move to the W$^*$ setting.  First, however, we need more information about the retraction constructed in the previous chapter.

\section{Moment Polynomials}
In the Sauvageot C$^*$-dilation of chapter \ref{chapiteratedproducts}, the inductive limit algebra $\Aa$
is norm-generated as an algebra by elements $\sigma_t(i(a))$
for $t \geq 0$ and $a \in \AAA$.  In studying the retraction
$\E$, therefore, one is naturally led to
consider expressions of the form
\begin{equation} \label{eqnkeyexpression}
\E \Big[ \sigma_{t_1}i((a_1)) \sigma_{t_2}(i(a_2))
\dots \sigma_{t_n}(i(a_n)) \Big], \quad
t_1, \dots, t_n \geq 0; \quad a_1, \dots, a_n \in \AAA.
\end{equation}
In particular, it would be desirable to have a formula
for the value of (\ref{eqnkeyexpression}) in terms of the
original semigroup $\{\phi_t\}$ and the state $\omega$
chosen for the dilation procedure.  From the construction
of $\E$ in previous chapters, we
see that (\ref{eqnkeyexpression}) can be evaluated as
follows:
\begin{enumerate}
    \item If all the $t_i$ are strictly positive, let
    $\tau$ denote the minimum; then, by Theorem
    \ref{thmstrongdilation},
    \[
    \E \Big[ \sigma_{t_1}i((a_1)) \sigma_{t_2}(i(a_2))
    \cdots \sigma_{t_n}(i(a_n)) \Big]
    = \phi_\tau \bigg( \E \big[ \sigma_{t_1-\tau}i((a_1)) \sigma_{t_2-\tau}(i(a_2))
\cdots \sigma_{t_n-\tau}(i(a_n)) \big] \bigg).
    \]

    \item If some of the $t_i$ are zero, let $\gamma = \{t_i\}$,
    $\gamma' = \gamma \setminus \{0\}$, and, disregarding the
    trivial case $\gamma = \{0\}$, $\tau = \min \gamma'$.
    By definitions
    \ref{defsigmat} and \ref{defdilationretraction},
    \begin{align*}
    \E \Big[ \sigma_{t_1}i((a_1)) \sigma_{t_2}(i(a_2))
    \cdots \sigma_{t_n}(i(a_n)) \Big] &=
    \E \Big[ f_{\infty, \{t_1\}} (a_1) \cdots f_{\infty, \{t_n\}}
    (a_n) \Big] \\
    &= \E \Big[ f_{\infty, \gamma} \big( f_{\gamma, \{t_1\}}(a_1)
    \cdots f_{\gamma, \{t_n\}}(a_n) \big) \Big]\\
    &= \epsilon_\gamma \Big[ f_{\gamma, \{t_1\}}(a_1)
    \cdots f_{\gamma, \{t_n\}}(a_n) \Big].
    \end{align*}
    Now $\AAA_\gamma$ is the Sauvageot product $\AAA_{\gamma'}
    \star \AAA$ with respect to the map $\phi_\tau \circ
    \epsilon_{\gamma'}$, so
    $f_{\gamma, \{t_1\}}(a_1) \cdots f_{\gamma, \{t_n\}}(a_n)$
    is a word in $\AAA_{\gamma'}$ (corresponding to nonzero $t_i$)
    and $\AAA$ (corresponding to those $t_i$ equal to zero); the
    value of $\epsilon_\gamma$ at this word can be computed using
    the moment function from chapter \ref{chapliberation}.
\end{enumerate}
In carrying out the second step, one
ends up applying the map $\epsilon_{\gamma'}$ to words
of the form $f_{\gamma', \{t_{i_1}\}} \cdots f_{\gamma', \{t_{i_m}\}}$,
yielding expressions similar to (\ref{eqnkeyexpression}).  It is therefore convenient, for both theoretical and practical purposes, to introduce a recursive definition for such expressions, rather than relying on appropriate evaluations of the moment function of chapter
\ref{chapliberation}.

We take as our basic object of study pairs of the form
$\lla t_1, \dots, t_n ; a_1, \dots, a_n \rra$,
where $t_1, \dots, t_n \geq 0$ and $a_1, \dots, a_n \in \AAA$.  The \textbf{length} of such
a pair is $n$.  This pair corresponds
to a word form $\sigma_{t_1}(i(a_1)) \cdots \sigma_{t_n}(i(a_n))$
in $\Aa$.
We identify elements $a \in \AAA$ with pairs
$\lla 0; a \rra$.

We define a concatenation or ``multiplication'' operation on pairs
by
\begin{multline*}
\lla t_1, \dots, t_n ; a_1, \dots, a_n \rra
\vee \lla s_1, \dots, s_m ; b_1, \dots, b_m \rra\\
= \lla t_1, \dots, t_n, s_1, \dots, s_m ;
a_1, \dots, a_n, b_1, \dots, b_m \rra
\end{multline*}
or, more succinctly,
\[
\lla \vec{t} ; \vec{a} \rra
\vee \lla \vec{s} ; \vec{b}  \rra
= \lla \vec{t} \vee \vec{s} ; \vec{a} \vee \vec{b}
\rra.
\]

We make simultaneous recursive definitions of an $\AAA$-valued function $\Ss$
 on pairs (the \textbf{moment polynomial} function, corresponding to that defined in chapter 8 of \cite{ArvesonDynamics}), as well as a collapse function similar
 to that in chapter \ref{chapliberation},
as follows:

\begin{definition} \label{defmomentpolynomial}
 Given a pair $\lla \vec{t}; \vec{a} \rra$, let
    $\gamma = \cup \{t_i\}$.
\begin{itemize}
    \item If $\gamma = \{0\}$ then $\Ss \lla \vec{t}; \vec{a}; \gamma \rra = \Pi(\vec{a})$.
    \item If $0 < \tau = \min \gamma$,
    \[
    \Ss \lla \vec{t}; \vec{a} \rra
    = \phi_\tau \circ \Ss \lla \vec{t}-\tau; \vec{a} \rra.
    \]

    \item If $0 \in \gamma \neq \{0\}$, let $\tau
    = \min \gamma \setminus \{0\}$ and decompose
\[
\lla \vec{t}; \vec{a}\rra
= \lla \vec{n}_0; \vec{z}_0 \rra
\vee \lla \vec{s}_1; \vec{w}_1 \rra
\vee \lla \vec{n}_1; \vec{z}_1 \rra
\vee \cdots \vee \lla \vec{s}_\ell; \vec{w}_\ell \rra
\vee \lla \vec{n}_\ell; \vec{z}_\ell \rra
\]
where each $\vec{n}_i$ is a vector of zeros, each
$\vec{s}_i$ a vector of nonzero numbers, and
some of the pairs may be empty.  We refer to the pairs $\lla \vec{n}_i; \vec{z}_i \rra$ and $\lla \vec{s}_i; \vec{w}_i \rra$
as the \textbf{components} of this decomposition.   Given $S \subset [2\ell-1]$, let
\[
x_j = \begin{cases} \Ss \lla \vec{s}_{(j+1)/2};
 \vec{w}_{(j+1)/2} \rra & j \text{ odd }\\
 \Pi (\vec{z}_{j/2}) & j \text{ even} \end{cases}
\]
for $j \in S \cup \{0, 2\ell\}$ and
\[
y_k = \begin{cases} \lla \vec{s}_{(k+1)/2} ; \vec{w}_{(k+1)/2}\rra & k \text{ odd} \\
\lla \tau; \omega( \Pi(\vec{z}_{j/2}))\rra & k \text{ even}
\end{cases}
\]
for $k \in [2\ell-1] \setminus S$.  Let
$T_0, \dots, T_m$ denote the consecutive in-subsets
of $S$ and $U_1, \dots, U_m$ the consecutive out-subsets as in
chapter \ref{chapliberation}.  Then we define
\[
\text{Col}(\lla \vec{t}; \vec{a} \rra, S)
= \left(\bigvee_{j \in T_0} x_j \right)
\vee \left(\bigvee_{k \in U_1} y_k \right)
\vee \left(\bigvee_{j \in T_1} x_j \right)
\vee \cdots \vee \left(\bigvee_{j \in T_m} x_j \right)
\]
and
\[
\Ss\lla \vec{t}; \vec{a} \rra
= \sum_{\underset{S \neq 2 [\ell-1]}{S \subset [2\ell-1]} } (-1)^{\ell+|S|}
\Ss(\text{Col}(\lla \vec{t}; \vec{a} \rra, S)).
\]
\end{itemize}
\end{definition}

By the reasoning given above when introducing these pairs,
we arrive at the following:

\begin{proposition} \label{propmomentpolyasLM}
Let $\AAA$ be a unital C$^*$-algebra, $\{\phi_t\}$
a cp$_0$-semigroup on $\AAA$, $\omega$ a faithful
state on $\AAA$, and $(\Aa, i, \E, \{\sigma_t\})$
the Sauvageot dilation.  Then
for every $t_1, \dots, t_n \geq 0$
and $a_1, \dots, a_n \in \AAA$,
\[
\E \Big[ \sigma_{t_1}(i(a_1))
\cdots \sigma_{t_n}(i(a_n)) \Big]
= \Ss \lla\vec{t}; \vec{a} \rra.
\]
\end{proposition}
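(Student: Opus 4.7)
The plan is to prove the identity by strong induction on the complexity pair $(d,n)$ ordered lexicographically, where $d$ is the number of distinct positive times among $t_1,\dots,t_n$ and $n$ is the length of the word. The three-branch recursion of Definition \ref{defmomentpolynomial} drives the three cases of the induction; in each case I show that the recursive computation of $\E\bigl[\sigma_{t_1}(i(a_1))\cdots\sigma_{t_n}(i(a_n))\bigr]$ mirrors the recursive definition of $\Ss\lla\vec{t};\vec{a}\rra$.

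The base case $d=0$ is immediate: all $t_i=0$, so $\sigma_0=\mathrm{id}$ and $\E\circ i=\mathrm{id}_\AAA$ give $\E[i(a_1)\cdots i(a_n)]=\E[i(a_1\cdots a_n)]=\Pi(\vec{a})$, matching the first clause. The case $\tau:=\min_i t_i>0$ is nearly as easy: since $\sigma_\tau$ is a unital $*$-endomorphism it commutes with the product, and the strong intertwining $\E\circ\sigma_\tau=\phi_\tau\circ\E$ of Theorem \ref{thmstrongdilation} yields $\phi_\tau\bigl(\E[\sigma_{t_1-\tau}(i(a_1))\cdots\sigma_{t_n-\tau}(i(a_n))]\bigr)$; the inductive hypothesis applies since $d$ strictly decreases when $\tau\mapsto 0$, matching the second clause.

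The substantive case is $0\in\gamma$ with $\gamma\neq\{0\}$. Let $\gamma'=\gamma\setminus\{0\}$ and $\tau=\min\gamma'$; recall $\AAA_\gamma=\AAA_{\gamma'}\star\AAA$ is the Sauvageot product with respect to $\phi_\tau\circ\epsilon_{\gamma'}$ and $\omega$. Since $\gamma\in\FF_0$, Definition \ref{defdilationretraction} gives
\[
\E\bigl[\sigma_{t_1}(i(a_1))\cdots\sigma_{t_n}(i(a_n))\bigr]=\epsilon_\gamma[W],
\]
where $W=\dss{f}{\gamma,\{t_1\}}(a_1)\cdots\dss{f}{\gamma,\{t_n\}}(a_n)$. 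Decomposing $\lla\vec{t};\vec{a}\rra$ into alternating blocks as in Definition \ref{defmomentpolynomial}, and using multiplicativity of $\dss{\psi}{R}$ together with the factorization $\dss{f}{\gamma,\{t_i\}}=\dss{\psi}{L}\circ\dss{f}{\gamma',\{t_i\}}$ for $t_i>0$, I rewrite $W$ as the alternating word $\dss{\psi}{R}(Z_0)\,\dss{\psi}{L}(W_1)\,\dss{\psi}{R}(Z_1)\cdots\dss{\psi}{L}(W_\ell)\,\dss{\psi}{R}(Z_\ell)$, with $Z_i=\Pi(\vec{z}_i)\in\AAA$ and $W_j=\dss{f}{\gamma',\{s_{j,1}\}}(w_{j,1})\cdots\in\AAA_{\gamma'}$. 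By Corollary \ref{corthetaliberates}, $(\AAA_\gamma,\dss{\psi}{L},\dss{\psi}{R},\dss{\psi}{R}\circ\epsilon_\gamma)$ is a right-liberating representation of $(\AAA_{\gamma'},\AAA,\phi_\tau\circ\epsilon_{\gamma'},\omega)$, so Theorem \ref{thmrightlibrepmoments} (combined with injectivity of $\dss{\psi}{R}$ and unitality of $\epsilon_\gamma$) gives
\[
\epsilon_\gamma[W]=\MM\bigl((Z_0,W_1,Z_1,\dots,W_\ell,Z_\ell);\,\phi_\tau\circ\epsilon_{\gamma'};\,\omega\bigr).
\]

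The hard part is the careful bookkeeping that matches this $\MM$-recursion with the $\Ss$-recursion of Definition \ref{defmomentpolynomial} term by term. The inclusion-exclusion signs $(-1)^{\ell+|S|}$ and the indexing over $S\subset[2\ell-1]$ with $S\neq 2[\ell-1]$ agree automatically. For each such $S$, three identifications anchor the correspondence. First, for odd $j\in S$, Lemma \ref{lemtailretractions} applied to $\{0\}\cup\gamma'\in\FF_0$ together with $\E\circ\dss{f}{\infty,\{0\}\cup\gamma'}=\epsilon_{\{0\}\cup\gamma'}$ gives $(\phi_\tau\circ\epsilon_{\gamma'})(W_{(j+1)/2})=\E\bigl[\sigma_{s_{(j+1)/2,1}}(i(w_{(j+1)/2,1}))\cdots\bigr]$; since $\lla\vec{s}_{(j+1)/2};\vec{w}_{(j+1)/2}\rra$ has strictly smaller length than $\lla\vec{t};\vec{a}\rra$ (because $0\in\gamma$ forces at least one $\vec{n}_i$ to be nonempty), the inductive hypothesis identifies this with $\Ss\lla\vec{s}_{(j+1)/2};\vec{w}_{(j+1)/2}\rra=x_j$. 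Second, for even $k\notin S$, the scalar $\omega(Z_{k/2})$ coincides with $\Ss\lla\tau;\omega(\Pi(\vec{z}_{k/2}))\one\rra$ since $\phi_\tau$ is unital, matching $y_k$. Third, the matches $x_j=Z_{j/2}$ for even $j\in S$ and $y_k=\lla\vec{s}_{(k+1)/2};\vec{w}_{(k+1)/2}\rra$ for odd $k\notin S$ are literal. Once these identifications are in place, the collapsed $\MM$-tuple and the collapsed $\Ss$-pair encode the same combinatorial data built from the same atoms, and a straightforward secondary induction on the parameter $\mathrm{Alt}(S)<\ell$ confirms that the fully unfolded $\MM$- and $\Ss$-recursions produce equal elements of $\AAA$, completing the proof.
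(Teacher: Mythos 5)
Your proposal is correct and follows essentially the same route as the paper, which gives only a one-line proof referring back to the two-step evaluation procedure (factor out $\phi_\tau$ via Theorem \ref{thmstrongdilation} when all times are positive; otherwise reduce to $\epsilon_\gamma$ on an alternating word in $\dss{\psi}{L}(\AAA_{\gamma'})$ and $\dss{\psi}{R}(\AAA)$ and invoke the moment function of chapter \ref{chapliberation}) that the definition of $\Ss$ was designed to formalize. Your version simply makes explicit the induction scheme and the term-by-term matching between the $\MM$- and $\Ss$-recursions that the paper leaves implicit.
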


\section{Continuity Properties of Moment Polynomials}
The continuity properties of $\Ss(\vec{t}; \vec{a})$ in
the case where $\AAA$ is a W$^*$-algebra
will be important in what follows.
There are three types of continuity properties
to consider: continuity in $a_1, \dots, a_n$ with respect
to both the weak and the strong topologies, and
continuity in $t_1, \dots, t_n$.  It turns out
that weak continuity holds with respect
to $a_1, \dots, a_n$ separately (which is the best
we could hope for, as multiplication is not jointly
weakly continuous), whereas
strong continuity holds jointly in $a_1, \dots, a_n$,
and a restricted form of joint continuity
in $t_1, \dots, t_n$ holds as well.

\begin{proposition}
Let $\AAA$ be a W$^*$-algebra, $\{\phi_t\}$ a CP$_0$-semigroup on $\AAA$, $\omega$ a faithful normal state on $\AAA$.
Fix $n \geq 1$, $t_1, \dots, t_n \geq 0$, $j \in \{1, \dots,n\}$, and $a_k$ for $k \in \{1, \dots, n\} \setminus \{j\}$.  Then $\Ss \lla \vec{t}; \vec{a} \rra$, viewed as a
function of $a_j$, is a normal linear map from $\AAA$
to itself.
\end{proposition}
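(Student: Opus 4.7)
The plan is to prove the statement by induction on a well-founded complexity measure for pairs $\lla \vec{t}; \vec{a}\rra$, corresponding to termination of the recursion in Definition \ref{defmomentpolynomial}. The recursion builds $\Ss$ from operations that each preserve normal dependence on a distinguished entry: finite sums, composition with the normal map $\phi_\tau$, composition with the normal state $\omega$, and separate ultraweakly continuous multiplication in the W$^*$-algebra $\AAA$. Linearity in $a_j$ is immediate at each step, so the work reduces to tracking ultraweak continuity.

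For Case~1 of Definition \ref{defmomentpolynomial}, $\Ss\lla \vec{t}; \vec{a}\rra = \Pi(\vec{a}) = a_1 \cdots a_n$ is separately ultraweakly continuous in each factor. For Case~2, $\Ss\lla \vec{t}; \vec{a}\rra = \phi_\tau\bigl(\Ss\lla \vec{t} - \tau; \vec{a}\rra\bigr)$; the shifted pair has minimum time $0$, so it falls into Case~1 or~3, and composition with the normal map $\phi_\tau$ preserves normal dependence on $a_j$.

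The main step is Case~3, where $\Ss\lla \vec{t}; \vec{a}\rra$ is a finite sum over $S \subsetneq [2\ell-1]$ of values $\Ss(\text{Col}(\lla \vec{t}; \vec{a}\rra, S))$. Because the components $\lla \vec{n}_i; \vec{z}_i\rra$ and $\lla \vec{s}_i; \vec{w}_i\rra$ partition the positions of $\vec{a}$, the variable $a_j$ belongs to exactly one component, and consequently in each collapsed pair it affects exactly one entry. Enumerating by which component contains $a_j$ and by whether the corresponding index lies in $S$, that entry is either (a) a product $\Pi(\vec{z}_i)$, (b) a scalar $\omega(\Pi(\vec{z}_i))$ (identified with a scalar multiple of $\one$), (c) a moment $\Ss\lla \vec{s}_i; \vec{w}_i\rra$, or (d) a direct entry of $\vec{w}_i$ within an uncollapsed $\lla \vec{s}_i; \vec{w}_i\rra$. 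In cases (a), (b), and (d) the dependence on $a_j$ is normal by separate normality of multiplication and normality of $\omega$; in case (c), since $0 \in \gamma$ forces $|\vec{s}_i| < n$, the inductive hypothesis at strictly smaller length gives normal dependence of $\Ss\lla \vec{s}_i; \vec{w}_i\rra$ on $a_j$. The collapsed pair itself has alternation count $\text{Alt}(S) < \ell$ by the constraint $S \neq 2[\ell-1]$, so the inductive hypothesis at strictly smaller alternation within the recursion tree gives normality of $\Ss$ of the collapsed pair in its distinguished entry. Composing yields normal dependence of each summand on $a_j$, and summing finitely many such terms gives normality of $\Ss\lla \vec{t}; \vec{a}\rra$ in $a_j$.

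The main subtlety is establishing well-foundedness: Case~3 collapses can have length larger than the original pair, so induction on length alone does not suffice. However, the recursion involves two strictly decreasing quantities---length, for the calls to $\lla \vec{s}_i; \vec{w}_i\rra$, and alternation count $\ell$, for the collapse---which together ensure that the recursion tree is finite. Ordinal induction along this tree then yields the proposition.
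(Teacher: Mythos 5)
Your approach is essentially the same as the paper's (induction on the recursion of Definition \ref{defmomentpolynomial}, tracking at each step that the construction only composes normality-preserving operations: $\phi_\tau$, $\omega$, and multiplication by fixed elements of $\AAA$). The interesting point is that you have correctly identified a real subtlety that the paper's one-line proof glosses over: the paper says ``by induction on the length of the pair,'' but a collapse $\text{Col}(\lla \vec t;\vec a\rra,S)$ can have \emph{strictly greater} length than the original pair. For instance, $\vec t=(t_1,0,t_2)$ with $t_1,t_2>0$ distinct has length $3$ and $\ell=2$ with $\vec n_0$ and $\vec n_2$ empty; taking $S=\{1\}$ produces a collapsed pair of length $4$. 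So the paper's induction measure, read literally, does not decrease. Your replacement — recursing on the alternation count $\ell$ for the collapsed pairs, and on length for the embedded calls $\Ss\lla\vec s_i;\vec w_i\rra$, then arguing by structural induction along the recursion tree — is the right fix, and the case analysis of how $a_j$ enters each $x_j$ or $y_k$ is correctly handled.

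The one place you should tighten is the final well-foundedness paragraph. The bare statement that ``two strictly decreasing quantities ... together ensure the recursion tree is finite'' is not a valid principle in general, since a branch that decreases one parameter may increase the other and vice versa, permitting oscillation. In the present case the recursion does terminate, but this needs a more careful bound, e.g.: (i) the embedded calls $\Ss\lla\vec s_i;\vec w_i\rra$ strictly decrease length; (ii) the collapsed pair $\text{Col}(\cdot,S)$ has alternation $\text{Alt}(S)<\ell$, and moreover its decomposition always begins and ends with a nonempty block of zero times (coming from $T_0$ and $T_m$), so its own collapses cannot increase length further — the increase by at most two happens only at the top level. Alternatively, and more simply, you may observe that the well-foundedness of this recursion is already a prerequisite for $\Ss$ to be well defined at all, and is tacitly assumed by the paper when it states Definition \ref{defmomentpolynomial}; given that, ``induction along the recursion tree'' is automatic and the proposition follows from your case analysis without needing to re-derive termination.
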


\begin{proof}
This follows from Definition
(\ref{defmomentpolynomial}) by induction on the length of the pair.  We also use
the normality of the state $\omega$ and the maps
$\phi_t$, as well as the normality of multiplication by
a fixed element of $\AAA$.
\end{proof}

\begin{definition} \label{defnoncrossingly}
For $n \geq 1$ and elements $\{\vec{s}_k\}$ and
$\vec{t}$ of $[0,\infty)^n$, we say that $\vec{s}_k$
\textbf{converges non-crossingly to} $\vec{t}$ if
$\vec{s}_k \to \vec{t}$ and, for all $k$, the
order relations among the entries of $\vec{s}_k$ are the
same as those in $\vec{t}$; that is, if
\[
\forall k: \ \forall i,j = 1, \dots, n: \
(s_k)_i \leq (s_k)_j \LRA t_i \leq t_j.
\]
\end{definition}

\begin{proposition} \label{propjointcontinuitymoments}
Let $\AAA$ be a separable W$^*$-algebra, $\{\phi_t\}$
a CP$_0$-semigroup on $\AAA$, $\omega$ a faithful
normal state on $\AAA$.  Let $n \geq 1$.  Let
$\vec{t}_k \to \vec{t}$ converge non-crossingly in $[0,\infty)^n$,
and let $\vec{a}_k \to \vec{a}$ be a strongly convergent
sequence of tuples in $(\AAA_1)^n$.  Then
$\Ss \lla \vec{t}_k; \vec{a}_k \rra \to \Ss \lla \vec{t}; \vec{a} \rra$
strongly.  That is, $\Ss\lla \vec{t}; \vec{a}\rra $ is jointly
strongly continuous in $\vec{t}$ and $\vec{a}$, subject
to the non-crossing restriction on $\vec{t}$.
\end{proposition}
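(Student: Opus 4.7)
The approach is induction on the length $n$. The base case $n=1$ is immediate: $\Ss\lla t; a \rra = \phi_t(a)$ from Definition \ref{defmomentpolynomial}, and joint strong continuity on $[0,\infty) \times \AAA_1$ is precisely Theorem \ref{thmjointcontinuityCP}(2), with non-crossing vacuous. For $n > 1$, the strategy splits into a reduction to the case $\min\vec{t} = 0$, followed by a combinatorial unwinding via the decompose branch of the recursion.

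For the reduction, set $\tau_k = \min\vec{t}_k$ and $\tau = \min\vec{t}$. Non-crossing gives $\tau_k \to \tau$, and $\vec{t}_k - \tau_k \to \vec{t} - \tau$ remains non-crossing with $\min(\vec{t}-\tau) = 0$. The positive-min branch of Definition \ref{defmomentpolynomial} yields
\[
\Ss\lla \vec{t}_k; \vec{a}_k\rra = \phi_{\tau_k}\!\big(\Ss\lla \vec{t}_k - \tau_k; \vec{a}_k\rra\big)
\]
(with $\phi_0 = \mathrm{id}$). By Proposition \ref{propmomentpolyasLM} and contractivity of $\E, \sigma_t, i$, every $\Ss$-value on tuples in $\AAA_1$ remains in $\AAA_1$, so joint strong continuity of $(t,x) \mapsto \phi_t(x)$ on $[0,\infty) \times \AAA_1$ reduces the problem to the case $\min\vec{t} = 0$.

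In that case, if $\vec{t} = \vec{0}$, non-crossing forces all entries of $\vec{t}_k$ to a common $\rho_k \to 0$, and $\Ss\lla \vec{t}_k; \vec{a}_k\rra = \phi_{\rho_k}(\Pi(\vec{a}_k)) \to \Pi(\vec{a}) = \Ss\lla \vec{0}; \vec{a}\rra$ via joint strong continuity of multiplication on $\AAA_1$ and joint continuity of $\phi$ at $0$. Otherwise, let $S = \{i : t_i = 0\}$; non-crossing forces $(t_k)_i$ to a common value $\rho_k = \min\vec{t}_k \to 0$ for $i \in S$ and strictly larger for $i \notin S$, so after one more outer $\phi_{\rho_k}$ I may assume $\min\vec{t}_k = 0$ with zero-index set $S$ for every $k$. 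The decompose branch then writes $\Ss\lla \vec{t}_k; \vec{a}_k\rra$ as a finite sum---indexed by a subset-collection that depends only on $\vec{t}$---of $\Ss$-values on collapsed pairs built from sub-moments $\Ss\lla \vec{s}_i^{(k)}; \vec{w}_i^{(k)}\rra$ (length strictly less than $n$, and thus covered by the inductive hypothesis with non-crossing convergence inherited from $\vec{t}_k \to \vec{t}$), products $\Pi(\vec{z}_j^{(k)})$ (jointly strongly continuous on $\AAA_1$), scalars $\omega(\Pi(\vec{z}_j^{(k)}))$ (normality of $\omega$), and further $\phi$-applications at non-crossingly convergent times. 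Because the recursion on $\Ss$ terminates by strictly decreasing alternation number, each term of the sum unwinds into a finite composition of jointly strongly continuous operations whose combinatorial shape is independent of $k$, so strong convergence follows term-by-term.

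The main obstacle is the branch-change at the boundary between the positive-min and decompose cases: at a limit with $\min\vec{t} = 0$, an approaching sequence may have $\min\vec{t}_k > 0$, so Definition \ref{defmomentpolynomial} selects different branches for $\vec{t}$ and for $\vec{t}_k$. The non-crossing hypothesis supplies the combinatorial matching---forcing the zero-index set to remain constant after shifting by $\min\vec{t}_k \to 0$---while Theorem \ref{thmjointcontinuityCP}(2) supplies joint continuity of $\phi_t$ at $t=0$ to bridge the analytic gap; neither ingredient can be dispensed with.
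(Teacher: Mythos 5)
Your proof is correct and follows essentially the same route as the paper's: subtract off the minimum time (using joint strong continuity of $(t,x)\mapsto\phi_t(x)$ on $[0,\infty)\times\AAA_1$ to absorb the outer $\phi_{\tau_k}$), observe that non-crossing convergence forces the zero-index set and hence the combinatorial shape of the decomposition to be constant along the sequence, and induct on the length. Your treatment is somewhat more explicit about the branch-matching at $\min\vec{t}=0$ and about why the $\Ss$-values stay in the unit ball, but the argument is the same.
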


\begin{proof}
It is convenient to write each $\vec{t}_i = \delta_i + \vec{u}_i$
and $\vec{t} = \delta + \vec{u}$, where $\delta_i, \delta \geq 0$
and each $\vec{u}_i$ and $\vec{u}$ contains at least one zero entry. We make the following observations:
\begin{enumerate}
    \item $\delta_i \to \delta$
    \item $\vec{u}_i \to \vec{u}$ non-crossingly
    \item $\Ss \lla \vec{t}_i; \vec{a}_i \rra
    = \phi_{\delta_i} \circ \Ss \lla \vec{u}_i; \vec{a}_i \rra$.
\end{enumerate}
By the joint strong continuity of the semigroup $\phi$  (Theorem \ref{thmjointcontinuityCP}), it therefore suffices to consider the case where all $\vec{t}_i$ and $\vec{t}$ have zero entries.  The non-crossing hypothesis implies that these occur at the same positions for all $i$, and hence that the components of the decompositions of all $\vec{t}_i$ in the definition of $\Ss$ all have constant length as $i$ varies.  The time vectors in each component pair converge non-crossingly to the time vector in the corresponding component of $\vec{t}$, and the result follows by induction on $n$.
\end{proof}

To illustrate the necessity of the non-crossing hypothesis, Appendix \ref{chapmomentpolynomials} contains an example where this hypothesis is violated and where discontinuity results.  The underlying reason is that a crossing creates a change in the lengths of the components of the decomposition, so that a different branch of the recursion is followed.

\section{The Continuous Theorem}
We now return to the question of how to obtain a
continuous W$^*$-dilation from an algebraic C$^*$-dilation.
The technique in this section is adapted
from the eighth chapter of \cite{ArvesonDynamics}.
Throughout, we let $\AAA$ denote a separable W$^*$-algebra,
$\{\phi_t\}$ a CP$_0$-semigroup
on $\AAA$, $(\Aa, i, \E, \{\sigma_t\})$ the Sauvageot
dilation from the previous chapter,
$\PP \subset \Aa$ the subset
\[
\PP = \{\sigma_{t_1}(i(a_1)) \dots \sigma_{t_k}(i(a_k))
\mid t_1, \dots, t_k \geq 0; \, a_1, \dots, a_k \in \AAA\},
\]
$\Aa_0 \subseteq \Aa$ the norm-dense linear span of $\PP$,
$(H, \pi)$ a faithful
normal representation of $\AAA$ on a separable Hilbert
space, $(\Hh, V, \psi)$ a minimal Stinespring
dilation of $\pi \circ \E$, $\widetilde{\Aa} = \psi(\Aa)''$,
and $\widetilde{\E}: \widetilde{\Aa} \to \AAA$ the
map $\widetilde{\E}[T] = \pi\inv(V^*T V)$, which
is well-defined because $T \mapsto V^* T V$ is normal and maps
the weakly dense subspace $\psi(\Aa) \subset \widetilde{\Aa}$ into the weakly closed set $\pi(\AAA)$, and
because $\pi$ is faithful; it satisfies
$\widetilde{\E} \circ \psi = \E$ and therefore is
a normal retraction with respect to $\psi \circ i$.

We begin with the observation that weak-operator continuity
of families of contractions can be checked
on a dense subset of Hilbert space.

\begin{lemma} \label{lemWOTcontinuitydense}
Let $\HH$ be a Hilbert space and $\{T_t\}_{t \geq 0}$
a family (not necessarily a semigroup) of contractions on $\HH$.  Let $\HH_0 \subseteq \HH$ be a dense linear
subspace such that for all $\xi, \eta \in \HH_0$,
the map $t \mapsto \la T_t \xi, \eta \ra$ is
continuous.  Then $t \mapsto T_t$ is WOT-continuous.
\end{lemma}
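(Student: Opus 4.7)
The plan is a standard $3\epsilon$ density argument that leverages the uniform bound $\|T_t\| \leq 1$ to transfer continuity from $\HH_0$ to all of $\HH$.

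First I would reduce the task to verifying, for arbitrary $\xi, \eta \in \HH$ and $t_0 \geq 0$, that $\la T_t \xi, \eta \ra \to \la T_{t_0} \xi, \eta \ra$ as $t \to t_0$. Given $\epsilon > 0$, use density to pick $\xi_0, \eta_0 \in \HH_0$ with $\|\xi - \xi_0\|$ and $\|\eta - \eta_0\|$ small, to be quantified below. The decomposition
\[
\la T_t \xi, \eta \ra - \la T_{t_0} \xi, \eta \ra
= \bigl(\la T_t \xi_0, \eta_0 \ra - \la T_{t_0} \xi_0, \eta_0 \ra\bigr)
+ R_1(t) + R_2(t),
\]
where $R_1(t) = \la T_t(\xi - \xi_0), \eta \ra - \la T_{t_0}(\xi - \xi_0), \eta \ra$ and $R_2(t) = \la T_t \xi_0, \eta - \eta_0 \ra - \la T_{t_0} \xi_0, \eta - \eta_0 \ra$, isolates a main term that is continuous in $t$ by hypothesis, plus two remainder terms. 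By Cauchy--Schwarz and the contractivity of every $T_t$, $|R_1(t)| \leq 2 \|\xi - \xi_0\| \|\eta\|$ and $|R_2(t)| \leq 2 \|\xi_0\| \|\eta - \eta_0\|$ \emph{uniformly in $t$}.

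Thus I would first choose $\xi_0$ so that $2\|\xi-\xi_0\|\|\eta\| < \epsilon/3$, then choose $\eta_0$ so that $2\|\xi_0\|\|\eta-\eta_0\| < \epsilon/3$, and finally use the hypothesized continuity of the main term to find a neighborhood of $t_0$ on which $|\la T_t\xi_0,\eta_0\ra - \la T_{t_0}\xi_0,\eta_0\ra| < \epsilon/3$. Adding the three estimates gives the desired continuity at $t_0$.

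There is no real obstacle here; the only thing to be careful about is that the uniform bound $\|T_t\| \leq 1$ (rather than any semigroup structure) is what makes the remainder estimates hold uniformly in $t$, so that density in $\HH$ suffices even though we only have pointwise-in-$t$ continuity on $\HH_0$. This is the same standard trick that one uses, for instance, to extend strong continuity of a contraction semigroup from a dense subspace to all of $\HH$.
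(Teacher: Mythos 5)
Your proof is correct and is essentially the same as the paper's: the identical three-term decomposition into a main term on $\HH_0 \times \HH_0$ plus two remainders controlled uniformly in $t$ by Cauchy--Schwarz and contractivity. Your bookkeeping (choosing $\xi_0$ before $\eta_0$ so the bound $2\|\xi_0\|\,\|\eta-\eta_0\|$ can be made small) is if anything slightly cleaner than the paper's.
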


\begin{proof}
Let $\xi, \eta \in \HH$ and $t_0 \geq 0$.  Given
$\epsilon > 0$, choose $\xi_0, \eta_0 \in \HH_0$
with $\|\xi - \xi_0\| < \max(1,\epsilon)$ and
$\|\eta - \eta_0\| < \epsilon$.  Then
for any $t\geq 0$,
\begin{align*}
\la (T_t - T_{t_0}) \xi, \eta \ra &= \la (T_t - T_{t_0})
\xi_0, \eta_0 \ra \\
&+ \la (T_t - T_{t_0}) \xi_0, \eta - \eta_0 \ra\\
&+ \la (T_t - T_{t_0}) (\xi - \xi_0), \eta \ra.
\end{align*}
The first term tends to zero as $t \to t_0$ by hypothesis, so
that in particular it is less than $\epsilon$
for $t$ sufficiently close to $t_0$.  The second term
is at most $2 \|\xi_0\| \epsilon \leq
2 (\|\xi\|+1) \epsilon$ by Cauchy-Schwarz, and
the third term at most $2 \|\eta\| \epsilon$.
Hence
\[
|\la (T_t - T_{t_0}) \xi, \eta \ra|
\leq  \Big(3 + 2\|\eta\| + 2 \|\xi\| \Big) \epsilon
\]
for $t$ sufficiently near $t_0$.
\end{proof}

The next lemma is rather technical, but it
advances our study of how $\E$ interacts with time
translations, and in particular with translation
of the middle term of a threefold product.  It is based
on Lemma 8.6.3 in \cite{ArvesonDynamics}.

\begin{lemma} \label{lemexistsQ}
Let $y,z \in \PP$ and $t \geq 0$.
There exist $y_0,z_0 \in \PP$ and a normal
linear map $Q: \AAA \to \AAA$ such
that, for every $x \in \Aa$,
\begin{equation} \label{eqnQmap}
\E \big[ y \sigma_t(x) z \big] = Q \Big(
\E \big[ y_0 x z_0 \big] \Big).
\end{equation}
\end{lemma}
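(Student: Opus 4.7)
The plan is to reduce the identity~\eqref{eqnQmap} to one verifiable on the generating set $\PP \subset \Aa_0$ by linearity and norm-continuity of both sides in $x$, and then to prove the identity on $\PP$ by induction on the total length $k+m$ of the tuples $\vec t,\vec s$ associated to $y$ and $z$. Since $\sigma_t$, $w\mapsto ywz$, $\E$, and $Q$ are all bounded linear maps, both sides of~\eqref{eqnQmap} depend norm-continuously on $x$, so it suffices to check the identity on the norm-dense subalgebra $\Aa_0$ and hence on the generating set $\PP$ itself.

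Before each inductive step I would normalize via a \emph{shift reduction}: if $\tau^\ast:=\min(\vec t,\vec s,t)>0$, then $y=\sigma_{\tau^\ast}(\tilde y)$, $z=\sigma_{\tau^\ast}(\tilde z)$, and $\sigma_t(x)=\sigma_{\tau^\ast}(\sigma_{t-\tau^\ast}(x))$ give $y\sigma_t(x)z=\sigma_{\tau^\ast}\bigl(\tilde y\sigma_{t-\tau^\ast}(x)\tilde z\bigr)$, so Theorem~\ref{thmstrongdilation} yields $\E[y\sigma_t(x)z]=\phi_{\tau^\ast}\bigl(\E[\tilde y\sigma_{t-\tau^\ast}(x)\tilde z]\bigr)$. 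This reduces $(y,z,t)$ to $(\tilde y,\tilde z,t-\tau^\ast)$ with $\min=0$ by postcomposing $Q$ with the normal map $\phi_{\tau^\ast}$. The base case $y=z=\one$ follows directly from the strong-dilation identity $\E[\sigma_t(x)]=\phi_t(\E[x])$, giving $y_0=z_0=\one$ and $Q=\phi_t$, which is normal because $\{\phi_t\}$ is a CP$_0$-semigroup on the W$^*$-algebra $\AAA$.

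For the recursive cases I rely on two algebraic reductions. The first is the $i(\AAA)$-bimodule property of $\E$, which peels a leftmost time-zero factor $y=i(a_1)y'$ via $\E[y\sigma_t(x)z]=a_1\E[y'\sigma_t(x)z]$ (setting the new $Q(w)=a_1Q'(w)$, normal as a composition with left multiplication by $a_1$), and symmetrically for a rightmost time-zero factor of $z$. The second is the semigroup rewriting $\sigma_s(i(a))\sigma_t(w)=\sigma_t\bigl(\sigma_{s-t}(i(a))w\bigr)$ valid when $s\geq t$: applied to the rightmost factor of $y$ with time $s_k\geq t$, it absorbs $\sigma_{s_k-t}(i(a_k))$ into the $x$-slot, so that the inductive hypothesis on $(y',z,t)$ produces $y_0:=y_0'\sigma_{s_k-t}(i(a_k))\in\PP$; the symmetric absorption handles the leftmost factor of $z$ when $s_1\geq t$.

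The main obstacle is the residual subcase in which no bimodule peel or absorption is available: $\tau^\ast=0$, $t>0$, the boundary times of $y$ and $z$ are strictly positive and strictly less than $t$, and every zero in $\vec t\cup\vec s$ is interior. Here I would apply the dual semigroup identity $\sigma_s(i(a))\sigma_t(w)=\sigma_s\bigl(i(a)\sigma_{t-s}(w)\bigr)$ (valid when $s<t$) to the rightmost factor of $y$, giving $y\sigma_t(x)z=y'\sigma_{s_k}\bigl(i(a_k)\sigma_{t-s_k}(x)\bigr)z$; apply the inductive hypothesis on the shorter data $(y',z,s_k)$ with middle $v=i(a_k)\sigma_{t-s_k}(x)$ to get $Q'\bigl(\E[y_0'\,i(a_k)\,\sigma_{t-s_k}(x)\,z_0']\bigr)$; and then invoke the lemma again at $(y_0'i(a_k),z_0',t-s_k)$ with strictly smaller time parameter $t-s_k<t$. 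Termination of this double recursion is the technical heart of the argument, and must be organized on a well-founded measure combining total length and time parameter: the bimodule and absorption steps strictly decrease $k+m$ while preserving $t$, whereas the residual step strictly decreases $t$ (using $s_k>0$, since no boundary time is zero). Prioritizing the easy reductions at every level, and re-applying the shift reduction after each residual step, is what makes the recursion well-founded and terminate at the base case $y=z=\one$.
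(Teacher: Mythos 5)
Your proposal takes a genuinely different route from the paper's: where you set up an explicit recursion via algebraic reductions (bimodule peel, absorption of boundary factors into the $\sigma_t(x)$ slot, and a residual ``dual'' absorption), the paper does a single strong induction on the total length $|y|+|z|$ and delegates the combinatorics entirely to the moment-polynomial machinery of Proposition~\ref{propmomentpolyasLM} and its normality in each argument. After reducing to $\tau=0$, the paper takes $y_0,z_0$ to be the trailing/leading nonzero segments of $y,z$; since $\tau=0$ forces some zero time, $|y_0|+|z_0|<|y|+|z|$. The crucial structural fact is that in the alternating-block decomposition of the time vector $\vec v\vee(\vec u+t)\vee\vec t$, the middle block $\vec u+t$ (which is entirely $\geq t>0$) together with those adjacent nonzero segments forms a single nonzero block $\vec s_i$; so the dependence of $\E[y\sigma_t(x)z]$ on $x$ passes through $\E[y_0\sigma_t(x)z_0]$ via a normal outer map, and the inductive hypothesis at $(y_0,z_0,t)$ then converts $\E[y_0\sigma_t(x)z_0]$ to the required form. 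Termination is immediate.

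Your individual reduction identities are all correct, but there are two genuine gaps in the control flow. First, the residual step does not fit any well-founded measure of the kind you describe. The inductive hypothesis supplies $y_0',z_0'$ with no size control whatsoever --- the lemma statement says nothing about $|y_0|+|z_0|$ --- so the follow-up call at $(y_0'i(a_k),z_0',t-s_k)$ may have \emph{larger} total length while only $t$ has decreased; and a lexicographic order on $\mathbb N\times[0,\infty)$ is not well-founded, since the real time parameter can decrease indefinitely (the successive $s_k$ are not drawn from any fixed finite set, as the intermediate shifts produce new real differences). To rescue this you would need to strengthen the lemma statement to control the output (e.g.\ require that $y_0,z_0$ have only positive times and $|y_0|+|z_0|\leq|y|+|z|$), which amounts to re-deriving the structure the paper gets for free from the moment polynomial.

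Second, even granting termination, the residual step lands you outside your own case analysis. After invoking the inductive hypothesis on $(y',z,s_k)$, the new left word is $y_0'i(a_k)$, whose \emph{rightmost} factor $i(a_k)=\sigma_0(i(a_k))$ has time zero --- sitting at the inner boundary, next to $\sigma_{t-s_k}(x)$. The bimodule peel only clears a time-zero factor at the \emph{outer} boundary (leftmost of $y$ or rightmost of $z$); the absorption requires the inner time to be $\geq t-s_k>0$; and your residual case requires it to lie strictly in $(0,t-s_k)$. A zero inner-boundary time satisfies none of these, and there is no semigroup identity that moves $\sigma_0(i(a_k))$ past $\sigma_{t-s_k}(\cdot)$. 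This is precisely the configuration that the paper's moment-polynomial expansion handles by collapsing interior zeros and treating $\lla\vec s_i;\vec w_i\rra$ as a block, and it is where your recursion gets stuck.
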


\begin{proof}
Let $y = \sigma_{v_1}(a_1) \cdots \sigma_{v_p}(a_p)$
and $z = \sigma_{t_1}(b_1) \cdots \sigma_{t_n}(b_n)$.  We
proceed by strong induction on $m+n$.  In the base case $m+n=0$
(meaning that $y = z = \one$) the requisite map is
$Q = \phi_t$, by Theorem \ref{thmstrongdilation}.
Inductively, letting
$\tau = \min(v_1, \dots, v_p, t_1, \dots, t_n)$, the result
is again trivial in case $t \leq \tau$, as then
one can use $Q = \phi_t$,
$y_0 = \sigma_{v_1-t}(a_1)
\cdots \sigma_{v_p-t}(a_p)$, and $z_0 = \sigma_{t_1-t}(b_1) \cdots \sigma_{t_n-t}(b_n)$.  Hence we assume $t > \tau$.
We further assume $\tau = 0$, as the case $\tau > 0$ reduces to
this by Theorem \ref{thmstrongdilation} again.

Let $(v_1', \dots, v_q')$ be the (possibly empty) final segment of nonzero entries from $(v_1, \dots, v_q)$,
and $(a_1', \dots, a_q')$ the corresponding entries
from $(a_1, \dots ,a_p)$.  Similarly, let $(t_1', \dots,
t_h')$ be the initial segment of nonzero entries
from $(t_1, \dots, t_n)$, and $(b_1', \dots, b_h')$
the corresponding entries from $(b_1, \dots, b_n)$.
Let $y_0 = \sigma_{v_1'}(a_1') \dots \sigma_{v_q'}(a_q')$
and $z_0 = \sigma_{t_1'}(b_1') \dots \sigma_{t_h'}(b_h')$.

For any $x \in \PP$, write $x = \sigma_{u_1}(c_1) \cdots \sigma_{u_m}(c_m)$, so that $\sigma_t(x)
= \sigma_{u_1+t}(c_1) \cdots \sigma_{u_m+t}(c_m)$.  Now $\E[y \sigma_t(x) z]
= \Ss \lla \vec{v} \vee (\vec{u}+t) \vee \vec{t};
\vec{a} \vee \vec{c} \vee \vec{b} \rra$ by Proposition \ref{propmomentpolyasLM}.
In the standard decomposition $\vec{v} \vee (\vec{u}+t)
\vee \vec{t}
= \vec{n}_0 \vee \vec{s}_1
\vee \dots \vee \vec{n}_{\ell}$, we must have $\vec{u}+t$
contained in a single one of the $\vec{s}_i$;
more specifically, for some $i$ we have
$\vec{s}_i = (v_1', \dots, v_q')
\vee (\vec{u}+t) \vee (t_1', \dots, t_h')$ and
$\vec{w}_i = (a_1', \dots, a_q') \vee
\vec{c} \vee (b_1', \dots, b_h')$.
Then Propositions \ref{propmomentpolyasLM} and \ref{propnormalmoments} imply that $\E[y \sigma_t(x) z]$
is the composition of $\E[y_0 x z_0]$ with some normal map $Q$, which is independent of $x$.  This gives us equation (\ref{eqnQmap})
for all $x \in \PP$, and since both sides are linear and
norm-continuous in $x$, it follows that (\ref{eqnQmap})
holds for all $x \in \Aa$.
\end{proof}

\begin{theorem} \label{thmexistsnormalextension}
There exists a (necessarily unique) semigroup of normal unital *-endomorphisms $\{\widetilde{\sigma}_t\}_{t \geq 0}$ of $\widetilde{\Aa}$
such that
\begin{equation} \label{eqncovarianceofextension}
\forall t \geq 0: \qquad \widetilde{\sigma}_t \circ \psi = \psi \circ \sigma_t.
\end{equation}
\end{theorem}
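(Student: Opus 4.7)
The plan is to construct $\widetilde{\sigma}_t$ by first specifying it on the ultraweakly dense *-subalgebra $\psi(\Aa) \subseteq \widetilde{\Aa}$ through the forced formula $\widetilde{\sigma}_t(\psi(a)) = \psi(\sigma_t(a))$, and then extending by normality to all of $\widetilde{\Aa}$; the asserted uniqueness is then automatic from the ultraweak density of $\psi(\Aa)$. To carry this out I must verify (i) that the assignment $\psi(a) \mapsto \psi(\sigma_t(a))$ is well-defined on $\psi(\Aa)$, and (ii) that it is ultraweakly continuous on bounded subsets, so that Kaplansky density yields a unique normal extension to $\widetilde{\Aa}$.

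The crucial tool for both points is Lemma \ref{lemexistsQ}. By minimality of the Stinespring dilation, vectors of the form $\psi(y)Vh$ with $y \in \PP$ and $h \in H$ span a dense subspace of $\Hh$, so it suffices to analyze matrix coefficients of $\psi(\sigma_t(a))$ against such vectors. Taking $y, z \in \PP$ (which is closed under adjoints) and applying Lemma \ref{lemexistsQ} to $y^*$ and $z$, one obtains $y_0, z_0 \in \PP$ and a normal linear map $Q : \AAA \to \AAA$ with
\[
\la \psi(\sigma_t(a)) \psi(z)Vk, \psi(y)Vh \ra
= \la \pi\bigl(\E(y^* \sigma_t(a) z)\bigr) k, h \ra
= \la \pi\bigl(Q(\widetilde{\E}(\psi(y_0)\psi(a)\psi(z_0)))\bigr) k, h \ra,
\]
where I used $V^*\psi(\cdot)V = \pi(\E(\cdot))$ and $\widetilde{\E}\circ\psi = \E$. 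Every operation on the right-hand side—multiplication by the fixed elements $\psi(y_0)$ and $\psi(z_0)$, the normal map $\widetilde{\E}$, the normal map $\pi \circ Q$, and the vector functional—is ultraweakly continuous in $\psi(a)$.

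Well-definedness follows immediately: if $\psi(a) = 0$, the displayed right-hand side vanishes for all admissible $y, z, h, k$, so $\psi(\sigma_t(a))$ annihilates a dense subspace and is zero. For normality, let $\psi(a_\nu) \to 0$ ultraweakly along a bounded net; the identity forces every matrix coefficient of $\psi(\sigma_t(a_\nu))$ against the dense span to tend to $0$, and since $\sigma_t$ is contractive the net $\psi(\sigma_t(a_\nu))$ is bounded, so WOT convergence extends to all of $\Hh$ and coincides with ultraweak convergence in $B(\Hh)$ on bounded sets. Kaplansky density then produces a unique normal contractive linear extension $\widetilde{\sigma}_t : \widetilde{\Aa} \to \widetilde{\Aa}$.

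Finally, unitality, *-preservation, multiplicativity, and the semigroup identity all pass from $\psi(\Aa)$ to $\widetilde{\Aa}$ by normality. Multiplicativity is the only step requiring care: one shows $\widetilde{\sigma}_t(ST) = \widetilde{\sigma}_t(S)\widetilde{\sigma}_t(T)$ by first fixing $S \in \psi(\Aa)$ and using separate ultraweak continuity in $T$, then fixing $T \in \widetilde{\Aa}$ and repeating in $S$. The main obstacle is step (ii): without the factorization supplied by Lemma \ref{lemexistsQ}—and ultimately the moment polynomial machinery developed earlier—there would be no a priori reason for $\psi(a) \mapsto \psi(\sigma_t(a))$ to respect ultraweak limits, since $\sigma_t$ is defined only on the C*-algebra $\Aa$ and carries no normality structure of its own.
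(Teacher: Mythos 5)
Your proposal is correct and follows essentially the same route as the paper: both hinge on Lemma \ref{lemexistsQ} to rewrite the matrix coefficients of $\psi(\sigma_t(x))$ against the dense set $\psi(\PP)VH$ as normal functionals of $\psi(x)$ (the paper names these $\rho_{t,\xi,\eta}$ and defines them on all of $\widetilde{\Aa}$), then extends to $\widetilde{\Aa}$ via Kaplansky density and transfers unitality, the *-homomorphism property, and the semigroup law by iterated weak limits. The paper merely spells out more explicitly the steps you compress into ``Kaplansky density then produces a unique normal extension'' — sesquilinearity and boundedness of $(\xi,\eta)\mapsto\rho_{t,\xi,\eta}(z)$ for general $z$, and the verification that the resulting operator lands in $\widetilde{\Aa}$ rather than merely in $B(\Hh)$.
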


\begin{proof}
We construct $\{\widetilde{\sigma}_t\}$ and verify
its properties in many small steps.
\begin{enumerate}
    \item For each $t \geq 0$ and $\xi, \eta \in \psi(\PP) V H$,
    we construct a normal linear functional $\rho_{t, \xi, \eta}$
    on $\widetilde{\Aa}$ as  follows.
    Let $\xi = \psi(y) V \xi'$ and $\eta = \psi(z) V \eta'$
    for $y,z \in \PP$ and $\xi', \eta' \in H$.
    By Lemma (\ref{lemexistsQ}), there exists a normal
    linear map $Q: \AAA \to \AAA$ and elements $y_0, z_0 \in \PP$
    such that $\E[z^* \sigma_t(x) y]
    = Q(\E[z_0^* x y_0])$ for all $x \in \Aa$.  We thus have
\[
\forall x \in \Aa: \qquad \la \psi(\sigma_t(x)) \xi, \eta \ra =
\la \pi \circ Q \circ \E[z_0^* x y_0] \xi', \eta' \ra.
\]
We now define $\rho_{t, \xi, \eta}$ by
\[
\rho_{t, \xi, \eta}(T) = \la \pi \circ Q \circ \widetilde{\E}
[\psi(z_0)^* T \psi(y_0)] \xi', \eta'\ra, \qquad T \in \widetilde{\Aa}.
\]
Then the restriction to $\psi(\Aa)$ satisfies
\begin{align}
\forall x \in \Aa: \qquad \rho_{t, \xi, \eta}(\psi(x))
&= \la \pi \circ Q \circ \widetilde{\E} \circ
\psi(z_0^* x y_0) \xi', \eta' \ra \nonumber\\
&= \la \pi \circ Q \circ \E[z_0^* x y_0] \xi', \eta' \ra \nonumber\\
&= \la \pi \circ \E[z^* \sigma_t(x) y] \xi', \eta' \ra \nonumber\\
&= \la V^* \psi(z^* \sigma_t(x) y) V \xi', \eta' \ra \nonumber\\
&= \la \psi(\sigma_t(x)) \xi, \eta \ra. \label{eqnrhofunctional}
\end{align}

    \item We extend the definition to $\xi, \eta$
    in the linear span of $\psi(\PP) V H$ in the natural way; for
$\xi = \sum_i c_i \xi_i$ and $\eta = \sum_j d_j \eta_j$ with
$\xi_i, \eta_j \in \psi(\PP) VH$, we define
$\rho_{t, \xi, \eta} = \sum_{i,j} c_i \overline{d}_j \rho_{t, \xi_i, \eta_j}$.  This is well-defined because, if
$\sum_i c_i \xi_i = \sum_k \tilde{c}_k \tilde{\xi}_k$
and $\sum_j d_j \eta_j = \sum_\ell \tilde{d}_\ell \tilde{\eta}_\ell$
then equation (\ref{eqnrhofunctional}) implies that, for $x$ in
the ultraweakly dense subspace $\psi(\Aa)$ of $\widetilde{\Aa}$,
\begin{align*}
\dss{\rho}{t, \sum c_i \xi_i, \sum d_j \eta_j}
(\psi(x)) &= \left\la \psi(\sigma_t(x))\sum c_i \xi_i,
\sum d_j \eta_j \right\ra\\
&= \left\la \psi(\sigma_t(x)) \sum \tilde{c}_k \tilde{\xi}_k,
\sum \tilde{d}_\ell \tilde{\eta}_\ell \right\ra
= \dss{\rho}{t, \sum \tilde{c}_k \tilde{\xi}_k,
\sum \tilde{d}_\ell \tilde{\eta}_\ell}(\psi(x)).
\end{align*}

    \item Next, we note that equation (\ref{eqnrhofunctional}) also
    implies that $\|\rho_{t, \xi, \eta}\| \leq \|\xi\| \|\eta\|$.
    This allows us to extend the definition to $\xi, \eta$ in
    the norm closure of the linear span of $\psi(\PP) VH$,
    which is all of $\Hh$.

    \item Having defined the family of functionals $\{\rho_{t, \xi, \eta}\}$, we now use them to define the family
        of endomorphisms $\{\widetilde{\sigma}_t\}$.
        Equation (\ref{eqnrhofunctional})
      implies that, for fixed $t \geq 0$ and $x \in \Aa$, $\rho_{\xi,\eta}(\psi(x))$
        is a bounded sesquilinear function of $\xi$ and $\eta$, so that it
corresponds to a unique operator in $B(\Hh)$, which we call
$S_t(\psi(x))$, characterized by the property
\begin{equation} \label{eqndefSt}
\forall \xi, \eta \in \Hh: \quad
\rho_{t,\xi, \eta}(\psi(x)) = \la S_t(\psi(x))
\xi, \eta \ra.
\end{equation}

    \item Equations (\ref{eqnrhofunctional}) and (\ref{eqndefSt})
together imply that
\begin{equation} \label{eqnStcovariance}
\forall x \in \Aa: \quad S_t(\psi(x)) = \psi(\sigma_t(x)).
\end{equation}

    \item Because $\psi$ and $\sigma_t$
    are unital *-homomorphisms, equation (\ref{eqnStcovariance})
    implies that $S_t$ is as well.

    \item Because $S_t$ is a unital *-homomorphism of a C$^*$-algebra, it is contractive.  This implies
    \begin{equation} \label{eqncontractivity}
    \forall x \in \Aa: \quad \|\psi(\sigma_t(x))\|
    \leq \|\psi(x)\|.
    \end{equation}

    \item Given any $z \in \widetilde{\Aa}$, we can now
    show that $\rho_{t,\xi, \eta}(z)$ is a bounded sesquilinear
    function of $\xi$ and $\eta$.  For boundedness, we
    will show more precisely that
    \begin{equation} \label{eqnboundedforfixedz}
    |\rho_{t,\xi, \eta}(z)| \leq \|z\| \|\rho\|
    \|\eta\|.
    \end{equation}
    Let $z, \xi, \eta$ be given, and choose
    $\epsilon > 0$.  By the Kaplansky density theorem and
    the normality of $\rho_{t,\xi, \eta}$, there exists
    $x \in \Aa$ such that $\|\psi(x)\| \leq \|z\|$ and
    $|\rho_{t,\xi, \eta}(z - \psi(x))|  < \epsilon$.  Then
    \begin{align*}
    |\rho_{t,\xi, \eta}(z)| &\leq
    |\rho_{t,\xi, \eta}(\psi(x))| + |\rho_{\xi, \eta}(z - \psi(x))|\\
    &\leq \epsilon + |\la \psi(\sigma_t(x)) \xi, \eta \ra|\\
    &\leq \epsilon + \|\psi(\sigma_t(x))\| \|\xi\| \|\eta\|\\
    &\leq \epsilon + \|\psi(x)\| \|\xi\| \|\eta\| \\
    &\leq \epsilon + \|z\| \|\xi\| \|\eta\|.
    \end{align*}
    Letting $\epsilon \to 0$,
    we have \ref{eqnboundedforfixedz}.

    To show linearity in $\xi$, let $c_1, c_2 \in \com$
    and $\xi_1, \xi_2, \eta \in \Hh$ be given, and choose
    $\epsilon > 0$.  By Kaplansky density and the normality
    of $\rho_{t, \xi_1, \eta}$, $\rho_{t, \xi_2, \eta}$,
    and $\rho_{t, c_1 \xi_1 + c_2 \xi_2, \eta}$, there
    exists $x \in \Aa$ such that $\|\psi(x)\| \leq \|z\|$
    and the three inequalities
    \begin{align*}
    \left| \rho_{t,c_1 \xi_1 + c_2 \xi_2,\eta}(z - \psi(x)) \right|
    &< \epsilon \\
    |c_1| \left|\rho_{t,\xi_1, \eta}(z - \psi(x)) \right| &<
    \epsilon \\
    |c_2| \left| \rho_{t, \xi_2, \eta}(z - \psi(x))\right|
    &< \epsilon
    \end{align*}
    all hold.  Then
    \begin{align*}
    &\left|\rho_{t, c_1 \xi_1 + c_2 \xi_2, \eta}(z)
    - c_1 \rho_{t, \xi_1, \eta}(z) - c_2 \rho_{t, \xi_2,
    \eta}(z) \right| \\
    \leq &\left|\rho_{t, c_1 \xi_1 + c_2 \xi_2, \eta}(z - \psi(x))\right|
    + |c_1| \left|\rho_{t, \xi_1, \eta}(z - \psi(x))\right|
    + |c_2| \left|\rho_{t, \xi_2, \eta}(z - \psi(x))\right|\\
    &+ \left|\rho_{t, c_1 \xi_1 + c_2 \xi_2, \eta}(\psi(x))
    - c_1 \rho_{t, \xi_1, \eta}(\psi(x)) - c_2 \rho_{t, \xi_2,
    \eta}(\psi(x)) \right| \\
    \leq &3\epsilon +
    \left| \la \psi(\sigma_t(x)) (c_1 \xi_1 + c_2 \xi_2),
    \eta \ra - c_1 \la \psi(\sigma_t(x)) \xi_1, \eta \ra
    - c_2 \la \psi(\sigma_t(x)) \xi_2, \eta \ra \right|
    = &3\epsilon
    \end{align*}
    and as this is true for all $\epsilon > 0$,
    we conclude that
    \[
    \rho_{t, c_1 \xi_1 + c_2 \xi_2, \eta}(z)
    = c_1 \rho_{t, \xi_1, \eta}(z) +
    c_2 \rho_{t, \xi_2, \eta}(z).
    \]
    Conjugate-linearity in $\eta$ is, of course, established
    in the same way.

    \item We therefore obtain an operator in $B(\Hh)$,
    which we call $\widetilde{\sigma}_t(z)$, characterized by
    the property
    \begin{equation} \label{eqndefsigma}
    \forall \xi, \eta \in \Hh: \quad
    \rho_{t,\xi, \eta}(z) = \la \widetilde{\sigma}_t(z) \xi, \eta \ra.
    \end{equation}
    We now have a function (not yet known to be linear,
    continuous, multiplicative, or self-adjoint)
    $\widetilde{\sigma}_t: \widetilde{\Aa}
    \to B(\Hh)$ which extends the unital *-endomorphism
    $S_t: \psi(\Aa) \to \psi(\Aa)$.

    \item The function $\widetilde{\sigma}_t$ is contractive, because
    \[
    \|\widetilde{\sigma}_t(z)\| = \sup_{\xi, \eta \in \Hh_1}
    |\la \widetilde{\sigma}_t(z) \xi, \eta \ra |
    = \sup_{\xi, \eta \in \Hh_1} |\rho_{t, \xi, \eta}(z)|
    \leq \sup_{\xi, \eta \in \Hh_1} \|z\| \|\xi\| \|\eta\|
    = \|z\|
    \]
    by equation (\ref{eqnboundedforfixedz}).

    \item Weak continuity of
    $\widetilde{\sigma}_t$ is a
     straightforward consequence of the normality of the
    $\rho_{t, \xi, \eta}$.  Indeed, if $z_\nu \to z$
    weakly in the unit ball $\widetilde{\Aa}_1$, then for all
    $\xi, \eta$ it follows that
    \[
    \la \widetilde{\sigma}_t(z_\nu) \xi, \eta \ra
    = \rho_{t, \xi, \eta}(z_\nu) \to \rho_{t, \xi, \eta}(z)
    = \la \widetilde{\sigma}_t(z) \xi, \eta \ra
    \]
    so that $\widetilde{\sigma}_t(z_\nu) \to \widetilde{\sigma}_t(z)$
    in the weak operator topology, which agrees
    with the weak topology of $\widetilde{\Aa}$
    on bounded subsets.

    \item Since $\widetilde{\sigma}_t$ maps the unit ball
    of $\psi(\Aa)$ into $\psi(\Aa)$, it follows
    from the previous step and the Kaplansky density
    theorem that it maps the unit ball of $\widetilde{\Aa}$
    into $\widetilde{\Aa}$.  Hence $\widetilde{\sigma}_t$,
    initially defined as a map from $\widetilde{\Aa}$
    into $B(\Hh)$, is actually a self-map of $\widetilde{\Aa}$.

    \item Next, we prove that $\widetilde{\sigma}_t$ is
    a *-endomorphism of $\widetilde{\Aa}$.  Let $x, y \in \widetilde{\Aa}$ and choose bounded
    nets $\{x_\nu\}, \{y_\nu\} \subset \Aa$ with
    $\psi(x_\nu) \to x$ and $\psi(y_\nu) \to y$ weakly.
    By the weak continuity of $\widetilde{\sigma}_t$ and its multiplicativity  on $\psi(\Aa)$,
    \begin{align*}
    \widetilde{\sigma}_t(x y) &= \widetilde{\sigma}_t \left(\lim_\mu \lim_\nu
    \psi(x_\nu) \psi(y_\mu) \right)\\
    &= \lim_\mu \lim_\nu \widetilde{\sigma}_t \Big(\psi(x_\nu) \psi(y_\mu) \Big)\\
    &= \lim_\mu \lim_\nu \widetilde{\sigma}_t \Big(\psi(x_\nu) \Big)
    \widetilde{\sigma}_t \Big(\psi(y_\mu) \Big) \\
    &= \widetilde{\sigma}_t(x) \widetilde{\sigma}_t(y).
    \end{align*}
    Linearity and self-adjointness are proved similarly.

    \item Finally, it is clear that
    $\widetilde{\sigma}_0 = \dss{\text{id}}{\AAA}$, and for all $s,t \geq 0$
    and all $x \in \Aa$,
    \[
    \widetilde{\sigma}_{s+t}(\psi(x))
    = \psi(\sigma_{s+t}(x))
    = \psi(\sigma_s (\sigma_t(x)))
    = \widetilde{\sigma}_s(\psi(\sigma_t(x)))
    = \widetilde{\sigma}_s(\widetilde{\sigma}_t(\psi(x)))
    \]
    so that $\widetilde{\sigma}_{s+t}$ and $\widetilde{\sigma}_s
    \circ \widetilde{\sigma}_t$ agree on the ultraweakly
    dense subset $\psi(\Aa) \subset \widetilde{\Aa}$;
    as both are normal, they are equal.
\end{enumerate}
\end{proof}

As one corollary, we can now find many dense
subspaces of $\Hh$.  Recall that $\psi(\PP) VH$ is
dense by the standard properties of the minimal Stinespring
dilation plus the fact that $\PP$ is norm-dense
in $\Aa$.

\begin{lemma} \label{lemfamilydense}
For any finite set $F \subset [0,\infty)$ let
$\PP^{(F)}$ denote those elements of $\PP$ which
do not use any time indices from $F$.  Then for
all finite $F \subset [0,\infty)$, $\psi\big(P^{(F)} \big) VH$
is dense in $\Hh$.
\end{lemma}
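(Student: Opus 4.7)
I would aim to show that $\MM:=\overline{\psi(\PP^{(F)})VH}$ is invariant under $\psi(\Aa)$; since $\one\in\PP^{(F)}$ gives $VH\subseteq\MM$, and Stinespring minimality gives $\overline{\psi(\Aa)VH}=\Hh$, such invariance would force $\MM=\Hh$. Because $\psi$ is continuous and $\Aa$ is the norm closure of the linear span of $\PP$, it suffices to prove $\psi(\sigma_t(i(a)))\MM\subseteq\MM$ for every $t\ge 0$ and $a\in\AAA$.

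For $t\notin F$ this is immediate, as $\sigma_t(i(a))\cdot\PP^{(F)}\subseteq\PP^{(F)}$. The essential case is $t=t_0\in F$. Fix $z\in\PP^{(F)}$ and $\xi\in H$, and for small $s>0$ with $t_0+s\notin F$ observe that $\sigma_{t_0+s}(i(a))\cdot z\in\PP^{(F)}$, so the approximant $\psi(\sigma_{t_0+s}(i(a)))\psi(z)V\xi$ lies in $\MM$. I would establish the norm convergence
\[
\psi\bigl(\sigma_{t_0+s}(i(a))\bigr)\psi(z)V\xi\longrightarrow\psi\bigl(\sigma_{t_0}(i(a))\bigr)\psi(z)V\xi\qquad(s\to 0^+);
\]
by closedness of $\MM$ the limit also lies in $\MM$, and since $\psi(\PP^{(F)})VH$ is dense in $\MM$ and $\psi(\sigma_{t_0}(i(a)))$ is bounded, this yields $\psi(\sigma_{t_0}(i(a)))\MM\subseteq\MM$.

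The main obstacle is establishing this norm convergence. Expanding the squared norm through $V^*\psi(\cdot)V=\pi\circ\E$ produces $\la\xi,\pi\bigl(M_{11}(s)-M_{12}(s)-M_{21}(s)+M_{22}\bigr)\xi\ra$, where $M_{ij}(s)=\E[z^*\sigma_{\tau_i}(i(a^*))\sigma_{\tau_j}(i(a))z]$ with $\tau_1=t_0+s$, $\tau_2=t_0$, and each $M_{ij}(s)$ is a moment polynomial by Proposition \ref{propmomentpolyasLM}. The diagonal terms collapse via multiplicativity of $\sigma_{\cdot}$ to single-factor expressions $\E[z^*\sigma_{\tau_i}(i(a^*a))z]$; since every time $u_j$ appearing in $z$ satisfies $u_j\notin F$ and thus $u_j\neq t_0$, the associated time vectors converge non-crossingly as $s\to 0^+$ (no ties are broken), and so $M_{11}(s)\to M_{22}$ strongly by Proposition \ref{propjointcontinuitymoments}. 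The off-diagonal terms are the obstruction: their time vectors contain an adjacent pair $(t_0+s,t_0)$ (or $(t_0,t_0+s)$) that fuses in the limit into a tie, violating non-crossing in the sense of Definition \ref{defnoncrossingly}, so Proposition \ref{propjointcontinuitymoments} does not apply directly.

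To resolve this, I would combine the coinciding factors in the limit using $\sigma_{t_0}(i(a^*))\sigma_{t_0}(i(a))=\sigma_{t_0}(i(a^*a))$ and then compare the recursive block decompositions of the perturbed and limit moment polynomials. After subtracting $\min$, the perturbed vector has a nonzero block of length one greater than in the limit (accommodating the entry $s>0$), with a corresponding zero block shortened by one; this is the combinatorial mismatch that the appendix example exploits to exhibit discontinuity in general. For our specific perturbation, however, the extra entry $s$ sits at the boundary of the two blocks, and by expanding the defining sum of $\Ss$ over subsets $S$ and isolating those whose contribution depends on the block-length mismatch, the discrepancies between perturbed and limit reduce to expressions that are strongly continuous in $s$ and vanish as $s\to 0^+$, using the strong continuity of $\phi_s$ and of multiplication on bounded subsets (Theorem \ref{thmjointcontinuityCP}) together with the per-entry continuity of $\Ss$ (Proposition \ref{propnormalmoments}). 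The base case $z=\one$ serves as a sanity check: one computes $\Ss\lla(t_0+s,t_0);(a^*,a)\rra=\phi_{t_0}(\phi_s(a^*)a)$ and $\Ss\lla(t_0,t_0+s);(a^*,a)\rra=\phi_{t_0}(a^*\phi_s(a))$, both of which converge strongly to $\phi_{t_0}(a^*a)=\Ss\lla(t_0,t_0);(a^*,a)\rra$, as required.
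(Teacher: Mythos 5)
Your argument is correct and follows the same essential route as the paper's: approximate $\sigma_{t_0}(i(a))\xi$ by perturbing the offending time index to $t_0 + s$ with $t_0+s\notin F$, expand the squared norm of the difference as a sum of four moment polynomials, and argue that they cancel in the limit $s\to 0^+$. The framing is slightly different — you package the reduction as showing that $\MM:=\overline{\psi(\PP^{(F)})VH}$ is $\psi(\Aa)$-invariant and then invoke Stinespring minimality, whereas the paper inducts directly on the number of factors $n$, approximating the tail $\sigma_{t_2}(i(a_2))\cdots\sigma_{t_n}(i(a_n))Vh$ first and then applying the one-factor case to the head — but these are equivalent repackagings of the same reduction.

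Where your write-up adds value is in flagging the subtlety that the off-diagonal terms $\Ss\lla\vec{s}^{\,*}\vee(t_0+s,t_0)\vee\vec{s};\cdots\rra$ do \emph{not} converge non-crossingly to $\Ss\lla\vec{s}^{\,*}\vee(t_0,t_0)\vee\vec{s};\cdots\rra$, so Proposition \ref{propjointcontinuitymoments} is not directly applicable. The paper elides this, asserting convergence ``by the continuity properties of $\Ss$'' in the $n=1$ case and saying ``proceed as before'' for the inductive step, which glosses over exactly this point. Your observation that the extra entry $s$ sits at the boundary of a zero block and a nonzero block — so that after enough passes through the recursion the $s$-dependence enters only through $\phi_s$ applied to a fixed operator, which is strongly continuous at $s=0^+$ — is the right mechanism for why the convergence nevertheless holds here (and why it can fail in the Appendix B example, where the crossing reshuffles interior block structure rather than a boundary entry). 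You sketch this rather than prove it in full combinatorial detail, but the paper is no more detailed at the same point, and the sanity check for $z=\one$ ($\Ss\lla(t_0+s,t_0);(a^*,a)\rra=\phi_{t_0}(\phi_s(a^*)a)\to\phi_{t_0}(a^*a)$) correctly illustrates the mechanism.
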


\begin{proof}
Consider a general vector of the form
$\sigma_{t_1}(i(a_1)) \cdots \sigma_{t_n}(i(a_n)) V h$,
which we already know to be total in $\Hh$.  We proceed
by induction on $n$.  In the case $n = 1$ we have for
any $\tilde{t}$ that
\[
\|(\sigma_t(i(a)) - \sigma_{\tilde{t}}(i(a)) ) Vh\|^2
= \Ss \lla t; a^*a \rra - \Ss\lla \tilde{t}, t; a^*, a \rra
- \Ss \lla t, \tilde{t}; a^*, a \rra + \Ss \lla \tilde{t}; a^* a\rra.
\]
As $\tilde{t} \to t$, this approaches zero by
the continuity properties of $\Ss$.  Inductively,
we can approximate $\sigma_{t_2}(i(a_2))
\cdots \sigma_{t_n}(i(a_n)) Vh$ by a vector in
$\psi(\PP^{(F)}) VH$, which we then use as our
$h$ and proceed as before.
\end{proof}

Before establishing our main continuity result, one more preliminary is needed.

\begin{proposition} \label{propseparabilityofHh}
The Hilbert space $\Hh$ is separable.
\end{proposition}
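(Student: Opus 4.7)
The Stinespring minimality gives $\Hh = \overline{\psi(\Aa)VH}$, and since the norm-dense linear span of $\PP$ generates $\Aa$, the plan is to exhibit a countable set of vectors of the form $\psi(x)Vh$ whose closed linear span is all of $\Hh$. I would fix a countable norm-dense subset $D_H \subseteq H$ (available by separability of $H$) and a countable strongly-dense subset $D_\AAA \subseteq \AAA$ whose intersection with each ball $r\cdot\AAA_1$ remains strongly dense (available because the separable W$^*$-algebra $\AAA$ has an ultrastrongly separable unit ball). Setting
\[
W = \bigl\{\psi\bigl(\sigma_{q_1}(i(d_1))\cdots\sigma_{q_n}(i(d_n))\bigr)Vh\,\bigm|\,n \geq 0,\ q_j \in \mathbb{Q}_{\geq 0},\ d_j \in D_\AAA,\ h \in D_H\bigr\},
\]
which is manifestly countable, the goal is to show that $W$ is dense in $\Hh$.

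Given an arbitrary spanning vector $\psi(X)Vh$ with $X=\sigma_{t_1}(i(a_1))\cdots\sigma_{t_n}(i(a_n))$, I would approximate in three successive stages: first choose $h_0\in D_H$ close to $h$ (trivially controlled since $\|\psi(X)V\|\leq\|X\|$); then, holding $\vec t$ fixed, choose $d_j^{(k)}\in D_\AAA$ with $d_j^{(k)}\to a_j$ strongly in a bounded set; then, holding the $d_j$'s fixed, choose rationals $q_j^{(k)}\to t_j$. The element-approximation stage is handled directly by Proposition~\ref{propjointcontinuitymoments}: after rescaling so that all elements lie in $\AAA_1$, the expansion of $\|(\psi(X^{(k)})-\psi(X))Vh_0\|^2$ produces a signed sum of four values $\langle\pi(\Ss\lla\vec t^R\vee\vec t;\,\cdots\rra)h_0,h_0\rangle$, all carrying the constant time vector $\vec t^R\vee\vec t$, so the non-crossing hypothesis is trivially met and strong convergence of the $\vec a$'s yields strong convergence of each $\Ss$ and hence Hilbert-space convergence of the vectors.

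The time-approximation stage is the main technical obstacle. A naive simultaneous variation $\vec t\mapsto\vec t_k$ fails, because the cross-term moment polynomials $\Ss\lla\vec t^R\vee\vec t_k;\,\cdots\rra$ do \emph{not} satisfy the non-crossing hypothesis of Proposition~\ref{propjointcontinuitymoments}: equal entries in the limit, at positions $n-i+1$ and $n+i$, arise from a fixed $t_i$ on the left and a moving $t_{k,i}$ on the right. I propose to circumvent this by imitating the inductive scheme in the proof of Lemma~\ref{lemfamilydense}, moving one time at a time. By induction on $n$, I would first produce an approximation $Y_0$ of the tail $Y=\sigma_{t_2}(i(d_2))\cdots\sigma_{t_n}(i(d_n))$ with $\|\psi(Y-Y_0)Vh_0\|$ small and $Y_0$ already of the desired rational form; then use contractivity of $\psi(\sigma_{t_1}(i(d_1)))$ to reduce the remaining task to the single-time perturbation $\|\psi(\sigma_{t_1}(i(d_1))-\sigma_{q_1}(i(d_1)))u\|^2$ where $u=\psi(Y_0)Vh_0$.

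The single-time perturbation is exactly the base-case computation of Lemma~\ref{lemfamilydense}, applied with $u$ in place of $Vh$. Expanding the norm squared yields a signed sum of four moment polynomials of the form $\Ss\lla\vec r^R,s,s',\vec r;\,\vec b^{*R},c,c',\vec b\rra$ (with $\vec r,\vec b$ the time and element tuples of $Y_0$, and $(s,s',c,c')$ selected among $(t_1,t_1,d_1^*,d_1)$, $(t_1,q_1,d_1^*,d_1)$, $(q_1,t_1,d_1^*,d_1)$, $(q_1,q_1,d_1^*,d_1)$), where the $*$-homomorphism identity $\sigma_t(i(c))\sigma_t(i(c'))=\sigma_t(i(cc'))$ in $\Aa$ collapses the two diagonal terms to $\Ss\lla\vec r^R,s,\vec r;\,\vec b^{*R},d_1^*d_1,\vec b\rra$. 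Each of the four quantities approaches the common value $\Ss\lla\vec r^R,t_1,\vec r;\,\vec b^{*R},d_1^*d_1,\vec b\rra$ as $q_1\to t_1$ by iterated application of the joint strong continuity of $\{\phi_t\}$ (Theorem~\ref{thmjointcontinuityCP}); the four-term sum therefore vanishes in the limit, closing the induction and producing the required countable dense subset.
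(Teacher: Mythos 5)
Your countable candidate set and the treatment of the $H$- and element-approximation stages are sound; the element step, done through the four-term norm-squared expansion with a \emph{constant} time vector, is a perfectly valid (and somewhat cleaner) variant of the paper's appeal to Kaplansky density plus joint strong continuity of multiplication. The genuine gap is in the time-approximation stage.

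You correctly diagnose that the naive simultaneous perturbation fails non-crossing, but the fix you propose inherits the same defect. After reducing to a single-time perturbation, you expand
\[
\bigl\| \psi\bigl(\sigma_{t_1}(i(d_1))-\sigma_{q_1}(i(d_1))\bigr)\,\psi(Y_0)Vh_0\bigr\|^2
\]
into four moment polynomials $\Ss\lla\vec r^{R},s,s',\vec r;\,\vec b^{*R},d_1^*,d_1,\vec b\rra$ with $(s,s')\in\{t_1,q_1\}^2$. The two diagonal terms collapse and converge, but the two off-diagonal terms $\Ss\lla\vec r^{R},t_1,q_1,\vec r;\cdots\rra$ and $\Ss\lla\vec r^{R},q_1,t_1,\vec r;\cdots\rra$ are exactly the configuration you were trying to avoid: as $q_1\to t_1$, the adjacent entries at the middle positions become equal in the limit but are unequal along the sequence, so $\vec r^{R}\vee(t_1,q_1)\vee\vec r$ does \emph{not} converge non-crossingly to $\vec r^{R}\vee(t_1,t_1)\vee\vec r$, and Proposition~\ref{propjointcontinuitymoments} does not apply. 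More to the point, ``iterated application of joint strong continuity of $\{\phi_t\}$'' is not a proof here: once $q_1$ reaches $t_1$, the zero-pattern of the shifted time vector changes, the decomposition in Definition~\ref{defmomentpolynomial} changes its component lengths, and a different branch of the recursion is taken. Appendix~\ref{chapmomentpolynomials} illustrates that precisely this kind of crossing can produce a genuine discontinuity of $\Ss$, so the cross terms cannot be assumed to converge to the common value without a separate argument. Since a norm estimate cannot isolate the cross terms from the diagonal ones, this route is stuck.

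The paper circumvents this by abandoning the direct norm estimate for the time variables and instead showing that the rational-time vectors remain total via a weak-convergence/orthogonality argument. The mechanism is Lemma~\ref{lemfamilydense}: by pairing the varying vector against test vectors $\psi(Z)V\eta$ with $Z\in\PP^{(F)}$ where $F$ collects all the times being perturbed, the inner products are moment polynomials of the form $\Ss\lla\vec s^{*}\vee\vec t_k;\cdots\rra$ in which the varying block $\vec t_k$ is \emph{disjoint} in value from the fixed block $\vec s$; a rational sequence $\vec t_k$ can then be chosen so that $\vec s^{*}\vee\vec t_k\to\vec s^{*}\vee\vec t$ non-crossingly, and Proposition~\ref{propjointcontinuitymoments} applies. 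Boundedness of the vectors plus convergence of inner products against the total set $\psi(\PP^{(F)})VH$ gives weak convergence, hence membership in the closed span. The essential idea you are missing is this separation of the perturbed times from the test times, which is what makes the non-crossing hypothesis checkable; without it, the crossing is unavoidable because it occurs between the two copies of the perturbed time that necessarily appear in any norm-squared expansion.
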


\begin{proof}
Let $H_0$ be a countable dense subset of $H$,
and $\AAA_0$ a countable ultraweakly dense subset
of $\AAA$.  We may assume WLOG that $\AAA_0$ is
a self-adjoint $\Q$-subalgebra, so that its unit ball
is strongly dense in the unit ball of $\AAA$
by Kaplansky's theorem.

We will show that the countable set
\[
\Big\{\psi\big(\sigma_{t_1}(i(x_1)) \dots \sigma_{t_n}(i(x_n))\big)Vh \mid 0 \leq t_1, \dots, t_n \in \Q;
\, x_1, \dots, x_n \in \AAA_0; \, h \in H_0 \Big\}
\]
spans a dense subset of $\Hh$.  We already know that $\psi(\PP) VH$ has dense span, so it suffices to show that vectors in
$\psi(\PP) VH$ can be norm-approximated by vectors
of the prescribed form.  Let $\tau_1, \dots, \tau_n \geq
0$, $y_1, \dots, y_n \in \AAA$, and
$k \in H$.  By the triangle inequality, we have for any $h \in H_0$, any $t_1, \dots, t_n \in \Q_+$, and any $x_1, \dots, x_n \in \AAA_0$ that
\begin{align*}
&\Big\| \psi\big(\sigma_{\tau_1}(i(y_1)) \dots \sigma_{\tau_n}(i(y_n))
\big) Vk - \psi\big(\sigma_{t_1}(i(x_1)) \dots \sigma_{t_n}(i(x_n))\big) Vh \Big\|\\
&\leq \Big\|\psi\big(\sigma_{\tau_1}(i(y_1))\dots \sigma_{\tau_n}(i(y_n))\big)\Big\| \|h-k\|\\
&\qquad + \Big\| \psi \Big[ \sigma_{\tau_1}(i(y_1)) \cdots
\sigma_{\tau_n}(i(y_n)) - \sigma_{\tau_1}(i(x_1)) \cdots
\sigma_{\tau_n}(i(x_n)) \Big] V h \Big\|\\
&\qquad + \Big\| \psi \Big[ \sigma_{\tau_1}(i(x_1)) \cdots
\sigma_{\tau_n}(i(x_n)) - \sigma_{t_1}(i(x_1)) \cdots
\sigma_{t_n}(i(x_n)) \Big] V h \Big\|.
\end{align*}
The first term can be made small by choosing $h$
sufficiently close to $k$.  For the second,
 note that each composition $\psi \circ \sigma_t \circ i$ is normal, since it equals the composition
$\widetilde{\sigma}_t \circ \psi \circ i$; hence
$\psi(\sigma_t(i(\AAA_0)))$ is weakly
dense in $\psi(\sigma_t(i(\AAA)))$.  By Kaplansky's theorem,
it follows that the unit ball of $\psi(\sigma_t(i(\AAA_0)))$
is strongly dense in the unit ball of $\psi(\sigma_t(i(\AAA)))$; this plus the
joint strong continuity of multiplication implies that \\
${\Big\{\psi\big(\sigma_{t_1}(i(x_1))
\dots \sigma_{s_n}(i(x_n)) \big) \mid  s_1, \dots, s_n \geq 0; \ x_1, \dots, x_n \in \AAA_0\Big\}}$ is
strongly dense in \\ ${\Big\{\psi\big(\sigma_{t_1}(i(y_1))
\dots \sigma_{t_n}(i(y_n)) \big) \mid  t_1, \dots, t_n \geq 0; \ y_1, \dots, y_n \in \AAA\Big\}}$.  Hence, once $h$ has
  been fixed, an appropriate choice of $x_1, \dots, x_n$ makes the second term arbitrarily small.  So far we have shown that vectors of the form
\begin{equation} \label{eqnvectorform}
\psi \big(\sigma_{\tau_1}(i(x_1)) \cdots \sigma_{\tau_n}(i(x_n))
\big) Vh \qquad \tau_1, \dots, \tau_n \geq 0; \,
x_1, \dots, x_n \in \AAA_0; \, h \in H_0
\end{equation}
are total in $\Hh$.  It remains to prove
that such vectors remain total under the added restriction
that the $\tau_i$ be rational.  Let $\xi \in \psi(\PP) VH$
be orthogonal to all vectors of the form (\ref{eqnvectorform}).
That is, we let $z_1, \dots, z_m \in \AAA$,
$\eta \in H$, and $s_1, \dots, s_m \geq 0$ such that,
for all $x_1, \dots, x_n \in \AAA_0$, all
$0 \leq t_1, \dots, t_n \in \Q$, and all $h \in H_0$,
\begin{align*}
0 &= \left\la
\psi \big(\sigma_{t_1}(i(x_1)) \cdots \sigma_{t_n}(i(x_n))\big)
Vh, \psi \big(\sigma_{s_1}(i(z_1)) \cdots \sigma_{s_m}(i(z_m))
\big) V \eta \right\ra\\
&= \left\la V^* \psi \big( \sigma_{s_m}(i(z_m^*))
\cdots \sigma_{s_1}(i(z_1^*)) \sigma_{t_1}(i(x_1)) \cdots \sigma_{t_n}(i(x_n)) \big) Vh, \eta \right\ra\\
&= \la \pi (\Ss \lla \vec{s}^{\, *} \vee \vec{t}; \vec{z}^{\, *} \vee
\vec{x} \rra) h, \eta \ra
\end{align*}
where we introduce the
notation $(s_1, \dots, s_m)^* = (s_m, \dots, s_1)$
for $s_1, \dots, s_m \geq 0$
and $(z_1, \dots, z_m)^* = (z_m^*, \dots, z_1^*)$
for $z_1, \dots, z_m \in \AAA$.  Now for any $\vec{t} \in [0,\infty)^n$, let $\{\vec{t}_k \} \subset \Q_+^n$
such that $\vec{s}^{\, *} \vee \vec{t}_k \to \vec{s}^{\, *} \vee \vec{t}$ non-crossingly; then
by Proposition
(\ref{propjointcontinuitymoments}),
\[
\la \pi(\Ss \lla \vec{s}^{\, *} \vee \vec{t}; \vec{z}^{\, *} \vee
\vec{x} \rra) h, \eta \ra
= \lim_{k \to \infty}
\la \pi(\Ss \lla \vec{s}^{\, *} \vee \vec{t}_k; \vec{z}^{\, *} \vee
\vec{x}\rra ) h, \eta \ra = 0.
\]

We thus see that $\xi$ must be orthogonal to a known total
set and hence zero.
\end{proof}

\begin{theorem}  \label{thmultraweakcontinuity}
For any $a \in \widetilde{\Aa}$,
$t \mapsto \widetilde{\sigma}_t(a)$  is ultraweakly continuous
for all $t > 0$.
\end{theorem}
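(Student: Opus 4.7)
The plan is to reduce continuity of $t \mapsto \widetilde\sigma_t(a)$ at $t_0 > 0$ to a concrete moment-polynomial continuity statement by invoking the factorization provided by Lemma \ref{lemexistsQ}. Since each $\widetilde\sigma_t$ is contractive, the family $\{\widetilde\sigma_t(a)\}_t$ is norm-bounded, so ultraweak continuity at $t_0$ is equivalent to weak-operator continuity. By Lemma \ref{lemWOTcontinuitydense}, it then suffices to prove that the scalar function $t \mapsto \la \widetilde\sigma_t(a)\xi,\eta\ra$ is continuous at $t_0$ for $\xi,\eta$ in a dense subspace of $\Hh$. I would take vectors of the form $\xi = \psi(y)Vh$ and $\eta = \psi(z)Vk$ with $y,z \in \PP^{(F)}$ and $h,k \in H$, where $F \subset [0,\infty)$ is a finite set to be specified later; by Lemma \ref{lemfamilydense}, the span of such vectors is dense in $\Hh$ for any choice of $F$.

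Next, I would apply Lemma \ref{lemexistsQ} to the pair $(z^*,y)$ at the parameter $t$ to obtain elements $\tilde z, \tilde y \in \PP$—arising from the nonzero-time segments of $z^*$ and $y$, and hence independent of $t$ for $t$ in a neighborhood of $t_0 > 0$—together with a normal linear map $Q_t\colon \AAA \to \AAA$ satisfying $\E[z^*\sigma_t(x)y] = Q_t(\E[\tilde z\,x\,\tilde y])$ for every $x \in \Aa$. Both sides are normal in $\psi(x)$ once $\E$ is replaced by $\widetilde\E$ and $\sigma_t$ by $\widetilde\sigma_t$, so the Kaplansky-density argument of Theorem \ref{thmexistsnormalextension} lifts this identity to
\[
\widetilde\E[\psi(z)^*\,\widetilde\sigma_t(a)\,\psi(y)] = Q_t(b_a),\qquad b_a := \widetilde\E[\psi(\tilde z)\,a\,\psi(\tilde y)] \in \AAA,
\]
for every $a \in \widetilde\Aa$. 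Pairing with $h,k$ and recalling $\pi \circ \widetilde\E[T] = V^*TV$ yields
\[
\la \widetilde\sigma_t(a)\xi,\eta\ra = \la \pi(Q_t(b_a))h,k\ra,
\]
and since $b_a$ does not depend on $t$, the continuity question is reduced to showing that $t \mapsto Q_t(b)$ is strongly continuous at $t_0$ for each fixed $b \in \AAA$.

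Finally, I would establish the continuity of $t \mapsto Q_t(b)$ by unpacking the construction of $Q_t$ in the proof of Lemma \ref{lemexistsQ}. There $Q_t(b)$ is exhibited as the outer normal-linear part of a moment polynomial whose time vector depends linearly on $t$. Because $t_0 > 0$, the interior shifted block contributes no zeros, so the outer zero/nonzero decomposition of the full time vector is stable for $t$ in a neighborhood of $t_0$; the only potential source of discontinuity is a crossing between entries contributed by $\tilde z, \tilde y$ and the $t$-shifted interior entries. Choosing $F$ to contain the finite collection of forbidden time values at which such a crossing would occur at $t = t_0$ eliminates this possibility, rendering the convergence of the time vector non-crossing in the sense of Definition \ref{defnoncrossingly}. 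Proposition \ref{propjointcontinuitymoments} then delivers the required strong continuity of $Q_t(b)$.

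The main obstacle is this last step: extracting from the proof of Lemma \ref{lemexistsQ} an explicit moment-polynomial description of $Q_t(b)$ directly amenable to Proposition \ref{propjointcontinuitymoments}, and verifying that the set $F$ of forbidden times is indeed finite and determined by $t_0$ together with the combinatorial data of $\tilde z, \tilde y$ alone—rather than by any intrinsic (and, for general $a$, undefined) interior time structure of $a$. This requires carefully tracing how the normality arguments in the proof of Lemma \ref{lemexistsQ} encode the passage from the specific, $x$-parameterized moment polynomial to the abstract normal map $Q_t$ valid on all of $\AAA$.
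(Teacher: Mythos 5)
Your plan diverges substantially from the paper's. You try to prove continuity at $t_0 > 0$ directly for \emph{every} $a \in \widetilde{\Aa}$ by absorbing the dependence on $a$ into a $t$-independent element $b_a$ and pushing the whole $t$-dependence into a normal map $Q_t$. The paper, by contrast, proves continuity only for $a$ in the norm-dense subalgebra $\Aa_0$ (where $a$ has an explicit time vector $\vec{\tau}$, so that $\langle \widetilde{\sigma}_t(\psi(a))\xi,\eta\rangle$ is literally a moment polynomial and Proposition~\ref{propjointcontinuitymoments} applies after choosing $F$ to contain $\vec{\tau}+t_0$), and then handles general $a$ by a two-step detour: (i) a Kaplansky/metrizability argument shows $t\mapsto\widetilde{\sigma}_t(a)$ is a pointwise limit of continuous functions, hence WOT-measurable; (ii) passing to the preadjoint semigroup on the separable Banach space $\widetilde{\Aa}_*$ and invoking the C$_0$-semigroup fact that weak measurability implies weak continuity at $t>0$. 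That measurability detour is not an inessential flourish; it is exactly the device that lets one avoid having to produce a $t$-continuous description of the dilation that is uniform in $a$.

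The gap in your argument is the final step, and it is a real one, not merely bookkeeping. Lemma~\ref{lemexistsQ} only provides $Q_t$ abstractly: its proof asserts that $\E[y\sigma_t(x)z]$ factors as $Q(\E[y_0xz_0])$ with $Q$ normal, but supplies no formula for $Q_t(b)$ when $b$ is outside the (typically non-dense) range of $x\mapsto\E[y_0xz_0]$, so there is no moment-polynomial expression in hand for Proposition~\ref{propjointcontinuitymoments} to act on. Worse, any attempt to extract such a formula by tracing the recursion for $\Ss$ hits the problem that the recursion, at each stage, uses $\tau=\min(\gamma\setminus\{0\})$ computed over the \emph{entire} time vector, including the interior block $\vec{u}+t$; whenever $\min(\vec{u}+t)$ is the smallest nonzero time, the branch of the recursion---and hence the shape of the would-be formula for $Q_t$---changes with $\vec{u}$, which is exactly the interior time structure of $x$ that the factorization is supposed to have hidden. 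There is also a secondary issue with the claim that $\tilde z,\tilde y$ are independent of $t$ near $t_0$: this holds in the branch $t>\tau$ of the lemma, but if the times in $y,z$ all exceed $t_0$ (which happens for many vectors in the dense family $\psi(\PP^{(F)})VH$), one lands in the branch $t\le\tau$, where $y_0,z_0$ are $\sigma_{v_i-t}(\cdot)$, i.e.\ genuinely $t$-dependent, so $b_a$ varies with $t$ and the reduction collapses. Resolving either difficulty would effectively require reproving, in disguise, the measurability argument you are trying to avoid.
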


\begin{proof}
We establish this in a series of steps.
\begin{enumerate}
    \item For any $a \in \AAA_0$ and $\xi, \eta \in \psi(\PP) VH$,
    the value of $\la \widetilde{\sigma}_t(\psi(a))
    \xi, \eta \ra = \la \psi(\sigma_t(a)) \xi, \eta \ra$ is
    given by a certain Sauvageot moment polynomial; explicitly, if\\ $a = \sigma_{\tau_1}(i(x_1))
    \cdots \sigma_{\tau_n}(i(x_n))$,
    $\xi = \sigma_{s_1}(i(y_1)) \cdots \sigma_{s_m}
    (i(y_n)) V \xi_0$, and\\
    $\eta = \sigma_{u_1}(i(z_1)) \cdots \sigma_{u_\ell}
    (i(z_\ell)) V \eta_0$, then
    \[
    \la \widetilde{\sigma}_t(\psi(a)) \xi, \eta \ra
    = \la \pi(\Ss \lla \vec{u}^{\, *} \vee (\vec{\tau} +t)
    \vee \vec{s}; \vec{z}^{\, *} \vee
    \vec{x} \vee \vec{y} \rra)  \xi_0, \eta_0 \ra.
    \]

    \item Given $\vec{\tau}$ and a time $t_0\geq 0$, let
    $F$ be the set of times in $\vec{\tau} + t_0$.
    Taking any $\xi_0, \eta_0 \in \psi(P^{(F)}) V H$,
    which is dense by Lemma \ref{lemfamilydense},
    we see by Proposition \ref{propjointcontinuitymoments}
    that the above expression is continuous at $t_0$,
    since if $t \to t_0$ within a sufficiently small
    neighborhood of $t_0$ then $\vec{u}^{\, *} \vee (\vec{\tau}
    + t) \vee \vec{s} \to \vec{u}^{\, *} \vee (\vec{\tau} + t_0)
    \vee \vec{s}$ non-crossingly.
    We therefore have that $t \mapsto \la \widetilde{\sigma}_t(\psi(a))
    \xi, \eta \ra$ is continuous at $t_0$
    for all $\xi, \eta \in
    \psi(\PP^{(F)}) VH$ and all $a \in \AAA_0$.

    \item By Lemma \ref{lemWOTcontinuitydense},
    this implies that $t \mapsto \la \widetilde{\sigma}_t(\psi(a))
    \xi, \eta \ra$ is continuous at $t_0$ for all
    $\xi, \eta \in \Hh$ and all $a \in \AAA_0$.

    \item Now let $a \in \widetilde{\Aa}$.  By Kaplansky
    density, there is a sequence $\{a_n\} \subset \Aa_0$
    such that $\psi(a_n) \to a$ in SOT.  We can use
    a sequence rather than a net because the separability
    of $\Hh$, established
     in Proposition \ref{propseparabilityofHh}, implies the SOT-metrizability of $B(\Hh)$ (\cite{Blackadar} III.2.2.27).  Then for any $\xi, \eta \in \Hh$,
     \[
     \la \widetilde{\sigma}_t(a) \xi, \eta
     \ra = \lim_n \la \widetilde{\sigma}_t(\psi(a_n))
     \xi, \eta \ra
     \]
     so that the left-hand side, as a function
      of $t$, is a pointwise limit of
     a sequence of continuous functions, hence
     measurable.  That is,
     $t \mapsto \widetilde{\sigma}_t(a)$ is WOT-measurable;
     as the $\widetilde{\sigma}$ are contractions and the WOT
     agrees with the ultraweak topology on bounded subsets,
     $t \mapsto \widetilde{\sigma}_t(a)$ is ultraweakly
     measurable at all $t \geq 0$.

     \item Since each $\widetilde{\sigma}_t$ is normal,
     there is a corresponding preadjoint semigroup
     $\{\rho_t\}$ on $\widetilde{\Aa}_*$ given
     by $\rho_t f = f \circ \widetilde{\sigma}_t$,
     as discussed in section \ref{secC0semigroups},
     such that for each $f \in \widetilde{\Aa}_*$,
     $t \mapsto \rho_t(f)$ is weakly measurable
     at all $t \geq 0$.

     \item Since $\Hh$ is separable and
     $\widetilde{\Aa} \subset B(\Hh)$, it follows
     that $\widetilde{\Aa}_*$ is a separable Banach
     space.  By section \ref{secC0semigroups},
     the weak measurability
     of $\{\rho_t\}$ is therefore equivalent to its
     weak continuity at times $t > 0$.
     This is then equivalent to the
     ultraweak continuity of $t \mapsto \widetilde{\sigma}_t$.
 \end{enumerate}
\end{proof}

\begin{theorem} \label{thmextensionstrongdilation}
$(\widetilde{\Aa}, \psi \circ i, \widetilde{\E},
\{\widetilde{\sigma}_t\})$ is a strong dilation
of $(\AAA, \{\phi_t\})$.
\end{theorem}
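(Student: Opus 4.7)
The plan is to verify the four ingredients of a strong dilation in order, with the bulk of the work already being done by Theorems \ref{thmstrongdilation} and \ref{thmexistsnormalextension}, so only routine assembly is needed.

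First, I would check that $\psi \circ i : \AAA \to \widetilde{\Aa}$ is an embedding and that $\widetilde{\E}$ is a retraction with respect to it. For any $a \in \AAA$,
\[
\widetilde{\E}(\psi(i(a))) \;=\; \pi^{-1}(V^{*}\psi(i(a))V) \;=\; \pi^{-1}(\pi(\E(i(a)))) \;=\; \pi^{-1}(\pi(a)) \;=\; a,
\]
using the defining property of the Stinespring triple $(\Hh,V,\psi)$ and the fact that $i$ is a section of $\E$. This single identity simultaneously shows that $\psi \circ i$ is injective (hence an embedding) and that $\widetilde{\E}\circ(\psi\circ i) = \dss{\mathrm{id}}{\AAA}$, so $\widetilde{\E}$ is a normal retraction.

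Next, I would invoke Theorem \ref{thmexistsnormalextension}, which already provides $\{\widetilde{\sigma}_t\}$ as a semigroup of normal unital $*$-endomorphisms of $\widetilde{\Aa}$ satisfying the covariance $\widetilde{\sigma}_t \circ \psi = \psi \circ \sigma_t$. So $\{\widetilde{\sigma}_t\}$ is an e$_0$-semigroup on $\widetilde{\Aa}$, and it remains only to verify the strong dilation identity $\widetilde{\E} \circ \widetilde{\sigma}_t = \phi_t \circ \widetilde{\E}$.

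For the strong dilation identity, I would first verify it on the ultraweakly dense subalgebra $\psi(\Aa) \subset \widetilde{\Aa}$. For $a \in \Aa$, using covariance plus $\widetilde{\E}\circ\psi = \E$ plus Theorem \ref{thmstrongdilation},
\[
\widetilde{\E}(\widetilde{\sigma}_t(\psi(a))) \;=\; \widetilde{\E}(\psi(\sigma_t(a))) \;=\; \E(\sigma_t(a)) \;=\; \phi_t(\E(a)) \;=\; \phi_t(\widetilde{\E}(\psi(a))).
\]
Both maps $\widetilde{\E}\circ\widetilde{\sigma}_t$ and $\phi_t\circ\widetilde{\E}$ are normal --- $\widetilde{\sigma}_t$ by construction, $\widetilde{\E}$ because $T \mapsto V^{*}TV$ is normal and $\pi^{-1}$ is a C$^{*}$-isomorphism on a W$^{*}$-algebra, and $\phi_t$ because it is a CP map in a CP$_0$-semigroup on a W$^{*}$-algebra. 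Since they agree on the ultraweakly dense set $\psi(\Aa)$, they agree everywhere on $\widetilde{\Aa}$.

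There is really no hard step here: all the substance was packed into Theorem \ref{thmexistsnormalextension} (constructing the $\widetilde{\sigma}_t$ and establishing covariance) and into Theorem \ref{thmstrongdilation} (strong dilation at the C$^{*}$ level). The present theorem is a clean formal consequence; the only mild subtlety to be careful about is that $\widetilde{\E}$ is well-defined and normal, which is built into its definition via the normality of $\pi$ and the compression $T\mapsto V^{*}TV$.
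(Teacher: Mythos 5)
Your proposal is correct and follows essentially the same route as the paper: the key computation is the identical chain $\widetilde{\E}\circ\widetilde{\sigma}_t\circ\psi = \widetilde{\E}\circ\psi\circ\sigma_t = \E\circ\sigma_t = \phi_t\circ\E = \phi_t\circ\widetilde{\E}\circ\psi$, followed by extension from the ultraweakly dense set $\psi(\Aa)$ via normality of both sides. The extra verifications you include (that $\psi\circ i$ is an embedding and $\widetilde{\E}$ a retraction with respect to it) are handled in the paper's setup preceding the theorem rather than in the proof itself, but are correct and harmless to restate.
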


\begin{proof}
By the definition of
$\widetilde{\E}$, equation \ref{eqncovarianceofextension},
 and theorem \ref{thmstrongdilation},
\begin{align*}
\widetilde{\E} \circ \widetilde{\sigma}_t \circ \psi
&=\widetilde{\E} \circ \psi \circ \sigma_t
= \E \circ \sigma_t \\
&= \phi_t \circ \E
= \phi_t \circ \widetilde{\E} \circ \psi.
\end{align*}
Since both $\phi_t \circ \widetilde{\E}$ and
$\widetilde{\E} \circ \widetilde{\sigma}_t$ are normal, and
since they are equal on the ultraweakly dense subset $\psi(\Aa)
\subset \widetilde{\Aa}$, they must be equal.
\end{proof}

So far, theorem \ref{thmultraweakcontinuity}
leaves open the question whether
$\{\widetilde{\sigma}_t\}$ is point-weakly continuous
at $t = 0$.  Without resolving that question,
we can still remedy the situation by taking a suitable quotient.

\begin{lemma} \label{lemcontinuityat0}
Let $A$ be a separable W$^*$-algebra and $\{\alpha_t\}$
an e$_0$-semigroup on $A$ which is point-weakly
continuous at all $t > 0$.  Then $\alpha_t$
is point-weakly continuous at 0 iff
\[
\bigcap_{t > 0} \ker \alpha_t = \{0\}.
\]
\end{lemma}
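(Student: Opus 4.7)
The forward direction is immediate: if $\{\alpha_t\}$ is point-weakly continuous at $0$ and $x \in \bigcap_{t > 0} \ker \alpha_t$, then $\alpha_t(x) = 0$ for every $t > 0$, and taking the ultraweak limit as $t \to 0^+$ gives $0 = \alpha_0(x) = x$.

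For the converse, fix $x \in A$ and consider the net $\bigl(\alpha_t(x)\bigr)_{t \to 0^+}$. It is bounded in norm by $\|x\|$ and therefore lies in the ball of radius $\|x\|$, which is ultraweakly compact by Alaoglu's theorem applied to $A = (A_*)^*$. The plan is to show that every ultraweak cluster point of this net equals $x$; since in a compact Hausdorff space a net converges to $x$ iff every convergent subnet converges to $x$, this will yield $\alpha_t(x) \to x$ ultraweakly as $t \to 0^+$.

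To identify the cluster points, suppose $\alpha_{t_\lambda}(x) \to y$ ultraweakly along a subnet with $t_\lambda \to 0^+$. For any fixed $s > 0$, the semigroup relation together with the normality (hence ultraweak continuity) of $\alpha_s$ gives
\[
\alpha_s(y) = \lim_\lambda \alpha_s\bigl(\alpha_{t_\lambda}(x)\bigr) = \lim_\lambda \alpha_{s + t_\lambda}(x) = \alpha_s(x),
\]
where the last equality uses $s + t_\lambda \to s$ together with the assumed point-weak continuity at $s$. Hence $y - x \in \bigcap_{s > 0} \ker \alpha_s = \{0\}$, so $y = x$, completing the argument.

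The decisive step is the semigroup trick $\alpha_s \circ \alpha_{t_\lambda} = \alpha_{s + t_\lambda}$, which converts the problematic limit at $0$ into the harmless limit at $s > 0$; this is the only place the semigroup property is genuinely used. Everything else is routine topology (Alaoglu, normality of $\alpha_s$, and the standard compactness/cluster-point characterization of convergence). Notably, separability of $A$ is not actually required for this argument; it would merely allow one to use sequences in place of subnets, since separability makes the relevant ball ultraweakly metrizable.
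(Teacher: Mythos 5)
Your proof is correct, and it takes a genuinely different route from the paper's. The paper works downstairs in the predual: point-weak continuity of $\{\alpha_t\}$ at $0$ is translated to weak (equivalently strong) continuity at $0$ of the preadjoint semigroup $\{\rho_t\}$ on $A_*$, and then the Hille--Phillips theory cited in section \ref{secC0semigroups} is invoked, which says that for such a semigroup (weakly continuous at positive times, on a separable Banach space) continuity at $0$ is equivalent to density of $\bigcup_{t>0}\rho_t A_*$; the annihilator of this range is computed to be $\bigcap_{t>0}\ker\alpha_t$, giving the equivalence. You instead stay upstairs in $A$ and give a self-contained topological argument: Alaoglu compactness of the norm ball, normality of each $\alpha_s$, and the semigroup relation to show that every ultraweak cluster point of $\alpha_t(x)$ as $t\to 0^+$ must coincide with $x$. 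What your approach buys is independence from the Hille--Phillips measurability-to-continuity machinery and, as you correctly note, from the separability hypothesis altogether --- separability enters the paper's proof because those Banach-space results require it, whereas your compactness argument does not. What the paper's approach buys is conceptual alignment with the $C_0$-semigroup framework laid out earlier (generator, range density, and preadjoint semigroup), which is natural given that the result sits immediately after a measurability argument (Theorem \ref{thmultraweakcontinuity}) that already goes through the predual. Both proofs are sound; yours is arguably more elementary and slightly more general.
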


\begin{proof}
The point-weak continuity of $\alpha_t$ at $t=0$ is equivalent
to the weak (equivalently, strong) continuity at $t=0$ of
the preadjoint semigroup $\{\rho_t\}$ on $A_*$ defined
by $\rho_t f = f \circ \alpha_t$.  As mentioned
in section \ref{secC0semigroups}, this is equivalent to
the condition
\[
\overline{ \bigcup_{t > 0} \rho_t A_*} = A_*
\]
since $A_*$ is assumed separable.  Now
the annihilator of the left-hand side is
\begin{align*}
\overline{ \bigcup_{t > 0} \rho_t A_*}^\perp
&= \{a \in A \mid \forall t > 0: \ \forall f \in A_*:\
(\rho_t f)(a) = 0\}\\
&= \{a \in A \mid \forall t > 0: \ \forall f \in A_*: \
f(\alpha_t(a)) = 0\}\\
&= \{a \in A \mid \forall t > 0: \alpha_t(a) = 0\}\\
&= \bigcap_{t > 0 } \ker \alpha_t
\end{align*}
because $A_*$ separates points on $A$.
\end{proof}

\newpage
\begin{theorem} \label{thmkahuna}
Let $\AAA$ be a separable W$^*$-algebra and $\{\phi_t\}$
a CP$_0$-semigroup on $\AAA$.  Then there exists a
unital strong dilation of $\{\phi_t\}$ to an E$_0$-semigroup on a
separable W$^*$-algebra.
\end{theorem}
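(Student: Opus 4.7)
The plan is to assemble the theorem from the machinery already in place: first apply the C$^*$-construction from Chapter \ref{chapiteratedproducts} to get the algebraic dilation $(\Aa, i, \E, \{\sigma_t\})$, next use Theorem \ref{thmexistsnormalextension} (together with a faithful normal representation of $\AAA$ on a separable Hilbert space, which exists by the separability hypothesis) to pass to the W$^*$-extension $(\widetilde{\Aa}, \psi \circ i, \widetilde{\E}, \{\widetilde{\sigma}_t\})$, and finally take a quotient by the ``zero set at positive times'' to force point-weak continuity at $t=0$. By Theorem \ref{thmultraweakcontinuity} the extended semigroup is already ultraweakly continuous at every $t>0$, and by Theorem \ref{thmextensionstrongdilation} it is a strong dilation; by Proposition \ref{propseparabilityofHh} the dilation Hilbert space $\Hh$ is separable, so $\widetilde{\Aa} \subset B(\Hh)$ has separable predual. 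The only missing feature is continuity at $0$, and Lemma \ref{lemcontinuityat0} pinpoints exactly how to get it.

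Concretely, I would set $I = \bigcap_{t > 0} \ker \widetilde{\sigma}_t$. This is a weak-$*$ closed two-sided ideal because each $\widetilde{\sigma}_t$ is a normal $*$-endomorphism. The key verifications are (i) $\widetilde{\sigma}_s(I) \subseteq I$ for all $s \geq 0$, which is immediate from the semigroup law $\widetilde{\sigma}_t \circ \widetilde{\sigma}_s = \widetilde{\sigma}_{s+t}$; (ii) $\widetilde{\E}(I) = 0$, which follows because for $x \in I$ the strong-dilation relation gives $\phi_t(\widetilde{\E}(x)) = \widetilde{\E}(\widetilde{\sigma}_t(x)) = 0$ for every $t>0$, and then the point-weak continuity of $\{\phi_t\}$ at $0$ (with $\phi_0 = \text{id}$) forces $\widetilde{\E}(x) = 0$; and (iii) the composition of $\psi \circ i$ with the quotient map $q \colon \widetilde{\Aa} \to \widehat{\Aa} := \widetilde{\Aa}/I$ is still an injection, since $\psi(i(a)) \in I$ implies $\phi_t(a) = \widetilde{\E}(\widetilde{\sigma}_t(\psi(i(a)))) = 0$ for all $t>0$, whence $a = 0$ by continuity at $0$ of $\{\phi_t\}$ on $\AAA$.

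From these three properties the quotient E$_0$-semigroup $\{\widehat{\sigma}_t\}$ on $\widehat{\Aa}$ descends, the retraction factors as $\widehat{\E} \circ q = \widetilde{\E}$, and the strong-dilation identity $\widehat{\E} \circ \widehat{\sigma}_t = \phi_t \circ \widehat{\E}$ follows from the analogous identity on $\widetilde{\Aa}$. By construction $\bigcap_{t > 0} \ker \widehat{\sigma}_t = \{0\}$, so Lemma \ref{lemcontinuityat0} delivers point-weak continuity of $\{\widehat{\sigma}_t\}$ at $t = 0$; combined with ultraweak continuity at all $t > 0$ (which passes through the normal surjection $q$), this makes $\{\widehat{\sigma}_t\}$ an E$_0$-semigroup. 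For separability of $\widehat{\Aa}$, note that $\widehat{\Aa}_*$ embeds isometrically into $\widetilde{\Aa}_*$ as the annihilator of $I$, and the latter is separable because $\Hh$ is.

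The main obstacle is verifying item (ii), that $\widetilde{\E}$ vanishes on $I$: this is the one place where we use the hypothesis that $\{\phi_t\}$ is a CP$_0$-\emph{semigroup} (i.e.\ point-weakly continuous, not merely a cp$_0$-semigroup), and it is also the step that justifies why the quotient does not collapse the image of $\AAA$. Everything else is either a direct appeal to the earlier machinery or a routine normality/Kaplansky-density bookkeeping argument; once the quotient is shown to be well-defined and nontrivial on $\psi \circ i$, the E$_0$-dilation properties transfer from $\widetilde{\Aa}$ to $\widehat{\Aa}$ essentially by inspection.
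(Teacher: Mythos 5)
Your proposal is correct and follows essentially the same route as the paper: the paper likewise sets $\RR = \bigcap_{t>0} \ker\widetilde{\sigma}_t$, passes to the quotient $\widehat{\Aa} = \widetilde{\Aa}/\RR$, verifies that $q \circ \psi \circ i$ remains injective and that $\RR \subset \ker\widetilde{\E}$ via $\phi_t(\widetilde{\E}(a)) = \widetilde{\E}(\widetilde{\sigma}_t(a)) = 0$ and continuity of $\{\phi_t\}$ at $t=0$, and then applies Lemma \ref{lemcontinuityat0}. The three items you single out as the key verifications are precisely the ones the paper checks.
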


\begin{proof}
The dilation $(\widetilde{\Aa}, \psi \circ i,
\widetilde{\E}, \{\widetilde{\sigma}_t\})$ constructed
in this chapter satisfies all the requirements
except possibly point-ultraweak continuity at $t=0$.

We now let
\[
\RR = \bigcap_{t > 0} \ker \widetilde{\sigma}_t.
\]
This is an ultraweakly closed ideal in $\widetilde{\Aa}$;
we use $\widehat{\Aa}$ for the quotient
$\widetilde{\Aa}/\RR$, which is another
separable W$^*$-algebra.  Because
$\widetilde{\sigma}_t(\RR) \subset
\RR$ for each $t > 0$, we obtain for each
$t > 0$ a map $\widehat{\sigma}_t: \widehat{\Aa} \to\widehat{\Aa}$
characterized by the commutative diagram
\[ \xymatrix{
\widetilde{\Aa} \ar[d] \ar[r]^{\widetilde{\sigma}_t} &
\widetilde{\Aa} \ar[d] \\
\widehat{\Aa} \ar[r]_{\widehat{\sigma}_t} &
\widehat{\Aa}
}\]
Defining also $\widehat{\sigma}_0 = \dss{\text{id}}{\widehat{\Aa}}$, we see that $\{\widehat{\sigma}_t\}$ inherits from
$\{\widetilde{\sigma}_t\}$ the properties of being an
e$_0$-semigroup and of point-ultraweak continuity
at $t > 0$.  Furthermore, \\
$\displaystyle\bigcap_{t > 0}
\ker \widehat{\sigma}_t = \{0\}$ by construction,
so that $\{\widehat{\sigma}_t\}$ is point-weakly continuous at $t=0$ and hence is an E$_0$-semigroup.  Our embedding
of $\AAA$ into $\widehat{\Aa}$ is given by
$q \circ \psi \circ i$, where $\widetilde{\Aa} \sa{q}
\widehat{\Aa}$ is the quotient map; this is injective because,
if $a \in \AAA$ is such that $q(\psi(i(a)))=0$, then
$\psi(i(a)) \in \RR$, so that for all $t > 0$ one has
\begin{align*}
\widetilde{\sigma}_t(\psi(i(a))) &= 0\\
\psi(\sigma_t(i(a))) &= 0\\
\sigma_t(i(a)) &= 0\\
\E[ \sigma_t(i(a))] &= 0\\
\phi_t(\E[i(a)]) &= 0\\
\phi_t(a) &= 0
\end{align*}
and since $\phi_t(a) \to a$ as $t \to 0^+$ this implies
$a = 0$.  To construct our retraction, we first note that
$\RR \subset \ker \widetilde{\E}$; indeed, if
$a \in \RR$ then for all $t > 0$ we have
\begin{align*}
\widetilde{\sigma}_t(a) &= 0\\
\widetilde{\E} \circ \widetilde{\sigma}_t(a) &= 0\\
\phi_t \circ \widetilde{\E}(a) &= 0
\end{align*}
and by letting $t \to 0^+$ we conclude $\widetilde{\E}(a) = 0$.
Hence, $\ker q \subset \ker \widetilde{\E}$, so there is
a unique map $\widehat{\E}: \widehat{\Aa} \to \AAA$
with $\widetilde{\E} = \widehat{\E} \circ q$.
This map satisfies $\widehat{\E} \circ q \circ \psi \circ i
= \widetilde{\E} \circ \psi \circ i = \dss{\text{id}}{\AAA}$,
so it is a retraction with respect to the given embedding.
Finally,
\[
\widehat{\E} \circ \widehat{\sigma}_t \circ
q = \widehat{\E} \circ q \circ \widetilde{\sigma}_t
= \widetilde{\E} \circ \widetilde{\sigma}_t
= \phi_t \circ \widetilde{\E}
= \phi_t \circ \widehat{\E} \circ q,
\]
and since the image of $q$ generates $\widehat{\Aa}$ this
implies $\widehat{\E} \circ \widehat{\sigma}_t =
\phi_t \circ \widehat{\E}$.  We therefore have
a strong dilation of the original semigroup.
\end{proof}

\appendix
\chapter{Table of Moments} \label{appendixmomenttables}

\noindent Here $w_\ell$ denotes
the tuple $(b_0, a_1, b_1, \dots, a_\ell, b_\ell)$.  The function $\MM_0$ in the tables is related to $\MM$  by the
relation $\MM(w_\ell) = b_0 \MM_0(w_\ell)
b_\ell$.  We also let
\begin{align*}
\rho(a_{i,j}) &= \rho(a_i a_j) - \rho(a_i) \rho(a_j)\\
\rho(a_{i,j,k}) &= \rho(a_i a_j a_k)
- \rho(a_i) \rho(a_j a_k) - \rho(a_i a_j) \rho(a_k)
+ \rho(a_i) \rho(a_j) \rho(a_k)\\
\rho(a_{i,j,k,l}) &= \rho(a_i a_j a_k a_l)
- \rho(a_i) \rho(a_j a_k a_l) - \rho(a_i a_j) \rho(a_k a_l)
-\rho(a_i a_j a_k) \rho(a_l) + \rho(a_i a_j)
\rho(a_k) \rho(a_l)\\
&\quad + \rho(a_i) \rho(a_j a_k) \rho(a_l)
+ \rho(a_i) \rho(a_j) \rho(a_k a_l) -
\rho(a_i) \rho(a_j) \rho(a_k) \rho(a_l)
\end{align*}

\section*{General Case}

\begin{align*}
\MM_0(w_1) &= \rho(a_1)\\
\MM_0(w_2) &= \rho(a_1) b_1 \rho(a_2) + \rho \big[ a_1 \psi(b_1) a_2 \big] - \rho(a_1) \rho \big[ \psi(b_1) a_2 \big]
- \rho \big[ a_1 \psi(b_1) \big] \rho(a_2)
+ \rho(a_1) \rho \big[ \psi(b_1) \big] \rho(a_2)\\
\MM_0(w_3) &=
\rho(a_1) b_1 \rho(a_2) b_2 \rho(a_3) 
+ \rho(a_1) \rho \big[ \psi(b_1) \big] \rho(a_2) b_2 \rho(a_3)
+ \rho(a_1) b_1 \rho(a_2) \rho \big[ \psi(b_2) \big] \rho(a_3)\\
&\qquad+ \rho(a_1) \rho \big[ \psi(b_1) \big] \rho(a_2)
\rho \big[\psi(b_2) \big] \rho(a_3) 
- \rho(a_1) b_1 \rho(a_2) \rho \big[ \psi(b_2) a_3 \big]\\ 
&\qquad- \rho(a_1) \rho \big[ \psi(b_1) \big] \rho(a_2)
\rho \big[\psi(b_2) a_3 \big] 
- \rho \big[ a_1 \psi(b_1) \big] \rho(a_2) b_2 \rho(a_3)\\ 
&\qquad - \rho \big[ a_1 \psi(b_1) \big] \rho(a_2)
\rho \big[ \psi(b_2) \big] \rho(a_3) 
 + \rho(a_1) \rho \big[ \psi(b_1) a_2 \psi(b_2) \big]
\rho(a_3) \\
&\qquad- \rho(a_1) \rho \Big( \psi(b_1) \psi \big[ \rho(a_2) b_2 \big] \Big) \rho(a_3)
+ \rho(a_1) \rho \Big( \psi(b_1) \psi \big[
\rho(a_2) \big] \psi(b_2)\Big) \rho(a_3)\\
&\qquad- \rho(a_1) \rho \Big( \psi \big[ b_1 \rho(a_2) \big]
\psi(b_2) \Big) \rho(a_3)
+ \rho(a_1) \rho \Big( \psi \big[ b_1 \rho(a_2)
b_2 \big] \Big) \rho(a_3) \\
&\qquad+ \rho(a_1) \rho \big[ \psi(b_1) \big] \rho \Big( \psi \big[
\rho(a_2) \big] \Big) \rho \big[ \psi(b_2) \big] \rho(a_3) 
- \rho(a_1) b_1 \rho \big[ a_2 \psi(b_2) \big] \rho(a_3)\\ 
&\qquad- \rho(a_1) \rho \big[ \psi(b_1) \big] \rho \Big( \psi \big[
\rho(a_2) \big] \psi(b_2) \Big) \rho(a_3)
+ \rho(a_1) \rho \big[ \psi(b_1) \big] \rho \Big( \psi \big[
\rho(a_2) b_2 \big] \Big) \rho(a_3)\\
&\qquad- \rho(a_1) \rho \big[ \psi(b_1) \big] \rho \big[ a_2
\psi(b_2) \big] \rho(a_3) 
+ \rho(a_1) \rho \Big( \psi \big[ b_1 \rho(a_2)
\big] \Big) \rho \big[ \psi(b_2) \big] \rho(a_3) \\
&\qquad- \rho(a_1) \rho \big[ \psi(b_1) a_2 \big] b_2 \rho(a_3)
- \rho(a_1) \rho \big[ \psi(b_1) a_2 \big] \rho \big[ \psi(b_2)
\big] \rho(a_3)\\
&\qquad- \rho(a_1) \rho \Big( \psi(b_1) \psi \big[ \rho(a_2) \big] \Big) \rho \big[ \psi(b_2) \big] \rho(a_3) 
+ \rho \big[ a_1 \psi(b_1) \big] \rho(a_2) \rho \big[ \psi(b_2) a_3 \big] \\ 
&\qquad -\rho(a_1) \rho \big[ \psi(b_1) a_2 \psi(b_2) a_3 \big] 
+ \rho(a_1) b_1 \rho \big[ a_2 \psi(b_2) a_3 \big]\\
&\qquad - \rho(a_1) \rho \Big( \psi(b_1) \psi
\big[ \rho(a_2) \big]
\psi(b_2) a_3 \Big) + \rho(a_1) \rho \Big( \psi \big[
b_1 \rho(a_2) \big] \psi(b_2) a_3 \Big)\\
&\qquad - \rho(a_1) \rho \Big( \psi \big[ b_1 \rho(a_2) b_2
\big] a_3 \Big)
+ \rho(a_1) \rho \Big( \psi(b_1) \psi \big[ \rho(a_2)
_2 \big] a_3 \Big)+ \cdots
\end{align*} 

\begin{align*}
\MM_0(w_3) &=\cdots + \rho(a_1) \rho \big[ \psi(b_1) \big] \rho \big[
a_2 \psi(b_2) a_3 \big] 
+ \rho(a_1) \rho \big[ \psi(b_1) \big] \rho \Big( \psi \big[
\rho(a_2) \big] \psi(b_2) a_3 \Big)\\
&\qquad + \rho(a_1) \rho \Big( \psi(b_1) \psi \big[ \rho
(a_2) \big] \Big) \rho \big[ \psi(b_2) a_3 \big]
+ \rho(a_1) \rho \big[ \psi(b_1) a_2 \big] \rho \big[ \psi
(b_2) a_3 \big]\\
&\qquad - \rho(a_1) \rho \Big( \psi \big[ b_1 \rho(a_2)
\big] \Big) \rho \big[ \psi(b_2) a_3 \big]
- \rho(a_1) \rho \big[ \psi(b_1) \big] \rho \Big( \psi \big[
\rho(a_2) b_2 \big] a_3 \Big)\\ 
&\qquad - \rho(a_1) \rho \big[ \psi(b_1) \big] \rho \Big( \psi \big[ \rho(a_2) \big] \Big) \rho \big[ \psi(b_2) a_3 \big] 
+ \rho \big[ a_1 \psi(b_1) a_2 \big] b_2 \rho(a_3) \\
&\qquad- \rho \Big( a_1 \psi \big[ b_1 \rho(a_2) b_2 \big] \Big) \rho(a_3) + \rho \Big( a_1 \psi \big[ b_1 \rho(a_2) \big] \Big) \rho(a_3)\\
&\qquad - \rho \big[ a_1 \psi(b_1) a_2 \psi(b_2) \big] \rho(a_3)
+ \rho \Big( a_1 \psi(b_1) \psi \big[ \rho(a_2) b_2 \big] \Big) \rho(a_3)\\
&\qquad - \rho \Big( a_1 \psi(b_1) \psi \big[ \rho(a_2) \big]
\psi(b_2) \Big) \rho(a_3) 
+ \rho \big[ a_1 \psi(b_1) a_2 \big] \rho \big[ \psi(b_2) \big]
\rho(a_3)\\ 
&\qquad + \rho \big[ a_1 \psi(b_1) \big] \rho \big[ a_2 \psi(b_2) \big] \rho(a_3) - \rho \big[ a_1 \psi(b_1) \big] \rho \Big(
\psi \big[ \rho(a_2) b_2 \big] \Big) \rho(a_3)\\
&\qquad - \rho \Big( a_1 \psi \big[ b_1 \rho(a_2) \big] \Big)
\rho \big[ \psi(b_2) \big] \rho(a_3)
+ \rho \Big( a_1 \psi(b_1) \psi \big[ \rho(a_2) \big] \Big)
\rho \big[ \psi(b_2) \big] \rho(a_3)\\
&\qquad + \rho \big[ a_1 \psi(b_1) \big]
\rho \Big( \psi \big[ \rho(a_2) \big] \psi(b_2) \Big) \rho(a_3) 
- \rho \big[ a_1 \psi(b_1) \big] \rho \Big( \psi \big[
\rho(a_2) \big] \Big) \rho \big[ \psi(b_2) \big] \rho(a_3)\\
&\qquad + \rho \big[ a_1 \psi(b_1) a_2 \psi(b_2) a_3 \big] 
+ \rho \Big( a_1 \psi(b_1) \psi \big[ \rho(a_2) \big] \psi(b_2)
a_3 \Big)\\
&\qquad - \rho \Big( a_1 \psi \big[ b_1 \rho(a_2) \big] \psi(b_2) a_3
\Big) - \rho \Big( a_1 \psi(b_1) \psi \big[ \rho(a_2) b_2 \big] a_3 \Big)\\
&\qquad + \rho \Big( a_1 \psi \big[ b_1 \rho(a_2)
b_2 \big] a_3 \Big) 
+ \rho \big[ a_1 \psi(b_1) \big] \rho \Big( \psi \big[
\rho(a_2) b_2 \big] a_3 \Big)\\ 
&\qquad + \rho \Big( a_1 \psi \big[ b_1 \rho(a_2) \big] \Big)
\rho \big[ \psi(b_2) a_3  \big]
- \rho \big[a_1 \psi(b_1) \big] \rho \big[ a_2 \psi(b_2) a_3 \big]\\
&\qquad - \rho \big[ a_1 \psi(b_1) \big]
\rho \Big( \psi \big[ \rho(a_2) \big] \psi(b_2) a_3 \Big)
- \rho \Big( a_1 \psi(b_1) \psi \big[ \rho(a_2) \big] a_3\Big)\\
&\qquad - \rho \big[ a_1 \psi(b_1) a_2 \big] \rho \big[ \psi(b_2) a_3 \big] + \rho \big[ a_1 \psi(b_1) \big] \rho \Big( \psi \big[ \rho(a_2) \big] \Big) \rho \big[ \psi(b_2) a_3 \big]
\end{align*}

\newpage
\section*{Special Case: Scalar-Valued $\psi$}
We list here values of the moment function in the special
case where the map from $B$ to $A$ is scalar-valued.  To reflect
this extra assumption, we denote that map by $\nu$ rather than $\psi$.  Then

\begin{align*}
\MM_0(w_1) &= \rho(a_1)\\
\MM_0(w_2) &= \nu(b_1)  \rho(a_{1,2})+ \rho(a_1) b_1 \rho(a_2)\\
\MM_0(w_3) &=   \rho(a_1) b_1 \rho(a_2) b_2 \rho(a_3)
 +\nu(b_1) \nu(b_2)   \rho(a_{1,2,3}) + \nu(b_1)   \rho(a_{1,2})
b_2 \rho(a_3)   + \nu(b_2)   \rho(a_1) b_1 \rho(a_{2,3})\\
&\quad + \Big[ \nu(b_1) \nu(b_2) \nu\big(\rho(a_2)\big) -
\nu(b_1) \nu \big( \rho(a_2) b_2 \big)
- \nu\big(b_1 \rho(a_2)\big) \nu(b_2) + \nu \big( b_1 \rho(a_2) b_2 \big) \Big]  \rho(a_{1,3}) \\
\MM_0(w_4) &=   \rho(a_1) b_1 \rho(a_2) b_2 \rho(a_3) b_3 \rho(a_4) + \nu(b_1)\nu(b_2) \nu(b_3)   \rho(a_{1,2,3,4})  \\
&+ \nu(b_1) \nu(b_2) \nu(b_3) \nu(\rho(a_2))   \rho(a_{1,3,4})
+ \nu(b_1) \nu(b_2) \nu(b_3) \nu(\rho(a_3))   \rho(a_{1,2,4}) \\
&+ \nu(b_1) \nu(b_3) \nu(b_3) \Big[ \nu(\rho(a_2 a_3)) + 2 \nu(\rho(a_2))
\nu(\rho(a_3)) - \nu \big( \rho(a_2) \rho(a_3) \big) \Big]
\rho(a_{1,4})  \\
&+ \nu(b_1) \nu(b_2) \nu(\rho(a_2))   \rho(a_{1,3}) b_3
\rho(a_4)  - \nu(b_1) \nu(b_2) \nu \big( \rho(a_3) b_3 \big)
\dss{\rho}{[1,2,4]}(a)  \\
&- \nu(b_1) \nu(b_2)  \Big[ \nu \big( \rho(a_{2,3}) b_3
\big) - 2 \nu(\rho(a_2)) \nu \big( \rho(a_3) b_3 \big) \Big]
  \rho(a_{1,4})  \\
&+\nu(b_1) \nu(b_3)   \rho(a_{1,2}) b_2
\rho(a_{3,4})   - \nu(b_1) \nu(b_3) \nu \big( \rho(a_2) b_2 \big) \rho(a_{1,3,4})  \\
&- \nu(b_1) \nu(b_3) \nu \big(b_2 \rho(a_3) \big)
\rho(a_{1,2,4})  \\
&+ \nu(b_1) \nu(b_3) \Big[ \nu \big(\rho(a_2) b_2 \rho(a_3) \big)
- \nu \big( \rho(a_2) b_2 \big) \nu(\rho(a_3))
- \nu(\rho(a_2)) \nu \big( b_2 \rho(a_3) \big) \Big]
  \rho(a_{1,4})  \\
&+ \nu(b_2) \nu(b_3)   \rho(a_1) b_1 \rho(a_{2,3,4})
- \nu(b_2) \nu(b_3) \nu \big( b_1 \rho(a_2) \big)
\rho(a_{1,3,4})  \\
&- \nu(b_2) \nu(b_3) \Big[ \nu \big( b_1 \rho(a_{2,3})
\big) + 2 \nu \big( b_1 \rho(a_2) \big) \nu(\rho(a_3)) \Big]
  \rho(a_{1,4})  \\
&+ \nu(b_1)   \rho(a_{1,2}) b_2 \rho(a_3)
b_3 \rho(a_4)
- \nu(b_1) \nu \big( \rho(a_2) b_2 \big)
\rho(a_{1,3}) b_3 \rho(a_4)  \\
&+ \nu(b_1) \nu \big( b_2 \rho(a_3) b_3 \big)
\rho(a_{1,2,4})  \\
&- \nu(b_1) \Big[ \nu \big( \rho(a_2) b_2 \rho(a_3) b_3
\big) - \nu(\rho(a_2)) \nu \big( b_2 \rho(a_3) b_3 \big)
- \nu \big( \rho(a_2) b_2 \big)
\nu \big( \rho(a_3) b_3 \big) \Big]
\rho(a_{1,4})  \\
&+ \nu(b_2)   \rho(a_1) b_1 \rho(a_{2,3}) b_3
\rho(a_4)   - \nu(b_2) \nu \big( b_1 \rho(a_2) \big)
\rho(a_{1,3}) b_3 \rho(a_4)  \\
&- \nu(b_2) \nu \big( \rho(a_3) b_3 \big)
\rho(a_1) b_1 \rho(a_{2,4})  \\
&+ \nu(b_2) \Big[ \nu \big( b_1 \rho(a_{2,3}) b_3 \big)
+ 2 \nu \big( b_1 \rho(a_2) \big)
\nu \big( \rho(a_3) b_3 \big) \Big]
\rho(a_{1,4})  \\
&+ \nu(b_3)   \rho(a_1) b_1 \rho(a_2) b_2
\rho(a_{3,4})
+ \nu(b_3) \nu \big( b_1 \rho(a_2) b_2 \big)
\rho(a_{1,3,4})  \\
&- \nu(b_3) \nu \big( b_2 \rho(a_3) \big)   \rho(a_1)
b_1 \rho(a_{2,4})  \\
&- \nu(b_3) \Big[ \nu \big( b_1 \rho(a_2) b_2 \rho(a_3)
\big) - \nu \big( b_1 \rho(a_2) b_2 \big) \nu(\rho(a_3))
- \nu \big( b_1 \rho(a_2) \big) \nu \big(b_2 \rho(a_3)\big)
\Big]   \rho(a_{1,4})  \\
&+ \Big[ \nu \big( b_1 \rho(a_2) b_2 \rho(a_3) b_3 \big)
- \nu \big( b_1 \rho(a_2) b_2 \big) \nu \big(
\rho(a_3) b_3 \big) - \nu \big( b_1 \rho(a_2) \big)
\nu \big( b_2 \rho(a_3) b_3 \big) \Big]
\rho(a_{1,4})  \\
&+ \nu \big( b_1 \rho(a_2) b_2 \big)
\rho(a_{1,3}) b_3 \rho(a_4)
+ \nu \big( b_2 \rho(a_3) b_3 \big)  
\rho(a_1) b_1 \rho(a_{2,4})  
\end{align*}
\chapter{Table of Moment Polynomials} \label{chapmomentpolynomials}
For the sake of brevity, we use $1,2,3$ to denote
$t_1, t_2, t_3$, with the standing assumption
that $0 < t_1 < t_2 < t_3$, and omit listing $a_1, \dots, a_n$;
  hence $\Ss(1,0,3,2)$ is an abbreviation for
  $\Ss(t_1, 0, t_3, t_2; a_1, a_2, a_3, a_4)$,
  and $\phi_{2-1}$ for $\phi_{t_2-t_1}$.  After the first few, we omit
polynomials in which 0 appears as the first or last index,
since the bimodule property easily reduces these to others, viz. \begin{align*}
\Ss(0, s_1, \dots, s_k; a_0, a_1, \dots, a_k) &= a_0 \phi_\tau
\big(\Ss(s_1-\tau, \dots, s_k-\tau; a_1, \dots, a_k) \big)\\
\Ss(s_1, \dots, s_k, 0;  a_1, \dots, a_k, a_{k+1}) &= \phi_\tau
\big(\Ss(s_1-\tau, \dots, s_k-\tau; a_1, \dots, a_k) \big) a_{k+1}
\end{align*}
where $\tau = \min (s_1, \dots, s_k)$.   We also omit polynomials with consecutive time indices equal, since these can be reduced by multiplying consecutive terms with the same time index; for instance,
\[
\Ss(t_1, t_1, t_2, t_3, t_3; a_1, a_2, a_3, a_4, a_5) = \Ss(t_1, t_2, t_3; a_1 a_2, a_3, a_4 a_5).
\]
Then

\begin{align*}
\Ss(0) &= a_1  \\
\Ss(0,1) &= a_1 \phi_1(a_2)\\
\Ss(1,0) &= \phi_1(a_1) a_2\\
\Ss(0,1,0) &= a_1 \phi_1(a_2) a_3\\
\Ss(1,0,1) &= \phi_1(a_1) a_2 \phi_1(a_3) +\omega(a_2) \big[ \phi_1(a_1 a_3) - \phi_1(a_1) \phi_1(a_3) \big] \\
\Ss(0,1,2) &= a_1 \phi_1 \big( a_2 \phi_{2-1}( a_3) \big)\\
\Ss(0,2,1) &= a_1 \phi_1 \big( \phi_{2-1}(a_2) a_3 \big)\\
\Ss(1,0,2) &= \phi_1(a_1) a_2 \phi_2(a_3) + \omega(a_2) \big[ \phi_1 \big(a_1 \phi_{2-1}(a_3)
\big) - \phi_1(a_1) \phi_2(a_3) \big]\\
\Ss(1,2,0) &= \phi_1 \big( a_1 \phi_{2-1}(a_2)\big)a_3\\
\Ss(2,0,1) &= \phi_2(a_1) a_2 \phi_1(a_3)
+ \omega(a_2) \big[ \phi_1 \big( \phi_{2-1}(a_1) a_3 \big)
- \phi_2(a_1) \phi_1(a_3) \big]\\
\Ss(2,1,0) &= \phi_1 \big( \phi_{2-1}(a_1) a_2 \big) a_3\\
\Ss(1,0,1,2) &= \phi_1(a_1) a_2 \phi_1 \big( a_3 \phi_{3-1}(a_4) \big) + \omega(a_2) \big[ \phi_1 \big( a_1 a_3 \phi_{2-1}(a_4) \big) - \phi_1(a_1) \phi_1 \big( a_3 \phi_{2-1}(a_4) \big) \big]\\
\Ss(1,0,2,1) &= \phi_1(a_1) a_2 \phi_1 \big( \phi_{2-1}(a_3) a_4 \big) +\omega(a_2) \big[ \phi_1 \big( a_1 \phi_{2-1}(a_3) a_4 \big) - \phi_1(a_1) \phi_1 \big( \phi_{2-1}(a_3) a_4 \big)\big]\\
\Ss(1,2,0,1) &= \phi_1 \big( a_1 \phi_{2-1}(a_2)\big)
a_3 \phi_1(a_4) + \omega(a_3) \big[ \phi_1 \big(
a_1 \phi_{2-1}(a_2) a_4 \big) - \phi_1 \big( a_1 \phi_{2-1}(a_2)\big) \phi_1(a_4) \big]\\
\Ss(2,1,0,1) &= \phi_1 \big( \phi_{2-1}(a_1) a_2 \big) a_3 \phi_1(a_4) + \omega(a_3) \big[ \phi_1\big( \phi_{2-1}(a_1)
a_2 a_4 \big) - \phi_1 \big( \phi_{2-1}(a_1) a_2\big)
\phi_1(a_4) \big]\\
\Ss(1,2,0,2) &=\phi_1 \big( a_1 \phi_{2-1}(a_2) \big)
a_3 \phi_2(a_4) + \omega(a_3) \big[ \phi_1 \big( a_1 \phi_{2-1}(a_2a_4) \big) - \phi_1 \big( a_1 \phi_{2-1}(a_3)\big) \phi_2(a_4) \big]
\end{align*}
\begin{align*}
\Ss(2,0,1,2) &= \phi_2(a_1) a_2\phi_1 \big( a_3 \phi_{2-1}(a_4)\big) + \omega(a_2) \big[ \phi_1 \big(
\phi_{2-1}(a_1) a_3 \phi_{2-1}(a_4) \big) \\
&\qquad\qquad \qquad -
\phi_2(a_1) \phi_1 \big( a_3 \phi_{2-1}(a_4)\big)\big]
+ \omega(a_2) \omega(a_3) \big[
\phi_2(a_1 a_4) - \phi_2(a_1) \phi_2(a_4)\big]\\
\Ss(2,0,2,1) &= \phi_2(a_1) a_2 \phi_1 \big( \phi_{2-1}(a_3) a_4 \big) + \omega(a_2) \big[ \phi_1 \big( \phi_{2-1}(a_1 a_3) a_4\big) - \phi_2(a_1) \phi_1 \big( \phi_{2-1}(a_3) a_4\big)\big]\\
\Ss(2,1,0,2) &= \phi_1 \big( \phi_{2-1}(a_1) a_2 \big)
a_3 \phi_2(a_4) + \omega(a_3) \big[ \phi_1 \big( \phi_{2-1}(a_1) a_2 \phi_{2-1}(a_4) \big) \\
&\qquad \qquad \qquad - \phi_1 \big( \phi_{2-1}(a_1) a_2
\big) \phi_2(a_4) \big] + \omega(a_2) \omega(a_3)
\big[ \phi_2(a_1 a_4) - \phi_2(a_1) \phi_2(a_4)\big]\\
\Ss(1,0,2,3) &= \phi_1(a_1) a_2 \phi_2 \big( a_3
\phi_{3-2}(a_4) \big) \\
&\qquad  \qquad + \omega(a_2) \Big[ \phi_1 \big(
a_1 \phi_{2-1} \big( a_3 \phi_{3-2}(a_4)\big)\big)
- \phi_1(a_1) \phi_2 \big( a_3 \phi_{3-2}(a_4)\big)\Big]\\
\Ss(1,0,3,2) &= \phi_1(a_1) a_2 \phi_2 \big( \phi_{3-2}(a_3) a_4 \big) \\
&\qquad  \qquad+ \omega(a_2) \Big[ \phi_1\big(a_1 \phi_{2-1}\big(
\phi_{3-2}(a_3) a_4 \big) \big) - \phi_1(a_1)
\phi_2 \big(\phi_{3-2}(a_3)a_4\big) \Big]\\
\Ss(1,2,0,3) &= \phi_1 \big( a_1 \phi_{2-1}(a_2)\big)
a_3 \phi_3(a_4) \\
&\qquad\qquad+ \omega(a_3) \Big[ \phi_1 \big(
a_1 \phi_{2-1}\big(a_2 \phi_{3-2}(a_4)\big)\big)
- \phi_1 \big( a_1 \phi_{2-1}(a_2) \big) \phi_3(a_4)\Big]\\
\Ss(1,3,0,2) &= \phi_1 \big( a_1 \phi_{2-1}(a_3)\big)
a_3 \phi_2(a_4) \\
&\qquad \qquad+ \omega(a_3) \Big[ \phi_1 \big( a_1
\phi_{2-1} \big( \phi_{3-2}(a_2) a_4 \big) \big)
- \phi_1 \big( a_1 \phi_{3-1}(a_2)\big) \phi_2(a_4) \Big]\\
\Ss(2,0,1,3) &= \phi_2(a_1) a_2 \phi_1 \big( a_3 \phi_{3-1}(a_4)\big) + \omega(a_2) \Big[ \phi_1 \big( \phi_{2-1}(a_1) a_3 \phi_{3-1}(a_4) \big)\\
 &\qquad \qquad - \phi_2(a_1)
\phi_1 \big( a_3 \phi_{3-1}(a_4) \big) \Big] + \omega(a_2) \omega(a_3) \Big[
\phi_2\big(a_1 \phi_{3-2}(a_4)\big) - \phi_2(a_1)
\phi_3(a_4) \Big]\\
\Ss(2,0,3,1) &= \phi_2(a_1) a_2 \phi_1 \big(
\phi_{3-1}(a_3) a_4 \big)\\
&\qquad \qquad + \omega(a_2) \Big[
\phi_1 \big( \phi_{2-1} \big( a_1 \phi_{3-2}(a_3) \big)
a_4 \big) - \phi_2(a_1) \phi_1 \big( \phi_{3-1}(a_3) a_4\big)
\Big]\\
\Ss(2,1,0,3) &= \phi_1 \big( \phi_{2-1}(a_1) a_2 \big)
a_3 \phi_3(a_4) + \omega(a_3) \big[ \phi_1 \big(
\phi_{2-1}(a_1) a_2 \phi_{3-1}(a_2)\big) \\
&\qquad \qquad- \phi_1 \big(
\phi_{2-1}(a_1) a_2 \big) \phi_3(a_4) \Big] + \omega(a_2) \omega(a_3)
\Big[ \phi_2 \big( a_1 \phi_{3-2}(a_4)\big)
- \phi_2(a_1) \phi_3(a_4) \Big]\\
\Ss(2,3,0,1) &= \phi_2 \big( a_1 \phi_{3-2}(a_2)\big)
a_3 \phi_1(a_4) \\
&\qquad \qquad+ \omega(a_3) \Big[ \phi_1 \big(
\phi_{2-1}(a_1 \phi_{3-2}(a_3)\big) a_4 \big)
- \phi_2 \big( a_1 \phi_{3-2}(a_2)\big) \phi_1(a_4)\Big]\\
\Ss(3,0,1,2) &= \phi_3(a_1) \phi_1 \big( a-3 \phi_{2-1}(a_4)\big) + \omega(a_2) \Big[ \phi_1 \big( \phi_{3-1}(a_1) a_3 \phi_{2-1}(a_4)\big) \\
&\qquad \qquad - \phi_3(a_1) \phi_1 \big( a_3 \phi_{2-1}(a_4)\big) \Big] + \omega(a_2) \omega(a_3) \Big[
\phi_2 \big( \phi_{3-2}(a_1) a_4 \big) - \phi_3(a_1)
\phi_2(a_4) \Big]\\
\Ss(3,0,2,1) &= \phi_3(a_1) a_2 \phi_1 \big( \phi_{2-1}(a_3) a_4 \big) \\
&\qquad \qquad + \omega(a_2) \Big[ \phi_1 \big( \phi_{2-1} \big( \phi_{3-2} (a_1) a_3 \big) a_4 \big)
- \phi_3(a_1) \phi_1 \big( \phi_{2-1}(a_3) a_4 \big) \Big]\\
\Ss(3,1,0,2) &= \phi_1 \big( \phi_{3-1}(a_1) a_2 \big)
a_3 \phi_2(a_4) + \omega(a_3) \Big[ \phi_1 \big( \phi_{3-1}(a_1) a_2 \phi_{2-1}(a_4) \big)\\
 &\qquad \qquad- \phi_1 \big( \phi_{3-1}(a_1) a_2 \big) \phi_2(a_4) \Big] + \omega(a_2) \omega(a_3) \Big[
\phi_2 \big( \phi_{3-2}(a_1) a_4 \big) - \phi_3(a_1)
\phi_2(a_4) \Big]\\
\Ss(3,2,0,1) &= \phi_2 \big( \phi_{3-2}(a_1) a_2 \big) a_3
\phi_1(a_4) \\
&\qquad \qquad+ \omega(a_3) \Big[ \phi_1 \big( \phi_{2-1}
\big( \phi_{3-2}(a_1) a_2 \big) a_4 \big)
- \phi_2 \big( \phi_{3-2}(a_1) a_2 \big) \phi_1(a_4)\Big]
\end{align*}

\newpage
\noindent To illustrate possible discontinuity in the time parameters,
we consider the following \\
for $0 < \tau < t_1
< t_2 < t_3$:

\begin{align*}
\Ss(t_1, \tau, t_3, 0, t_2) &= \phi_\tau \big( \phi_{t_1-\tau}(a_1) a_2
\phi_{t_3-\tau}(a_3) \big) a_4 \phi_{t_2}(a_5)\\
&\qquad + \omega(a_2) \phi_\tau \Big( \phi_{t_1-\tau}
\big( a_1 \phi_{t_3-t_1}(a_3) \big) - \phi_{t_1-\tau}(a_1)
\phi_{t_3-\tau}(a_3) \Big) a_4 \phi_{t_2}(a_5) \\
&\qquad + \omega(a_4)\phi_\tau \Big( \phi_{t_1-\tau}(a_1) a_2
 \phi_{t_2-\tau} \big( \phi_{t_3-t_2}(a_3) a_5 \big)
 \Big) \\
&\qquad + \omega(a_2) \omega(a_4)
 \phi_\tau \Big[ \phi_{t_1-\tau} \big( a_1
 \phi_{t_2-\tau} \big( \phi_{t_2-t_2}(a_3) a_5 \big) \big)\\
 &\qquad \qquad \qquad
 - \phi_{t_1-\tau}(a_1) \phi_{t_2-\tau} \big(\phi_{t_3-t_2}(a_3) a_5\big) \Big]\\
&\qquad - \omega(a_4) \phi_\tau \big(\phi_{t_1-\tau}(a_1) a_2
\phi_{t_3-\tau}(a_3) \big) \phi_{t_2}(a_5)\\
&\qquad - \omega(a_2) \omega(a_4)\phi_\tau \Big( \phi_{t_1-\tau}
\big( a_1 \phi_{t_3-t_1}(a_3) \big) - \phi_{t_1-\tau}(a_1)
\phi_{t_3-\tau}(a_3) \Big) \phi_{t_2}(a_5)\\
\Ss(t_1, 0, t_3, 0, t_2) &= \phi_{t_1}(a_1)
a_2 \phi_{t_3}(a_3) a_4 \phi_{t_2}(a_5)\\
&\quad + \omega(a_2) \omega(a_4)
\Big[ \phi_{t_1} \big(a_1 \phi_{t_2-t_1} \big(\phi_{t_3-t_2}(a_3)
\big) a_5 \big) - \phi_{t_1}(a_1) \phi_{t_2} \big(
\phi_{t_3-t_2}(a_3) a_5 \big)\\
&\qquad \qquad \qquad - \phi_{t_1}\big(a_1 \phi_{t_3-t_1}(a_3) \big) \phi_{t_2}(a_5) +\phi_{t_1}(a_1) \phi_{t_3}(a_3) \phi_{t_2}(a_5) \Big]\\
&\quad + \omega(a_2) \Big[ \phi_{t_1}\big(a_1 \phi_{t_3-t_1}(a_3)\big) - \phi_{t_1}(a_1) \phi_{t_3}(a_3) \Big] a_4
\phi_{t_2}(a_5)\\
&\quad + \omega(a_4) \phi_{t_1}(a_1) a_2 \Big[ \phi_{t_2}\big(\phi_{t_3-t_2}(a_3) a_5 \big) - \phi_{t_3}(a_3) \phi_{t_2}(a_5) \Big]\\
&\quad + \Big[\omega(a_2) \omega(a_3)\omega(a_4)
- \omega(a_2) \omega \big(\phi_{t_3}(a_3) a_4 \big)
- \omega \big( a_2 \phi_{t_3}(a_3) \big) \omega(a_4)\\
&\qquad \qquad \qquad + \omega \big( a_2 \phi_{t_3}(a_3) a_4 \big) \Big] \big[ \phi_{t_1}\big(a_1 \phi_{t_2-t_1}(a_5) \big)
- \phi_{t_1}(a_1) \phi_{t_2}(a_5) \big]\\
\Ss(t_1, 0, t_3, 0, t_2) &- \lim_{\tau \to 0^+}
\Ss(t_1, \tau, t_3, 0, t_2) = \Big[\omega(a_2) \omega(a_3)\omega(a_4)
 - \\
 &\qquad \qquad \omega(a_2) \omega \big(\phi_{t_3}(a_3) a_4 \big)
 - \omega \big( a_2 \phi_{t_3}(a_3) \big) \omega(a_4)\\
&\qquad \qquad + \omega \big( a_2 \phi_{t_3}(a_3) a_4 \big) \Big]
\Big[ \phi_{t_1}\big(a_1 \phi_{t_2-t_1}(a_5) \big)
- \phi_{t_1}(a_1) \phi_{t_2}(a_5) \Big]
\end{align*}

\backmatter
\bibliographystyle{amsalpha}
\bibliography{UnitalDilations}
\printindex

\end{document}